\documentclass[final]{amsart} 
\usepackage{mathtools, amsthm, amssymb}
\usepackage[margin=1in]{geometry}
\usepackage{mathrsfs}
\usepackage{tikz-cd}

\usepackage{hyperref}

\usepackage{thmtools}
\makeatletter
\IfFormatAtLeastTF{2024-11-01}{
  \renewcommand*\@addtoreset[2]{%
    \bgroup
      \edef\aliasctr@@truelist{\aliasctr@follow{#2}}%
      \let\@elt\relax
      \expandafter\@cons\aliasctr@@truelist{{#1}}%
    \egroup
    \expandafter\xdef\csname theH#1\endcsname{%
      \expandafter\noexpand\csname theH#2\endcsname.%
      \noexpand\the\noexpand\value{#1}}%
  }
}{}
\makeatother

\usepackage[capitalise]{cleveref} 

\crefname{enumi}{item}{items}

\usepackage{microtype}

\newcommand{\resp}{{\sfcode`\.1000 resp.}}
\newcommand{\ie}{{\sfcode`\.1000 i.e.}}
\newcommand{\eg}{{\sfcode`\.1000 e.g.}}
\newcommand{\cf}{{\sfcode`\.1000 cf.}}

\DeclareFontFamily{U}{min}{}
\DeclareFontShape{U}{min}{m}{n}{<-> udmj30}{}
\newcommand\yo{\!\text{\usefont{U}{min}{m}{n}\symbol{'210}}\!}

\DeclareMathOperator\Corr{Corr}
\DeclareMathOperator\Span{Span}
\newcommand\all{\mathrm{all}}
\DeclareMathOperator\sixFF{6FF}
\DeclareMathOperator\VsixFF{V6FF}

\newcommand\BM{\mathrm{BM}}

\newcommand\pt{\mathrm{pt}}

\DeclareMathOperator\SH{\mathbf{SH}}

\DeclareMathOperator\Fun{Fun}
\DeclareMathOperator\Hom{Hom}

\DeclareMathOperator\Spec{Spec}

\DeclareMathOperator\Psh{Psh}
\DeclareMathOperator\Shv{Shv}

\DeclareMathOperator\aff{\mathbb{A}}
\DeclareMathOperator\proj{\mathbb{P}}

\newcommand\LAd{\mathrm{LAd}}

\newtheorem{thm}{Theorem}[section]
\Crefname{thm}{Theorem}{Theorems}
\newtheorem{thmX}{Theorem}
\Crefname{thmX}{Theorem}{Theorems}
\newtheorem{prp}[thm]{Proposition}
\Crefname{prp}{Proposition}{Propositions}
\newtheorem{lem}[thm]{Lemma}
\Crefname{lem}{Lemma}{Lemmas}
\newtheorem{cor}[thm]{Corollary}
\Crefname{cor}{Corollary}{Corrolaries}

\theoremstyle{remark}
\newtheorem{rmk}[thm]{Remark}
\Crefname{rmk}{Remark}{Remarks}

\theoremstyle{definition}
\newtheorem{defn}[thm]{Definition}
\Crefname{defn}{Definition}{Definitions}
\newtheorem{exa}[thm]{Example}
\Crefname{exa}{Example}{Examples}

\Crefname{warn}{Warning}{Warnings}
\newtheorem{nota}[thm]{Notation}
\Crefname{nota}{Notation}{Notations}

\Crefname{cnstr}{Construction}{Constructions}
\newtheorem{setng}[thm]{Setting}
\Crefname{setng}{Setting}{Settings}
\newtheorem{ass}[thm]{Assumption}
\Crefname{ass}{Assumption}{Assumptions}

\newcommand\id{\mathrm{id}}

\newcommand\comps{\mathbb{C}}
\newcommand\reals{\mathbb{R}}

\newcommand\fin{\mathrm{fin}} 

\newcommand\lred{\mathrm{lred}}
\newcommand\nice{\mathrm{nice}}

\newcommand\et{\mathrm{\acute{e}t}}

\newcommand\cdh{\mathrm{cdh}}

\DeclareMathOperator\B{\mathbf{B}\!}

\newcommand\lax{\mathrm{lax}}

\newcommand\Betti{\mathrm{Betti}}
\newcommand\an{\mathrm{an}}
\newcommand\hol{\mathrm{hol}}
\newcommand\alg{\mathrm{alg}}

\newcommand\mot{\mathrm{mot}}

\DeclareMathOperator\ho{ho}

\newcommand\coCart{\mathrm{coCart}}

\newcommand\op{\mathrm{op}}

\newcommand\red{\mathrm{red}}
\newcommand\clcl{\mathrm{cl}}

\DeclareMathOperator\QC{QCoh}

\newcommand\cstr{\mathrm{cstr}}
\DeclareMathOperator\PPF{PPF}
\DeclareMathOperator\PF{PF}

\newcommand\Sch{\mathrm{Sch}}

\newcommand\AlgSp{\mathrm{AlgSp}}
\newcommand\AlgStk{\mathrm{AlgStk}}
\newcommand\HolStk{\mathrm{HolStk}}
\newcommand\An{\mathrm{An}}

\newcommand\spaces{\mathcal{S}}

\newcommand\Cat{\mathbf{Cat}}

\newcommand\PrL{{\mathbf{Pr}^{\mathbf{L}}}}
\newcommand\PrR{{\mathbf{Pr}^{\mathbf{R}}}}

\newcommand\LMod{\mathrm{LMod}}
\newcommand\Mod{\mathrm{Mod}}
\newcommand\Alg{\mathrm{Alg}}
\newcommand\CAlg{\mathrm{CAlg}}

\DeclareMathOperator\Sp{Sp}

\DeclareMathOperator\Th{Th}

\DeclarePairedDelimiter\cls\lbrack\rbrack

\newcommand\qadm{\mathrm{qadm}}

\newcommand\stackslink[1]{\href{https://stacks.math.columbia.edu/tag/#1}{#1}}
\newcommand\stackscite[1]{\cite[Tag \stackslink{#1}]{stacks-project}}
\newcommand\kerodonlink[1]{\href{https://kerodon.net/tag/#1}{#1}}
\newcommand\kerodoncite[1]{\cite[Tag \kerodonlink{#1}]{kerodon}}


\title[Geometric Criteria for 6-Functor Formalisms]{Geometric Criteria for 6-Functor Formalisms in the Setting of Pullback Formalisms}
\author{Roy Magen}
\address{Institute of Mathematics and Informatics, Bulgarian Academy of Sciences \\ Bulgaria, Sofia 1113, Acad. G. Bonchev Str., Bl. 8}
\thanks{Supported by the Simons Foundation, grant SFI-MPS-T-Institutes-00007697, and the Ministry of Education and Science of the Republic of Bulgaria, grant DO1-239/10.12.2024}

\begin{document}

\begin{abstract}
	In this article, we study criteria for producing six-functor formalisms and morphisms between them.
	One notable application is that the motivic homotopy theory of algebraic stacks is the universal six-functor functor formalism in a strong sense: it is initial in some category whose objects are six-functor formalisms, and whose morphisms \emph{commute with all six operations}. As a further application, we produce an analytic realization to a complex analytic version of motivic homotopy theory that is compatible with the six operations, and extend Betti realization to a map from this complex analytic version that is also compatible with the six operations. The abstract nature of our results is suitable for applications to many geometric contexts, allowing us to prove a similar result for the motivic homotopy theory of complex analytic stacks as a six-functor formalism \emph{defined on complex analytic stacks}.


	Our main general result is a generalized and enhanced version of Voevodsky's geometric criterion for six-functor formalisms, given in terms of localization and duality properties. Our version of Voevodsky's principle makes sense in very general geometric contexts, and provides criteria not only for showing when presheaves extend to six-functor formalism, and when a transformation between six-functor formalisms is compatible with the six operations, but also for when a transformation to an ordinary presheaf extends to a morphism of six-functor formalisms (and therefore establishing the six operations for the codomain).
\end{abstract}
\maketitle

\tableofcontents

\section{Introduction}

\subsection{Setting the stage}

\subsubsection{Cohomology theories and Grothendieck's six operations}

Cohomology theories are a powerful and ubiquitous tool for studying geometric objects of various types. Some common examples are given by singular cohomology for topological spaces, and sheaf cohomology for schemes. These assign algebraic invariants to our geometric objects depending on the ``coefficients'' we choose for our cohomology theory.

Indeed, for a given type of geometric object, the cohomology theories on these objects are usually organized into systems of coefficients. For example, one could take any abelian group as a coefficient for singular cohomology of topological spaces. More generally, for any topological space $X$, one can take singular cohomology with coefficients in any local system on $X$. Similarly, for a scheme $X$, one can compute sheaf cohomology of $X$ with coefficients in any coherent sheaf on $X$.

Thus, cohomology theories are often organized into ``systems of coefficients'': given a category $\mathcal C$ of geometric objects, we associate to each object $X \in \mathcal C$ a category $D(X)$ of coefficients for cohomology theories on $X$. Many properties of the resulting cohomology theories are then governed by the properties of the system of coefficients $X \mapsto D(X)$.

First considered in the setting of \'{e}tale cohomology, the yoga of Grothendieck's six operations is a recurring behaviour of these systems of coefficients that governs many important properties of cohomology theories, such as duality theorems and K\"{u}nneth formulae. These so-called 6-functor formalisms, which are systems of coefficients that have this behaviour, have been studied for a long time before they were finally given a general definition in Lucas Mann's thesis \cite{Mann6FF}, after which they have received even more attention in the literature.

\subsubsection{Motivic homotopy and Voevodsky's geometric criterion for 6-functor formalisms}

Apart from defining 6-functor formalisms, Mann also proves some important results about them, building on \cite{LZ6FF}. One crucial result is \cite[Proposition A.5.10]{Mann6FF}, which is the main result used to construct 6-functor formalisms, and gives criteria for a system of coefficients to extend to a 6-functor formalisms by showing that certain maps behave cohomologically like open immersions or proper maps. This principle was later refined in \cite{6FFUnique, CLL6FF}.

In contrast, and more than 20 years earlier, Voevodsky also outlined in \cite[1.2.1]{voe-crit} some geometric axioms for a system of coefficients on the category of schemes to admit the structure of a 6-functor formalism. This principle was proven in Ayoub's thesis \cite{Ayoub6I, Ayoub6II}, where it was then used to show that stable motivic homotopy theory admits the structure of a 6-functor formalism.

This was later generalized in \cite{tri-cat-mixed-motives}, allowing the authors to drop some quasi-projectivity hypotheses. In \cite{sixopsequiv}, this principle was established for the stable motivic homotopy theory of \emph{equivariant} schemes. In \cite{SixAlgSp}, the principle was established for general systems of coefficients on algebraic spaces, and in \cite{SixAlgSt} this was done for algebraic stacks, allowing the authors to produce a 6-functor formalism of stable motivic homotopy on algebraic stacks.


One of the goals of the present work is to establish a version of Voevodsky's principle that works in the same level of generality as Mann's definition of 6-functor formalisms, so that it should be able to recover these previous accounts, as well as allow for applications to other geometric contexts beyond algebraic geometry.

\subsubsection{Betti realization and morphisms of 6-functor formalisms}

It is often fruitful to compare the different types of cohomology theories coming from different types of systems of coefficients. We have already mentioned the examples of sheaf cohomology for schemes coming from the system of coefficients of coherent sheaves, and the example singular cohomology coming from the system of coefficients of local systems. More generally, if we consider cohomology theories on topological spaces with coefficients in general sheaves of abelian groups (instead of just locally constant ones), then we can try to compare cohomology theories on a scheme $X$ with coefficients in coherent sheaves on $X$, to the cohomology theories on the underlying topological space of the analytification $X^\an$ of $X$, with coefficients given by sheaves of abelian groups on $X^\an$.

Motivic homotopy theory was developed, at least partially, as an algebro-geometric analog of the usual homotopy theory of CW-complexes.\footnotemark{} As mentioned in Joseph Ayoub's ICM address \cite{AyoubICM}, one of the goals of motivic homotopy is to provide a bridge between so-called ``transcendental'' invariants of an algebraic variety, such as its Betti cohomology, and its algebro-geometric invariants, such as its Chow groups and $K$-theory. Indeed, there is a Betti realization functor that can be seen as a morphism from the coefficient system of motivic spectra to the coefficient system of sheaves of abelian groups on analytic spaces. Ayoub showed in \cite{AyoubBetti} that this morphism is actually compatible with the structure of Grothendieck's six operations. This has proven to be a crucial tool not only for using topology to study algebraic geometry, but the reverse as well: Voevodsky used Betti realization in his proof of the Bloch-Kato conjecture in $K$-theory to compute the motivic Steenrod algebra in terms of the topological one, and in \cite{C2equiv}, Behrens and Shah show how to use Betti realization to compute $C_2$-equivariant homotopy groups in terms of motivic homotopy groups over $\reals$.
\footnotetext{This paragraph is mostly repeated from the introduction of \cite{Fundamentals}.}

Thus, we are not only interested in producing 6-functor formalisms, but also morphisms between them. In the implementation of Voevodsky's principle given in \cite{tri-cat-mixed-motives}, the authors also enhance the principle by providing criteria for when transformations between 6-functor formalisms on schemes are actually compatible with the six operations, allowing them to prove some results about realizations in \cite[\S17]{tri-cat-mixed-motives}. On the other hand, the recent works \cite{6FFUnique, CLL6FF} are dedicated to enhancing the result given in \cite[Proposition A.5.10]{Mann6FF} about constructing \emph{abstract} 6-functor formalisms to statements about \emph{categories} of 6-functor formalisms, which can also be seen as giving criteria for transformations of 6-functor formalisms to be compatible with the six operations.

\subsection{An abstract version of Voevodsky's principle} \label{S:voe}

In view of the preceding discussion, we can identify two types of related results: on the one hand, we have the abstract categorical results of \cite{6FFUnique, CLL6FF, Mann6FF, LZ6FF} that allow us to construct abstract 6-functor formalisms and morphisms between them, and on the other hand, we have Voevodsky's principle and its refinements given in \cite{SixAlgSt, SixAlgSp, tri-cat-mixed-motives, Ayoub6I, Ayoub6II, voe-crit} that take advantage of a particular algebro-geometric setup to give more ``geometric'' criteria for producing 6-functor formalisms on schemes, and morphisms between them.

The primary objective of the present work is to find an optimal midpoint between these two types of results that allows us to give general ``geometric'' criteria for producing particularly well-behaved 6-functor formalisms and morphisms between them. This allows us to more easily extend the existing results of the second type to new settings, such as that of derived algebraic stacks (allowing us to improve some results of \cite{SixAlgSt}), or even beyond algebraic geometry, allowing us to prove results about 6-functor formalisms on complex analytic stacks. Furthermore, we will see that this abstract perspective allows us to make some improvements on past applications of Voevodsky's principle.

In order to formulate our results, we fix an $\infty$-category $\mathcal C$ of ``geometric objects''. In $\mathcal C$, we fix some collections of maps that we will refer to as \textbf{$\mathcal C$-smooth} maps and \textbf{$\mathcal C$-closed} maps. These maps are assumed to be closed under base change and composition, and to contain all equivalences. We also assume that every $\mathcal C$-closed map has $\mathcal C$-smooth complement in the sense of \Cref{defn:complements}.

The following definition is a slight weakening of the corresponding notion from \Cref{defn:PPF cat} -- see \Cref{prp:gluing implies loc,prp:loc for stab PF}.
\begin{defn} \label{defn:intro cstr PF}
	A \textbf{constructible pullback formalism} on $\mathcal C$ is a presheaf $D : \mathcal C^\op \to \CAlg(\PrL)$ of symmetric monoidal presentable $\infty$-categories satisfying axioms similar to some of the axioms from \cite[1.2.1]{voe-crit} or \cite[1.4.1]{Ayoub6I}:
	\begin{enumerate}

		\item \textbf{Pointed and reduced:}
			$D$ takes values in pointed presentable $\infty$-categories,\footnote{These are $\infty$-categories $\mathcal C$ that are presentable in the sense of \cite[\S5.5]{htt}, and which contain a zero object. A zero object of an $\infty$-category is an object that is both initial and terminal.} and $D(\emptyset) \simeq \pt$, where $\emptyset$ denotes any initial object of $\mathcal C$.

		\item \textbf{Localization:} \label{itm:intro/loc}
			For any $\mathcal C$-closed map $i : Z \to S$ with complement $j : U \to S$, the right adjoint $i_*$ of $i^*$ fits into a fibre sequence
			\[
				D(Z) \xrightarrow{i_*} D(S) \xrightarrow{j^*} D(U)
			\]
			of pointed $\infty$-categories.

		\item \textbf{Smooth projection formula:}
			For any $\mathcal C$-smooth map $f : X \to Y$, the functor $f^* \coloneqq D(f) : D(Y) \to D(X)$ admits a left adjoint $f_\sharp$, and for any $M \in D(X)$ and $N \in D(Y)$, the natural map
			\[
				f_\sharp(M \otimes f^* N) \to f_\sharp M \otimes N
			\]
			is an equivalence.

		\item \textbf{Smooth base change:}
			If
			\[
				\begin{tikzcd}
					X' \ar[d, "p"'] \ar[r, "f'"] & Y' \ar[d, "q" ] \\
					X \ar[r, "f"'] & Y
				\end{tikzcd}
			\]
			is a Cartesian square in $\mathcal C$, and $f$ is a $\mathcal C$-smooth map, then the natural map
			\[
				f'_\sharp p^* \to q^* f_\sharp
			\]
			is an equivalence.

	\end{enumerate}
\end{defn}

\begin{exa} \label{exa:intro AlgStk general}
	One might wonder what sorts of presheaves $D : \mathcal C^\op \to \CAlg(\PrL)$ are constructible pullback formalisms. Versions of motivic homotopy theory provide our main source of examples. Indeed, a good example to keep in mind is when $\mathcal C = \Sch$ is a suitable category of schemes, the $\Sch$-smooth maps are the smooth morphisms, and the $\Sch$-closed maps are the closed immersions. Then the presheaf $\SH : \Sch^\op \to \CAlg(\PrL)$ of Morel-Voevodsky's stable motivic homotopy is a constructible pullback formalism.

	In fact, we can do better: if $\mathcal C = \AlgStk$ is a suitable category of algebraic stacks as considered in \cite{SixAlgSt}, we can take the $\AlgStk$-smooth maps are the representable smooth morphisms, and the $\AlgStk$-closed maps are the closed immersions. Then the presheaf $\SH : \AlgStk^\op \to \CAlg(\PrL)$ constructed in \cite{SixAlgSt} is a constructible pullback formalism on $\AlgStk$.

	More generally, \cite{UnivFF} and \cite{Fundamentals} study constructions that produce these sorts of presheaves. We will briefly sketch the situation: for any $S \in \mathcal C$, we denote by $\mathcal C_S$ the full subcategory of $\mathcal C_{/S}$ consisting of $\mathcal C$-smooth maps to $S$. Since $\mathcal C$-smooth maps are stable under base change and composition, the presheaf $S \mapsto \Psh(\mathcal C_S)$ defines a presheaf satisfying the smooth projection formula and smooth base change. 

	The constructions studied in \cite[\S5 and \S6]{UnivFF} or \cite[\S3 and \S4]{Fundamentals} show that the operations of
	\begin{itemize}

		\item imposing ``smooth'' descent or invariance conditions on our presheaves,

		\item taking pointed objects,

		\item formally adjoining $\otimes$-inverses,

	\end{itemize}
	all preserve the smooth projection formula and smooth base change axioms. Of course, taking pointed objects allows us to produce presheaves $D$ that take values in pointed categories, and imposing mild descent conditions ensure that $D$ is a reduced presheaf ($D(\emptyset) = \pt$). 

	In general, the localization property can be quite difficult to show, but still, this has been done for various versions of ($\aff^1$-invariant) motivic homotopy theory. In \cite{Gluing} we study general tools for showing this property, and establish it in some special cases.
\end{exa}

Next, we will need to consider a notion of duality for maps in $\mathcal C$. Before introducing this notion, it will be convenient to introduce the following notation: given a $\mathcal C$-smooth map $X \to S$, we denote by $\cls{X}$ the object of $D(S)$ given by $(X \to S)_\sharp$ of the monoidal unit of $D(X)$.
\begin{defn} \label{defn:intro ambidext}
	Let $D$ be a constructible pullback formalism on $\mathcal C$, and let $f : X \to Y$ be a $\mathcal C$-smooth map that has $\mathcal C$-closed diagonal. Then there is a natural map (see \Cref{defn:duality,prp:monoidal twists})
	\[
		\eth_f : f_\sharp \to f_*(- \otimes \omega_f)
	,\]
	where
	\[
		\omega_f \coloneqq \cls{X \times_Y X}/\cls{X \times_Y X \setminus X}
	\]
	(the map $X \to X \times_Y X$ is given by the diagonal). Then $\omega_f$ can be thought of as a \textbf{Thom object of the tangent bundle of $f$}.

	We say that $f$ is \textbf{stably $D$-ambidextrous} if $\omega_f$ is $\otimes$-invertible, and $\eth_f$ is an equivalence.
\end{defn}

This notion of ``stable ambidexterity'' can be seen as a twisted ambidexterity property since it identifies the left and right adjoints of $f^*$ up to a twisting by a ``line bundle'' ($\otimes$-invertible object). This corresponds to familiar notions of duality in topology and geometry. 

The final ingredient required for our results is a collection of maps in $\mathcal C$ called \textbf{$\mathcal C$-proper maps}. We now come to the key geometric input from the algebro-geometric setting:
\begin{rmk}[Generating proper maps in algebraic geometry] \label{rmk:gen prop in AG}
	In the case $D = \SH$ of stable motivic homotopy theory in the setting of algebraic geometry, it has been shown (see \cite[Lemma 6.9]{SixAlgSt} or \cite[Theorem 6.9]{sixopsequiv}) that if $\mathcal E$ is a finite locally free sheaf on a (suitable) algebraic stack $S$, the map $\proj(\mathcal E) \to S$ is stably $\SH$-ambidextrous. 

	Recalling \Cref{exa:intro AlgStk general}, we see that projective maps are given as composites of $\AlgStk$-closed maps, and stably $\SH$-ambidextrous maps. Furthermore, versions of Chow's Lemma (see \Cref{lem:source-locally qproj}) show that if $X \to Y$ is a representable proper map, then there is a projective cdh cover $X' \to X$ such that $X' \to Y$ is also projective. This shows that the representable proper maps are ``generated'' by the $\AlgStk$-closed maps and the stably $\SH$-ambidextrous maps in a suitable sense.
\end{rmk}

Thus, we are led to consider constructible pullback formalisms $D$ such that the $\mathcal C$-proper maps are ``generated'' in an appropriate but lenient sense, which may depend on $D$, by the $\mathcal C$-closed maps and certain stably $D$-ambidextrous maps. We then say that $D$ is a \textbf{strongly projective pullback formalism} if it also takes values in stable $\infty$-categories. See \Cref{defn:proj sat} for the precise definition, where the terms ``$\mathcal C$-smooth'', ``$\mathcal C$-closed'', and ``$\mathcal C$-proper'' are replaced by ``quasi-admissible'', ``exceptionally closed'', and ``exceptionally quasi-proper''.

\subsubsection{Results about strongly projective pullback formalisms: the statement of \Cref{thmX:PPF}} \label{S:intro PPF}

One of the main contributions of this paper is given by the properties of strongly projective pullback formalisms. A general discussion of the strategy for showing these results is given in \Cref{S:outline}. Here is a first result:
\begin{thm} \label{thm:intro strongly PPF is PPF}
	If $D$ is a strongly projective pullback formalism, and $f : X \to Y$ is a $\mathcal C$-proper map, then $f$ ``behaves cohomologically like a proper map'' in the sense that $D$ satisfies the \textbf{proper projection formula} for $f$, has \textbf{proper base change} and \textbf{smooth-proper base change} for $f$, and the right adjoint $f_*$ of $f^* \coloneqq D(f)$ has a further right adjoint. See \Cref{rmk:desc proper} for a precise formulation (where ``quasi-admissible'' means $\mathcal C$-smooth).
	%
	%
	%
	%
	%
	%
	%
\end{thm}

Constructible pullback formalisms that satisfy the statement of \Cref{thm:intro strongly PPF is PPF} will turn out to be very important. Indeed, if $D$ is such a constructible pullback formalisms and $D$ takes values in stable categories, we will say that $D$ is a \textbf{projective pullback formalisms}. Also see \Cref{defn:proper,defn:PPF cat}.

The notion of $\mathcal C$-closed maps allows us to consider notions of excision, and when combined with the notions of $\mathcal C$-smooth and $\mathcal C$-proper maps, we can define a good notion of \textbf{cdh excision} for presheaves on $\mathcal C$. See \Cref{defn:cdh} for a precise definition, where the notions of $\mathcal C$-smooth, $\mathcal C$-proper, and $\mathcal C$-closed maps are replaced by quasi-admissible, exceptionally quasi-proper and exceptionally closed maps, as before. \Cref{lem:PPF cdh} shows the following (also see \Cref{prp:gluing implies exc}):
\begin{thm} \label{thm:intro cdh PPF}
	If $D$ is a projective pullback formalism, then $D$ has cdh excision.
\end{thm}

As an aside, we mention that in addition to the descent property given in \Cref{thm:intro cdh PPF}, we actually have that for any constructible pullback formalism $D$, $D$ has descent along any base change of a family of $\mathcal C$-smooth maps $\{X_i \to S\}_i$ if and only if the family of functors $\{(X_i \to S)^*\}_i$ is jointly conservative. This holds even without the localization property, and when $D$ is not pointed and reduced (see \cite[Theorem 2.4.3]{Fundamentals}, recalled in \Cref{thm:D-topology}).

The next result concerns morphisms out of strongly projective pullback formalisms. Note that in addition to establishing properties of transformations between strongly projective pullback formalisms, it also shows when the codomain of a transformation from a strongly projective pullback formalism is itself a strongly projective pullback formalism:
\begin{thm} \label{thm:intro PPF morphism}
	Let $D$ be a strongly projective pullback formalism, and let $\phi : D \to D'$ be a transformation of constructible pullback formalisms. Suppose that for every $\mathcal C$-smooth map $f$, the natural map
	\begin{equation} \label{eqn:smooth trans}
		f_\sharp \phi \to \phi f_\sharp
	\end{equation}
	is an equivalence. Then we also have that $D'$ is a strongly projective pullback formalism, and for any $\mathcal C$-proper map $f$, the natural map
	\begin{equation} \label{eqn:proper trans}
		\phi f_* \to f_* \phi
	\end{equation}
	is an equivalence.
\end{thm}

If we write $\PF^\cstr_\bullet(\mathcal C)$ for the $\infty$-category of constructible pullback formalisms and transformations $\phi : D \to D'$ between them such that \eqref{eqn:smooth trans} is an equivalence for all $\mathcal C$-smooth $f$, and $\PPF(\mathcal C)$ for the subcategory of $\PF^\cstr_\bullet(\mathcal C)$ consisting of projective pullback formalisms and morphisms $\phi : D \to D'$ between them such that \eqref{eqn:proper trans} is an equivalence for every $\mathcal C$-proper $f$, then we have the following main result:
\begin{thmX}[\Cref{thm:PPF}] \label{thmX:PPF}
	If $D$ is a strongly projective pullback formalism, then $D \in \PPF(\mathcal C)$, and the functor
	\[
		\PPF(\mathcal C)_{D/} \to \PF^\cstr_\bullet(\mathcal C)_{D/}
	\]
	is an equivalence. Furthermore, for any morphism $D \to D'$ in $\PF^\cstr_\bullet(\mathcal C)$, $D'$ is a strongly projective pullback formalism.
\end{thmX}

\subsubsection{6-functor formalisms} \label{S:intro 6FF}

Before coming to specific applications of \Cref{thmX:PPF} in \Cref{S:intro apps}, let us explain the relevance to 6-functor formalisms.

Let $I$ be a collection of $\mathcal C$-smooth maps (\eg{} open immersions or \'{e}tale maps), and let $P$ be a collection of $\mathcal C$-proper maps (\eg{} projective maps or proper maps). Let $E$ be some collection of maps containing $I \cup P$, and assume the following:
\begin{itemize}

	\item $\mathcal C$ admits finite products.

	\item $I,P,E$ contain all equivalences, and are stable under base change, composition, and taking diagonals.

	\item Every map in $I \cap P$ is truncated (this is automatic if $\mathcal C$ is an ordinary category).

\end{itemize}

When every map in $E$ is of the form $p \circ j$ for $p \in P$ and $j \in I$, combining \cite[Proposition A.5.10]{Mann6FF} with \Cref{thm:intro strongly PPF is PPF} immediately shows that any strongly projective pullback formalism extends to a 6-functor formalism on $(\mathcal C,E)$. In fact, \cite[Theorem B]{CLL6FF} produces a functor from $\PPF(\mathcal C)$ to the category of 6-functor formalisms on $(\mathcal C,E)$, and when every map in $E$ is truncated, \cite[Theorem 3.3]{6FFUnique} shows that this functor is the inclusion of a subcategory.

By a strategic application of our results, and using results from \Cref{S:6FF ext} that refine some of the extension results for 6-functor formalisms given in \cite[\S3.4]{HM6FF} and \cite[\S A.5]{Mann6FF}, we will be able to show stronger versions of these results under the following weaker assumptions:
\begin{enumerate}

	\item Every composite of maps in $I \cup P$ is of the form $p \circ j$ for $p \in P$ and $j \in I$. In fact, this only needs to hold locally with respect to $\mathcal C$-smooth cdh covers.

	\item Every map in $E$ is cdh locally on the source and target a composite of maps in $I \cup P$.

\end{enumerate}
A precise formulation is given in \Cref{setng:V6FF transfer}, where $E'$ is taken to be the collection of composites of maps in $I' \cup P'$.

\begin{exa}
	Consider the case that $\mathcal C \subseteq \AlgStk$ consists only of quasi-compact quasi-separated algebraic stacks. We can take $I$ to be the collection of (quasi-compact) open immersions, and $P$ to be the collection of projective morphisms, so that by \Cref{lem:qc imm} and \cite[Lemma A.0.3]{Fundamentals}, the composites of maps in $I \cup P$ are precisely the quasi-projective morphisms, which are always of the form $p \circ j$ for $p \in P$ and $j \in I$.

	If all objects of $\AlgStk$ have separated diagonals and nice\footnote{In the sense of \cite[Definition 2.1(i)]{SixAlgSt}: an fppf affine group scheme $G$ over an affine scheme $S$ is \emph{nice} if it is an extension of a finite \'{e}tale group scheme of order prime to the residue characteristic of $S$, by a group scheme of multiplicative type.} stabilizers, then by combining \cite[Theorem 2.12(ii), Theorem 2.14(i), Theorem 6.11, and Remark 7.8]{SixAlgSt}, we find that $E$ can be the collection of finite type representable morphisms (see \Cref{lem:source-locally qproj} and the proof of \Cref{thm:nice mot 6FF}).
\end{exa}

\begin{exa}
	Consider the case that $\mathcal C \subseteq \AlgStk$ consists of the quasi-compact quasi-separated quasi-Deligne-Mumford stacks with locally separated diagonals. By \cite[Theorem B]{qDM_compactification}, if we take $I$ to be the collection of open immersions, and $P$ to be the collection of proper representable morphisms, then the collection of all composites of maps in $I \cup P$ is the collection of finite type separated representable morphisms in $\mathcal C$.
\end{exa}

\Cref{thm:6FF} then implies the following:
\begin{thmX}[Criterion for 6-functor formalisms] \label{thmX:6FF}
	Any strongly projective pullback formalism $D$ extends to a 6-functor formalism on $(\mathcal C,E)$ satisfying the following properties:
	\begin{enumerate}

		\item $D$ and $D^!$ have cdh descent.


		\item If $p \in P$, then $p_* \simeq p_!$, and if $j \in I$, then $j^* \simeq j^!$.

		\item Every $\mathcal C$-smooth map is $D$-suave, and every $\mathcal C$-proper map is $D$-prim. (See \Cref{thm:suave prim consequences} for consequences.)

	\end{enumerate}
\end{thmX}

In fact, we even have the following criterion for morphisms of 6-functor formalisms, also given in \Cref{thm:6FF}:
\begin{thmX}[Criterion for morphisms of 6-functor formalisms] \label{thmX:6FF morphism}
	Let $D$ be a strongly projective pullback formalism, and let $\phi : D \to D'$ be a transformation of constructible pullback formalisms. Suppose that for any $\mathcal C$-smooth map $f$, the natural map
	\[
		f_\sharp \phi \to \phi f_\sharp
	\]
	is an equivalence (\ie{} $\phi$ is a morphism in $\PF^\cstr_\bullet(\mathcal C)$). Then $D'$ is also a strongly projective pullback formalism, and $\phi$ extends to a morphism of 6-functor formalisms on $(\mathcal C,E)$. For any map $f$, in addition to the canonical equivalences
	\[
		f^* \phi \simeq \phi f^*, \quad\text{and if $f \in E$,} \quad f_! \phi \simeq \phi f_!
	,\]
	we also have that
	\[
		\begin{array}{cl}
			\phi f_* \to f_* \phi & \text{is an equivalence if $f$ is $\mathcal C$-proper, and} \\
			\phi f^! \to f^! \phi & \text{is an equivalence if $f \in E$ is $\mathcal C$-smooth.} \\
		\end{array}
	\]
\end{thmX}

To obtain a statement on the level of categories of 6-functor formalisms, we define the following subcategory $\VsixFF(\mathcal C,E)$ of the category of 6-functor formalisms on $(\mathcal C,E)$, which is given in greater generality in \Cref{defn:V6FF}:
\begin{description}
	
	\item[Objects] are 6-functor formalisms $D$ on $(\mathcal C,E)$ taking values in stable presentable $\infty$-categories such that
		\begin{enumerate}

			\item every $\mathcal C$-proper map is $D$-prim and every $\mathcal C$-smooth map is $D$-suave,\footnote{In order to make sense of this for maps not in $E$, we use \Cref{defn:suave prim}.} 

			\item both of the associated presheaves $D^* : \mathcal C^\op \to \CAlg(\PrL)$ and $D^! : E^\op \to \PrR$ have cdh descent, and

			\item $D^*$ is a projective pullback formalism.

		\end{enumerate}

	\item[Morphisms] are morphisms of 6-functor formalisms $\phi : D \to D'$ such that for any map $f$ in $\mathcal C$, in addition to the canonical equivalences
		\[
			\phi f^* \simeq f^* \phi, \quad\text{and}\quad f_! \phi \simeq f_! \phi \quad\text{when $f \in E$}
		,\]
		we also have that
		\[
			\begin{array}{cl}
				\phi f_* \to f_* \phi & \text{is an equivalence if $f$ is $\mathcal C$-proper,} \\
				\phi f^! \to f^! \phi & \text{is an equivalence if $f \in E$ is $\mathcal C$-smooth,} \\
				f_\sharp \phi \to \phi f_\sharp & \text{is an equivalence if $f$ is $\mathcal C$-smooth, and}\footnotemark{} \\
				f^\flat \phi \to \phi f^\flat & \text{is an equivalence if $f \in E$ is $\mathcal C$-proper.}\footnotemark[\thefootnote]{}
			\end{array}
		\]
		\footnotetext{In general, $f_\sharp$ denotes a left adjoint of $f^*$ when it exists, and $f^\flat$ denotes a left adjoint of $f_!$ when it exists.}
		
\end{description}

\Cref{rmk:6FF ext functor} shows the following:
\begin{thmX} \label{thmX:V6FF PPF}
	The natural restriction functor
	\[
		\VsixFF(\mathcal C,E) \to \PPF(\mathcal C)
	\]
	admits a section, and if all maps in $I \cup P$ are truncated, then it is an equivalence.
\end{thmX}

Therefore, if $D^*$ is a strongly projective pullback formalism, and all maps in $I \cup P$ are truncated, the following result follows immediately by combining \Cref{thmX:V6FF PPF} with \Cref{thmX:PPF}.
\begin{thmX} \label{thmX:V6FF}
	The presheaf $D^*$ extends uniquely to a 6-functor formalism $D \in \VsixFF(\mathcal C,E)$, and the functor
	\[
		\VsixFF(\mathcal C,E)_{D/} \to \PF^\cstr_\bullet(\mathcal C)_{D^*/}
	\]
	is an equivalence.
\end{thmX}

\subsection{Applications to motivic 6-functor formalisms} \label{S:intro apps}

We will now present some applications to general versions of stable motivic homotopy theory that follow form our abstract results. We leave most of the discussion of the strategy for deducing these to \Cref{S:outline}.

In what follows, $\mathcal C$ is some geometric category of ``stacks''. We will present the case that $\mathcal C$ is either a suitable category $\mathcal C^\alg$ of algebraic stacks, or a suitable category $\mathcal C^\hol$ of complex analytic stacks. When we need to specialize to a particular case, we will write $\mathcal C^\alg$ for the algebraic case, and $\mathcal C^\hol$ for the complex analytic case.

All of the results stated in this section hold in the following cases, but see \Cref{S:applications} for more general versions:
\begin{itemize}

	\item We can take $\mathcal C^\alg$ to be the category of quasi-compact quasi-separated algebraic stacks with separated diagonals and nice stabilizers.

	\item We can take $\mathcal C^\hol$ to be the category $\HolStk^{\fin, \red}$ defined in \Cref{defn:HolStk}, which can be approximately described as the category of reduced complex analytic stacks with finite stabilizers.

\end{itemize}

We have the following natural classes of maps:
\begin{description}
	
	\item[$\mathcal C$-smooth maps] given by representable smooth morphisms in the algebraic case, and representable submersions in the complex analytic case.

	\item[$\mathcal C$-closed maps] given by closed immersions in the algebraic case, and embeddings in the complex analytic case.

	\item[$\mathcal C$-proper maps] given by representable proper maps in the algebraic case. These are less straightforward to define in the complex analytic case, but are given in \Cref{defn:hol} in a way that can be seen as forcing Chow's Lemma to hold (\ie{} we ``generate'' them from embeddings and projective bundles).

\end{description}

In either case, we obtain a notion of ``motivic pullback formalisms'' (\Cref{defn:mot alg PF,defn:mot hol PF}) which can be thought of as presheaves $\mathcal C^\op \to \PrL$ that satisfy Voevodsky's axioms. Recall that for any $D \in \PF^\cstr_\bullet(\mathcal C)$, and $\mathcal C$-smooth map $X \to S$, we write $\cls{X} \in D(S)$ to denote $(X \to S)_\sharp(1)$.
\begin{defn} \label{defn:mot PF}
	The category $\PF^\mot(\mathcal C)$ of \textbf{motivic pullback formalisms} on $\mathcal C$ is the full subcategory of the category $\PF^\cstr_\bullet(\mathcal C)$ of pullback formalisms on $\mathcal C$ consisting of those constructible pullback formalisms $D : \mathcal C^\op \to \CAlg(\PrL)$ satisfying the following:
	\begin{description}
		
		\item[Thom stability] If $V$ is a vector bundle on $S \in \mathcal C$, then the ``Thom object'' $\cls{V}/\cls{V \setminus 0} \in D(S)$ is $\otimes$-invertible.

		\item[Homotopy invariance] For any $S \in \mathcal C$, we have that $\cls{\aff^1_S} \simeq \cls{S}$ in $D(S)$. When $\mathcal C$ consists of complex analytic stacks, $\aff^1_S$ denotes $S \times \comps$.

	\end{description}
\end{defn}

We also have a constructible pullback formalism $\SH$ of stable motivic homotopy theory on $\mathcal C$. When necessary, we will write $\SH^\alg$ to denote the algebraic version (on $\mathcal C^\alg$), and $\SH^\hol$ to denote the complex analytic version (on $\mathcal C^\hol$). These are constructed in \cite[Theorem 5.3.10]{Fundamentals} and \cite{SixAlgSt} or \cite[Theorem 5.1.11]{Fundamentals}.

\begin{rmk}[The role of Voevodsky's axioms in the definition of motivic pullback formalisms]
	The axioms given in \Cref{defn:mot PF} are quite common in the literature -- see especially \cite[\S5]{SixAlgSt}, \cite[\S2.4.d]{tri-cat-mixed-motives}, \cite[\S2.1]{SixAlgSp}, \cite[1.4.1]{Ayoub6I}, and \cite[1.2.1]{voe-crit}. In fact, the main motivation for considering these axioms is that they describe the notion of stable motivic homotopy theory we want to study. Indeed, the presheaf $\SH$ of stable motivic homotopy theory on $\mathcal C$ is constructed as a universal categorical invariant that has Thom stability and homotopy invariance -- see \cite{SixAlgSt, SixAlgSp, robalo, UnivFF}.

	Apart from an independent interest in categorical invariants that satisfy these conditions, we can also sketch the relevance of these axioms to strongly projective pullback formalisms on $\mathcal C$. Indeed, as we have mentioned in \Cref{rmk:gen prop in AG}, the $\mathcal C$-proper maps are ``generated'' under certain operations by the $\mathcal C$-closed maps and projective bundles, and we want $\SH$ to satisfy the localization axiom of \Cref{defn:intro cstr PF}(\ref{itm:intro/loc}) for $\mathcal C$-closed maps, and for projective bundles to be $\SH$-ambidextrous.

	The homotopy invariance axiom is then necessary, not only because we are interested in studying homotopy-invariant cohomology theories, but also in order to show that $\SH$ satisfies the localization axiom (while still allowing for $\mathcal C$-smooth maps to have positive relative dimension) -- see, for example, the introduction of \cite{locspalg}. Next, we note that even without the Thom stability condition, it is possible to show some unstable version of ambidexterity for projective bundles -- see \cite[Theorem 5.22]{sixopsequiv} -- and the Thom stability condition then gives us the stable version.
\end{rmk}

Next, we show that $\SH$ is the initial motivic pullback formalism, and is a strongly projective pullback formalism.
This allows us to obtain the following result, given in \Cref{thm:alg PPF,thm:hol PPF}:
\begin{thmX} \label{thmX:mot PPF}
	Every motivic pullback formalism $D \in \PF^\mot(\mathcal C)$ is a strongly projective pullback formalism. Furthermore, $\SH$ is the initial motivic pullback formalism, and the functor
	\[
		\PF^\cstr_\bullet(\mathcal C)_{{\SH}/} \to \PF^\cstr_\bullet(\mathcal C)
	\]
	is fully faithful with essential image given by $\PF^\mot(\mathcal C)$, so for any constructible pullback formalism $D$, the following are equivalent:
	\begin{itemize}

		\item $D$ is a motivic pullback formalism.

		\item There exists a morphism $\SH \to D$ in $\PF^\cstr_\bullet(\mathcal C)$.

		\item There exists a unique morphism $\SH \to D$ in $\PF^\cstr_\bullet(\mathcal C)$.

	\end{itemize}
\end{thmX}

Now we will present results about 6-functor formalisms. First we will show that $\SH$ is the universal 6-functor formalism. For this we must fix a geometric setup $(\mathcal C,E)$. In the algebraic case, let $E$ be the collection of finite type representable maps in $\mathcal C^\alg$, and in the complex analytic case, let $E$ be some collection of truncated $\mathcal C^\hol$-proper maps that is stable under taking diagonals. The following result is given in greater generality in \Cref{thm:nice mot 6FF,thm:hol 6FF}:
\begin{thmX}[$\SH$ is the initial motivic 6-functor formalism] \label{thmX:univ SH}
	$\SH$ extends uniquely to a 6-functor formalism on $(\mathcal C,E)$ so that $\SH \in \VsixFF(\mathcal C,E)$, and $\SH$ is initial in the full subcategory of $\VsixFF(\mathcal C,E)$ consisting of those 6-functor formalisms $D$ such that $D^*$ is a motivic pullback formalism. In fact, the functor
	\[
		\VsixFF(\mathcal C,E)_{{\SH}/} \to \PF^\cstr_\bullet(\mathcal C)
	\]
	is fully faithful with essential image given by the motivic pullback formalisms.
\end{thmX}

\begin{rmk} \label{rmk:explain univ SH}
	Let us spell out the consequences of \Cref{thmX:univ SH}. These say that every motivic pullback formalism $D^*$ extends uniquely to a 6-functor formalism $D$ on $(\mathcal C,E)$ such that
	\begin{enumerate}

		\item $D^*$ and $D^!$ have cdh descent,

		\item every $\mathcal C$-smooth map is $D$-suave and every $\mathcal C$-prim map is $D$-prim,

		\item \label{itm:mot morphism} for any morphism $\phi^* : D^* \to D'^*$ of \emph{constructible pullback formalisms}, $\phi^*$ extends uniquely to a morphism $\phi : D \to D'$ such that, in addition to the given equivalences
			\[
				\begin{array}{cl}
					f^* \phi \simeq \phi f^* & \text{for any map $f$,} \\
					f_! \phi \simeq \phi f_! & \text{for any map $f \in E$, and} \\
					f_\sharp \phi \simeq \phi f_\sharp & \text{for any $\mathcal C$-smooth map $f$},
				\end{array}
			\]
			we also have that
			\[
				\begin{array}{cl}
					\phi f_* \to f_* \phi & \text{is an equivalence for any $\mathcal C$-proper map $f$, and} \\
					\phi f^! \to f^! \phi & \text{is an equivalence for any $\mathcal C$-smooth map $f \in E$},
				\end{array}
			\]
			and

		\item the unique morphism $\SH \to D^*$ of motivic pullback formalisms extends uniquely to a morphism $\SH \to D$ of 6-functor formalisms satisfying the properties as above.

	\end{enumerate}
	
\end{rmk}

Finally, as an application of \Cref{thmX:univ SH} in the algebraic case (\Cref{thm:nice mot 6FF}), we are able to produce a stacky analytic realization in \Cref{thm:analytification}, and easily deduce a stacky Betti realization in \Cref{rmk:Betti}. Let us now present an equivariant version of these results, so let $G$ be a constant finite group, and let $\AlgSp_\comps^G$ be the category of finite type quasi-separated algebraic spaces over $\Spec \comps$ with $G$-action. By \Cref{rmk:global quot of ft algsp}, we have functors $\AlgSp_\comps^G \to \mathcal C^\alg$ and $\AlgSp_\comps^G \to \mathcal C^\hol$ given by sending $X \in \AlgSp_\comps^G$ to $[X/G]$ and $[X^\an_\red/G]$, where $X^\an_\red$ is the analytification of the underlying reduced algebraic space of $X$. This lets us view $\SH^\alg$ and $\SH^\hol$ as presheaves $\SH^\alg_G$ and $\SH^\hol_G$ on $\AlgSp_\comps^G$, and there is a canonical \textbf{analytic realization} map $\alpha : \SH^\alg_G \to \SH^\hol_G$.

Furthermore, there is a presheaf $\SH^\Betti_{G,\et}$ on $\AlgSp_\comps^G$ such that for any $X \in \AlgSp_\comps^G$,
$\SH^\Betti_{G,\et}(X)$ is naturally identified with the category of $G$-equivariant sheaves of spectra on $X^\an$:
\[
	\SH^\Betti_{G,\et}(X) \simeq \Shv_{\Sp}(X^\an)^G
.\]

In fact, we have a canonical \textbf{Betti realization} map $\beta_\et : \SH^\hol_G \to \SH^\Betti_{G,\et}$. By using \Cref{rmk:global quot of ft algsp} to restrict along $\AlgSp_\comps^G \to \mathcal C^\alg$, \Cref{thm:analytification,rmk:Betti} give the following result:
\begin{thmX}[Realizations] \label{thmX:realization}
	The transformations
	\[
		\SH^\alg_G \xrightarrow{\alpha} \SH^\hol_G \xrightarrow{\beta_\et} \SH^\Betti_{G,\et}
	\]
	extend to morphisms of 6-functor formalisms on $(\AlgSp_\comps^G, \{\text{all maps}\})$ such that for any map $f$ in $\AlgSp_\comps^G$, and $\phi \in \{\alpha,\beta_\et\}$, in addition to the canonical equivalences
	\[
		f^* \phi \simeq \phi f^* \quad\text{and}\quad f_! \phi \simeq \phi f_!
	,\]
	we also have the following natural equivalences:
	\[
		\begin{array}{cl}
			f_\sharp \phi \simeq \phi f_\sharp \quad\text{and}\quad \phi f^! \simeq f^! \phi & \text{if $f$ is smooth,} \\
			\phi f_* \simeq f_* \phi \quad\text{and}\quad f^\flat \phi \simeq \phi f^\flat & \text{if $f$ is proper,}
		\end{array}
	\]
	where $f_\sharp \dashv f^*$ if $f$ is smooth, and $f^\flat \dashv f_!$ if $f$ is proper.

	Furthermore, for $D \in \{\SH^\alg_G, \SH^\hol_G, \SH^\Betti_{G,\et}\}$, $D^*$ and $D^!$ have cdh descent, every smooth map is $D$-suave, and every proper map is $D$-prim.
\end{thmX}

\subsection{Comparison with previous work on this topic}

As mentioned previously, the work of \cite{Mann6FF, LZ6FF} gives abstract categorical conditions for constructing 6-functor formalisms. In fact, given a geometric setup $(\mathcal C,E)$, the work of \cite{CLL6FF, 6FFUnique} even compares categories of 6-functor formalisms on $(\mathcal C,E)$ to certain categories of presheaves on $\mathcal C$. On the other hand, the work of \cite{voe-crit, Ayoub6I, Ayoub6II, tri-cat-mixed-motives, SixAlgSp, SixAlgSt} gives more geometric criteria for constructing 6-functor formalisms and morphisms between them in the setting of schemes. In light of the aforementioned more recent works on abstract 6-functor formalisms, the latter works can be thought of as producing the hypotheses necessary to apply to the abstract theory of 6-functor formalisms (although these works also prove other results). The main general result of the present work, given in \Cref{thmX:PPF}, can be seen as giving an abstract setting in which the ``strongest possible'' result of this form can be proven -- it shows as much as possible without using the exceptional adjunctions. \Cref{thmX:V6FF}, or \Cref{thmX:6FF,thmX:6FF morphism}, then apply this to produce results about 6-functor formalisms that generalize and enhance some of the results of \cite{voe-crit, Ayoub6I, Ayoub6II, tri-cat-mixed-motives, SixAlgSp, SixAlgSt} to do with producing 6-functor formalisms and morphisms between them.

To the author's knowledge, there are no results similar to \Cref{thmX:PPF,thmX:V6FF} currently appearing in the literature. \Cref{thmX:V6FF} is easily given by combining \Cref{thmX:PPF} with \Cref{thmX:V6FF PPF}, and the latter result does have a strong relationship with pre-existing results, most notably the results of \cite{6FFUnique, CLL6FF} which are actually used in its proof. \Cref{thmX:V6FF PPF} enhances the relevant results of \cite{6FFUnique, CLL6FF} by also showing cdh descent, establishing the exceptional adjunctions for a much larger class of maps (instead of just the compactifiable ones), and showing the suaveness and primness of many maps (but it requires stronger hypotheses). The strategy for proving \Cref{thmX:V6FF PPF} is explained in \Cref{S:outline}, and relies heavily on \cite[Theorem B]{CLL6FF} and \cite[Theorem 3.3]{6FFUnique}, as well as the arguments for \cite[Lemma A.5.11 and Proposition A.5.16]{Mann6FF} and \cite[Proposition 3.4.8]{HM6FF}.

One finds many more parallels in the literature when considering our applications from \Cref{S:intro apps} to the setting of algebraic geometry.

To the author's knowledge, the most general result about giving ``geometric'' criteria for producing algebro-geometric 6-functor formalisms currently appearing in the literature is given in \cite[Theorem 7.1]{SixAlgSt}. When combined with \cite[Corollary 7.15, Theorem 6.1, Theorem 7.10, Remark 7.11]{SixAlgSt}, this shows that if $D$ is a motivic pullback formalisms,\footnote{In the language of \cite{SixAlgSt}, this means that $D$ is a $(*,\sharp,\otimes)$-formalisms that satisfy Voevodsky's axioms} then $D$ extends to 6-functor formalism on $(\mathcal C^\alg, \{\text{finite type representable maps}\})$ such that $D^*$ and $D^!$ have cdh descent, the proper representable maps behave like $D$-prim maps, and many smooth representable maps are $D$-suave. In fact, \cite[Theorem 6.1]{SixAlgSt} can be seen as establishing \Cref{thm:intro strongly PPF is PPF,thm:intro cdh PPF} for $D = \SH$. These results generalize and enhance previous results from \cite{Ayoub6I, Ayoub6II, tri-cat-mixed-motives, sixopsequiv, SixAlgSp}, and recover part of \Cref{thmX:univ SH} (see \Cref{rmk:explain univ SH}), but only for a particular $\infty$-category of algebraic stacks, and without establishing any results about morphisms (or categories) of 6-functor formalisms.

To the author's knowledge, the most general results giving criteria for the compatibility of morphisms with the six operations are given in \cite[Proposition 2.3.11, Proposition 2.4.53, and Theorem 4.4.25]{tri-cat-mixed-motives}, but these are only given in the context of categories of schemes. We give more general results in \Cref{rmk:explain univ SH}(\ref{itm:mot morphism}), \Cref{thm:intro PPF morphism}, and \Cref{thmX:6FF morphism} (as well as the categorical versions given in \Cref{thmX:univ SH,thmX:PPF,thmX:V6FF}), although we note that in the case of schemes of finite type over a field, \cite[Theorem 4.4.25]{tri-cat-mixed-motives} establishes slightly stronger compatibilities than those given by our more general results. One enhancement given by our results is the additional compatibilities with operations of the form $f^!$ for $\mathcal C$-smooth $f$. Another important enhancement is the fact that we do not need to assume the codomain of our transformation satisfies the same criteria as the domain: it suffices for it to be a constructible pullback formalism, and for the transformation to commute with $f_\sharp$ for $\mathcal C$-smooth $f$. 

For the remainder of this section, it will be convenient to introduce the following notation whenever we have a notion of $\mathcal C$-smooth, $\mathcal C$-proper, and $\mathcal C$-closed maps, and a notion of motivic pullback formalisms on $\mathcal C$: we write $\VsixFF^\mot(\mathcal C,E)$ to denote the full subcategory of $\VsixFF(\mathcal C,E)$ consisting of those $D$ such that the associated presheaf $D^*$ is a motivic pullback formalism.

In the setting of a suitable category of schemes $\Sch$, if we define the $\mathcal C$-smooth maps to be the open immersions, and the $\mathcal C$-proper maps to be the proper maps, and the $\mathcal C$-closed maps to be the equivalences, the result about abstract 6-functor formalisms given in \cite[Theorem 3.3]{6FFUnique} already gives the identification
\[
	\VsixFF(\Sch, \{\text{finite type separated maps}\}) \xrightarrow{\sim} \PPF(\Sch)
.\]
If we instead take the $\mathcal C$-smooth maps to be the smooth morphisms, and the $\mathcal C$-closed maps to be the closed immersions, \cite[Corollary 3.6]{6FFUnique} combines \cite[Theorem 3.3]{6FFUnique} with the results of \cite{UnivFF} and \cite{tri-cat-mixed-motives} (by way of \cite{MotivicHodge}) to show that $\SH$ defines an initial object of $\VsixFF^\mot(\Sch, \{\text{finite type separated maps}\})$.

The most obvious enhancement of \cite[Corollary 3.6]{6FFUnique} afforded by our applications is that they hold for many $\infty$-categories of algebraic stacks, and possibly non-separated maps, since we actually show that $\SH$ is an initial object of $\VsixFF^\mot(\text{reasonable algebraic stacks}, \{\text{finite type representable maps}\})$. Once again, another enhancement is given by the fact that, as in \Cref{rmk:explain univ SH}, we do not need to fully construct a 6-functor formalism in order to get a morphism of 6-functor formalisms from $\SH$: if $D^* \in \PF^\cstr_\bullet(\mathcal C)$, then in order to extend $D^*$ to a 6-functor formalism along with a morphism of 6-functor formalisms $\SH \to D$, we can either show that $D^*$ is a motivic pullback formalism, or produce a morphism of constructible pullback formalisms $\SH \to D^*$.

Our results actually show that $\VsixFF^\mot(\mathcal C,E)$ is a ``\emph{full cosieve}'' of $\PF^\cstr_\bullet(\mathcal C)$: the functor $D \mapsto D^*$ defines a fully faithful functor $\VsixFF^\mot(\mathcal C,E) \to \PF^\cstr_\bullet(\mathcal C)$, and for any $D \in \VsixFF^\mot(\mathcal C,E)$, and map $D^* \to D'^*$ in $\PF^\cstr_\bullet(\mathcal C)$, we have that $D'^*$ is in the essential image of this functor, and the map lifts (uniquely) to a map $D \to D'$ in $\VsixFF^\mot(\mathcal C,E)$.

Finally, to the author's knowledge, results similar to \Cref{thmX:realization} that factorize Betti realization are usually given in terms of Hodge realizations, as in \cite{MotivicHodge,TubachHodge,HodgeStacks}. Our result is interesting because, unlike the case of Hodge realization, our analytic realization is fine enough that it does not factor through \'{e}tale sheafification, and in particular, it cannot be obtained simply by extending from the case of schemes using \'{e}tale descent. It will be interesting to compare our analytic realization with Hodge realization in future works, and describe a Hodge realization map from $\SH^\hol$. We also hope to refine our Betti realization to a ``genuine'' version in future work, so that we get a functor $\SH^\hol_G(\pt) \to \Sp_G$, where $\Sp_G$ is the category of \emph{genuine} $G$-spectra, instead of $\SH^\hol_G(\pt) \to \SH^\Betti_{G,\et}(\pt) \simeq \Sp^G$, where $\Sp^G$ is simply the category of spectra equipped with $G$-actions.

\subsection{Outline} \label{S:outline}

After going over some preliminary notions to do with pullback formalisms in \Cref{S:prelims}, we introduce a notion of cohomological properness of maps in \Cref{S:prop}, and study some of its closure properties. This notion of cohomological properness corresponds to the properties enjoyed by $\mathcal C$-proper maps in \Cref{thm:intro strongly PPF is PPF}, and will be essential to our study of 6-functor formalisms.

In \Cref{S:gluing,S:duality}, we study the notions of gluing/localization (as in \Cref{defn:intro cstr PF}(\ref{itm:intro/loc})) and duality/ambidexterity (as in \Cref{defn:intro ambidext}) for pullback formalisms, which can be seen as identifying two important types of proper maps: gluing/localization corresponds to closed immersions, and duality/ambidexterity corresponds to projective bundles (or smooth proper maps more generally). ``Closed immersions'' allow us to consider notions of excision, and indeed, we establish some excision and descent properties in \Cref{S:excision} -- see \Cref{prp:elementary cdh descent,prp:gluing implies exc}.

The key results from \Cref{S:gluing,S:duality} are that the maps behaving like closed immersions and projective bundles automatically have the cohomological properness studied in \Cref{S:prop}, and also guarantee automatic compatibility with morphisms of pullback formalisms. See \Cref{thm:closed,thm:duality}.

In \Cref{S:6FF}, we consider the abstract geometric setting of \Cref{S:voe} in which we are given an $\infty$-category $\mathcal C$ along with collections of $\mathcal C$-smooth maps, $\mathcal C$-proper maps, and $\mathcal C$-closed maps. We introduce the notion of strongly projective pullback formalisms in \Cref{defn:proj sat}, and give the proof of \Cref{thmX:PPF} in \Cref{thm:PPF}, which is shown by combining the results from \Cref{S:gluing,S:duality} about ``closed immersions'' and ``projective bundles'' with the closure properties of cohomologically proper maps shown in \Cref{S:prop}.

In \Cref{S:V6FF}, we introduce the category $\VsixFF(\mathcal C,E)$ of Voevodsky-6-functor formalisms, and show how this relates to projective pullback formalisms. In particular, we establish \Cref{thmX:V6FF PPF} using the following strategy:
\begin{enumerate}

	\item we use \cite[Theorem B]{CLL6FF} and \cite[Theorem 3.3]{6FFUnique} to obtain 6-functor formalisms,

	\item we use \Cref{lem:PPF cdh} (\Cref{thm:intro cdh PPF}) and \Cref{lem:desc V6FF} to get cdh descent for our 6-functor formalisms,

	\item we extend our 6-functor formalisms using cdh descent and results from \Cref{S:6FF ext} that are refined versions of \cite[Lemma A.5.11 and Proposition A.5.16]{Mann6FF} and \cite[Proposition 3.4.8]{HM6FF}, and

	\item we use \Cref{lem:suave prim source-local,lem:suave prim target-local}, and \Cref{prp:nat sq adj 6FF}(\ref{itm:nat sq adj/suave prim}), to show that many maps are suave and prim.

\end{enumerate}
The results of \Cref{S:6FF} can be seen as giving ``geometric'' criteria for extending pullback formalisms to 6-functor formalisms, and for extending morphisms of pullback formalisms to morphisms of 6-functor formalisms -- recall \Cref{thmX:6FF,thmX:6FF morphism}, which are given by \Cref{thm:6FF}, in turn proven using \Cref{thmX:V6FF PPF,thmX:PPF}.

In \Cref{S:applications}, we apply the results of \Cref{S:6FF} to the study of motivic homotopy theory, and prove the results presented in \Cref{S:intro apps}. For this, we fix a more specialized geometric context $\mathcal C$ where we can formulate the Thom stability and homotopy invariance properties of constructible pullback formalisms, leading to the notion of motivic pullback formalisms on $\mathcal C$. We then establish the key result given by \Cref{thmX:mot PPF}, which shows that every motivic pullback formalism is strongly projective, and that the presheaf $\SH$ of stable motivic homotopy theory is the initial motivic pullback formalism. The strategy for showing this result is as follows:
\begin{enumerate}

	\item The fact that $\SH$ is the initial motivic pullback formalism is relatively easy to show by its construction, and using results already appearing in the literature (such as \cite[Theorems 5.1.11 and 5.3.10]{Fundamentals} and \cite[Proposition 5.13]{SixAlgSt}). In the complex analytic case, we use \cite{Gluing} to get that $\SH$ is a constructible pullback formalism (which is necessary to show that it is a motivic pullback formalism).

	\item Therefore, using \Cref{thmX:PPF}, it suffices to show that $\SH$ is a strongly projective pullback formalism, as this will imply that every motivic pullback formalism is also strongly projective, and that morphisms between motivic pullback formalisms are morphisms of projective pullback formalisms.

	\item In the algebraic case, to show that $\SH$ is a strongly projective pullback formalism, we use the fact that, as in \Cref{rmk:gen prop in AG}, the $\mathcal C$-proper maps are ``generated'' by closed immersions and projective bundles. Since closed immersions are the $\mathcal C$-closed maps, it suffices to show that projective bundles are stably $\SH$-ambidextrous in the sense given by \Cref{defn:duality}. The same reduction holds in the complex analytic case by our choice of $\mathcal C$-proper maps.

	\item The fact that projective bundles $\proj(V) \to S$ are stably $\SH$-ambidextrous in the algebraic case is given by \cite[Theorem 6.9]{sixopsequiv} or \cite[Lemma 6.9]{SixAlgSt}. In the complex analytic case, this is shown in \Cref{lem:hol ambidext} using the following strategy. By trivialize $V$, we see that locally on $S$, $\proj(V) \to S$ is actually a base change of $\proj^n \to \pt$, and this is the analytification of $\proj^n_\comps \to \Spec \comps$. Using the fact that there is a morphism $\SH^\alg \to \SH^\hol((-)^\an)$, we use \Cref{thm:duality} to deduce that $\proj(V) \to S$ is stably $\SH^\hol$-ambidextrous from the fact that $\proj^n_\comps \to \Spec \comps$ is stably $\SH^\alg$-ambidextrous.

\end{enumerate}

Once we have established \Cref{thmX:mot PPF}, it is relatively easy to combine it with \Cref{thmX:V6FF PPF} to obtain \Cref{thmX:univ SH}, but in the algebraic case we need to make some additional arguments to show that we can choose our geometric setup $(\mathcal C^\alg, E)$ to be such that $E$ consists of the finite type representable maps. We do this by showing that locally on the source and target, finite type representable maps reduce to quasi-projective maps (relying on arguments from \cite{SixAlgSt}).

This argument uses results about quasi-projective maps which are only known to us in the case of classical algebraic stacks, so we make a separate argument (\Cref{lem:reduce to clcl}) allowing us to reduce the case of derived algebraic stacks to classical algebraic stacks. We use this reduction to show results about 6-functor formalisms for derived algebraic stacks in \Cref{thm:nice mot 6FF,thm:alg mot 6FF}. Alternatively, in the case of derived Deligne-Mumford stacks, \cite{qDM_compactification} shows that we have a good theory of compactifications for finite type separated representable maps, so are able to easily prove \Cref{thm:qDM mot 6FF} using \Cref{thmX:mot PPF,thmX:V6FF PPF}.

In \Cref{S:realization}, we apply the algebraic version of \Cref{thmX:univ SH} (\Cref{thm:nice mot 6FF}) in order to show \Cref{thmX:realization}. 

Finally, we mention that some new general results about 6-functor formalisms are shown in \Cref{S:app 6FF}. Perhaps the most interesting ones are those given in \Cref{S:suave prim} for suave and prim maps, most of which are conveniently summarized in \Cref{thm:suave prim consequences}. In \Cref{S:6FF ext}, we also give categorical refinements of some of the extension results given in \cite[\S A.5]{Mann6FF} and \cite[Proposition 3.4.8]{HM6FF}.

\subsection{Acknowledgements}

The author would like to thank Andrew Blumberg and Johan de Jong for their support as PhD advisors, Elden Elmanto for his encouragement and advice, and Bastiaan Cnossen for his comments and suggestions. 

\subsection{Notations and Conventions}

Throughout this article, we will make heavy use of the machinery of $\infty$-categories as developed in \cite{htt} and \cite{ha}. Therefore, all of our language will be implicitly $\infty$-categorical:
\begin{enumerate}

	\item We say ``category'' to mean ``$\infty$-category''. Note that then functors, adjoints, and (co)limits must all be understood in the context of $\infty$-categories.

	\item Following \cite[Remark 3.0.0.5]{htt}, we will write $\Cat$ to denote the category of small categories, and $\widehat{\Cat}$ to denote the category of all categories.

	\item We write $\spaces$ for the category of small spaces/$\infty$-groupoids/anima (see \cite[\S1.2.16]{htt}), and $\Sp$ for the category of spectra (see \cite[\S1.4.3]{ha}). Write $\widehat{\spaces}$ for the category of all spaces (not necessarily small).

	\item Unless otherwise specified, presheaves and sheaves are always implicitly assumed to take values in $\spaces$. Given a category $\mathcal C$, we write $\Psh(\mathcal C)$ to denote that category of presheaves on $\mathcal C$, and if $\mathcal C$ is equipped with a Grothendieck topology that is understood from context, we write $\Shv(\mathcal C)$ to denote the category of sheaves on $\mathcal C$.

	\item Given a category $\mathcal C$, we write $\mathcal C(-,-)$ for the hom functor $\mathcal C^\op \times \mathcal C \to \widehat{\spaces}$. $\mathcal C$ is locally small if this functor takes values in $\spaces$.

	\item The very large categories $\PrL, \PrR$ of presentable categories are defined in \cite[Definition 5.5.3.1]{htt}. These are the categories of presentable categories and left adjoint functors or right adjoint functors respectively. Note that $\PrL$ is equipped with the structure of a symmetric monoidal category as in \cite[Proposition 4.8.1.15]{ha}.

	\item For any symmetric monoidal category $\mathcal C$, we write $\CAlg(\mathcal C)$ for the category of commutative algebra objects in $\mathcal C$ -- see \cite[Definition 2.1.3.1]{ha}. In particular, $\CAlg(\Cat)$ is the category of symmetric monoidal categories, and $\CAlg(\PrL)$ is the category of symmetric monoidal presentable categories where the monoidal product preserves small colimits in each variable.

	\item A ``zero object'' of a category $\mathcal C$ is an object that is both initial and terminal. We will say that a category is pointed if it has a zero object. See \cite[Definition 1.1.1.1]{ha}.

\end{enumerate}

As this article is a sequel to \cite{Fundamentals}, we will take for granted the basic notions of pullback formalisms set down in \cite[\S1.2]{Fundamentals}, although we will review the necessary language in \Cref{S:prelims}. \cite[\S1.1,1.4,2.3]{Fundamentals} may also be helpful.

We will also use the following notations and conventions:
\begin{enumerate}

	\item All quotients by group actions are assumed to be stacky. Often these quotients are denoted by $[X/G]$, but we will instead simply write $X/G$.

	\item The abbreviation ``qcqs'' will be used to mean ``quasi-compact and quasi-separated'' in any context where these adjectives make sense.

	\item Whenever we say "limit-preserving" or "colimit-preserving", we are referring only to \emph{small} limits and colimits.

	\item If $f : X \to Y$ and $g : X' \to Y$ are maps in a category $\mathcal C$, and the fibred product $X \times_Y X'$ exists, we will sometimes write $f^{-1}(g) : X \times_Y X' \to X$ for the base change of $g$ along $f$ in $\mathcal C$.

	\item Given some implicitly specified ambient category, we will write $\pt$ for a terminal object of that category.

	\item Given some implicitly specified ambient monoidal category, we will write $1$ for a monoidal unit of that category.

	\item Given a locally small category $\mathcal C$, we will write $\yo : \mathcal C \to \Psh(\mathcal C)$ for the Yoneda embedding of $\mathcal C$. We generally do not include $\mathcal C$ in the notation as it is often clear from context which category's Yoneda embedding we are considering.

	\item Following \cite[\S1.2.8]{htt}, we will use the symbol $\star$ to denote the join of simplicial sets.

	\item Following \cite[Notation 1.2.8.4]{htt}, for any simplicial set $K$, we write $K^\triangleleft$ and $K^\triangleright$ for the simplicial sets obtained by adjoining an initial or terminal cone point respectively to $K$. We will also write $-\infty, \infty$ to denote the cone points of $K^\triangleleft, K^\triangleright$ respectively, so we can write $K^\triangleleft = \{-\infty\} \star K$ and $K^\triangleright = K \star \{\infty\}$.

\end{enumerate}

\section{Preliminaries} \label{S:prelims}

In this section, we will introduce some notions that will be fundamental to the rest of the article. Many of these are recalled from \cite{Fundamentals}.

\subsection{General presheaves of categories and adjointability}

\begin{nota} \label{nota:psh}
	Let $\mathcal C$ be a category, and let $D : \mathcal C^\op \to \widehat{\Cat}$ be a presheaf.
	\begin{itemize}

		\item If $f : X \to Y$ is a map in $\mathcal C$, and $D$ is clear from context, we will often write $f^* \coloneqq D(f) : D(Y) \to D(X)$.

		\item If $F : \mathcal C' \to \mathcal C$ is a functor, we will often write $F^* D \coloneqq D \circ F^\op : (\mathcal C')^\op \to \widehat{\Cat}$.

		\item
			If $D$ is actually a presheaf taking values in $E_0$-algebras, then for $S \in \mathcal C$, $X \in \mathcal C_{/S}$, and $M \in D(S)$, we write $D(X;M) \coloneqq D(X)(1, (X \to S)^* M)$, where $1$ is the monoidal unit of $D(X)$, is the structure map of $X \in \mathcal C_{/S}$. This defines a functor $D(-;-) : \mathcal C_{/S}^\op \times D(S) \to \widehat{\spaces}$. More generally, if $\xi \in D(X)$, we also define the ``\emph{$\xi$-twisted cohomology space of $X$ with coefficients in $M$}''.
			\[
				D(X;M)[\xi] \coloneqq D(X)(1, \xi \otimes (X \to S)^* M)
			.\]
	\end{itemize}
	
\end{nota}

\begin{defn}
	\cf{} \cite[\S2.3]{Fundamentals}.

	If $\mathcal C$ is a category, and $D : \mathcal C^\op \to \widehat{\Cat}$ is a presheaf, then we say that a map $f$ in $\mathcal C$ is \emph{$D$-acyclic} if $f^* = D(f)$ is fully faithful. More generally, a diagram $X : K^\triangleright \to \mathcal C$ is $D$-acyclic if
	\[
		D(X(\infty)) \to \varprojlim_{a \in K} D(X(a))
	\]
	is fully faithful.
\end{defn}

We will now recall the notions of adjointable squares and mates from \cite[Remark 4.5]{laxtrans} and \cite[\S2.1]{CLL6FF}. Note that \cite[\S F]{TwAmb} and the material following \cite[Definition 4.7.4.13]{ha} are also useful references on adjointable squares.
\begin{defn}
	Let $\mathscr V$ be a 2-category, and let
	\[
		\begin{tikzcd}
			B \ar[d, "b"'] \ar[r, "r"] & \ar[dl, Rightarrow, "\phi"] A \ar[d, "a"] \\
			B' \ar[r, "r'"'] & A'
		\end{tikzcd}
	\]
	be a lax square, so $\phi$ is a 2-morphism $a \circ r \to r' \circ b$. If $r$ and $r'$ have left adjoint $l$ and $l'$, then the \emph{left mate} of this square is the 2-morphism $\psi : l' a \to b l$ given by the following composite:
	\[
		l' a \xrightarrow{l' a (\id \to rl)} l' a r l \xrightarrow{l' \phi l} l' r' b l \xrightarrow{(l' r' \to \id)bl} b l
	.\]
	We refer to the resulting colax square
	\[
		\begin{tikzcd}
			A \ar[d, "a"'] \ar[r, "l"] & B \ar[d, "b"] \\
			A' \ar[ur, Rightarrow, "\psi"]\ar[r, "l'"'] & B'
		\end{tikzcd}
	\]
	as the colax \emph{left mate square}.

	There are evident analogous definitions of right mates for colax squares. We can also define the right mate of a lax square by taking the right mate of the colax square given by the transpose, and similarly the left mate of a colax square.

	Now, when the left mate of a lax square is invertible, we may view the colax square given by the left mate as a lax square. In this case, the resulting right mate is called the \emph{left-right mate} of the original lax square. There is an analogous definition for \emph{right-left mates} given by colax squares.

	When the left (\resp{} right) mate is an equivalence, we say the square is \emph{left (\resp{} right) adjointable}. Similarly, when the left-right (\resp{} right-left) mate is an equivalence, we say that the square is \emph{left-right (\resp{} right-left) adjointable}.

	When considering commutative squares, it may not be clear if we are viewing the square as a lax square or a colax square, so we instead refer to \emph{horizontal or vertical left, right, right-left, or left-right, mates} and \emph{horizontally or vertically left, right, right-left, or left-right, adjointable squares}.
\end{defn}

Now we recall some notions from \cite[\S D]{Fundamentals} and introduce the notion of right-left base change.
\begin{defn}
	Let $\mathcal C$ be a category, let $f : X \to Y$ be a map in $\mathcal C$.
	\begin{enumerate}

		\item If $\mathscr V$ is a 2-category, and $\phi : D \to D'$ is a transformation of presheaves $D,D' : \mathcal C^\op \to \mathscr V$, Say $\phi$ \emph{is left (right) adjointable at} $f$ if the square
			\[
				\begin{tikzcd}
					D(Y) \ar[d] \ar[r] & D(X) \ar[d] \\
					D'(Y) \ar[r] & D'(X)
				\end{tikzcd}
			\]
			is horizontally left (right) adjointable. \cf{} \cite[\S D.1]{Fundamentals}.

		\item If $D : \mathcal C^\op \to \CAlg(\widehat{\Cat})$ is a $\CAlg(\widehat{\Cat})$-valued presheaf, then say $D$ \emph{has the left (right) projection formula for} $f$ if for all $a \in D(Y)$, the square
			\[
				\begin{tikzcd}
					D(Y) \ar[d, "\otimes a"'] \ar[r] & D(X) \ar[d, "\otimes f^* a"] \\
					D(Y) \ar[r] & D(X)
				\end{tikzcd}
			\]
			is left (right) adjointable. Alternatively, we say that $f^*$ \emph{has a linear left (right) adjoint}. \cf{} \cite[\S D.2]{Fundamentals}

		\item Given a 2-category $\mathscr V$, say $D : \mathcal C^\op \to \mathscr V$ \emph{has left (right) base change for $f$ against a map $Y' \to Y$} if the pullback $X \times_Y Y'$ exists, and the square
			\[
				\begin{tikzcd}
					D(Y) \ar[d] \ar[r] & D(X) \ar[d] \\
					D(Y') \ar[r] & D(X \times_Y Y')
				\end{tikzcd}
			\]
			is horizontally left (right) adjointable. We say $D$ \emph{has left (right) base change for $f$} if it has left (right) base change for $f$ against all maps to $Y$. \cf{} \cite[\S D.3]{Fundamentals}.

			In this case, we also say $D$ \emph{has left (right) exchange or left-right (right-left) base change for $f$ against $Y' \to Y$} if the horizontal left (right) mate
			\[
				\begin{tikzcd}
					D(X) \ar[d] \ar[r] & D(Y) \ar[d] \\
					D(X \times_Y Y') \ar[r] & D(Y')
				\end{tikzcd}
			\]
			is vertically right (left) adjointable.

	\end{enumerate}
	
\end{defn}

\begin{thm} \label{thm:bc descent}
	Let $\Cat^L$ be the subcategory of $\widehat{\Cat}$ consisting of categories that admit all small colimits, and functors between them that admit right adjoints. Let $\mathcal C$ be a locally small category, let $D : \mathcal C^\op \to \Cat^L$ be a presheaf, and let $Q$ be a collection of maps in $\mathcal C$ that is stable under base change, and such that $D$ has left base change for maps in $Q$.

	Note that we can view $D$ as a limit-preserving presheaf $\Psh(\mathcal C)^\op \to \widehat{\Cat}$ (whose restriction to $\mathcal C$ lands in $\Cat^L$). Let $Y \in \mathcal C$, and let $\mathcal U \to \yo(Y)$ be a sieve generated by a small family of maps $\{X_i \to Y\}_i$ in $Q$. Then
	\begin{enumerate}

		\item The functor $D(Y) \to D(\mathcal U)$ admits a fully faithful left adjoint.

		\item For any morphism $\phi : D' \to D$ in $\Fun(\mathcal C^\op, \Cat^L)$, if $\phi$ is left adjointable at every map in $Q$, then $\phi$ is left adjointable at $\mathcal U \to \yo(Y)$. If $D'(\mathcal U) \to D'(X_i)$ admits a left adjoint, then $\phi$ is also left adjointable at $\yo(X_i) \to \mathcal U$.

		\item $D$ has left base change for the maps $\mathcal U \to \yo(Y)$, and $\{\yo(X_i) \to \mathcal U\}_i$.

	\end{enumerate}
	\begin{proof}
		This follows from \cite[Theorem 2.4.1]{Fundamentals}, where the quasi-admissibility structure is given by $Q$. In fact, although $Q$ is not assumed to be a quasi-admissibility structure, the argument still holds. Alternatively, we can replace $Q$ with the collection of morphisms that are composites of equivalences and maps in $Q$ in order to obtain a quasi-admissibility structure.
	\end{proof}
\end{thm}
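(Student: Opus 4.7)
The plan is to deduce the theorem directly from \PFDescent{}, after a mild enlargement of $Q$ to a genuine quasi-admissibility structure. The two main ingredients will be (a) the horizontal-surjection adjoint construction of \HorizSurjAdj{}, which produces the left adjoint of $D(Y) \to D(\mathcal U)$ from the fact that each $X_i \to Y$ has a linear left adjoint satisfying base change, and (b) the assembly of these local left adjoints into a coherent global adjoint, which is precisely what \PFDescent{} accomplishes.

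First, I would enlarge $Q$ to a quasi-admissibility structure $\bar Q$, defined as the smallest collection of maps in $\mathcal C$ that contains $Q$ together with all equivalences and is closed under composition and base change. Since left base change is preserved under horizontal and vertical pasting of squares, and is trivially satisfied along equivalences, one checks that $D$ still has left base change for maps in $\bar Q$ against maps in $\bar Q$. The sieve $\mathcal U \to \yo(Y)$ is unaffected by this enlargement, because $\{X_i \to Y\}_i$ already lies in $Q \subseteq \bar Q$. Then \PFDescent{} applies directly to $(\mathcal C, \bar Q, D)$, yielding claim (1), claim (3), and the first half of claim (2).

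The second half of claim (2), adjointability of $\phi$ at $\yo(X_i) \to \mathcal U$, then follows by the 2-out-of-3 property for left adjointable squares (as in \cite[\S F]{TwAmb}): the composite $\yo(X_i) \to \mathcal U \to \yo(Y)$ is the representable map $X_i \to Y$, which lies in $Q$, so $\phi$ is left adjointable there by hypothesis; combined with left adjointability at $\mathcal U \to \yo(Y)$ from the first half, and the assumed existence of a left adjoint to $D'(\mathcal U) \to D'(X_i)$, the remaining square at $\yo(X_i) \to \mathcal U$ must be left adjointable as well.

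The main obstacle is the reduction step: verifying that the enlargement from $Q$ to $\bar Q$ preserves the left base change hypothesis in the required generality. This is a formal exercise in the composition and base change closure properties of Beck--Chevalley squares, but it must be carried out with care since one is taking composites of maps that may be base changes of maps in $Q$ against different maps. An alternative route, as indicated in the author's sketch, is to inspect the proof of \PFDescent{} and observe that the quasi-admissibility assumption is only invoked to guarantee that the iterated fibre products $X_{i_0} \times_Y \cdots \times_Y X_{i_n}$ indexing the \v{C}ech-style limit computing $D(\mathcal U)$ have maps back to the $X_{i_k}$ lying in a class along which $D$ has left base change --- and this is already provided by stability of $Q$ under base change together with the hypothesized left base change for $Q$ against $Q$.
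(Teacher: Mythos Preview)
Your proposal is correct and takes essentially the same approach as the paper: both reduce directly to \PFDescent{} after enlarging $Q$ to a quasi-admissibility structure by adjoining equivalences and closing under composition (since $Q$ is already stable under base change, this closure is automatic). The paper's proof is a one-line citation with the same two alternatives you outline---either observe that the argument of \PFDescent{} does not actually use the full quasi-admissibility hypothesis, or enlarge $Q$---while you supply more detail on the enlargement step and an explicit 2-out-of-3 argument for the second half of claim~(2).
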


The following notion will be useful:
\begin{defn} \label{defn:pseudocover}
	Given a category $\mathcal C$, and a presheaf $D : \mathcal C^\op \to \widehat{\Cat}$, a \emph{(small) $D$-pseudocover of} $S \in \mathcal C$ is a (small) family of maps $\{X_i \to S\}_i$ such that the functors $\{D(S) \to D(X_i)\}_i$ are jointly conservative.
\end{defn}

\begin{lem}[Locality on the target for adjointability] \label{lem:target-locality of radj}
	Let $\mathcal C$ be a category, and let $\phi : D \to D'$ be a transformation of presheaves $D,D' : \mathcal C^\op \to \widehat{\Cat}$.
	Let $f : X \to Y$ be a map in $\mathcal C$, and let $\{Y_i \to Y\}_i$ be a $D'$-pseudocover of $Y$ such that for each $i$, $D$ and $D'$ have right base change for $f$ against $Y_i \to Y$, and $\phi$ is right adjointable at the base change $f_i : X_i \to Y_i$ of $f$ along $Y_i \to Y$. Then $\phi$ is right adjointable at $f$.
	\begin{proof}
		For any index $i$, the top and bottom squares of
		\[
			\begin{tikzcd}
				D(Y) \ar[d] \ar[r] & D(X) \ar[d] \\
				D(Y_i) \ar[d] \ar[r] & D(X_i) \ar[d] \\
				D'(Y_i) \ar[r] & D'(X_i)
			\end{tikzcd}
		\]
		are horizontally right adjointable. Therefore the outer rectangle is horizontally right adjointable by \cite[Lemma F.6(2)]{TwAmb}. This outer rectangle is equivalent to the outer rectangle of
		\[
			\begin{tikzcd}
				D(Y) \ar[d] \ar[r] & D(X) \ar[d] \\
				D'(Y) \ar[d] \ar[r] & D'(X) \ar[d] \\
				D'(Y_i) \ar[r] & D'(X_i)
			\end{tikzcd}
		,\]
		and we have assumed that the bottom square in this diagram is horizontally right adjointable.

		It follows from \cite[Lemma F.6(2)]{TwAmb} that the functor $D'(Y) \to D'(Y_i)$ sends the horizontal right mate of the top square to an equivalence, so since $\{Y_i \to Y\}_i$ is a $D'$-pseudocover, the top square must be horizontally right adjointable.
	\end{proof}
\end{lem}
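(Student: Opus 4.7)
The plan is a two-step pasting argument. First, note that the right adjoints to $f^*$ in both $D$ and $D'$ exist (their existence is implicit in the right base change hypotheses for $f$ against $p_i \coloneqq Y_i \to Y$), so the only content to verify is that the mate transformation $\phi_Y f_*^D \to f_*^{D'} \phi_X$ is an equivalence. Since $\{Y_i \to Y\}_i$ is a $D'$-pseudocover, the functors $p_i^* \coloneqq D'(p_i)$ are jointly conservative, and it suffices to show that each $p_i^*$ sends this mate to an equivalence.

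Fix an index $i$. I form a column of two squares by stacking the ($\phi$-at-$f$)-square on top of the $D'$-right-base-change square for $f$ along $p_i$. The bottom square is right-adjointable by hypothesis. By the naturality of $\phi$ on the Cartesian pullback square defining $f_i : X_i \to Y_i$, the outer rectangle from $D(Y)$ to $D'(X_i)$ coincides with the alternative vertical paste in which the $D$-right-base-change square for $f$ along $p_i$ sits on top of the ($\phi$-at-$f_i$)-square. Both squares in this alternative paste are right-adjointable by hypothesis, so the outer rectangle is right-adjointable.

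Now I invoke the cancellation lemma for vertical pastings of right-adjointable squares (\eg{} \cite[Lemma F.6(2)]{TwAmb}): with the bottom square and the outer rectangle of the original column both right-adjointable, the left vertical functor of the bottom square, namely $p_i^*$, carries the mate of the top square to an equivalence. Since this is true for every $i$, joint conservativity of the $p_i^*$ finishes the proof. The only step that warrants any care is the identification of the two pastings of the outer rectangle, which is precisely the coherence data making $\phi$ a natural transformation of functors $\mathcal C^\op \to \widehat{\Cat}$; this is routine but needs to be tracked honestly in the $\infty$-categorical setting.
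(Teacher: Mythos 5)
Your argument is correct and follows essentially the same route as the paper: you form the same two vertical pastings (swapping the order in which they are presented), identify their outer rectangles via naturality of $\phi$ at the Cartesian square defining $f_i$, apply \cite[Lemma F.6(2)]{TwAmb} for the cancellation step, and finish with joint conservativity of the $p_i^*$. The only cosmetic difference is that you begin with the column whose top square is the target of the claim, while the paper begins with the column in which both squares are already known to be right adjointable, but the content is identical.
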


\subsection{Pullback contexts and pullback formalisms}

Now we recall the definition of pullback contexts and pullback formalisms from \cite{Fundamentals}.
\begin{defn}[Pullback contexts]
	A \emph{pullback context} is a category $\mathcal C$ equipped with a \emph{quasi-admissibility structure}, which is a collection of maps that contains all equivalences, and is closed under base change and composition. We call the elements of this collection \emph{quasi-admissible maps}.

	For any $S \in \mathcal C$, we write $\mathcal C_S$ for the full subcategory of $\mathcal C_{/S}$ consisting of quasi-admissible maps to $S$.

	A \emph{morphism of pullback contexts} is a functor between pullback contexts that preserves quasi-admissible morphisms, and base changes along quasi-admissible morphisms.

	An \emph{anodyne morphism of pullback contexts} is a morphism of pullback contexts $F : \mathcal C \to \mathcal D$ such that for all $S \in \mathcal C$, the induced functor $\mathcal C_S \to \mathcal D_{/F(S)}$ is an equivalence.
\end{defn}


\begin{defn}[Pullback formalisms] \label{defn:PF}
	Given a pullback context $\mathcal C$, we make the following definitions:
	\begin{enumerate}

		\item Say a presheaf $D : \mathcal C^\op \to \widehat{\Cat}$ \emph{respects quasi-admissibility} if it sends quasi-admissible maps to right adjoint functors. In this case, for a quasi-admissible map $f$, we will often denote the left adjoint of $f^* = D(f)$ by $f_\sharp$.

			When $D$ actually takes values in monoidal categories, for any quasi-admissible map $X \to S$, we also write $\cls{X} \in D(S)$ for the object of $D(S)$ given by $(X \to S)_\sharp$ of the monoidal unit of $D(X)$.

		\item Say a presheaf $D : \mathcal C^\op \to \widehat{\Cat}$ \emph{has quasi-admissible base change} if it has left base change for all quasi-admissible maps. In this case, we say that $D$ has \emph{quasi-admissible exchange for a map $f$} if it has right-left base change for $f$ against every quasi-admissible map.

		\item Say a transformation $D \to D'$ between presheaves $D,D' : \mathcal C^\op \to \widehat{\Cat}$ \emph{respects quasi-admissibility} if it is left adjointable at every quasi-admissible map.

		\item Say a presheaf $D : \mathcal C^\op \to \CAlg(\widehat{\Cat})$ \emph{satisfies the quasi-admissible projection formula} if it has the left projection formula for every quasi-admissible map.

	\end{enumerate}
	The \emph{category of pullback formalisms on $\mathcal C$} is the subcategory $\PF(\mathcal C)$ of $\Psh_{\CAlg(\PrL)}(\mathcal C)$ consisting of presheaves that have quasi-admissible base change and satisfy the quasi-admissible projection formula, and transformations that respect quasi-admissibility.
\end{defn}

\begin{rmk}[Comparison with other notions of pullback formalisms]
	Given a pullback context $\mathcal C$, our definition of pullback formalisms and the category $\PF(\mathcal C)$ agrees with the notion of pullback formalisms considered in \cite[\S1.2]{Fundamentals}. This also coincides with the notion of \emph{presentable} pullback formalisms given in \cite[Definition 4.5]{UnivFF}, except that we do not assume that $\mathcal C$ admits all finite limits. We have chosen to assume presentability in order to simplify the terminology of some results, but we will often formulate results about presheaves that only satisfy some of the properties mentioned in \Cref{defn:PF}.
\end{rmk}

\begin{lem} \label{lem:qadm mono loc}
	Let $\mathcal C$ be a pullback context, and let $D : \mathcal C^\op \to \widehat{\Cat}$ be a presheaf with quasi-admissible base change. Then for any quasi-admissible monomorphism $j$ in $\mathcal C$, $j_\sharp$ is fully faithful.
	\begin{proof}
		Since $j$ is a monomorphism, the base change of $j$ along $j$ is an equivalence, so since $D$ has left base change for $j$ against $j$, we have that $j^* j_\sharp$ is an equivalence, whence $j_\sharp$ is fully faithful by \cite[Lemma 3.3.1]{CARMELI2021107763}.
	\end{proof}
\end{lem}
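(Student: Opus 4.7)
The plan is to show that the unit of the adjunction $j_\sharp \dashv j^*$ is an equivalence, which is a standard characterization of fully faithful left adjoints.

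First, I would unwind what it means for $j : X \to Y$ to be a monomorphism in the $\infty$-categorical sense: the diagonal $\Delta_j : X \to X \times_Y X$ is an equivalence. Equivalently, the Cartesian square
\[
    \begin{tikzcd}
        X \ar[d, "\id"'] \ar[r, "\id"] & X \ar[d, "j"] \\
        X \ar[r, "j"'] & Y
    \end{tikzcd}
\]
exhibits $X$ as the self-fibre product $X \times_Y X$, with both projections equivalent to $\id_X$. Since $j$ is quasi-admissible, the base change of $j$ along $j$ is again (equivalent to) $\id_X$, which is trivially quasi-admissible, so the hypothesis of quasi-admissible base change applies to this square.

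Next, I would apply the quasi-admissible base change property of $D$ to this Cartesian square. This yields that the natural exchange map
\[
    \id_\sharp \, \id^* \;\longrightarrow\; j^* j_\sharp
\]
is an equivalence of endofunctors of $D(X)$. Since the left-hand side is just the identity on $D(X)$, we conclude that $j^* j_\sharp \simeq \id_{D(X)}$, and moreover, after tracing through, this equivalence is the unit of the adjunction $j_\sharp \dashv j^*$.

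Finally, I would invoke the general categorical fact (which is precisely the content of the cited Lemma 3.3.1 of the Carmeli et al. reference) that a left adjoint whose unit is an equivalence is fully faithful. This concludes the proof. The main subtle point — and the only place where care is needed — is the identification of the exchange transformation produced by quasi-admissible base change with the unit of the adjunction $j_\sharp \dashv j^*$, but this is a standard compatibility for Beck–Chevalley squares along the diagonal, so no real obstacle is expected.
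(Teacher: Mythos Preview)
Your proposal is correct and follows essentially the same route as the paper's proof: both use that $j$ being a monomorphism makes the self-pullback square degenerate, apply left base change for $j$ against itself to deduce that $j^* j_\sharp \simeq \id$, and then invoke the cited lemma of Carmeli--Schlank--Yanovski. Your version is slightly more explicit in identifying the Beck--Chevalley transformation with the unit of $j_\sharp \dashv j^*$, which the paper leaves implicit; this is a reasonable expansion and not a different argument.
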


We recall the following important result \cite[Theorem 2.4.3]{Fundamentals} about descent for pullback formalisms:
\begin{thm} \label{thm:D-topology}
	Let $D$ be a pullback formalism on a pullback context $\mathcal C$, and let $\mathcal R$ be a $D$-pseudocover of an object $X \in \mathcal C$. If all maps in $\mathcal R$ are quasi-admissible, then $D$ has descent along any base change of $\mathcal R$, and the same is true of any pullback formalism that receives a morphism from $D$.
\end{thm}


\subsection{Reduction to morphisms} \label{S:red to morph}

Fix a pullback context $\mathcal C$. In this section we establish some strategies that allow us to relate properties of pullback formalisms on $\mathcal C$ to properties of \emph{morphisms} of pullback formalisms. Thus, the general pattern for many of our results will be to first prove results for morphisms of pullback formalisms, and then deduce corresponding statements for pullback formalisms.

Here is an important application of this strategy:
\begin{prp} \label{prp:target-locality of dual bc}
	Let $f : X \to Y$ be a map in a pullback context $\mathcal C$, let $\mathcal R = \{Y_i \to Y\}_i$ be a small family of quasi-admissible maps to $Y$, and for each $i$, write $f_i : X_i \to Y_i$ for the base change of $f$ along $Y_i \to Y$.

	Let $D : \mathcal C^\op \to \widehat{\Cat}$ be a presheaf that has quasi-admissible base change, and which sends every base change of $f$ to a left adjoint functor.

	Then we have the following:
	\begin{enumerate}

		\item Suppose that $\phi : D \to D'$ is a transformation of presheaves that have quasi-admissible base change, $\mathcal R$ is a $D'$-pseudocover, and for every $i$, $\phi$ is right adjointable at $f_i$. Then $\phi$ is right adjointable at $f$.

		\item Suppose that $D$ lifts to a presheaf $\mathcal C^\op \to \CAlg(\widehat{\Cat})$, and that $\mathcal R$ is a $D$-pseudocover. If $D$ has the right projection formula for $f_i$ for all $i$, then $D$ has the right projection formula for $f$.

		\item Let
			\[
				\begin{tikzcd}
					X' \ar[d] \ar[r, "f'"] & Y' \ar[d, "q"] \ar[d] \\
					X \ar[r, "f"'] & Y
				\end{tikzcd}
			\]
			be a Cartesian square in $\mathcal C$, for each $i$, let $q_i : Y'_i \to Y_i$ be the base change of $q$ along $Y_i \to Y$.

			\begin{enumerate}

				\item Suppose that the base change $\mathcal R'$ of $\mathcal R$ along $q$ is a $D$-pseudocover. If $D$ has right base change for $f_i$ against $q_i$ for all $i$, then $D$ has right base change for $f$ against $q$.

				\item Suppose that $\mathcal R$ is a $D$-pseudocover, that $q$ is quasi-admissible, and that $D$ has quasi-admissible base change. If $D$ has right-left base change for $f_i$ against $q_i$ for all $i$, then $D$ has right-left base change for $f$ against $q$.

			\end{enumerate}

	\end{enumerate}
	
\end{prp}

We will prove \Cref{prp:target-locality of dual bc} at the end of the section, using \Cref{exa:tensor morph,exa:base change morph,exa:sharp morph}.

\begin{exa} \label{exa:tensor morph}
	Let $Y \in \mathcal C$ be an object. Note that $\pi : \mathcal C_{/Y} \to \mathcal C$ is an anodyne morphism of pullback contexts.

	Let $D : \mathcal C^\op \to \CAlg(\widehat{\Cat})$ be a presheaf of symmetric monoidal categories. Note then that
	\[
		\pi^* D \coloneqq D \circ \pi^\op
	\]
	can be seen as a presheaf of $D(Y)$-modules
	\[
		\pi^* D : (\mathcal C_{/Y})^\op \to \Mod_{D(Y)} \widehat{\Cat}
	.\]

	For any $N \in D(Y)$, $\otimes N$ defines a transformation $\pi^* D \to \pi^* D$, and if $D$ satisfies the quasi-admissible projection formula, then this transformation respects quasi-admissibility.
\end{exa}

\begin{exa} \label{exa:base change morph}
	Let $y : Y' \to Y$ be a map in $\mathcal C$. Let $\mathcal C' \subseteq \mathcal C_{/Y}$ be the full subcategory of maps $X \to Y$ such that $X \times_Y Y'$ exists in $\mathcal C$. Then the map
	\[
		\pi : \mathcal C' \to \mathcal C_{/Y} \to \mathcal C
	\]
	is an anodyne morphism of pullback contexts.

	Furthermore, by the construction of $\mathcal C'$, base change along $y$ defines a functor $\mathcal C' \to \mathcal C_{/Y'}$, and the composite
	\[
		\pi' : \mathcal C' \to \mathcal C_{/Y'} \to \mathcal C
	\]
	is also a morphism of pullback contexts.

	There is a transformation $\pi' \to \pi$ given by $- \times_Y Y' \to -$. For any presheaf $D$ on $\mathcal C$, the map
	\[
		D \circ (\pi^\op \to (\pi')^\op)
	,\]
	which we will denote by $\phi : \pi^* D \to (\pi')^* D$, evaluates to
	\[
		D(W \times_Y Y' \to W) : D(W) \to D(W \times_Y Y')
	\]
	at any $W \in \mathcal C'$. If $D : \mathcal C^\op \to \widehat{\Cat}$ has quasi-admissible base change, it follows that $\pi^* D \to (\pi')^* D$ respects quasi-admissibility.
\end{exa}

\begin{exa} \label{exa:sharp morph}
	In \Cref{exa:base change morph}, assume $y$ is quasi-admissible. By viewing $\phi$ as a functor
	\[
		(\mathcal C')^\op \to \Fun(\Delta^1, \widehat{\Cat})
	,\]
	using the fact that $D$ has quasi-admissible base change, we find that this functor actually lands in the subcategory $\Fun^\LAd(\Delta^1, \widehat{\Cat})$, so by \cite[Corollary 4.7.4.18(3)]{ha} taking left adjoints defines a transformation $\psi : (\pi')^* D \to \pi^* D$.

	Note then that for $W \in \mathcal C'$, evaluating $\psi$ at $W$ gives the functor
	\[
		(W \times_Y Y' \to W)_\sharp : D(W \times_Y Y') \to D(W)
	,\]
	and we easily see that $\psi$ respects quasi-admissibility.
\end{exa}

%

\begin{proof}[Proof of \Cref{prp:target-locality of dual bc}]
	First note that since $D$ has quasi-admissible base change, and sends every base change of $f$ to a left adjoint functor, we have that for any map $q : Y' \to Y$, if the base change $f' : X' \to Y'$ of $f$ along $q$ exists, then $D$ has right base change for $f'$ against $Y' \times_Y Y_i \to Y'$ for all $i$, since the latter map is quasi-admissible.
	\begin{enumerate}

		\item Since $D$ and $D'$ have quasi-admissible base change, they have right base change for $f$ against $Y_i \to Y$ for all $i$, so this follows immediately from \Cref{lem:target-locality of radj}.

		\item For any $N \in D(Y)$, \Cref{exa:tensor morph} gives a transformation $\otimes N : \pi^* D \to \pi^* D$ of presheaves on $\mathcal C_{/Y}$ such that for any $W \in \mathcal C_{/Y}$, the square
			\[
				\begin{tikzcd}
					\pi^* D(W) \ar[d, "\otimes N"'] \ar[r] & \pi^* D(W \times_Y X) \ar[d, "\otimes N"] \\
					\pi^* D(W) \ar[r] & \pi^* D(W \times_Y X)
				\end{tikzcd}
			\]
			is equivalent to the square
			\[
				\begin{tikzcd}
					D(W) \ar[d, "\otimes N"'] \ar[r] & D(W \times_Y X) \ar[d, "\otimes N"] \\
					D(W) \ar[r] & D(W \times_Y X)
				\end{tikzcd}
			.\]
			Thus, if $D$ has the right projection formula for $f_i$ for all $i$, then $\otimes N$ is right adjointable at $f_i$ for each $i$ and all $N \in D(Y)$, so by the first statement, $\otimes N$ is right adjointable at $f$ for all $N \in D(Y)$, so $D$ has the right projection formula for $f$.

		\item
			\begin{enumerate}

				\item Consider the morphism of pullback contexts $\pi, \pi' : \mathcal C' \to \mathcal C$ of \Cref{exa:base change morph} applied to the map $q : Y' \to Y$, so $\mathcal C'$ is the full subcategory of $\mathcal C_{/Y}$ consisting of maps $W \to Y$ such that $W \times_Y Y'$ exists, and we have a transformation $\phi : \pi^* D \to (\pi')^* D$ such that for each $W \in \mathcal C'$, $\phi : \pi^* D(W) \to (\pi')^* D(W)$ is $D(W) \to D(W \times_Y Y')$, and the square
					\[
						\begin{tikzcd}
							\pi^* D(W) \ar[d] \ar[r] & \pi^* D(W \times_Y X) \ar[d] \\
							(\pi')^* D(W) \ar[r] & (\pi')^* D(W \times_Y X)
						\end{tikzcd}
					\]
					is equivalent to the square
					\[
						\begin{tikzcd}
							D(W) \ar[d] \ar[r] & D(W \times_Y X) \ar[d] \\
							D(W \times_Y Y') \ar[r] & D(W \times_Y X')
						\end{tikzcd}
					\]
					given by applying $D(W \times_Y -)$ to the Cartesian square
					\[
						\begin{tikzcd}
							X' \ar[d] \ar[r] & Y' \ar[d, "q"] \\
							X \ar[r, "f"'] & Y
						\end{tikzcd}
					\]
					in $\mathcal C'$.

					If $D$ has right base change for $f_i$ against $q_i$ for each $i$, we have that for each $i$, $\phi$ is right adjointable at $f_i : X_i \to Y_i$, so since $\mathcal R'$ is a $D'$-pseudocover, we conclude by the first statement.

				\item Suppose $q$ is quasi-admissible. As in \Cref{exa:sharp morph}, if we use the same $\pi,\pi'$ as in the previous point, there is a transformation $\psi : (\pi')^* D \to \pi^* D$ such that for any $W \in \mathcal C'$, the square
					\[
						\begin{tikzcd}
							(\pi')^* D(W) \ar[d] \ar[r] & (\pi')^* D(W \times_Y X) \ar[d] \\
							\pi^* D(W) \ar[r] & \pi^* D(W \times_Y X)
						\end{tikzcd}
					\]
					is equivalent to the square
					\[
						\begin{tikzcd}
							D(W \times_Y Y') \ar[d, "(W \times_Y Y' \to W)_\sharp"'] \ar[r] & D(W \times_Y X') \ar[d, "(W \times Y X' \to W \times_Y X)_\sharp"] \\
							D(W) \ar[r] & D(W \times_Y X)
						\end{tikzcd}
					.\]

					Thus, if $D$ has right-left base change for $f_i$ against $q_i$ for each $i$, we have that for each $i$, $\psi$ is right adjointable at $f_i : X_i \to Y_i$, so by the first statement, since $\mathcal R$ is a $D$-pseudocover, it follows that $\psi$ is right adjointable at $f$, whence $D$ has right-left base change for $f$ against $q$. 

			\end{enumerate}

	\end{enumerate}

\end{proof}

\section{Cohomological Properness} \label{S:prop}

Fix a pullback context $\mathcal C$. In this section we introduce a notion (\Cref{defn:proper}) of ``properness'' for maps in $\mathcal C$. In fact, a quasi-admissibility structure is not enough to give us a good notion of properness, so instead we will give a definition that depends on a choice of system of coefficients on $\mathcal C$, which we will often assume is a pullback formalism. This can be seen as a cohomological characterization of properness.

\begin{defn} \label{defn:proper}
	Given a presheaf $D : \mathcal C^\op \to \CAlg(\widehat{\Cat})$, say a map $f : X \to Y$ is \emph{$D$-quasi-proper} if $D$ has the right projection formula for $f$, right base change for $f$, quasi-admissible exchange for $f$, and furthermore, the functor $f_* : D(X) \to D(Y)$ is colimit-preserving. If $f_*$ has a right adjoint, we will denote it by $f^\sharp$.
\end{defn}

Let us spell out \Cref{defn:proper} in the case of pullback formalisms.
\begin{rmk} \label{rmk:desc proper}
	If $D \in \PF(\mathcal C)$, then a map $f : X \to Y$ in $\mathcal C$ is $D$-quasi-proper if and only if the following hold:
	\begin{enumerate}

		\item \textbf{Additional adjoint:}
			The right adjoint $f_*$ of $f^* \coloneqq D(f)$ admits a further right adjoint $f^\sharp$.

		\item \textbf{Proper projection formula:}
			For any $M \in D(X)$, and $N,N' \in D(Y)$, the natural maps
			\[
				f_* M \otimes N \to f_*(M \otimes f^* N) \quad\text{and}\quad \underline\Hom(f^* N, f^\sharp N') \to f_\sharp \underline\Hom(N, N')
			\]
			are equivalences.

		\item \textbf{Proper base change:}
			If
			\[
				\begin{tikzcd}
					X' \ar[d, "p"'] \ar[r, "f'"] & Y' \ar[d, "q" ] \\
					X \ar[r, "f"'] & Y
				\end{tikzcd}
			\]
			is a Cartesian square in $\mathcal C$, then the natural maps
			\[
				q^* f_* \to f'_* p^* \quad\text{and}\quad p_* f'^\sharp \to f^\sharp q_*
			\]
			are equivalences.

		\item \textbf{Smooth-proper base change:}
			In the previous point, if $q$ is quasi-admissible, then the natural maps
			\[
				p^* f^\sharp \to f'^\sharp q^* \quad\text{and}\quad q_\sharp f'_* \to f_* p_\sharp
			\]
			are equivalences.

	\end{enumerate}
\end{rmk}

The relevance of this property to 6-functor formalisms is made explicit in the following \lcnamecref{rmk:6FF by decomp}.
\begin{rmk} \label{rmk:6FF by decomp}
	Let $I,P$ be collections of maps in $\mathcal C$, and assume that
	\begin{enumerate}

		\item $I$ and $P$ contain all equivalences, and are stable under base change, composition, and taking diagonals.

		\item For any $j \in I$ and $p \in P$, the composite $j \circ p$ is equivalent to $p' \circ j'$, where $j' \in \bar I$ and $p' \in \bar P$.

		\item Every morphism in $I \cap P$ is truncated.

	\end{enumerate}
	Write $E$ for the collection of maps that are composites of maps in $I \cup P$. Then
	$(\mathcal C,E)$ is a geometric setup in the sense of \cite[Convention 2.1.3]{HM6FF}.

	Suppose $\mathcal C$ is a pullback context such that every map in $I$ is quasi-admissible, and let $D$ be a pullback formalism on $\mathcal C$ such that every map in $P$ is $D$-quasi-proper. It follows immediately from \cite[Proposition A.5.10]{Mann6FF} that $D$ extends to a 6-functor formalism on $(\mathcal C,E)$ such that for any $f \in E$,
	\[
		\begin{array}{cl}
			f^! \simeq f^* & \text{if $f \in I$} \\
			f_! \simeq f_* & \text{if $f \in P$}
		\end{array}
	.\]
	Using this description, as well as \cite[Lemma F.6(2,3)]{TwAmb}, we should be able to show that all quasi-admissible maps are $D$-suave and all $D$-quasi-proper maps are $D$-prim (in the sense of \Cref{defn:suave prim}). Indeed, this argument can be made precise if instead of using \cite[Proposition A.5.10]{Mann6FF} to get a lax symmetric monoidal functor $\Span(\mathcal C,E) \to \PrL$, we use \cite[Theorem B]{CLL6FF} to get a lax symmetric monoidal 2-functor $\Span_2(\mathcal C,E)_{P,I} \to \PrL$, and then apply \Cref{prp:nat sq adj 6FF}(\ref{itm:nat sq adj/suave prim}).
\end{rmk}

Recall from \Cref{nota:psh} that for any presheaf $D : \mathcal C^\op \to \Alg_{E_0} \widehat{\Cat}$, we have a notion of ``cohomology spaces'' for $D$: if $S \in \mathcal C$, and $X \in \mathcal C_{/S}$, for any $M \in D(S)$, we can define $D(X;M)$ to be the mapping space $D(X)(1, (X \to S)^* M)$. In the setting of \Cref{rmk:6FF by decomp}, we also have a notion of (twisted) Borel-Moore homology given in \Cref{defn:BM} such that
\[
	\begin{array}{rcll}
		D^\BM(X;M) &\simeq& D(S)((X \to S)_* 1, M) & \text{if $f \in P$, and} \\
		D^\BM(X;M) &\simeq& D(X;M) & \text{if $f \in I$,}
	\end{array}
,\]
and \cite[Corollary 4.5.11]{HM6FF} says that
\[ \tag{PD6FF} \label{eqn:duality}
	\begin{array}{rcll}
		D^\BM(X;M)[\delta_f] &\simeq& D(S)((X \to S)_* 1, M) & \text{if $f$ is $D$-quasi-proper, and} \\
		D^\BM(X;M) &\simeq& D(X;M)[\omega_f] & \text{if $f$ is quasi-admissible.}
	\end{array}
\]

\begin{defn} \label{defn:BM}
	If $D$ is a 3-functor formalism on a geometric setup $(\mathcal C, E)$, $S \in \mathcal C$, and $X \in \mathcal C_{/S}$ is such that the structure map $X \to S$ is in $E$, we define the \emph{Borel-Moore homology} of $X$ with coefficients in $M \in D(S)$ to be the mapping space
	\[
		D^\BM(X;M) \coloneqq D(S)((X \to S)_! 1, M)
	.\]
	More generally, if $\xi \in D(X)$, we define the \emph{$\xi$-twisted} Borel-Moore homology to be
	\[
		D^\BM(X;M)[\xi] \coloneqq D(S)((X \to S)_! \xi, M)
	.\]
	If $D$ is a 6-functor formalism, then this is equivalent to $D(X)(\xi, (X \to S)^! M)$ (\cf{} \cite[Definition 9.1]{SixAlgSt}).
\end{defn}

We will see how to produce special relationships between Borel-Moore homology and usual cohomology in \Cref{rmk:BM closed,rmk:BM duality}.

\subsection{Closure properties of proper maps} \label{S:proper stable}

\begin{lem} \label{lem:composites of proper maps}
	Let $D$ be a pullback formalism on $\mathcal C$. Then the collection of $D$-quasi-proper maps is closed under composition.
	\begin{proof}
		This follows immediately from \cite[Lemma F.6]{TwAmb} and \cite[Lemma F.13]{TwAmb}.
	\end{proof}
\end{lem}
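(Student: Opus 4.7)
The plan is to unwind the definition of $D$-proper and verify each of the four constituent conditions for the composite $gf$ from $X \to Y \to Z$, using the fact that each condition says that a certain square formed from $f$ (respectively, $g$) is right adjointable, and such squares paste.

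First, colimit-preservation of $(gf)_\ast \simeq g_\ast f_\ast$ is immediate since both factors are colimit-preserving. For the right projection formula against $N \in D(Z)$, the tensor-multiplication square associated to $gf$ at $N$ decomposes as the square for $g$ at $N$ pasted vertically above the square for $f$ at $g^\ast N$; each is right adjointable by the projection-formula hypothesis on $g$ and $f$, and vertical pasting of right-adjointable squares (Lemma F.6 of \cite{TwAmb}) gives the result. For right base change along an arbitrary $h : Z' \to Z$, the Cartesian rectangle defining base change of $gf$ along $h$ factors as a horizontal pasting of the Cartesian squares for $g$ against $h$ and for $f$ against the induced $h' : Y' \to Y$; applying $D$ turns this into a vertical pasting of right-base-change squares, and we again invoke Lemma F.6.

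The only mildly delicate step is quasi-admissible exchange. Here the map $h : Z' \to Z$ is quasi-admissible, and since quasi-admissible maps are stable under base change, so is the induced $h' : Y' \to Y$, which means both pieces $D$ has right-left base change for. The exchange condition for $gf$ against $h$ is right adjointability of the transpose of the right-base-change square for $gf$; this transpose likewise decomposes as a pasting of the transposes of the squares for $f$ against $h'$ and for $g$ against $h$, but with an asymmetry between the ``right'' adjointability already available (from right base change) and the ``left'' adjointability supplied by quasi-admissibility. This is exactly the situation handled by Lemma F.13 of \cite{TwAmb}, which lets us paste the right-left adjointable squares to conclude right-left base change for $gf$ against $h$.

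The main obstacle, such as it is, is purely bookkeeping: keeping track of orientations and of whether each pasting is vertical or horizontal in order to apply the correct variant (F.6 vs F.13) of the standard pasting lemma. No new geometric or categorical input is required beyond the two cited lemmas.
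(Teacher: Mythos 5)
Your proposal unpacks the same argument the paper compresses into a one-line citation: decompose each of the four conditions for $gf$ into a pasting of the corresponding squares for $f$ and $g$, then invoke \cite[Lemma F.6]{TwAmb} for the pastings that involve only right adjoints (projection formula, right base change, and colimit-preservation of $(gf)_* \simeq g_* f_*$) and \cite[Lemma F.13]{TwAmb} for quasi-admissible exchange, where the square mixes right adjoints of $f^*$ with left adjoints of the quasi-admissible pullbacks. This matches the paper's proof.
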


\begin{lem} \label{lem:bc prop map by qadm mono}
	If $D$ is a pullback formalism on $\mathcal C$, then the collection of $D$-quasi-proper maps is stable under base change along quasi-admissible monomorphisms. In fact, we have the following more general result: let
	\[
		\begin{tikzcd}
			U' \ar[d, "j'"'] \ar[r, "f_U"] & U \ar[d, "j"] \\
			X' \ar[r, "f"'] & X
		\end{tikzcd}
	\]
	be a Cartesian square in a pullback context $\mathcal C$, where $j$ is a quasi-admissible monomorphism. If $D : \mathcal C^\op \to \widehat{\Cat}$ is a presheaf with quasi-admissible base change, then we have the following:
	\begin{enumerate}

		\item If $f^*$ has a right adjoint, then so does $f_U^*$.

		\item If $D$ has right base change for $f$, then it has right base change for $f_U$.

		\item If $D$ has quasi-admissible exchange for $f$, then it also has quasi-admissible exchange for $f_U$.

		\item If $D$ has right-left base change for $f$ against $j$, and $f_*$ is colimit-preserving, then $(f_U)_*$ is colimit-preserving.

		\item If $D$ lifts to a $\CAlg(\widehat{\Cat})$-valued presheaf that has the right projection formula for $f$, then it also has the right projection formula for $f_U$.

	\end{enumerate}

	\begin{proof}
		We first note that by \Cref{lem:qadm mono loc}, $j_\sharp$ and $j'_\sharp$ are conservative, and $j^*$ and $j'^*$ are essentially surjective.
		\begin{enumerate}

			\item Since $D$ has quasi-admissible base change, there is a commutative square
				\[
					\begin{tikzcd}
						D(U) \ar[d, "j_\sharp"'] \ar[r, "f_U^*"] & D(U') \ar[d, "j'_\sharp"] \\
						D(X) \ar[r, "f^*"] & D(X')
					\end{tikzcd}
				.\]
				Since the columns are conservative and colimit-preserving, if $f^*$ is colimit-preserving, then so is $f_U^*$.

			\item If $D$ has right base change for $f$, then $D$ has right base change for $f_U$ by \cite[Lemma F.6(2)]{TwAmb}, the fact that $D$ has right base change for $f$, and the fact that $j'^*$ is essentially surjective.

			\item Similarly, if $D$ has quasi-admissible exchange for $f$, then $D$ has quasi-admissible exchange for $f_U$ by \cite[Lemma F.13]{TwAmb}, the fact that $D$ has quasi-admissible exchange for $f$, and the fact that $j'_\sharp$ is conservative.

			\item Since, $D$ has right-left base change for $f$ against $j$, we have a commutative square
				\[
					\begin{tikzcd}
						D(U') \ar[d, "j'_\sharp"'] \ar[r, "(f_U)_*"] & D(U) \ar[d, "j_\sharp"] \\
						D(X') \ar[r, "f_*"] & D(X)
					\end{tikzcd}
				.\]
				Since the columns are conservative and colimit-preserving, we have that $(f_U)_*$ is colimit-preserving if $f_*$ is colimit-preserving.

			\item For any $M \in D(X)$ and $M' \in D(X')$, by \cite[Lemma F.19(2)]{TwAmb}, and since $D$ has left base change for $j$ against $f$, we have that
				\[
					(f_U)_* j'^* M' \otimes j^* M \to (f_U)_*(j'^* M' \otimes f_U^* j^* M)
				\]
				is equivalent to $j^*$ of
				\[
					f_* M' \otimes M \to f_*(M' \otimes f^* M)
				,\]
				so it is an equivalence if $D$ has the right projection formula for $f$. Since $j^*$ and $j'^*$ are essentially surjective, it follows that $D$ has the right projection formula for $f_U$.

		\end{enumerate}
	\end{proof}
\end{lem}

\subsubsection{Locality on the target}

\begin{prp}[Locality on the target for proper maps] \label{prp:proper local on target}
	Let $D$ be a pullback formalism on $\mathcal C$, let $f : X \to Y$ be a map in $\mathcal C$, and let $\{Y_i \to Y\}_i$ be a small quasi-admissible $D$-pseudocover of $Y$.

	If $\phi : D \to D'$ is a morphism of pullback formalisms, then $\phi$ is right adjointable at $f$ if it is right adjointable at the base change $f_i : X_i \coloneqq X \times_Y Y_i \to Y_i$ for each $i$.

	Furthermore, if all base changes of $f$ exist, and $f_i$ is $D$-quasi-proper for all $i$, then $f$ is $D$-quasi-proper.
	\begin{proof}
		By \Cref{thm:D-topology}, we have that every base change of $\{Y_i \to Y\}_i$ is a $D$-pseudocover, and a $D'$-pseudocover.

		Hence, the first statement follows from \Cref{prp:target-locality of dual bc}, which also shows that if $f_i$ is $D$-quasi-proper for all $i$, then $D$ satisfies the right projection formula for $f$, and if $f$ admits all base changes, then $D$ has right base change and quasi-admissible exchange for $f$.

		Thus, to show the second statement, it only remains to show that $f_*$ has a right adjoint. By \cite[Lemma F.0.6]{Fundamentals}, it suffices to show that for every $i$, the composite
		\[
			D(X) \xrightarrow{f_*} D(Y) \xrightarrow{(Y_i \to Y)^*} D(Y_i)
		\]
		has a right adjoint. Indeed, since $D$ has right base change for $f$, we have that this composite is equivalent to the composite
		\[
			D(X) \xrightarrow{(X \times_Y Y_i \to X)^*} D(X_i) \xrightarrow{(f_i)_*} D(Y_i)
		.\]
		The first functor has a right adjoint since $D$ takes values in $\PrL$, and the second has a right adjoint since $f_i$ is $D$-quasi-proper, so the composite has a right adjoint as desired.
	\end{proof}
\end{prp}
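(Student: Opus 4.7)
The plan is to handle the two statements in sequence, both by reducing to the locality result for right-adjointability (Lemma \ref{lem:target-locality of radj}) and, for the second statement, to Proposition \ref{prp:target-locality of dual bc} applied to the pseudocover $\{Y_i \to Y\}_i$.

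For the statement on $\phi$, I would verify the three hypotheses of Lemma \ref{lem:target-locality of radj}: adjointability at each $f_i$ is assumed; the family $\{Y_i \to Y\}_i$ is also a $D'$-pseudocover because, by Theorem \ref{thm:D-topology}\ref{itm:D-topology/morph}, $D'$ admits the morphism $\phi$ from $D$ and so has descent for the quasi-admissible $D$-covering topology; and both $D$ and $D'$ have right base change for $f$ against each $Y_i \to Y$ because these maps are quasi-admissible and the two presheaves have quasi-admissible base change, so by passing to right adjoints of a left-adjointable square one obtains the required right-adjointable square.

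For the $D$-properness statement, the four conditions of Definition \ref{defn:proper} must be checked. Three of them, namely the right projection formula, right base change, and quasi-admissible exchange for $f$, follow directly from the three parts of Proposition \ref{prp:target-locality of dual bc}, using that every base change of the $D$-pseudocover $\{Y_i \to Y\}_i$ is again a $D$-pseudocover by Theorem \ref{thm:D-topology}(1), and that each $f_i$ is $D$-proper. The remaining condition is that $f_* : D(X) \to D(Y)$ admit a right adjoint. For this I would invoke \cite[Lemma F.0.6]{Fundamentals} applied to the jointly conservative family $\{(Y_i \to Y)^*\}_i$: it suffices to produce right adjoints to the composites $(Y_i \to Y)^* \circ f_*$, which by the right base change just established coincide with $(f_i)_* \circ (X_i \to X)^*$. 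Both factors admit right adjoints: the first because $f_i$ is $D$-proper, the second because $D$ takes values in $\PrL$.

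The main subtlety is likely to be the bookkeeping around distinguishing left versus right base change, and ensuring that the Beck--Chevalley transformation for $f$ against a quasi-admissible map really is right-adjointable; this is where one needs to be careful that passing to right adjoints of a left-adjointable square produces a right-adjointable one. Once this is handled, everything else is a routine appeal to Proposition \ref{prp:target-locality of dual bc}, Theorem \ref{thm:D-topology}, and \cite[Lemma F.0.6]{Fundamentals}.
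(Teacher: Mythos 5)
Your proposal is correct and follows essentially the same route as the paper. The only structural difference is in the first statement: you invoke \Cref{lem:target-locality of radj} directly, whereas the paper invokes \Cref{prp:target-locality of dual bc}(1), which is itself a thin wrapper around that lemma (its proof consists of observing exactly the right-base-change fact you check and then citing \Cref{lem:target-locality of radj}). So this is a cosmetic shortcut, not a different approach. For the second statement you match the paper exactly: the right projection formula, right base change, and quasi-admissible exchange come from \Cref{prp:target-locality of dual bc} (using \Cref{thm:D-topology}(1) to propagate the pseudocover to all base changes), and the existence of a right adjoint for $f_*$ comes from \cite[Lemma F.0.6]{Fundamentals} applied to the conservative family $\{(Y_i \to Y)^*\}_i$, rewriting $(Y_i \to Y)^* f_*$ as $(f_i)_* (X_i \to X)^*$ via right base change.

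Two points where your sketch is terse but correct. First, descent for the quasi-admissible $D$-covering topology (\Cref{thm:D-topology}(2)) does imply that $\{Y_i \to Y\}_i$ is a $D'$-pseudocover, but this takes a one-line argument: the family generates a covering sieve $\mathcal U$, descent makes $D'(Y) \to D'(\mathcal U)$ an equivalence, and any map in $D'(Y)$ that becomes an equivalence over each $Y_i$ then becomes an equivalence over every object of $\mathcal U$ (since every such object factors through some $Y_i$), hence is itself an equivalence. The paper is similarly terse, so no issue. Second, your observation that quasi-admissible (left) base change yields right base change for $f$ against a quasi-admissible map by ``passing to right adjoints'' is the mate correspondence: the left Beck--Chevalley map $q'_\sharp f_i^* \to f^* q_\sharp$ and the right Beck--Chevalley map $q^* f_* \to (f_i)_* q'^*$ are mates of one another, so one is an equivalence iff the other is. This is the content of the references \cite[\S F]{TwAmb} and \cite[\S D]{Fundamentals} that the paper relies on, and it is precisely what the first paragraph of the proof of \Cref{prp:target-locality of dual bc} records.
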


\begin{lem} \label{lem:radj proper-local on target}
	Let
	\[
		\left\{\begin{tikzcd}
			\bar X_i \ar[d, "p_i"'] \ar[r, "\bar f_i"] & \bar Y \ar[d, "q_i"] \\
			X \ar[r, "f"'] & Y
		\end{tikzcd}\right\}_i
	\]
	be a family of commutative squares in $\mathcal C$.

	Let $\phi : D \to D'$ be a transformation of presheaves $D,D' : \mathcal C^\op \to \widehat{\Cat}$. Suppose that for each $i$, $\phi$ is right adjointable at $p_i, q_i, \bar f_i$. If $D(f)$ and $D'(f)$ admit right adjoints (both denoted by $f_*$), and the images of $\{(p_i)_*\}_i$ generate $D(X)$ under colimits that are preserved by $\phi f_*$ and $f_* \phi$, then $\phi$ is right adjointable $f$. 
	\begin{proof}
		For each $i$, we have a commutative diagram
		\[
			\begin{tikzcd}
				D(Y) \ar[r, "q_i^*"] \ar[d] & D(\bar Y_i) \ar[r, "\bar f_i^*"] \ar[d] & D(\bar X_i) \ar[d] \\
				D'(Y) \ar[r, "q_i^*"] & D'(\bar Y_i) \ar[r, "\bar f^*"] & D'(\bar X_i)
			\end{tikzcd}
		.\]
		Since $\phi$ is right adjointable at $q_i$ and $\bar f_i$, \cite[Lemma F.6(4)]{TwAmb} shows that the outer rectangle is horizontally right adjointable. This is equivalent to the outer rectangle in the following commutative diagram:
		\[
			\begin{tikzcd}
				D(Y) \ar[r, "f^*"] \ar[d] & D(X) \ar[r, "p_i^*"] \ar[d] & D(\bar X_i) \ar[d] \\
				D'(Y) \ar[r, "f^*"] & D'(X) \ar[r, "p_i^*"] & D'(\bar X_i)
			\end{tikzcd}
		.\]
		Since $\phi$ is right adjointable at $p_i$, we find that along with the outer rectangle, the right square is also horizontally right adjointable.

		Thus, by \cite[Lemma F.6(4)]{TwAmb}, the horizontal right mate of the left square is an equivalence at objects in the essential image of $(p_i)_*$ for each $i$. We conclude by our assumption on the generation of $D(X)$ by the images of $\{(p_i)_*\}_i$.
	\end{proof}
\end{lem}

\begin{prp} \label{prp:proper proper-local on target}
	Let
	\[
		\begin{tikzcd}
			\bar X \ar[d, "p"'] \ar[r, "\bar f"] & \bar Y \ar[d, "q"] \\
			X \ar[r, "f"'] & Y
		\end{tikzcd}
	\]
	be a Cartesian square in $\mathcal C$, and let $D$ be a pullback formalism on $\mathcal C$ such that $q$ is $D$-quasi-proper and $D$-acyclic, $p$ is $D$-quasi-proper, and $D$ has the right projection formula for every quasi-admissible base change of $p$.

	If $\bar f$ is $D$-quasi-proper, and all base changes of $f$ exist, then $f$ is $D$-quasi-proper. Furthermore, if $\phi : D \to D'$ is a transformation that is right adjointable at $p,q,\bar f$, then $\phi$ is right adjointable at $f$.
	\begin{proof}
		Since $q^*$ is fully faithful, we have that $1 \simeq q_* 1$. Since $D$ has right base change for $q$, we find that for any base change $q'$ of $q$, we also have that $1 \simeq q'_* 1$. Thus, by the dual of \cite[Lemma D.2.3]{Fundamentals}, we know that every base change of $q$ for which $D$ has the right projection formula is $D$-acyclic. In particular, if $p'$ is a quasi-admissible base change of $p$, then $p'_*$ is essentially surjective.

		By applying \Cref{lem:radj proper-local on target}, we see that $\phi$ is right adjointable at $f$, and by taking $\phi$ to be the morphisms given by \Cref{exa:base change morph}, \Cref{exa:sharp morph}, or \Cref{exa:tensor morph}, we see that $D$ has right base change for $f$, quasi-admissible exchange for $f$, and the right projection formula for $f$.

		Finally, to see that $f_*$ is colimit-preserving, we note that since $D$ has right base change for $f$ against $q$, we have an equivalence
		\[
			q^* f_* \simeq \bar f_* p^*
		.\]
		Since $\bar f_* p^*$ is colimit-preserving, and $q^*$ is colimit-preserving and conservative, it follows that $f_*$ is colimit-preserving.
	\end{proof}
\end{prp}

\subsubsection{Locality on the source}

\begin{lem} \label{lem:pre source locality of proper}
	Let $D : \mathcal C^\op \to \PrL$ be a presheaf on $\mathcal C$, let $f : X \to Y$ be a map in $\mathcal C$ such that the right adjoint $f_*$ of $f^* = D(f)$ is colimit-preserving, and let $\{f_i : X_i \to X\}_i$ be a small family of maps such that for each $i$, the functor $(f_i)_*$ admits a right adjoint $f_i^\sharp$, and the functors $\{f_i^\sharp : D(X) \to D(X_i)\}_i$ are jointly conservative.

	If $D' : \mathcal C^\op \to \widehat{\Cat}$ is a presheaf such that $D'(f)$ has a colimit-preserving right adjoint, and $\phi : D \to D'$ is a transformation that is right adjointable at $f_i$ and $f \circ f_i$ for all $i$, then $\phi$ is right adjointable at $f$.
	\begin{proof}
		Note that by \Cref{lem:radj cons iff ladj gens}, since $D$ takes values in presentable categories, we have that the category $D(X)$ is generated under small colimits by the union of the images of the functors $\{(f_i)_* : D(X_i) \to D(X)\}_i$. Thus, this statement follows immediately from \cite[Lemma D.1.13]{Fundamentals}, since $f_* : D(X) \to D(Y)$ preserves small colimits.
	\end{proof}
\end{lem}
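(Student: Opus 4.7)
The plan is to reduce the claim to the abstract lemma \HorizSurjAdj, which is a ``check right-adjointability on generators'' principle, by exhibiting a suitable generating family for $D(X)$.

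First I would establish that $D(X)$ is generated under small colimits by the union of the images of $\{(f_i)_* : D(X_i) \to D(X)\}_i$. Since $D$ takes values in $\PrL$, each $(f_i)_*$ is a colimit-preserving functor between presentable categories (being a left adjoint to $f_i^\sharp$), and the hypothesis that $\{f_i^\sharp\}_i$ is jointly conservative translates into exactly this generation statement via the cited lemma \Cref{lem:radj cons iff ladj gens} relating joint conservativity of right adjoints to joint generation under colimits by the images of the corresponding left adjoints.

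Next I would feed this generating family into \HorizSurjAdj. That lemma requires the two functors whose adjointability is being tested at $f$ to preserve small colimits on the $D(X)$-side: on the domain side, $f_* : D(X) \to D(Y)$ is colimit-preserving by hypothesis, and on the codomain side the right adjoint of $D'(f)$ is colimit-preserving by hypothesis. Consequently the Beck--Chevalley transformation encoding right-adjointability of $\phi$ at $f$ is compatible with small colimits in $D(X)$, so it may be checked after precomposing with each $(f_i)_*$. Precomposing and pasting in the adjointability square at $f_i$, this is equivalent to checking right-adjointability of $\phi$ at the composite $f \circ f_i$, which is part of the hypotheses; combining via the standard pasting law for right-adjointable squares yields right-adjointability of $\phi$ at $f$ on the image of each $(f_i)_*$, and the generation statement then upgrades this to right-adjointability at $f$ on all of $D(X)$.

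The main obstacle is essentially bookkeeping: verifying that the hypotheses of \HorizSurjAdj line up, especially the colimit-preservation requirement on the right adjoint of $D'(f)$, which is the subtle reason the statement explicitly includes that assumption even though $D'$ is not assumed to take values in $\PrL$. Once this is done, the proof is an immediate invocation of the two cited abstract lemmas.
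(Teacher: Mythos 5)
Your proposal is correct and matches the paper's proof: both invoke \Cref{lem:radj cons iff ladj gens} to produce the generating family from joint conservativity of the $f_i^\sharp$, and then cite \HorizSurjAdj to conclude, with the colimit-preservation of $f_*$ (and of the right adjoint of $D'(f)$) as the enabling hypotheses. Your extra paragraph unpacking the internal logic of \HorizSurjAdj (precompose with $(f_i)_*$, paste the adjointability squares at $f_i$ and $f\circ f_i$, then extend by generation) is an accurate description of the cited lemma's content rather than a deviation.
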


\begin{prp}[Locality on the source for proper maps] \label{prp:proper local on source}
	Let $D$ be a pullback formalism on $\mathcal C$, write $\mathcal C^\sharp$ for the wide subcategory of $\mathcal C$ consisting of $D$-quasi-proper maps, and let $D^\sharp : (\mathcal C^\sharp)^\op \to \PrR$ for the presheaf given by sending a $D$-quasi-proper map $g$ to the right adjoint $g^\sharp$ of $g_*$.

	Let $K$ be a simplicial set, and let $X : K^\triangleright \to \mathcal C^\sharp$ be a $D^\sharp$-acyclic diagram, \ie, the functor
	\[
		D(X(\infty)) \to \varprojlim_K D^\sharp X|_K^\op
	\]
	is fully faithful.

	If $f : X(\infty) \to Y$ is a map in $\mathcal C$ such that all base changes of $f$ exist, and $f_*$ preserves $K$-indexed colimits, then $f$ is $D$-quasi-proper if and only if for each $a \in K$, the composite $X(a) \to Y$ is $D$-quasi-proper.
	\begin{proof}
		It follows from \Cref{lem:source-locality of colim-pres} that $f_*$ is colimit-preserving. Thus, we may use \Cref{lem:pre source locality of proper} to see that $f^*$ has a linear right adjoint (using \Cref{exa:tensor morph}), and that $D$ has right base change and quasi-admissible exchange for $f$ (using \Cref{exa:base change morph,exa:sharp morph}), so $f$ is $D$-quasi-proper since $f_*$ admits a right adjoint.
	\end{proof}
\end{prp}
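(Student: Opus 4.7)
The plan is to verify each of the four defining conditions for $D$-properness of $f$ in \Cref{defn:proper} by means of the ``reduction to morphisms'' strategy developed in \Cref{S:red to morph}, using \Cref{lem:pre source locality of proper} as the main tool. The ``only if'' direction is immediate: given any $a \in K$, factor $X(a) \to Y$ as $X(a) \to X(\infty) \xrightarrow{f} Y$; the first map lies in $\mathcal C^\sharp$ and so is $D$-proper, $f$ is $D$-proper by assumption, and \Cref{lem:composites of proper maps} delivers $D$-properness of the composite.

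For the ``if'' direction, write $f_a : X(a) \to X(\infty)$ for the image under $X$ of the edge $a \to \infty$. The fully faithfulness hypothesis forces the family $\{f_a^\sharp : D(X(\infty)) \to D(X(a))\}_{a \in K}$ to be jointly conservative. I would first establish that $f_*$ is colimit-preserving: by hypothesis $f_*$ preserves $K$-indexed colimits, and for each $a \in K$ the composite $(f \circ f_a)_* \simeq f_* (f_a)_*$ preserves all small colimits since $f \circ f_a$ is $D$-proper. A suitable source-locality lemma for colimit-preservation (\Cref{lem:source-locality of colim-pres}) combines these inputs with joint conservativity of $\{f_a^\sharp\}$ to conclude that $f_*$ preserves all small colimits. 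Since $D$ takes values in $\PrL$, the adjoint functor theorem then produces a right adjoint $f^\sharp$.

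Granted that $f_*$ is colimit-preserving, each of the remaining three conditions can be recast as right adjointability of a transformation of presheaves at $f$: the right projection formula corresponds via \Cref{exa:tensor morph} to right adjointability of $\otimes N : \pi^* D \to \pi^* D$ for every $N \in D(Y)$; right base change against a map $q : Y' \to Y$ corresponds via \Cref{exa:base change morph} to right adjointability of the transformation $\pi^* D \to (\pi')^* D$ produced there; and quasi-admissible exchange corresponds via \Cref{exa:sharp morph} to right adjointability of $(\pi')^* D \to \pi^* D$. In every case, $D$-properness of $X(a) \to X(\infty)$ supplies right adjointability at $f_a$, while $D$-properness of $X(a) \to Y = f \circ f_a$ supplies right adjointability at the composite, so \Cref{lem:pre source locality of proper} yields right adjointability at $f$.

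The main obstacle is the first step of the ``if'' direction, namely promoting $f_*$ from $K$-colimit-preserving to genuinely colimit-preserving; this is not a formal consequence of \Cref{lem:pre source locality of proper} and requires a separate source-locality lemma exploiting the joint conservativity of the $f_a^\sharp$. Once that step is in hand, the remaining verifications are essentially mechanical translations of the $D$-properness of the maps $X(a) \to Y$ into the morphism-of-presheaves setup, and fall straight out of \Cref{lem:pre source locality of proper}.
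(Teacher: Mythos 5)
Your proposal is correct and takes essentially the same approach as the paper: establish that $f_*$ preserves all small colimits via \Cref{lem:source-locality of colim-pres}, then feed the transformations of \Cref{exa:tensor morph}, \Cref{exa:base change morph}, and \Cref{exa:sharp morph} into \Cref{lem:pre source locality of proper} to get the right projection formula, right base change, and quasi-admissible exchange, with the adjoint functor theorem supplying $f^\sharp$. You simply spell out a few steps the paper leaves implicit (the trivial ``only if'' direction from \Cref{lem:composites of proper maps}, and the joint conservativity of $\{f_a^\sharp\}$ coming from fully faithfulness).
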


\section{Gluing and Localization} \label{S:gluing}


First we will need the following definition:
\begin{defn} \label{defn:complements}
	Given a category $\mathcal C$, say objects $Z,U \in \mathcal C$ are \emph{disjoint}, or that $U$ is \emph{disjoint from} $Z$, if any object $X \in \mathcal C$ admitting maps to both $Z$ and $U$ must be initial.

	Note that the objects of $\mathcal C$ that are disjoint from $Z$ form a sieve in $\mathcal C$, which we may denote by $Z^\complement$. Say that an object $U$ of $\mathcal C$ is a \emph{complement} of $Z$ if it is terminal in $Z^\complement$. Equivalently, $U$ is a complement of $Z$ if it is $(-1)$-truncated, and every object of $\mathcal C$ that is disjoint from $Z$ admits a map to $U$.

	Note that for any object $S \in \mathcal C$, we get corresponding notions for maps to $S$ by applying the above definitions in $\mathcal C_{/S}$. Sometimes, if a complement of $Z \to S$ exists, we denote it by $S \setminus Z$. 
\end{defn}

The gluing property can be thought of as a cohomological property that is characteristic of closed immersions. Indeed, we only consider it for maps that are ``closed'' in the following sense:
\begin{defn} \label{defn:closed}
	If $\mathcal C$ is a pullback context, then a map $i : Z \to S$ is closed if it has a quasi-admissible complement $j : U \to S$ in the sense of \Cref{defn:complements}.
\end{defn}

Later (\Cref{cor:closed immersions are proper}), we will see that, just as closed immersions are proper, under mild additional hypotheses, the gluing property implies the cohomological properness of \Cref{defn:proper}. In fact, if $D$ is a pullback formalism taking values in stable categories and that sends initial objects to the zero category, and $i_*$ is fully faithful, then for $D$ to have gluing for $i : Z \to S$ is equivalent to each of the following conditions (see \Cref{prp:loc for stab PF,rmk:ptd gluing}), where $j : U \to S$ is a complement of $i$:
\begin{enumerate}

	\item $i^*, j^*$ are jointly conservative.

	\item $D(Z) \to D(S) \to D(U)$ is a fibre sequence.

	\item $j_\sharp j^* \to \id \to i_* i^*$ is an exact sequence.

\end{enumerate}

We now give the general definition of the gluing property:
\begin{defn} \label{defn:gluing}
	Let $\mathcal C$ be a pullback context, and let $D : \mathcal C^\op \to \widehat{\Cat}$ be a presheaf that respects quasi-admissibility. Given a closed map $i : Z \to S$, if $i^*$ admits a right adjoint $i_*$, and $j$ is a quasi-admissible complement of $i$, then the counit of $j_\sharp \dashv j^*$ and the unit of $i^* \dashv i_*$ define a square
	\[
		\begin{tikzcd}
			j_\sharp j^* \ar[d] \ar[r] & \id \ar[d] \\
			j_\sharp j^* i_* i^* \ar[r] & i_* i^*
		\end{tikzcd}
	,\]
	which we denote $\square_i$, or $\square_i^D$ if $D$ is not clear from context.
	
	Say $D$ \emph{has gluing for $i$ (at $F$)} if $\square_i$ ($\square_i(F)$) is coCartesian.
\end{defn}

\begin{lem} \label{lem:gluing for acyclic}
	In the setting of \Cref{defn:gluing}, if $i$ is $D$-acyclic, or $i$ has $D$-acyclic complement, then $D$ has gluing for $i$.
	\begin{proof}
		If $i$ is $D$-acyclic, the unit $\id \to i_* i^*$ is invertible, so the columns of $\square_i$ are invertible, and similarly, if the complement of $i$ is $D$-acyclic, then the rows of $\square_i$ are equivalences. In either case it follows that $\square_i$ is coCartesian.
	\end{proof}
\end{lem}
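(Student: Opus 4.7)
The plan is to recognize the square $\square_i$ as a naturality square for two natural transformations: the counit $\varepsilon : j_\sharp j^* \to \id$ of the adjunction $j_\sharp \dashv j^*$, and the unit $\eta : \id \to i_* i^*$ of $i^* \dashv i_*$. Explicitly, the horizontal arrows of $\square_i$ are $\varepsilon$ and its whiskering $\varepsilon_{i_* i^*}$, while the vertical arrows are $\eta$ and its whiskering $j_\sharp j^*\eta$. The strategy is then to show that in each of the two hypothesized cases, one of the two pairs of parallel arrows in $\square_i$ consists of equivalences, which immediately forces $\square_i$ to be coCartesian (indeed, a square both of whose rows, or both of whose columns, are equivalences is automatically both Cartesian and coCartesian).

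For the first case, suppose that $i$ is $D$-acyclic, so that $i^* = D(i)$ is fully faithful. Since $i^*$ is the left adjoint in $i^* \dashv i_*$, by the standard characterization of adjunctions a left adjoint is fully faithful if and only if the unit of the adjunction is invertible; hence $\eta : \id \to i_* i^*$ is an equivalence. Consequently both vertical arrows of $\square_i$, namely $\eta$ itself and its whiskering $j_\sharp j^* \eta$, are equivalences, so the square is coCartesian.

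For the second case, suppose instead that the complement $j$ is $D$-acyclic, so that $j^* = D(j)$ is fully faithful. Now $j^*$ is the right adjoint in $j_\sharp \dashv j^*$, and by the dual characterization, a right adjoint is fully faithful if and only if the counit of the adjunction is invertible; hence $\varepsilon : j_\sharp j^* \to \id$ is an equivalence. Consequently both horizontal arrows of $\square_i$, namely $\varepsilon$ and $\varepsilon_{i_* i^*}$, are equivalences, so again the square is coCartesian.

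The argument is entirely formal, and I expect no substantive obstacle: it reduces to two applications of the standard fact that a left (resp.\ right) adjoint is fully faithful if and only if the unit (resp.\ counit) of the adjunction is invertible, together with the trivial observation that a square with a pair of parallel equivalences is coCartesian. The only mild care needed is keeping track of which variance applies in each case, since the two hypotheses are handled by exactly dual arguments, one annihilating the columns of $\square_i$ and the other annihilating the rows.
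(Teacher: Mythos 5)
Your proof is correct and matches the paper's argument exactly: in the first case $D$-acyclicity of $i$ makes the unit $\id \to i_*i^*$ invertible so the columns of $\square_i$ are equivalences, and in the second case $D$-acyclicity of $j$ makes the counit $j_\sharp j^* \to \id$ invertible so the rows are equivalences. The paper leaves the adjunction characterizations of full faithfulness implicit, but you are invoking precisely the same facts.
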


Generally speaking, we will mostly be interested in studying gluing for \emph{reduced} presheaves with quasi-admissible base change. Recall that a presheaf is reduced if it preserves terminal objects. In particular, any reduced presheaf on a category $\mathcal C$ can be viewed as a limit-preserving presheaf on $\Psh^\emptyset(\mathcal C)$, the category of reduced presheaves $\mathcal C^\op \to \spaces$.

For the remainder of the section we will make the following assumption:
\begin{ass} \label{ass:gluing}
	$\mathcal C$ is a pullback context, $D : \mathcal C^\op \to \widehat{\Cat}$ is a reduced presheaf with quasi-admissible base change, and $i : Z \to S$ is a closed map in $\mathcal C$ such that $i^*$ has a right adjoint $i_*$, and $j : U \to S$ is a quasi-admissible complement of $i$.
\end{ass}

The next result is useful for identifying the square $\square_i$:
\begin{lem} \label{lem:desc reduced gluing}
	For any functor $M : I \to D(S)$, let
	\[
		\begin{tikzcd}
			j_\sharp j^* M \ar[d] \ar[r] & M \ar[d] \\
			j_\sharp \pt \ar[r] & i_* i^* M
		\end{tikzcd}
	\]
	be a not-necessarily-commutative square in $\Fun(I, D(S))$. If the top arrow is given by the counit of $j_\sharp \dashv j^*$, and the right arrow is given by the unit of $i^* \dashv i_*$, then this square commutes in an essentially unique way, and the resulting commutative square is equivalent to $\square_i M$.

	\begin{proof}
		Note that by quasi-admissible base change, the functor $j^* i_*$ factors through a right adjoint functor from the category
		\[
			D(Z \times_S U) = D(\emptyset) = \pt
		,\]
		so it must be the constant functor with value $\pt$ (a terminal object). It follows that
		\[
			j^*(i_* i^* \to \pt) \simeq j^* i_* (i^* \to \pt)
		\]
		is an equivalence, so
		\[
			j_\sharp j^* (i_* i^* \to \pt)
		\]
		is an equivalence
		\[
			j_\sharp j^* i_* i^* \to j_\sharp \pt 
		,\]
		where $j^* \pt \simeq \pt$ since $j^*$ is a right adjoint.

		Note that for any $A \in D(U)$ and $B \in D(Z)$, the space $D(S)(j_\sharp(A), i_*(B))$ is equivalent to a mapping space in the category
		\[
			D(Z \times_S U) = D(\emptyset) = \pt
		,\]
		so it must be contractible. In particular, the space of maps $j_\sharp j^* M \to i_* i^* M$ is contractible, so the square commutes in an essentially unique way. Furthermore, the space of maps $j_\sharp j^* i_* i^* M \to i_* i^* M$ is contractible, so any such map must be equivalent to the map induced by the counit of $j_\sharp \dashv j^*$.

		Now, since $j$ is a quasi-admissible monomorphism, \Cref{lem:qadm mono loc} shows that $j^*$ is a colocalization, so it defines a colocalization $\Fun(I, D(S)) \to \Fun(I, D(U))$. Thus, $j_\sharp j^* M$ is a colocal object for this colocalization, so there is a contractible space of lifts of the essentially unique map $j_\sharp j^* M \to i_* i^* M$ along the counit $j_\sharp j^* i_* i^* M \to i_* i^* M$, which is equivalent to any map $j_\sharp \pt \to i_* i^* M$. Thus, we see that any commutative square whose boundary has the same top and right arrows that we specified, is equivalent to this one. In particular, $\square_i M$ is equivalent to this commutative square.

	\end{proof}
\end{lem}

\begin{rmk} \label{rmk:gluing base change}
	If $\phi : D' \to D$ is a transformation that is right adjointable at $i$ and left adjointable at $j$, then we may use \Cref{lem:desc reduced gluing} to show that $\phi \square_i \simeq \square_i \phi$.

	In particular, if $D$ has right base change for $i$ against a map $\sigma : S' \to S$, then since $D$ also has left base change for $j$ against $\sigma$, as in \Cref{exa:base change morph}, we find that $\sigma^* \square_i \simeq \square_{i'} \sigma^*$, where $i'$ is the base change of $i$ along $\sigma$.

	Thus, if $\{X_k \to S\}_k$ is a quasi-admissible $D$-pseudocover of $S$, then $D$ has gluing for $i$ if for all $k$, $D$ has gluing for the base change of $i$ along $X_k \to S$.
\end{rmk}

\begin{rmk} \label{rmk:ptd gluing}
	Suppose that $D(S)$ has a zero object, and $j_\sharp j^*$ preserves zero objects. It follows that $j_\sharp \pt \simeq j_\sharp j^* \pt$ is a zero object. By \Cref{lem:desc reduced gluing}, $D$ has gluing for $i$ at $F \in D(S)$ if and only if a square of the form
	\[
		\begin{tikzcd}
			j_\sharp j^* F \ar[d] \ar[r] & F \ar[d] \\
			j_\sharp \pt \ar[r] & i_* i^* F
		\end{tikzcd}
	\]
	is coCartesian (where the top arrow is given by the counit, and the right arrow is given by the unit), so since $j_\sharp \pt$ is a zero object, we have that $D$ has gluing for $i$ at $F \in D(S)$ if and only if
	\[
		j_\sharp j^* F \to F \to i_* i^* F
	\]
	is a cofibre sequence in $D(S)$.

	Note that $j_\sharp j^*$ preserves zero objects under any one of the following conditions:
	\begin{enumerate}

		\item $D(U)$ has a zero object. In this case, the terminal object $\pt \in D(U)$ is an initial object, so since $j_\sharp$ preserves initial objects, we have that $j_\sharp \pt \simeq j_\sharp j^* \pt$ is a zero object.

		\item $D$ takes values in $\PrL$. In this case, $j^*$ admits a right adjoint, so it preserves initial objects, so since it also preserves terminal objects, it preserves zero objects. Therefore $D(U)$ has a zero object, and we may reduce to the previous case.

		\item $j^*$ lifts to a symmetric monoidal functor that admits a linear left adjoint, and the monoidal product on $D(S)$ preserves empty colimits in each variable. In this case, $j_\sharp j^* \simeq \cls{U} \otimes -$, which we have assumed preserves initial objects.

			Furthermore, in this case, since $j_\sharp j^* F \to F$ is equivalent to $(\cls{U} \to 1) \otimes F$, we have that $D$ has gluing for $i$ at $F$ if and only if
			\[
				\cls{U} \otimes F \to F \to i_* i^* F
			\]
			is a cofibre sequence in $D(S)$.

	\end{enumerate}
\end{rmk}

\begin{rmk} \label{rmk:gluing on fundamental}
	If $D$ is a reduced pullback formalism, then by \Cref{lem:desc reduced gluing}, we have that for any $X \in \mathcal C_S$, any square
	\[
		\begin{tikzcd}
			\cls{X} \otimes \cls{U} \ar[d] \ar[r] & \cls{X} \ar[d] \\
			\pt \otimes \cls{U} \ar[r] & i_* \cls{i^{-1}(X)}
		\end{tikzcd}
	\]
	where the right arrow is the unit and the top arrow is $\cls{X \times_S U \to X}$, commutes in an essentially unique way and is equivalent to $\square_i^D(\cls{X})$.

	In fact, this holds more generally if $D : \mathcal C^\op \to \CAlg(\widehat{\Cat})$ is a reduced presheaf that has quasi-admissible base change and the quasi-admissible projection formula.
\end{rmk}

\subsection{Preliminary consequences of gluing} \label{sec:gluing consequences}
We continue to assume \Cref{ass:gluing}.


\begin{rmk} \label{rmk:BM closed}
	If $D$ is a pointed reduced pullback formalism on $\mathcal C$, and $D$ has gluing for $i$, then by \Cref{rmk:ptd gluing}, we have a cofibre sequence
	\[
		j_\sharp 1 \to 1 \to i_* 1
	\]
	in $D(S)$.

	We often have that $D$ extends to a 3-functor formalism on a geometric setup $(\mathcal C,E)$ where $i \in E$, and $i_* \simeq i_!$, so, recalling the definition of Borel-Moore homology from \Cref{defn:BM}, this cofibre sequence induces a fibre sequence of spaces
	\[
		D^\BM(Z;M) \to D(S;M) \to D(U;M)
	\]
	for all $M \in D(S)$. Thus, we can view $D^\BM(Z;M)$ as cohomology on $S$ with supports in $Z$. We often also have that $j \in E$ and $j_\sharp \simeq j_!$, in which case we actually get a fibre sequence
	\[
		D^\BM(Z;M) \to D^\BM(S;M) \to D^\BM(U;M)
	.\]
\end{rmk}

\begin{lem} \label{lem:gluing at kernel}
	Let $F \in D(S)$ such that $j^* F$ is terminal. Then $\square_i(F)$ is coCartesian if and only if the unit $F \to i_* i^* F$ is invertible.
	\begin{proof}
		Note that since $j^* F$ is terminal, the map $j^* F \to \pt$ is invertible, so by \Cref{lem:desc reduced gluing}, the left map of $\square_i(F)$ is invertible. Thus, $\square_i(F)$ is coCartesian if and only if the right map, which is the unit $F \to i_* i^* F$, is invertible.
	\end{proof}
\end{lem}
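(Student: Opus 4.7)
The plan is to unfold $\square_i(F)$ using the reformulation established in Lemma \ref{lem:desc reduced gluing}, which replaces the bottom-left corner $j_\sharp j^* i_* i^* F$ with $j_\sharp \pt$. After this substitution, the square takes the form
\[
	\begin{tikzcd}
		j_\sharp j^* F \ar[d] \ar[r] & F \ar[d] \\
		j_\sharp \pt \ar[r] & i_* i^* F
	\end{tikzcd}
\]
where the right vertical is the unit of $i^* \dashv i_*$ and the left vertical is obtained by applying $j_\sharp$ to the canonical map $j^* F \to \pt$ (to the terminal object of $D(U)$).

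Next, I would use the hypothesis that $j^* F$ is terminal: the canonical map $j^* F \to \pt$ is then an equivalence, so applying $j_\sharp$ gives that the left vertical arrow $j_\sharp j^* F \to j_\sharp \pt$ is an equivalence. In a square where one pair of parallel arrows consists of equivalences, the square is coCartesian if and only if the other pair of parallel arrows consists of equivalences as well (a pushout square with one parallel pair invertible forces the other to be invertible, and conversely any square with both pairs of parallel arrows invertible is coCartesian).

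Applying this observation to the horizontal arrows is not quite what we want; the cleanest statement is that with the left vertical invertible, $\square_i(F)$ is coCartesian if and only if the right vertical $F \to i_* i^* F$ is invertible. This is the desired conclusion. There is no real obstacle here — the only thing to be careful about is invoking Lemma \ref{lem:desc reduced gluing} correctly to identify the bottom-left vertex as $j_\sharp \pt$, which is what makes the left vertical manifestly an application of $j_\sharp$ to $j^* F \to \pt$ and hence an equivalence under the hypothesis on $F$.
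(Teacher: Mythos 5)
Your proof is correct and takes essentially the same approach as the paper: both invoke Lemma \ref{lem:desc reduced gluing} to identify the left vertical arrow of $\square_i(F)$ as $j_\sharp$ applied to $j^*F \to \pt$, conclude it is invertible under the hypothesis, and then observe that a commutative square with one pair of parallel arrows invertible is coCartesian precisely when the other pair is invertible.
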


\begin{defn}[Localization] \label{defn:localization}
	Say $D$ has localization for $i$ if
	\[
		D(Z) \xrightarrow{i_*} D(S) \xrightarrow{j^*} D(U)
	\]
	is a fibre sequence of categories, \ie{} $i_*$ is fully faithful with essential image given by those $F \in D(S)$ such that $j^* F$ is terminal.
\end{defn}

\begin{prp} \label{prp:gluing implies loc}
	If $D$ has gluing for $i$, then $F \in D(S)$ is in the essential image of $i_*$ if and only if $j^* F$ is terminal. Furthermore, if $i_*$ is conservative, then it is fully faithful, so
	\[
		D(Z) \xrightarrow{i_*} D(S) \xrightarrow{j^*} D(U)
	\]
	is a fibre sequence of categories, and if $j^*$ preserves weakly contractible colimits, then so does $i_*$.
	\begin{proof}
		Since $\square_i$ is coCartesian, \Cref{lem:gluing at kernel} tells us that if $j^* F$ is terminal, then $F \to i_* i^* F$ is invertible, whence $F$ is in the essential image of $i_*$. The converse follows from the fact that $D$ is reduced, and has quasi-admissible base change, so $j^* i_*$ factors through a right adjoint functor from $D(\emptyset) = \pt$.

		Next we will consider the property that $i_*$ is fully faithful. Note that since $j^* i_*$ is terminal, \Cref{lem:gluing at kernel} says that since $\square_i \circ i_*$ is coCartesian, the map $(\id \to i_* i^*)i_*$ is invertible, so by the triangle identities for $i^* \dashv i_*$, we have that $i_* (i^* i_* \to \id)$ is invertible. Thus, if $i_*$ is conservative, then the counit of $i^* \dashv i_*$ is an equivalence, so the right adjoint $i_*$ is fully faithful.

		The sequence of categories is a fibre sequence since the first map is fully faithful with essential image given by the kernel of the second map, \ie{} those $F \in D(S)$ such that $j^* F$ is terminal.

		Finally, since $i_*$ is fully faithful, to show that it preserves weakly contractible colimits, we just need to show that its essential image is closed under weakly contractible colimits in $D(S)$. By our description of the essential image as the kernel of $j^*$, we must show that if $K$ is a weakly contractible simplicial set, and $F : K \to D(S)$ is a functor such that for each $p \in K$, $j^* F(p)$ is terminal, then $j^* \varinjlim F$ is terminal.

		Indeed, if $j^*$ preserves weakly contractible colimits, it suffices to show that $\varinjlim_{p \in K} j^* F(p)$ is terminal, but this is a weakly contractible colimit of terminal objects, so it is terminal.
	\end{proof}
\end{prp}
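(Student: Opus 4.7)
The plan is to prove the three claims in sequence, each resting on a preliminary observation: $j^{*} i_{*} : D(Z) \to D(U)$ lands in terminal objects. To establish this, I would apply quasi-admissible base change along $j$ to the Cartesian square whose top-left corner is $Z \times_{S} U = \emptyset$ (using that $j$ is a complement of $i$), obtaining $i^{*} j_{\sharp} \simeq j'_{\sharp} (i')^{*}$, where $i' : \emptyset \to U$ and $j' : \emptyset \to Z$; this composite factors through $D(\emptyset) \simeq \pt$ since $D$ is reduced. Its right adjoint $j^{*} i_{*}$ therefore also factors through $\pt$, landing in terminal objects.

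Given this, the first claim is immediate in the reverse direction. For the forward direction, suppose $j^{*} F$ is terminal. By \Cref{lem:desc reduced gluing}, the left vertical arrow of $\square_{i}(F)$ is $j_{\sharp}$ applied to the map $j^{*} F \to \pt$, which is an equivalence. Since $\square_{i}(F)$ is coCartesian by the gluing hypothesis, \Cref{lem:gluing at kernel} then forces the unit $F \to i_{*} i^{*} F$ to be invertible, placing $F$ in the essential image of $i_{*}$.

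For the second claim, I would set $F = i_{*} G$ in the first: since $j^{*} i_{*} G$ is terminal, \Cref{lem:gluing at kernel} applied to $\square_{i}(i_{*} G)$ makes the unit $\eta_{i_{*} G} : i_{*} G \to i_{*} i^{*} i_{*} G$ an equivalence. A triangle identity for $i^{*} \dashv i_{*}$ then exhibits $\eta_{i_{*} G}$ as a section of $i_{*} \epsilon_{G}$, where $\epsilon$ is the counit, so $i_{*} \epsilon_{G}$ is also an equivalence; if $i_{*}$ is conservative then $\epsilon_{G}$ itself is invertible, hence $i_{*}$ is fully faithful. The three conditions from the footnote's definition of a localization sequence then follow: full faithfulness of $i_{*}$ is just obtained; the essential image of $i_{*}$ is the kernel of $j^{*}$ by the first claim; and $j^{*}$ is a localization in the footnote's sense because its left adjoint $j_{\sharp}$ is fully faithful by \Cref{lem:qadm mono loc}.

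The third claim is now essentially formal. Since $i_{*}$ is fully faithful with essential image $\{F \in D(S) : j^{*} F \text{ is terminal}\}$, it preserves a weakly contractible colimit iff that colimit, computed in $D(S)$, lies back in this kernel. If $F : K \to D(S)$ has $K$ weakly contractible with each $j^{*} F(k)$ terminal, and $j^{*}$ preserves $K$-indexed colimits, then $j^{*} \varinjlim_{K} F \simeq \varinjlim_{K} j^{*} F$ is a weakly contractible colimit of terminal objects in $D(U)$, and is therefore itself terminal. The main subtlety in the whole argument is really only the preliminary observation, where reducedness of $D$ must be correctly combined with quasi-admissible base change along $j$; everything else reduces to a formal manipulation of the gluing square via \Cref{lem:desc reduced gluing} and \Cref{lem:gluing at kernel} together with standard triangle identities.
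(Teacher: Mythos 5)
Your proposal is correct and follows essentially the same route as the paper's proof: establish that $j^* i_*$ is constant at a terminal object via quasi-admissible base change and reducedness, use \Cref{lem:gluing at kernel} for both the characterization of the essential image and the full faithfulness argument (via the triangle identity and conservativity), then assemble the localization sequence with \Cref{lem:qadm mono loc}, and finally observe that the kernel of $j^*$ is closed under weakly contractible colimits. The only cosmetic difference is that you spell out the factorization $j^* i_* \simeq (i')_* (j')^*$ through $D(\emptyset) = \pt$ explicitly, which the paper handles tersely (it is carried out in the proof of \Cref{lem:desc reduced gluing}).
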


In view of \Cref{prp:gluing implies loc}, we make the following definition of cohomological closedness:
\begin{defn}
	Say a map $i : Z \to S$ in $\mathcal C$ is $D$-closed if it is closed, $i_* : D(Z) \to D(S)$ is conservative, and $D$ has gluing for $i$.
\end{defn}

The following result can be seen as a generalization of nil-invariance results, \cf{} \cite[Theorem A and Corollary 3.2.6]{locspalg}, \cite[Lemma 2.13]{SixAlgSp}, and \cite[Theorem 3.15]{SixAlgSt}.
\begin{rmk}[Invariance for ``surjective closed immersions''] \label{rmk:nil invar}
	Let $i : Z \to S$ be a $D$-closed map whose complement is initial. Then the map $i^* : D(S) \to D(Z)$ is an equivalence.
	\begin{proof}
		Since $i$ is $D$-closed, and $j : \emptyset \to S$ is a complement of $i$, \Cref{prp:gluing implies loc} says that $i_* : D(Z) \to D(S)$ is fully faithful with essential image given by those $F \in D(S)$ such that $j^*(F)$ is terminal. Since $D$ is reduced, we have that $j^*(F)$ is terminal for all $F \in D(S)$, so $i_*$ is essentially surjective, so it is an equivalence. Therefore, its left adjoint $i^*$ is also an equivalence.
	\end{proof}
\end{rmk}

We have the following locality result for $D$-closedness:
\begin{lem} \label{lem:locality of closedness}
	Suppose that $D$ lifts to a pullback formalism, and let $\{X_k \to S\}_k$ be a quasi-admissible $D$-pseudocover of $S$ such that for each $k$, the base change of $i$ along $X_k \to S$ is $D$-closed. Then $i$ is $D$-closed.
	\begin{proof}
		Since $D$ has quasi-admissible base change, we have a commutative square
		\[
			\begin{tikzcd}
				D(Z) \ar[d] \ar[r, "i_*"] & D(S) \ar[d] \\
				\prod_k D(Z \times_S X_k) \ar[r] & \prod_k D(X_k)
			\end{tikzcd}
		,\]
		and we have assumed that the bottom arrow and the right arrow are conservative. By \Cref{thm:D-topology}, we know that $\{Z \times_S X_k \to Z\}_k$ is a $D$-pseudocover, so the left arrow is also conservative, whence $i_*$ is conservative.

		Finally, we know that $D$ has gluing for $i$ by \Cref{rmk:gluing base change}.
	\end{proof}
\end{lem}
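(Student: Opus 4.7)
The plan is to verify the three defining conditions of $D$-closedness for $i$ directly: $i$ is closed (immediate, since it is already assumed closed), $i_*$ is conservative, and $D$ has gluing for $i$.

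For conservativity of $i_*$, I would exploit quasi-admissible base change to form the commutative square
\[
	\begin{tikzcd}
		D(Z) \ar[d] \ar[r, "i_*"] & D(S) \ar[d] \\
		\prod_k D(Z \times_S X_k) \ar[r, "\prod_k (i_k)_*"'] & \prod_k D(X_k)
	\end{tikzcd}
\]
where $i_k$ denotes the base change of $i$ along $X_k \to S$. The right vertical is conservative by hypothesis, since $\{X_k \to S\}_k$ is a quasi-admissible $D$-pseudocover. The bottom horizontal is conservative because by hypothesis each $i_k$ is $D$-closed, which includes the conservativity of $(i_k)_*$. Hence the composite around the bottom is conservative, which forces the top map $i_*$ to be conservative as well. (The only subtlety here is that I need to know $\{Z \times_S X_k \to Z\}_k$ is a quasi-admissible $D$-pseudocover, but this follows from \Cref{thm:D-topology}, which tells us the quasi-admissible $D$-covering sieves form a Grothendieck topology and are in particular stable under base change along $i$.)

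For the gluing property, the work has essentially already been packaged into \Cref{rmk:gluing base change}: it states that if $\{X_k \to S\}_k$ is a quasi-admissible $D$-pseudocover, then gluing for $i$ can be checked after base change along the $X_k \to S$. Since by hypothesis each base change $i_k$ is $D$-closed, $D$ has gluing for each $i_k$, and we are done.

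The only potential obstacle is a bookkeeping one: ensuring the assumption that $D$ lifts to a pullback formalism (rather than just being a reduced presheaf with quasi-admissible base change) is what gives access to \Cref{thm:D-topology} for the stability of pseudocovers under base change, and to \Cref{rmk:gluing base change} in the form needed. Given these inputs, no further computation is required.
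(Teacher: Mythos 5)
Your proposal is correct and follows essentially the same route as the paper: you form the identical commutative square, invoke \Cref{thm:D-topology} to transfer the pseudocover property across the pullback along $i$, and then appeal to \Cref{rmk:gluing base change} for the gluing property. The only cosmetic difference is phrasing; the substance matches the paper's argument step for step.
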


\begin{prp} \label{prp:loc for stab PF}
	If $D(Z), D(U), D(S)$ are stable categories, then the following are equivalent:
	\begin{enumerate}

		\item $i^*, j^*$ are jointly conservative and $i_*$ is fully faithful.

		\item $i$ is $D$-closed.

		\item $D$ has localization for $i$.

	\end{enumerate}
	In this case we also have an exact triangle of endofunctors of $D(S)$:
	\[
		j_\sharp j^* \to \id \to i_* i^*
	.\]
	If $D(Z)$ is presentable, $D(S)$ is locally small, and $j^*$ has a right adjoint $j_*$, then $i_*$ has a right adjoint $i^\sharp$, there is an exact triangle
	\[
		i_* i^\sharp \to \id \to j_* j^*
	,\]
	and $i^\sharp, j^*$ are jointly conservative.
	\begin{proof}
		Quasi-admissible base change implies that both $i^* j_\sharp$ and $j^* i_*$ factor through $D(\emptyset)$ by exact functors. Since $D$ is reduced, $D(\emptyset) = 0$, whence we have that $i^* j_\sharp$ and $j^* i_*$ are zero functors.

		The equivalence then follows from \Cref{rmk:ptd gluing,lem:char ex seq of st cats}, where we use \Cref{lem:qadm mono loc} to see that the left adjoint of $j^*$ is fully faithful. \Cref{rmk:ptd gluing} also gives the first exact triangle.

		To see that $i_*$ has a right adjoint, note that by \Cref{prp:gluing implies loc}, $i_*$ preserves weakly contractible colimits, but it also preserves finite colimits since it is an exact functor of stable categories, so it preserves all small colimits. Thus, \cite[Corollary 5.5.2.9 and Remark 5.5.2.10]{htt} show that $i_*$ has a right adjoint. We obtain the second exact triangle by taking right adjoints.
		
		Finally, since $i_* i^\sharp \to \id \to j_* j^*$ is exact, it follows that $i^\sharp,j^*$ are jointly conservative by the 5-lemma (\eg{} \stackscite{014A}).
	\end{proof}
\end{prp}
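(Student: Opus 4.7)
My plan is to extract two structural facts first, then deduce everything from them. Since $j : U \to S$ is a quasi-admissible complement of $i$, the fibred product $Z \times_S U$ is initial, so $D(Z \times_S U) \simeq \mathrm{pt}$ by reducedness of $D$, which in the stable setting is the zero category. Quasi-admissible base change then yields $i^* j_\sharp \simeq 0$ and $j^* i_* \simeq 0$. In parallel, \Cref{lem:qadm mono loc} applied to the quasi-admissible monomorphism $j$ (a monomorphism since it is a complement) gives that $j_\sharp$ is fully faithful, so $j^*$ is a Bousfield localization. These two observations are the engine of the proof.

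For the equivalence of (1), (2), (3): the implication (2) $\Rightarrow$ (3) is immediate from \Cref{prp:gluing implies loc}, and (3) $\Rightarrow$ (1) is straightforward, since full faithfulness of $i_*$ is part of the definition of a localization sequence, and if $j^* F \simeq 0$ then $F \simeq i_* i^* F$, so $i^* F \simeq 0$ forces $F \simeq 0$. I expect the main obstacle to be (1) $\Rightarrow$ (2), which I would handle as follows. For $F \in D(S)$, form $C \coloneqq \mathrm{cofib}(j_\sharp j^* F \to F)$ of the counit. Applying $j^*$ yields $\mathrm{cofib}(\mathrm{id}) \simeq 0$ by the triangle identity and full faithfulness of $j_\sharp$; applying $i^*$ yields $\mathrm{cofib}(0 \to i^* F) \simeq i^* F$ using $i^* j_\sharp \simeq 0$. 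The composite $j_\sharp j^* F \to F \to i_* i^* F$ is null (its adjunct lands in $j^* i_* i^* F \simeq 0$), so the unit factors canonically as $F \to C \to i_* i^* F$, and the triangle identity for $i^* \dashv i_*$ shows that $C \to i_* i^* F$ becomes an equivalence under $i^*$; it is trivially one under $j^*$, so joint conservativity closes the argument. Conservativity of $i_*$, required by (2), drops out of full faithfulness.

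The remaining assertions follow formally. The exact triangle $j_\sharp j^* \to \mathrm{id} \to i_* i^*$ is precisely the stable reformulation of the gluing square supplied by \Cref{rmk:ptd gluing}. For the right adjoint $i^\sharp$: \Cref{prp:gluing implies loc} gives preservation of weakly contractible colimits by $i_*$, and exactness between stable categories adds preservation of finite colimits, so $i_*$ preserves all small colimits; the adjoint functor theorem then produces $i^\sharp$ under the presentability and local smallness hypotheses. The second triangle $i_* i^\sharp \to \mathrm{id} \to j_* j^*$ is obtained by taking right adjoints componentwise of the first, via the duality between left and right adjoint functor categories (turning the cofibre sequence of left adjoints into a fibre sequence of right adjoints). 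Finally, joint conservativity of $i^\sharp$ and $j^*$ is immediate from this second triangle: if both vanish on $F$, then $F$ sits as the fibre of the zero map $0 \to 0$, hence $F \simeq 0$.
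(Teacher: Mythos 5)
Your proof is correct, and the ``engine'' facts you isolate at the start ($i^* j_\sharp \simeq 0 \simeq j^* i_*$ via reducedness and base change, $j_\sharp$ fully faithful via \Cref{lem:qadm mono loc}) are exactly the paper's opening moves. Where you diverge is in the equivalence of (1), (2), (3): the paper outsources this entirely to the abstract stable-category fact \Cref{lem:char ex seq of st cats} (noting the hypotheses $j^* i_* \simeq 0$ and $j_\sharp$ fully faithful put it in scope), whereas you prove the key implication $(1) \Rightarrow (2)$ by hand, forming $C = \cofib(j_\sharp j^* F \to F)$ and testing $C \to i_* i^* F$ against $i^*$ and $j^*$. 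That direct argument is in effect an inlined proof of the relevant direction of \Cref{lem:char ex seq of st cats}, so both routes are mathematically equivalent; the paper's is more modular, yours more self-contained. One small soft spot: you assert the unit $F \to i_* i^* F$ ``factors canonically'' through $C$. The nullhomotopy of $j_\sharp j^* F \to F \to i_* i^* F$ is unique up to contractible ambiguity because the mapping space $D(S)(j_\sharp j^* F, i_* i^* F) \simeq D(U)(j^* F, j^* i_* i^* F) \simeq *$ is contractible (this is what \Cref{lem:desc reduced gluing} records), and without this observation the claimed canonicity of the comparison map $C \to i_* i^* F$ is not justified; you should supply it. The remaining assertions (the two exact triangles, existence of $i^\sharp$ via the adjoint functor theorem, joint conservativity of $i^\sharp, j^*$) follow by the same reasoning the paper uses, with your direct fibre argument for the final conservativity doing the same work as the paper's appeal to the 5-lemma.
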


\begin{rmk}[Constructible separation] \label{rmk:constructible sep}
	Inspired by \Cref{prp:loc for stab PF}, we can define the $D$-constructible topology on $\mathcal C$ to be the smallest topology such that the empty sieve covers the initial object, and if $i$ is a $D$-closed map with complement $j$, then $\{i,j\}$ is a covering family. If $D$ takes values in stable categories, then any $D$-constructible cover is a $D$-pseudocover.
\end{rmk}

\subsection{Properties of cohomologically closed maps} \label{S:coh cl}

We continue to assume \Cref{ass:gluing}.

The main result of this section is
\begin{thm} \label{thm:closed}
	Let $\widehat{\Cat}_{\text{pointed}} \subseteq \widehat{\Cat}$ be the subcategory consisting of pointed categories and functors between them that preserves zero objects. Suppose $D$ takes values in $\widehat{\Cat}_{\text{pointed}}$, and $i$ is $D$-closed.
	\begin{enumerate}

		\item\label{itm:closed/morphism}
			Suppose $\phi : D \to D'$ is a transformation of reduced $\widehat{\Cat}_{\text{pointed}}$-valued presheaves with quasi-admissible base change, and that $\phi$ respects quasi-admissibility. If $D'$ has gluing for $i$, then $\phi$ is right adjointable at $i$, and the converse holds if $D'(S)$ is generated by the essential image of $\phi : D(S) \to D'(S)$ under colimits that are preserved by $i_* : D'(Z) \to D'(S)$.

		\item If $D$ lifts to a presheaf $\mathcal C^\op \to \CAlg(\widehat{\Cat}_{\text{pointed}})$\footnotemark{} that satisfies the quasi-admissible projection formula, then $D$ has the right projection formula for $i$.
			\footnotetext{The category $\CAlg(\widehat{\Cat}_{\text{pointed}})$ is the category of symmetric monoidal categories $\mathcal C$ such that $\mathcal C$ has a zero object $0$, and $0 \otimes -$ is the constant functor with value $0$.}

		\item If all quasi-admissible base changes of $i$ are $D$-closed, then $D$ has quasi-admissible exchange for $i$.

		\item If all base changes of $i$ are $D$-closed, then $D$ has right base change for $i$.

	\end{enumerate}
	\begin{proof}
		Follows immediately from \Cref{prp:gluing morphisms,prp:closed bc}.
	\end{proof}
\end{thm}

Before turning our attention to the proofs of \Cref{prp:closed bc,prp:gluing morphisms} needed to establish this \lcnamecref{thm:closed}, we consider the following important consequence:

\begin{cor} \label{cor:closed immersions are proper}
	If $D$ is a pointed reduced pullback formalism, and $i$ is $D$-closed, then the functor $i_*$ has a right adjoint $i^\sharp$, and if all base changes of $i$ are also $D$-closed, then $i$ is $D$-quasi-proper.
	\begin{proof}
		Since $i$ is a $D$-closed map, the existence of a right adjoint of $i_*$ follows from \Cref{prp:gluing implies loc} since $D$ takes values in pointed presentable categories. When all base changes of $i$ are $D$-closed, we conclude by \Cref{thm:closed}.
	\end{proof}
\end{cor}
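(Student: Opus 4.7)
The plan is to deduce both assertions directly from the earlier results, principally \Cref{prp:gluing implies loc} and \Cref{thm:closed}, without introducing any new constructions.

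For the existence of $i^\sharp$, I would verify the hypotheses of the adjoint functor theorem applied to $i_* : D(Z) \to D(S)$. Since $D$ is a pullback formalism, each $D(S)$ is presentable and each $f^*$ is a left adjoint (as a morphism in $\PrL$); in particular $j^*$ preserves all small colimits, hence in particular weakly contractible colimits. \Cref{prp:gluing implies loc} then guarantees that $i_*$ also preserves weakly contractible colimits. To upgrade this to preservation of all small colimits it remains only to handle the empty colimit: since $D$ is pointed, an initial object in $D(Z)$ is simultaneously terminal, and $i_*$ preserves terminal objects as a right adjoint of $i^*$. Combining the two, $i_*$ preserves all small colimits, and since $D(Z)$ is presentable while $D(S)$ is locally small, the adjoint functor theorem \cite[Corollary 5.5.2.9]{htt} produces the desired $i^\sharp$.

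For the second statement, I would unpack the definition of $D$-proper and verify each clause in turn. Suppose $i$ is universally $D$-closed. Then $i$ itself is $D$-closed, so by the first part $i_*$ admits a right adjoint; consequently $i_*$ is itself a left adjoint, and so preserves all small colimits (handling the colimit-preservation condition in \Cref{defn:proper}). The remaining three conditions — the right projection formula, right base change, and quasi-admissible exchange for $i$ — are exactly what is supplied by parts (2), (4), and (3) of \Cref{thm:closed} respectively, using that any pullback formalism satisfies the quasi-admissible projection formula, and that all (respectively, all quasi-admissible) base changes of $i$ are $D$-closed by hypothesis.

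I do not expect any genuine obstacle here: all of the technical work is already packaged in the results on gluing and localization, and the task is essentially to read off the definition of $D$-properness from those statements. The only subtle point worth flagging is the treatment of the empty colimit in the first part, which is precisely where the pointedness hypothesis on $D$ intervenes; without it one would obtain $i_*$-preservation only of weakly contractible colimits, which is insufficient to invoke the adjoint functor theorem.
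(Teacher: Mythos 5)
Your proof is correct and follows the same overall route as the paper's (terse) proof: Proposition~\ref{prp:gluing implies loc} plus the adjoint functor theorem for the existence of $i^\sharp$, and Theorem~\ref{thm:closed} for $D$-properness. The paper's proof of the first assertion is a single sentence (``follows from \Cref{prp:gluing implies loc}, since $D$ takes values in presentable categories'') and does not spell out the step you flag, namely upgrading preservation of weakly contractible colimits to preservation of all small colimits; you correctly identify that pointedness is exactly what closes this gap (binary coproducts become pushouts over the zero object, infinite coproducts become filtered colimits of finite ones, and the empty colimit is handled because $i_*$, as a right adjoint, preserves the terminal object, which is the zero object). Your verification of the second assertion by matching the four clauses of Definition~\ref{defn:proper} against Theorem~\ref{thm:closed}(2),(4),(3) and the already-established colimit-preservation of $i_*$ is also correct and is what the paper means by ``conclude by \Cref{thm:closed}.''
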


Now we address the proof of \Cref{thm:closed}.

\begin{prp} \label{prp:gluing morphisms}
	Let $\phi : D \to D'$ be a morphism of reduced presheaves with quasi-admissible base change on $\mathcal C$, and assume that $i$ is $D$-closed, and $\phi : D(U) \to D'(U)$ preserves terminal objects. If $D'$ has gluing for $i$, then $\phi$ is right adjointable at $i$.

	Conversely, if $\phi$ is right adjointable at $i$, then $D'$ has gluing for $i$ at any object in the essential image of $\phi : D(S) \to D'(S)$.
	\begin{proof}
		The square
		\[
			\begin{tikzcd}
				D(S) \ar[d, "\phi"'] \ar[r, "i^*"] & D(Z) \ar[d, "\phi"] \\
				D'(S) \ar[r, "i^*"'] & D'(Z)
			\end{tikzcd}
		\]
		is horizontally right adjointable if and only if the composite
		\[
			\phi i_* \to i_* i^* \phi i_* \simeq i_* \phi i^* i_* \to i_* \phi
		\]
		is invertible. Since $i$ is $D$-closed, $i_*$ is fully faithful, so the map $i^* i_* \to \id$ is invertible, so this is equivalent to the condition that
		\[
			\phi i_* \to i_* i^* \phi i_*
		\]
		is invertible.

		Let $j : U \to S$ be a quasi-admissible complement of $i$. Then
		\[
			j^* \phi i_* \simeq \phi j^* i_* \simeq \phi \pt \simeq \pt
		,\]
		where the last equivalence follows from our assumption that $\phi : D(U) \to D'(U)$ preserves terminal objects.

		Thus, by \Cref{lem:gluing at kernel}, if $D'$ has gluing for $i$, then the unit $\phi i_* \to i_* i^* \phi i_*$ is invertible, so that the square is horizontally right adjointable.

		Conversely, if the square is horizontally right adjointable, then $\phi \circ \square^D_i \simeq \square^{D'}_i \circ \phi$ by \Cref{lem:desc reduced gluing}, so since $\phi$ preserves colimits, and $\square^D_i$ is coCartesian, so is $\square^{D'}_i \circ \phi$.
	\end{proof}
\end{prp}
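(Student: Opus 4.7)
The plan is to compute the mate of the square in question and use the characterization of gluing from \Cref{lem:gluing at kernel}, which says that on objects whose pullback along $j$ is terminal, the gluing square $\square_i$ is coCartesian if and only if the unit $F \to i_* i^* F$ is invertible. Because $i$ is $D$-closed, \Cref{prp:gluing implies loc} tells us that $i_*$ is fully faithful for $D$, so $i^* i_* \simeq \id$. Hence the mate for the square associated to $\phi$ and $i$ is the composite
\[
\phi i_* \to i_* i^* \phi i_* \simeq i_* \phi i^* i_* \xrightarrow{\sim} i_* \phi,
\]
and showing it is an equivalence reduces to showing that the unit $\phi i_* \to i_* i^* \phi i_*$ is invertible.

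The key preliminary step is to compute $j^*(\phi i_* F)$. Since $\phi$ is a morphism of presheaves, $j^* \phi \simeq \phi j^*$; by quasi-admissible base change for $D$ the functor $j^* i_*$ factors through $D(Z \times_S U) = D(\emptyset) \simeq \pt$ (using that $D$ is reduced), so $j^* i_* F$ is terminal in $D(U)$. By hypothesis $\phi : D(U) \to D'(U)$ preserves terminal objects, so $j^*(\phi i_* F)$ is terminal in $D'(U)$. Granted this, the forward direction is immediate: if $D'$ has gluing for $i$, then \Cref{lem:gluing at kernel} applied in $D'$ forces $\phi i_* F \to i_* i^* (\phi i_* F)$ to be an equivalence, so the mate is an equivalence and $\phi$ is right adjointable at $i$.

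For the converse, I would transport the coCartesian square $\square_i^D$ through $\phi$. Using the compatibility of $\phi$ with $j_\sharp$ and $j^*$ (respecting quasi-admissibility), with $i^*$ (morphism of presheaves), and with $i_*$ (right adjointability at $i$), the construction of $\square_i$ recorded in \Cref{lem:desc reduced gluing} is preserved by $\phi$ up to equivalence, giving an identification $\phi \circ \square_i^D \simeq \square_i^{D'} \circ \phi$. Since $D$ has gluing for $i$ and $\phi$ preserves pushouts, $\square_i^{D'}(\phi F)$ is coCartesian for every $F \in D(S)$, which is exactly the statement that $D'$ has gluing for $i$ at every object in the essential image of $\phi$. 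The main technical work will be in this last step, namely assembling the compatibilities of $\phi$ with the various adjoints into a genuine equivalence of squares rather than a mere pointwise comparison — but the uniqueness portion of \Cref{lem:desc reduced gluing} reduces this to checking that the top and right edges are transported correctly, which is immediate from the two compatibilities listed above.
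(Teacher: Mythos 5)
Your proposal is correct and follows essentially the same route as the paper: reduce right adjointability to the invertibility of the unit $\phi i_* \to i_*i^*\phi i_*$ using full faithfulness of $i_*$, verify that $j^*\phi i_*$ is terminal, and invoke \Cref{lem:gluing at kernel}; for the converse, transport $\square_i^D$ through $\phi$ via the uniqueness statement of \Cref{lem:desc reduced gluing}. One small observation: both your argument and the paper's quietly use that $\phi$ is left adjointable at $j$ (to identify the top edges $\phi\, j_\sharp j^* \simeq j_\sharp j^* \phi$) and that $\phi$ preserves pushouts; these are not literally in the proposition's hypothesis clause but are supplied by the ambient conventions in which the result is applied, so this is an imprecision shared with the paper rather than a gap in your proof.
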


\begin{prp}[Closed base change] \label{prp:closed bc}
	Let
	\[
		\begin{tikzcd}
			Z' \ar[d, "\bar \sigma"'] \ar[r, "i'"] & S' \ar[d, "\sigma"] & \ar[l, "j'"'] \ar[d, "\sigma_U"] U' \\
			Z \ar[r, "i"'] & S & \ar[l, "j"] U
		\end{tikzcd}
	\]
	be a Cartesian square in $\mathcal C$, and assume $i$ is $D$-closed.
	\begin{enumerate}

		\item Suppose $\sigma_U^*$ preserves terminal objects. If $D$ has gluing for $i'$, then $D$ has right base change for $i$ against $\sigma$, and the converse holds if $\sigma^*$ is essentially surjective.

		\item If $D$ has gluing for $i'$, $\sigma$ is quasi-admissible, and $(\sigma_U)_\sharp$ preserves terminal objects, then $D$ has right-left base change for $i$ against $\sigma$.

		\item Suppose that $D$ takes values in $\CAlg(\widehat{\Cat})$, and satisfies the quasi-admissible projection formula. If $j^* M \otimes \pt \simeq \pt$ for any $M \in D(S)$, then $D$ has the right projection formula for $i$.

	\end{enumerate}

	\begin{proof}\hfill
		\begin{enumerate}

			\item This follows from \Cref{prp:gluing morphisms,exa:base change morph}.

			\item This follows from \Cref{prp:gluing morphisms,exa:sharp morph}.

			\item This follows from \Cref{prp:gluing morphisms,exa:tensor morph}.

		\end{enumerate}

	\end{proof}
\end{prp}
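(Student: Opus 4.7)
The plan is to deploy the ``reduction to morphisms'' strategy from Section \ref{S:red to morph} in all three parts. Each part isolates one of the desired compatibilities (base change, exchange, projection formula) for the closed immersion $i$, and we would like to express that compatibility as the right adjointability of a suitable natural transformation of presheaves on a pullback context built from $\mathcal C_{/S}$. Once this is done, the gluing hypothesis for $i'$ is exactly what \Cref{prp:gluing morphisms} needs to conclude the right adjointability, provided we verify its ``preserves terminal objects'' hypothesis over $U$.

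Concretely, for part (1) I would invoke \Cref{exa:base change morph} applied to $\sigma : S' \to S$ to obtain the morphism $\phi : \pi^* D \to (\pi')^* D$ whose value at an object $W \in \mathcal C_{/S}$ is $D(W) \to D(W \times_S S')$. Right adjointability of $\phi$ at $i : Z \to S$ (viewed inside $\mathcal C_{/S}$) unwinds to right base change for $i$ against $\sigma$, and evaluating $\phi$ at $U \in \mathcal C_{/S}$ recovers $\sigma_U^*$, so the hypothesis that $\sigma_U^*$ preserves terminal objects is exactly the hypothesis of \Cref{prp:gluing morphisms}. The gluing assumption for $i'$ on $D$ is the gluing assumption for the closed map $i$ on $(\pi')^* D$, giving the implication. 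The converse uses the second half of \Cref{prp:gluing morphisms}: essential surjectivity of $\sigma^*$ means every object of $D(S')$ lies in the essential image of $\phi$, so gluing for $i'$ follows.

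For part (2), I would instead use \Cref{exa:sharp morph} to obtain $\psi : (\pi')^* D \to \pi^* D$ whose value at $W$ is $(W \times_S S' \to W)_\sharp$; right adjointability of $\psi$ at $i$ is precisely right-left base change for $i$ against $\sigma$. At $U$, the functor $\psi$ evaluates to $(\sigma_U)_\sharp$, so the assumption that $(\sigma_U)_\sharp$ preserves terminal objects again matches the hypothesis of \Cref{prp:gluing morphisms}. Part (3) is analogous using \Cref{exa:tensor morph}: tensoring with any fixed $N \in D(S)$ defines a morphism $\otimes N : \pi^* D \to \pi^* D$, right adjointability at $i$ is the right projection formula for $i$ with coefficient $N$, and evaluation at $U$ gives $\otimes j^* N$, so the assumption $j^* N \otimes \pt \simeq \pt$ supplies the required preservation of terminal objects.

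The only real subtlety, and the step I would be most careful about, is checking that these three identifications (right adjointability of $\phi$, $\psi$, and $\otimes N$ at $i$ with the three base change/projection formula properties, and their values at $U$) are literally what the examples of Section \ref{S:red to morph} produce, so that the hypotheses on $\sigma_U^*$, $(\sigma_U)_\sharp$, and $j^* N \otimes \pt$ plug into \Cref{prp:gluing morphisms} unchanged. Once this bookkeeping is done, each part is a one-line application.
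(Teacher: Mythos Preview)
Your proposal is correct and follows exactly the paper's approach: each of the three parts is obtained by feeding the relevant example from \Cref{S:red to morph} (\Cref{exa:base change morph}, \Cref{exa:sharp morph}, \Cref{exa:tensor morph}) into \Cref{prp:gluing morphisms}, with the terminal-object hypothesis at $U$ matching up precisely as you describe. The bookkeeping you flag as the only subtlety is indeed the entire content, and your identification of the converse in part (1) via essential surjectivity of $\sigma^*$ is also exactly how the second half of \Cref{prp:gluing morphisms} is used.
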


\subsection{Excision and Descent} \label{S:excision}

The notion of $D$-closedness allows us to consider ``excision'' for $D$, as well as some versions of ``cdh'' covers. We will now show some  results about cdh descent and excision that follow from $D$-closedness.

We continue to assume \Cref{ass:gluing}.

It will be useful to fix a wide subcategory $\mathcal C^\sharp$ of $\mathcal C$ such that for any morphism $g$ in $\mathcal C^\sharp$, the functor $g^*$ admits a right adjoint $g_*$ that admits a further right adjoint $g^\sharp$. We write $D^\sharp : (\mathcal C^\sharp)^\op \to \widehat{\Cat}$ for the presheaf that sends any map $g$ to the functor $g^\sharp$. Note that by \Cref{prp:gluing implies loc}, if $D(Z)$ and $D(S)$ has zero objects, and $i$ is $D$-closed, then $i_*$ admits a right adjoint, so we can choose $\mathcal C^\sharp$ to contain $i$.

\begin{prp} \label{prp:elementary cdh descent}
	Assume that $\mathcal C$ is locally small, that $D(Z)$, $D(S)$, and $D(U)$ are stable categories, that $i$ is $D$-closed, and let $\{p_k : X_k \to S\}$ be a small family of maps in $\mathcal C$. For each $k$, write $p_k^Z : X_k \times_S Z \to Z$ for the base change of $p_k$ along $i : Z \to S$, and write $p_k^U : X_k \times_S U \to U$ for the base change of $p_k$ along $j : U \to S$.
	\begin{enumerate}

		\item If $p_k$ is quasi-admissible for each $k$, and $\{p_k \times_S Z : X_k \times_S Z \to Z\}_k$ is a $D$-pseudocover, then $\{p_k\}_k \cup \{j\}$ is a $D$-pseudocover. If $D$ takes values in categories that admit small colimits, then $D$ has descent along the family $\{p_k\}_k \cup \{j\}$.

		\item If $\{p_k \times_S U : X_k \times_S U \to U\}_k$ is a $D$-pseudocover, then $\{p_k\}_k \cup \{i\}$ is a $D$-pseudocover. If $D$ takes values in categories that admit small limits, and $D$ has right base change for all base changes of $p_k$ for all $k$, and all base changes of $i$, then $D$ descends along $\{p_k\}_k \cup \{i\}$.

		\item Assume that $D$ takes values in presentable categories, $i \in \mathcal C^\sharp$, and that for all $k$, the map $p_k$ is in $\mathcal C^\sharp$, and $D$ has quasi-admissible exchange for $p_k$. If $\{p_k \times_S U : X_k \times_S U \to U\}_k$ is a $D^\sharp$-pseudocover, then $\{i\} \cup \{p_k\}_k$ is a $D^\sharp$-pseudocover.

			Furthermore, if $D$ has right base change for all maps in $\mathcal C^\sharp$, $\mathcal C^\sharp$ admits pullbacks, and the inclusion into $\mathcal C$ preserves pullbacks, then $D^\sharp$ descends along $\{i\} \cup \{p_k\}_k$.

	\end{enumerate}
	
	\begin{proof}
		For each $k$, we have a Cartesian square
		\[
			\begin{tikzcd}
				Z_k \ar[d, "\bar p_k"'] \ar[r, "i_k"] & X_k \ar[d, "p_k"] & \ar[l, "j_k"'] U_k \ar[d, "\mathring p_k"] \\
				Z \ar[r, "i"'] & S & \ar[l, "j"] U
			\end{tikzcd}
		.\]
		Note that by \Cref{prp:loc for stab PF}, both $\{i^*,j^*\}$ and $\{i^\sharp,j^*\}$ are jointly conservative.
		\begin{enumerate}

			\item Since $\{\bar p_k^*\}_k$ is jointly conservative, and for each $k$, $\bar p_k^* i^* \simeq i_k^* p_k^*$, it follows that a map $f \in D(S)$ satisfies that $i^* f$ is invertible if $p_k^* f$ is invertible for all $k$. Thus, $\{p_k^*\}_k \cup \{j^*\}$ is jointly conservative since $\{i^*,j^*\}$ is jointly conservative, so it follows from \Cref{thm:bc descent} that since $D$ has quasi-admissible base change, it descends along the family $\{p_k\}_k \cup \{j\}$.

			\item For each $k$, since $j_k^* p_k^* \simeq \mathring p_k^* j^*$, and $\{\mathring p_k^*\}_k$ is jointly conservative, it follows that a map $f$ in $D(X)$ satisfies that $j^* f$ is an equivalence if $p_k^* f$ is an equivalence for all $k$. Thus, $\{p_k^*\}_k \cup \{i^*\}$ is jointly conservative since $\{j^*, i^*\}$ is jointly conservative. By the dual of \Cref{thm:bc descent} (where the collection $Q$ consists of all base changes of maps of the form $p_k$ or $i$), we find that $D$ descends along $\{p_k\}_k \cup \{i\}$.

			\item Note that by \Cref{lem:bc prop map by qadm mono}, we have that the right adjoint $(\mathring p_k)_*$ of $\mathring p_k^*$ exists, and admits a further right adjoint $\mathring p_k^\sharp$.

				Since $D$ has quasi-admissible exchange for $p_k$, we know that $j_k^* p_k^\sharp \simeq \mathring p_k^\sharp j^*$. Since $\{\mathring p_k^\sharp\}_k$ is jointly conservative, it follows that a map $f$ in $D(X)$ satisfies that $j^* f$ is an equivalence if $p_k^\sharp f$ is an equivalence for all $k$. Thus, $\{p_k^\sharp\}_k \cup \{i^\sharp\}$ is jointly conservative since $\{j^*, i^\sharp\}$ is jointly conservative.

				If $D$ has right base change for all maps in $\mathcal C^\sharp$, and $\mathcal C^\sharp \to \mathcal C$ preserves pullbacks, it follows that $D^\sharp$ has left base change for all maps. Thus, by \Cref{thm:bc descent}, it follows that $D^\sharp$ has descent along $\{p_k\}_k \cup \{i\}$.

		\end{enumerate}
		
	\end{proof}
\end{prp}

\begin{prp} \label{prp:gluing implies exc}
	Assume $D$ takes values in stable categories, and let
	\[
		\begin{tikzcd}
			X_Z \ar[d, "\bar p"'] \ar[r, "i_X"] & X \ar[d, "p"] & \ar[l, "j_X"'] X_U \ar[d, "\mathring p"] \\
			Z  \ar[r, "i"'] & S & \ar[l, "j"] U
		\end{tikzcd}
	\]
	be Cartesian squares in $\mathcal C$, where $i, i_X$ are $D$-closed, and $j$ is a complement of $i$.

	\begin{enumerate}

		\item If $p$ is quasi-admissible, and $\bar p^*$ is an equivalence, then $D$ sends the right square to a Cartesian square.

		\item If $D$ has right base change and quasi-admissible exchange for $p$, and $\mathring p^*$ is an equivalence, then $D$ sends the left square to a Cartesian square. If the left square is in $\mathcal C^\sharp$, then $D^\sharp$ sends it to a Cartesian square.

	\end{enumerate}
	\begin{proof}
		We follow the strategy of the proof of \cite[Proposition 6.24]{sixopsequiv}.

		By \Cref{prp:loc for stab PF}, we know that $i^*, j^*$ are jointly conservative. If $\bar p^*$ is an equivalence, it follows that $p^*, j^*$ are jointly conservative, and if $\mathring p^*$ is an equivalence, it follows that $i^*, p^*$ are jointly conservative.

		\Cref{prp:loc for stab PF} also shows that $i^\sharp, j^*$ are jointly conservative. Note that if $\mathring p^*$ is an equivalence, then $\mathring p_*$ is an equivalence, so it admits a right adjoint $\mathring p^\sharp$. Since $D$ has right-left base change for $p$ against $j$, it follows that $i^\sharp, p^\sharp$ are jointly conservative.

		Thus, by \cite[Proposition 5.2.2.36]{ha}, it suffices to show that if $E_X \in D(X)$, then
		\begin{enumerate}

			\item in the first case: for any $E_U \in D(U)$ and equivalence $\mathring p^* E_U \simeq j_X^* E_X$, the natural maps
				\begin{align}
					\label{eqn:p qadm exc}
					p^*(j_* E_U \bigtimes_{j_* \mathring p_* \mathring p^* E_U} p_* E_X) &\to E_X \\
					\label{eqn:j qadm exc}
					j^*(j_* E_U \bigtimes_{j_* \mathring p_* \mathring p^* E_U} p_* E_X) &\to E_U
				\end{align}
				are equivalences;

			\item in the first statement of the second case: for any $E_Z \in D(Z)$ and equivalence $p_Z^* E_Z \simeq i_X^* E_X$, the natural maps
				\begin{align}
					\label{eqn:p proper exc}
					p^*(i_* E_Z \bigtimes_{i_* \bar p_* \bar p^* E_Z} p_* E_X) &\to E_X \\
					\label{eqn:i proper exc}
					i^*(i_* E_Z \bigtimes_{i_* \bar p_* \bar p^* E_Z} p_* E_X) &\to E_Z
				\end{align}
				are equivalences.

			\item in the second statement of the second case: for any $E_Z \in D(Z)$ and equivalence $\bar p^\sharp E_Z \simeq i_X^\sharp E_X$, the natural maps
				\begin{align}
					\label{eqn:sharp p proper exc}
					E_X &\to p^\sharp(i_* E_Z \coprod_{i_* \bar p_* \bar p^\sharp E_Z} p_* E_X) \\
					\label{eqn:sharp i proper exc}
					E_Z &\to i^\sharp(i_* E_Z \coprod_{i_* \bar p_* \bar p^\sharp E_Z} p_* E_X)
				\end{align}
				are equivalences.

		\end{enumerate}

		The arguments are analogous, so we will only consider the first case, and the second statement of the second case.

		\textbf{First case:}
		To show \eqref{eqn:j qadm exc} is invertible, note that by \Cref{lem:qadm mono loc}, we have that $j^* j_* \to \id$ is an equivalence, so we just need to show that
		\[
			E_U \times_{\mathring p_* \mathring p^* E_U} j^* p_* E_X \to E_U
		\]
		is an equivalence, but since $D$ has left base change for $j$ against $p$, $j^* p_* E_X \to \mathring p_* \mathring p^* E_U$ is equivalent to $\mathring p_* j_X^* E_X \to \mathring p_* \mathring p^* E_U)$, which is $\mathring p_*$ of the equivalence $j_X^* E_X \to \mathring p^* E_U)$.

		To show \eqref{eqn:p qadm exc} is an equivalence, it suffices to show that it is an equivalence after applying $j_X^*$ and $i_X^\sharp$, since by \Cref{prp:loc for stab PF}, $j_X^*, i_X^\sharp$ are jointly conservative. Indeed, $j_X^*$ of \eqref{eqn:p qadm exc} is $\mathring p^*$ of \eqref{eqn:j qadm exc}, which we already showed was an equivalence. To consider the case of $i_X^\sharp$, first note that $i_X^\sharp$ preserves pullbacks since it is an exact functor, and by \Cref{prp:closed bc}, we have that $\bar p^* i^\sharp \to i_X^\sharp p^*$ is an equivalence and $0 \simeq (\emptyset \to X_Z)_* (\emptyset \to U)^\sharp \to i^\sharp j_*$ is an equivalence (where we use \Cref{lem:gluing for acyclic} to see that $\emptyset \to U$ is $D$-closed), so we have equivalences
		\[
			i_X^\sharp p^* j_* \simeq \bar p^* i^\sharp j_* \simeq \bar p^* (\emptyset \to X_Z)_* (\emptyset \to U)^\sharp \simeq \bar p^* 0 \simeq 0
		.\]
		Thus, $i_X^\sharp$ of \eqref{eqn:p qadm exc} is
		\[
			i_X^\sharp p^* p_* E_X \to i_X^\sharp E_X
		,\]
		which is equivalent to
		\[
			\bar p^* \bar p_* i_X^\sharp E_X \to i_X^\sharp E_X
		\]
		by \Cref{prp:closed bc}. Since $\bar p^*$ is an equivalence, the counit $\bar p^* \bar p_* \to \id$ is an equivalence, so this map is an equivalence.

		\textbf{Second statement of the second case:}
		To show \eqref{eqn:sharp i proper exc} is invertible, note that since $i_*$ is fully faithful, we have that $\id \to i^\sharp i_*$ is an equivalence, so -- since $i^\sharp$ preserves pushouts as an exact functor -- we just need to show that
		\[
			E_Z \to E_Z \coprod_{\bar p_* \bar p^\sharp E_Z} i^\sharp p_* E_X
		\]
		is an equivalence, but since $D$ has right base change for $i$ against $p$ by \Cref{prp:closed bc}, $\bar p_* \bar p^\sharp E_Z \to i^\sharp p_* E_X$ is equivalent to $\bar p_* \bar p^\sharp E_Z \to \bar p_* i_X^\sharp E_X$, which is $\bar p_*$ of the equivalence $\bar p^\sharp E_Z \to p_X^\sharp E_X)$.

		To show \eqref{eqn:sharp p proper exc} is an equivalence, it suffices to show that it is an equivalence after applying $j_X^*$ and $i_X^\sharp$, since by \Cref{prp:loc for stab PF}, $j_X^*, i_X^\sharp$ are jointly conservative. Indeed, $i_X^\sharp$ of \eqref{eqn:sharp p proper exc} is $\bar p^\sharp$ of \eqref{eqn:sharp i proper exc}, which we already showed was an equivalence. To consider the case of $j_X^*$, first note that $j_X^*$ preserves coproducts since it is an exact functor, and since $D$ has right-left base change for $p$ against $j$, we have that $j_X^* p^\sharp \to \mathring p^\sharp j^*$ is an equivalence, so we have equivalences
		\[
			0 \simeq \mathring p^\sharp 0 \simeq \mathring p^\sharp (\emptyset \to U)_* (\emptyset \to Z)^* \simeq \mathring p^\sharp j^* i_* \simeq j_X^* p^\sharp i_*
		.\]
		Thus, $j_X^*$ of the \eqref{eqn:sharp p proper exc} is
		\[
			j_X^* E_X \to j_X^* p^\sharp p_* E_X
		,\]
		which is equivalent to
		\[
			j_X^* E_X \to \mathring p^\sharp \mathring p_* j_X^* E_X
		\]
		since $D$ has right-left base change and right base change for $p$ against $j$, where the fact about right base change follows from the fact that $j$ is quasi-admissible and $D$ has quasi-admissible base change. Since $\mathring p^*$ is an equivalence, the unit $\id \to \mathring p^\sharp \mathring p_*$ is an equivalence, so this map is an equivalence.
	\end{proof}
\end{prp}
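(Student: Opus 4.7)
My plan is to verify the hypotheses of \Cref{rmk:crit for exc} for each of the squares in turn. That is, for the square in question to be sent to a Cartesian square by $D$ (or $D^\sharp$), it suffices to check (a) that the two pullback functors along the outgoing arrows are jointly conservative, and (b) for any purported pair of ``glued'' data on the two base-changed pieces, that the induced map out of the putative pullback is an equivalence after pulling back to each piece.

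For joint conservativity I would invoke \Cref{prp:loc for stab PF} twice. That result gives both that $i^*,j^*$ are jointly conservative on $D(S)$ and that $i^\sharp,j^*$ are jointly conservative (the latter being relevant for the $D^\sharp$ claim). Using that $p_Z^*$ (resp.\ $p_U^*$) is an equivalence, the base-change squares let me transport this joint conservativity up to $D(X)$: in the first case $j_X^*$ and $i_X^*$ are jointly conservative (so in particular $p^*,j^*$ detect equivalences on $D(X)$), and in the second case $j_X^*,i_X^\sharp$ are jointly conservative on $D(X)$, after noting that $(p_U)_*$ inherits invertibility so that $p_U^\sharp$ exists and agrees with $p_U^*$ up to equivalence. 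The analogous conservativity for $D^\sharp$ on $D(X)$ follows by transferring along $p$ using quasi-admissible exchange for $p$ against $j$.

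The main content is (b). Here I would follow the strategy of \cite[Proposition 6.24]{sixopsequiv}. To test the two mates, I pull them back along $j_X^*$ and $i_X^\sharp$ (resp.\ $i_X^*$ in the quasi-admissible case) separately. One of the two pullbacks is essentially tautological: after using that $j^*j_*\simeq \id$ (from \Cref{lem:qadm mono loc}) or that $\id \simeq i^\sharp i_*$ (from $i_*$ fully faithful by \Cref{prp:gluing implies loc}), and after applying the relevant base change identity---left base change for $j$ against $p$ (quasi-admissible base change), or right base change for $i$ against $p$ (from \Cref{prp:closed bc})---the mate reduces to applying $(p_U)_*$ or $(p_Z)_*$ to the hypothesized equivalence. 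The other pullback requires vanishing statements: $i_X^\sharp p^* j_* \simeq 0$ and $j_X^* p^\sharp i_* \simeq 0$. These follow from \Cref{prp:closed bc} combined with \Cref{lem:gluing for acyclic} (the complement of an isomorphism is $D$-acyclic, hence the mate factors through $D(\emptyset) = \pt$, which is the zero category by reducedness), and then the surviving term is handled by the same base-change plus counit/unit argument.

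The main obstacle I expect is bookkeeping the six different commuting-square identities needed in (b): keeping track of which base-change property is needed (left or right, against $i$ or against $j$, for the map $p$ or for the map $p_Z$ or $p_U$), and verifying that all of them are available under the given hypotheses. In particular, in the third statement (the $D^\sharp$ square), I will need right-left base change for $p$ against $j$, which is where the quasi-admissible exchange hypothesis on $p$ is essential. Once the bookkeeping is in place, the calculation is mechanical, reducing each check to either a unit/counit isomorphism arising from $p_Z^*$ or $p_U^*$ being an equivalence, or to the vanishing of a composite that factors through $D(\emptyset)$.
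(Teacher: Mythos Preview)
Your proposal is correct and follows essentially the same approach as the paper's proof: both invoke \Cref{rmk:crit for exc}, establish joint conservativity via \Cref{prp:loc for stab PF}, and then check the two mates by pulling back along $j_X^*$ and $i_X^\sharp$, using \Cref{lem:qadm mono loc}, \Cref{prp:closed bc}, and \Cref{lem:gluing for acyclic} exactly as you describe. One small caution: your parenthetical ``resp.\ $i_X^*$ in the quasi-admissible case'' is not what the paper does, and in fact $i_X^*$ would not work there, since $i^* j_*$ need not vanish (only $i^\sharp j_* \simeq 0$, as the right adjoint of $j^* i_* \simeq 0$); the paper uses $i_X^\sharp$ in both cases, relying on the identification $i_X^\sharp p^* \simeq p_Z^* i^\sharp$ from \Cref{prp:closed bc}.
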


\section{Duality and Ambidexterity} \label{S:duality}

In Mann's framework for 6-functor formalisms, we are given a category $\mathcal C$, along with a certain class of morphisms $E$ in $\mathcal C$ for which the exceptional operations should be defined. One notes (\cite[Remark 3.2.4]{HM6FF}) that this is not enough structure to express Poincar\'{e} duality, for which it would be necessary to have a notion of ``smooth map''. Nevertheless, for any 6-functor formalism, it is possible to define a notion of cohomological smoothness for maps such that the cohomologically smooth maps satisfy Poincar\'{e} duality. This is captured by the notion of $D$-suave maps for a 6-functor formalism $D$ -- see \Cref{S:suave prim}.

The approach to categorical invariants given by pullback formalisms instead takes the ``smooth'' maps as the fundamental geometric input (given by the quasi-admissibility structure). Nevertheless, pullback formalisms do not have enough structure to define Borel-Moore homology, which is necessary to express Poincar\'{e} duality in full generality (as in \eqref{eqn:duality}). It turns that analogously to the case of 6-functor formalisms, we can still define a notion of Atiyah duality or ambidexterity for maps (\Cref{defn:duality}) and we can show (\Cref{thm:duality}) that under some mild conditions, the quasi-admissible maps that have Atiyah duality are precisely the ones that have good cohomological properness properties.

Note that duality properties for cohomology theories are often given by a form of twisted ambidexterity: they say that the right and left adjoints of some functor agree up to a twist. We will see how to interpret this as a version of Poincar\'{e}/Serre duality in \Cref{rmk:BM duality}.

\subsection{Thom twists}
Throughout this section, $\mathcal C$ will denote a pullback context, and $D : \mathcal C^\op \to \widehat{\Cat}$ is a presheaf that respects quasi-admissibility.

We will begin by defining the relevant notion of ``twists''.

\begin{defn} \label{defn:Thom twist}
	If $S \in \mathcal C$, and $X$ is a pointed object of $\mathcal C_S$ such that the pointing $0 : S \to X$ satisfies that $0^*$ has a right adjoint $0_*$, define the \emph{Thom twist} of $X$ to be the endomorphism $\Sigma^X$ of $D(S)$ given by $(X \to S)_\sharp 0_*$.

	If $D$ takes values in $\Alg_{E_0}(\widehat{\Cat})$, define the Thom class of $X$ to be
	\[
		\Th_S^D X = \Th_S X = \Th X \coloneqq (X \to S)_\sharp 0_* 1
	.\]

	We write $\Sigma^{-X}$ to denote a right adjoint of $\Sigma^X$, if it exists. Say $X$ is \emph{$D$-stable} if $\Sigma^X$ is invertible.
\end{defn}

\begin{rmk} \label{rmk:morph twist}
	Let $\phi : D \to D'$ be a transformation of presheaves that respects quasi-admissibility, let $S \in \mathcal C$, and let $X$ be a pointed object of $\mathcal C_S$ such that the pointing $0 : S \to X$ satisfies that $0^*$ has a right adjoint $0_*$ for both $D$ and $D'$. Then there is a canonical map 
	\begin{equation} \label{eqn:morph twist}
		\phi \Sigma^X \to \Sigma^X \phi
	\end{equation}
	given as the composite
	\[
		\phi (X \to S)_\sharp 0_* \xleftarrow{\sim} (X \to S)_\sharp \phi 0_* \to (X \to S)_\sharp 0_* \phi
	,\]
	where the first map is induced by the left mate $(X \to S)_\sharp \phi \to \phi (X \to S)_\sharp$, and the last map is induced by the right mate $\phi 0_* \to 0_* \phi$.

	If $\phi$ lifts to a transformation of presheaves $\mathcal C^\op \to \Alg_{E_0}(\widehat{\Cat})$, this defines a map
	\[
		\phi \Th_S^D X \to \Th_S^{D'} X
	.\]
\end{rmk}

\begin{lem}[Basic properties of Thom twists] \label{lem:Thom twist properties}
	If $D$ has quasi-admissible base change, $S \in \mathcal C$, and $X$ is a pointed object of $\mathcal C_S$ with pointing given by $0 : S \to X$, we have the following properties of Thom twists:
	\begin{enumerate}

		\item Additivity: If $Y$ is also a pointed object of $\mathcal C_S$, then if $D$ has quasi-admissible exchange for $0$,
			\[
				\Sigma^X \Sigma^Y \simeq \Sigma^{X \times_S Y}
			,\]
			where the pointing of $X \times_S Y$ is induced by the pointing of $X,Y$. In particular,
			\[
				\Sigma^X \Sigma^Y \simeq \Sigma^Y \Sigma^X
			.\]

		\item Compatibility with equivalences: if $X$ and $Y$ are equivalent pointed objects of $\mathcal C_S$, then there is an equivalence $\Sigma^X \simeq \Sigma^Y$.

		\item Suppose that $D$ takes values in $\PrL$. If $0$ is a $D$-closed map, then $\Sigma^X$ preserves weakly contractible colimits.

		\item\label{itm:morph twist}
			If $\phi  : D \to D'$ is a transformation that respects quasi-admissibility, there is a natural map
			\[
				\phi \Sigma^X \to \Sigma^X \phi
			,\]
			which is invertible if $\phi$ is right adjointable at the pointing of $X$.

		\item Let $f : T \to S$ be a map in $\mathcal C$. We write $f^{-1}(X)$ for the pointed object of $\mathcal C_T$ given by base change along $f$.
			\begin{enumerate}

				\item There is a natural map
					\[
						f^* \Sigma^X \to \Sigma^{f^{-1}(X)} f^*
					\]
					which is invertible if $D$ has right base change for $0 : S \to X$ against $X \times_S f$.

				\item If $f$ is a quasi-admissible map, then there is a natural map
					\[
						f_\sharp \Sigma^{f^{-1}(X)} \to \Sigma^X f_\sharp
					,\]
					which is invertible if $D$ has right-left base change for the pointing $0 : S \to X$ against $f$.


			\end{enumerate}
			
	\end{enumerate}
	\begin{proof}\hfill
		\begin{enumerate}

			\item Write $0$ for the pointing map of any pointed object of $\mathcal C_S$. Write $p,q,r$ for the structure maps to $S$ of $X,Y, X \times_S Y$ respectively. Then
				\[
					r \simeq p \circ (X \times_S q) \quad\text{and}\quad 0 = (0 \times_S Y) \circ 0
				,\]
				and if we assume without loss of generality that $D$ has quasi-admissible exchange for the pointing $0 : S \to X$ of $X$, then since
				\[
					\begin{tikzcd}
						Y \ar[d, "q"'] \ar[r, "0 \times_S Y"] & X \times_S Y \ar[d, "X \times_S q"] \\
						S \ar[r, "0"'] & X
					\end{tikzcd}
				\]
				is Cartesian,
				\[
					(X \times_S q)_\sharp (0 \times_S Y)_* \simeq 0_* q_\sharp
				\]
				so
				\[
					\Sigma^{X \times_S Y} = r_\sharp 0_* \simeq p_\sharp (X \times_S q)_\sharp (0 \times_S Y)_* 0_* \simeq p_\sharp 0_* q_\sharp 0_* = \Sigma^X \Sigma^Y
				,\]
				as desired.

			\item We write $0$ for the pointing of any object in $\mathcal C_S$, and pick some equivalence $\alpha : X \to Y$.

				Note that since $\alpha$ is an equivalence, it is quasi-admissible, and since $0 \simeq \alpha \circ 0$, the square
				\[
					\begin{tikzcd}
						S \ar[d, "0"'] \ar[r, "\id"] & S \ar[d, "0"] \\
						X \ar[r, "\alpha"'] & Y
					\end{tikzcd}
				\]
				commutes. Since $\alpha$ is an equivalence, the square is Cartesian, so we have
				\[
					\alpha^* 0_* \simeq 0_*
				,\]
				whence the counit $\alpha_\sharp \alpha^* \to \id$ induces
				\[
					\Sigma^X = p_\sharp 0_* \simeq q_\sharp \alpha_\sharp \alpha^* 0_* \to q_\sharp 0_* = \Sigma^Y
				,\]
				but since $\alpha^*$ is an equivalence, the counit is invertible, so this composite is an equivalence.

			\item This follows from \Cref{prp:gluing implies loc}, since $0_*$ preserves weakly contractible colimits.

			\item This follows immediately from \Cref{rmk:morph twist}.

			\item This follows easily from \cref{itm:morph twist} and \Cref{exa:tensor morph,exa:base change morph}.
				%
				%
				%
				%
				%

		\end{enumerate}
	\end{proof}
\end{lem}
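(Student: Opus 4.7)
The plan is to treat each property as a direct consequence of the decomposition $\Sigma^X = p_\sharp \circ 0_*$, where $p : X \to S$ is the structure map and $0 : S \to X$ the pointing, together with the base change transformations packaged in \Cref{exa:base change morph,exa:sharp morph,exa:star morph}. Two items are essentially immediate: item~4 is a restatement of \Cref{rmk:morph twist}, and item~3 follows from \Cref{prp:gluing implies loc}, which shows that when $0$ is $D$-closed the right adjoint $0_*$ preserves weakly contractible colimits, while $p_\sharp$ preserves all colimits as a left adjoint.

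For item~1 (additivity) I would use the pullback square
\[
	\begin{tikzcd}
		Y \ar[d, "q"'] \ar[r, "0 \times_S Y"] & X \times_S Y \ar[d, "X \times_S q"] \\
		S \ar[r, "0"'] & X
	\end{tikzcd}
\]
and note that $X \times_S q$, as a base change of the quasi-admissible map $q : Y \to S$, is itself quasi-admissible. Quasi-admissible exchange for the pointing $0$ then yields $(X \times_S q)_\sharp (0 \times_S Y)_* \simeq 0_* q_\sharp$, and substituting into the factorisations $p \circ (X \times_S q)$ for the structure map of $X \times_S Y$ and $(0 \times_S Y) \circ 0_Y$ for its pointing gives $\Sigma^{X \times_S Y} \simeq \Sigma^X \Sigma^Y$. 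Commutativity of $\Sigma^X$ and $\Sigma^Y$ then follows by applying item~2 to the tautological equivalence $X \times_S Y \simeq Y \times_S X$ of pointed objects in $\mathcal C_S$. For item~2 itself, given an equivalence $\alpha : X \to Y$ of pointed objects, one observes that $\alpha$ is quasi-admissible, that the Cartesian square with bottom row $\alpha$ and top row $\id_S$ (and columns the two pointings) yields $\alpha^* (0_Y)_* \simeq (0_X)_*$, and that the counit $\alpha_\sharp \alpha^* \to \id$ is invertible because $\alpha^*$ is.

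Item~5 is handled uniformly: each of the three natural transformations is built by composing a left base change for $\sharp$ (via \Cref{exa:sharp morph}) with a right base change for $_*$ (via \Cref{exa:star morph}), applied to the stacked Cartesian rectangle with $f : T \to S$ at the bottom and the pointing $0 : S \to X$ at the top. In each case the stated invertibility hypothesis is precisely what forces the one constituent base change transformation not automatic from the quasi-admissible base change of $D$ to be an equivalence, so the composite is an equivalence.

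The proof is essentially pure bookkeeping of base change squares once the decomposition $\Sigma^X = p_\sharp 0_*$ is in hand; there is no substantive obstacle. The point requiring most care is item~1, where one must verify that \emph{quasi-admissible exchange for $0$} is the correct hypothesis — this is fine because the map $X \times_S q$ playing the role of the quasi-admissible map against which we base change $0_*$ is a base change of the quasi-admissible $q$, hence quasi-admissible by closure of the quasi-admissibility structure under base change.
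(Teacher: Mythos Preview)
Your proposal is correct and follows essentially the same approach as the paper: items 1--4 match the paper's arguments almost verbatim, and for item 5 the paper also offers exactly the two alternatives you gesture at (invoking item 4 together with the examples, or unpacking the stacked Cartesian rectangle directly). One small imprecision: your uniform description of item 5 as ``composing a left base change for $\sharp$ with a right base change for $_*$'' and the accompanying citations to \Cref{exa:sharp morph,exa:star morph} are slightly mismatched --- for 5(a) the relevant transformation is $f^*$ (so \Cref{exa:base change morph}), and in 5(c) the non-automatic step is commuting $p_\sharp$ past $\tilde f_*$ rather than something about $0_*$ --- but the substance is right and the paper's own citation list here is similarly loose.
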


\subsubsection{Monoidal Thom twists}
We will now study Thom twists in the presence of monoidal structures. Our main application is the following.
\begin{prp} \label{prp:monoidal twists}
	Let $D : \mathcal C^\op \to \CAlg(\widehat{\Cat})$ be a pointed reduced presheaf that has quasi-admissible base change and satisfies the quasi-admissible projection formula, let $S \in \mathcal C$, and let $X$ be a pointed object of $\mathcal C_S$, where the pointing $0 : S \to X$ is $D$-closed.
	\begin{enumerate}

		\item\label{itm:monoidal twists/desc} We have an equivalence $\Sigma^X \simeq - \otimes \cls{X}/\cls{X \setminus S}$, where $X \setminus S$ denotes the complement of $0$.

		\item\label{itm:monoidal twists/morph}
			Let $\phi : D \to D'$ be a transformation that respects quasi-admissibility and preserves initial objects pointwise, where $D'$ is also reduced and pointed, has quasi-admissible base change, and satisfies the quasi-admissible projection formula. If $0$ is $D'$-closed, then there is an equivalence $\phi \Sigma^X \simeq \Sigma^X \phi$, and if $X$ is $D$-stable, then $X$ is $D'$-stable. If $D(S)$ is closed monoidal, $\phi : D(S) \to D'(S)$ is conservative and admits a linear left adjoint, and $X$ is $D'$-stable, then $X$ is $D$-stable.

		\item\label{itm:monoidal twists/bc} Suppose that for all quasi-admissible maps $S' \to S$, the base change of $0$ along $X \times_S (S' \to S)$ is $D$-closed. Then any quasi-admissible base change of $X$ is $D$-stable if $X$ is $D$-stable, and conversely, if $D(S)$ is closed monoidal, and $\{S_i \to S\}_i$ is a quasi-admissible $D$-pseudocover of $X$ such that the base change of $X$ along $S_i \to S$ is $D$-stable for all $i$, then $X$ is $D$-stable.

	\end{enumerate}
\end{prp}

Before turning to the proof of \Cref{prp:monoidal twists}, we will consider when invertibility of Thom twists implies stability in the sense of \cite[Definition 1.1.1.9]{ha}.

\begin{lem} \label{lem:auto stable}
	Suppose $D$ is a pointed reduced pullback formalism. Let $S \in \mathcal C$, and $X$ a $D$-stable pointed object of $\mathcal C_S$ such that the map $S \to X$ is $D$-closed. If $X \to S$ is $D$-acyclic, and $X \setminus S \to S$ admits a section, then $D(S)$ is a stable category.
	\begin{proof}
		By \Cref{thm:closed}, we can apply \Cref{lem:Thom twist descriptions} to see that $\Sigma^X \simeq \otimes \cls{X}/\cls{X \setminus S}$. Since $X \to S$ is $D$-acyclic, we have that $\cls{X} \simeq 1$, and since $X$ is $D$-stable as a pointed object of $\mathcal C_S$, we find that $1/\cls{X \setminus S}$ is $\otimes$-invertible. Thus, we conclude by \cite[Lemma 4.2.4]{Fundamentals}.
	\end{proof}
\end{lem}

The next result allows us to understand Thom twists as twists in the sense that they are given by tensoring by some object.
\begin{lem} \label{lem:Thom twist descriptions}
	In the situation of \Cref{defn:Thom twist}, assume $D : \mathcal C^\op \to \CAlg(\widehat{\Cat})$ satisfies the quasi-admissible projection formula.
	\begin{enumerate}

		\item If $D$ has the right projection formula for $0$, then $\Sigma^X \simeq \Th X \otimes -$.

		\item If $D$ is reduced and takes values in pointed categories, and $0$ is a closed map for which $D$ has gluing, then $\Th X \simeq \cls{X}/\cls{X \setminus S}$.

	\end{enumerate}
	\begin{proof}\hfill
		\begin{enumerate}

			\item We have
				\[
					p_\sharp 0_* 1 \otimes - \xleftarrow{\sim} p_\sharp(0_* 1 \otimes p^*(-)) \xrightarrow{\sim} p_\sharp(0_*(1 \otimes 0^* p^*(-))) \simeq p_\sharp 0_* (-)
				,\]
				where the first equivalence is from the left projection formula for $p$, the second from the right projection formula for $0$, and the third is from $1 \otimes - \simeq \id$ and $0^* p^* \simeq (p0)^* \simeq \id$.

			\item By \Cref{rmk:ptd gluing}, we have a natural equivalence
				\[
					0_* 0^* 1 \simeq 1/(1 \otimes \cls{X \setminus S}) = 1/\cls{X \setminus S}
				.\]
				In particular, since $p_\sharp$ preserves colimits,
				\[
					p_\sharp 0_* 1 \simeq p_\sharp 0_* 0^* 1 \simeq p_\sharp(1/\cls{X \setminus S}) \simeq \cls{X}/\cls{X \setminus S}
				.\]

		\end{enumerate}
	\end{proof}
\end{lem}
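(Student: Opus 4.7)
The plan is to prove each part by a short manipulation of the projection formulas and, for the second part, the gluing cofibre sequence. Throughout, let $p \colon X \to S$ denote the structure map of $X$ over $S$, which is quasi-admissible by hypothesis.

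For part (1), I would start from $\Th X \otimes M = (p_\sharp 0_* 1) \otimes M$ and transform it into $\Sigma^X M = p_\sharp 0_* M$ by three successive equivalences. First, the (left) quasi-admissible projection formula for $p$ internalizes the tensor: $(p_\sharp 0_* 1) \otimes M \simeq p_\sharp(0_* 1 \otimes p^* M)$. Next, the right projection formula for $0$ pushes the tensor inside $0_*$: $0_* 1 \otimes p^* M \simeq 0_*(1 \otimes 0^* p^* M)$. Finally, using $p \circ 0 = \id_S$ (so $0^* p^* \simeq \id$) and $1 \otimes - \simeq \id$, the inner expression reduces to $0_* M$. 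Chaining these yields the claimed natural equivalence $\Th X \otimes - \simeq \Sigma^X$.

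For part (2), the plan is to first describe $0_* 1 \in D(X)$ as a cofibre in $D(X)$ and then apply $p_\sharp$. Since $0 \colon S \to X$ is $D$-closed with quasi-admissible complement $j \colon X \setminus S \to X$, and $D$ is reduced and pointed, \Cref{rmk:ptd gluing} applies in its monoidal form: for any $F \in D(X)$, the sequence $\cls{X \setminus S} \otimes F \to F \to 0_* 0^* F$ is a cofibre sequence in $D(X)$, where here $\cls{X \setminus S}$ denotes $j_\sharp 1 \in D(X)$. Taking $F = 1$ and using $0^* 1 \simeq 1$ yields $0_* 1 \simeq 1/\cls{X \setminus S}$ in $D(X)$. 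Since $p_\sharp$ preserves all colimits, applying it gives
\[
    \Th X = p_\sharp 0_* 1 \simeq p_\sharp 1 / p_\sharp(j_\sharp 1) \simeq \cls{X}/\cls{X \setminus S}
\]
in $D(S)$, where the final identification reconciles the two uses of the bracket notation via $p_\sharp j_\sharp \simeq (p \circ j)_\sharp$.

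Neither step presents a serious obstacle; both are essentially formal adjunction and projection-formula bookkeeping. The one point requiring a moment's care is in part (2), where I need the monoidal form of the gluing cofibre sequence: this requires that $j^*$ be symmetric monoidal with a linear left adjoint and that $\otimes$ preserve empty colimits variable-wise. The first of these follows from the quasi-admissible projection formula applied to $j$, and the second is automatic because $D$ takes values in pointed symmetric monoidal categories, so the required form of \Cref{rmk:ptd gluing} is indeed available.
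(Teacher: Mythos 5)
Your proof is correct and mirrors the paper's argument step for step: part (1) is the same three-step chain of projection formulas, and part (2) is the same combination of \Cref{rmk:ptd gluing} followed by applying $p_\sharp$. One small imprecision in your verification of the hypotheses for the monoidal cofibre sequence: ``$\otimes$ preserves empty colimits variable-wise because $D$ takes values in pointed symmetric monoidal categories'' does not actually follow (pointedness of $D(X)$ alone does not force $A \otimes 0 \simeq 0$); the cleaner route is to invoke condition (1) of \Cref{rmk:ptd gluing} (namely that $D(X \setminus S)$ has a zero object) to obtain the cofibre sequence $j_\sharp j^* F \to F \to 0_* 0^* F$, and then identify $j_\sharp j^* F \simeq \cls{X \setminus S} \otimes F$ directly from the quasi-admissible projection formula for $j$, bypassing condition (3)'s extra hypothesis on the monoidal product.
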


\begin{lem} \label{lem:propagate stability}
	Let $\phi : D \to D'$ be a transformation that respects quasi-admissibility between presheaves $\mathcal C^\op \to \CAlg(\widehat{\Cat})$ that satisfy the quasi-admissible projection formula. Let $S \in \mathcal C$, and let $X$ be a pointed object of $\mathcal C_S$ such that $\phi$ is right adjointable at the pointing $0 : S \to X$, and $D,D'$ have the right projection formula for $0$.

	If $X$ is $D$-stable, then it is $D'$-stable, and the converse holds if $D(S)$ is closed monoidal, and $\phi : D(S) \to D'(S)$ is conservative and has a linear left adjoint.

	In particular, we have the following:
	\begin{enumerate}

		\item Let $X'$ be the base change of $X$ along a quasi-admissible map $S' \to S$, and assume $D$ has the right projection formula for the base change $S' \to X'$ of $0$. If $X$ is $D$-stable, so is $X'$.

		\item Let $\{S_i \to S\}_i$ is a quasi-admissible $D$-pseudocover, and for each $i$, write $X_i$ for the base change of $X$ along $S_i \to S$, and $0_i : S_i \to X_i$ for the pointing of $X_i$. Suppose that for each $i$, $D$ has the right projection formula for $0_i$, and $X_i$ is $D$-stable. If $D(S)$ is closed monoidal, then $X$ is $D$-stable.

	\end{enumerate}
	\begin{proof}
		It follows from \Cref{lem:Thom twist descriptions} applied to both $D$ and $D'$, that $\Sigma^X$ is invertible if and only if $\Th X$ is $\otimes$-invertible. Since $\phi$ is right adjointable at $0$, by \Cref{rmk:morph twist} we have that $\phi \Th^D_S X \simeq \Th^{D'}_S X$, so if $\Th^D_S X$ is $\otimes$-invertible, then so is $\Th^{D'}_S X$, and the converse holds if $D(S)$ is closed monoidal and $\phi : D(S) \to D'(S)$ is conservative and has a linear left adjoint by \cite[Lemma D.2.5]{Fundamentals}.
		
		The last statements follows by applying this to the case that $\phi$ is of the form given in \Cref{exa:base change morph}, and using \cite[Lemma D.2.5]{Fundamentals}.
	\end{proof}
\end{lem}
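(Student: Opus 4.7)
The plan is to reduce the question of $D$-stability of $X$, which concerns the endofunctor $\Sigma^X$ of $D(S)$, to a question about the $\otimes$-invertibility of a single object $\Th^D_S X \in D(S)$. The key ingredient is \Cref{lem:Thom twist descriptions}, whose first part, under the standing hypothesis that $D$ has the right projection formula for the pointing $0 : S \to X$, yields an equivalence $\Sigma^X \simeq \Th^D_S X \otimes (-)$; the same holds verbatim for $D'$ by the analogous hypothesis on $D'$. Consequently $X$ is $D$-stable if and only if $\Th^D_S X$ is $\otimes$-invertible in $D(S)$, and analogously for $D'$.

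For the main forward implication, I will apply \Cref{rmk:morph twist} to produce the natural map $\phi \, \Th^D_S X \to \Th^{D'}_S X$ and argue that under our hypotheses it is an equivalence: left-adjointability of $\phi$ at the quasi-admissible map $X \to S$ (automatic since $\phi$ respects quasi-admissibility) and right-adjointability at $0$ make both constituent base-change transformations into equivalences. Since $D,D'$ take values in $\CAlg(\widehat{\Cat})$ and $\phi$ is a transformation of such presheaves, the component $\phi : D(S) \to D'(S)$ is symmetric monoidal and so carries $\otimes$-invertible objects to $\otimes$-invertible ones, yielding $D'$-stability of $X$. For the converse under the additional hypotheses (closed monoidality of $D(S)$, conservativity of $\phi$, and existence of a linear left adjoint), I would invoke \cite[Lemma D.2.5]{Fundamentals}, which is precisely the statement that such a symmetric monoidal functor reflects $\otimes$-invertibility.

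For the two in particular consequences I plan to specialize the main claim to transformations of the form constructed in \Cref{exa:base change morph}. For part (1), applied with $y = (S' \to S)$: the transformation $\pi^* D \to (\pi')^* D$ obtained by pullback along $S' \to S$ is a morphism of presheaves with quasi-admissible base change, and it is right adjointable at the relevant pointing because $D$ has quasi-admissible base change against the quasi-admissible map $S' \to S$. Applying the forward direction of the main statement to the evaluation of this transformation at $X$ then transports $D$-stability from $X$ to $X'$. For part (2), the same construction applied to each $S_i \to S$ produces morphisms whose components $(S_i \to S)^* : D(S) \to D(S_i)$ admit linear left adjoints $(S_i \to S)_\sharp$ by the quasi-admissible projection formula for $D$, and the $D$-pseudocover hypothesis supplies joint conservativity; one then uses the closed monoidality of $D(S)$ and \cite[Lemma D.2.5]{Fundamentals} to conclude that $\otimes$-invertibility of the $\Th^D_{S_i} X_i$ propagates back to $\Th^D_S X$, giving $D$-stability of $X$.

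The principal obstacle I foresee is properly packaging the base-change transformations of \Cref{exa:base change morph} so that the hypotheses of the main claim are genuinely met: one must check that the right projection formula hypothesis on the base change of $0$ is sufficient to invoke \Cref{lem:Thom twist descriptions} at each stage, and that in part (2) the joint conservativity of a family assembles with \cite[Lemma D.2.5]{Fundamentals} as needed, since that lemma is phrased for a single functor. If the cited lemma handles only one functor at a time, the clean fix is to note that an object of a closed symmetric monoidal category is $\otimes$-invertible iff it becomes so after applying a jointly conservative family of symmetric monoidal functors each of which admits a linear left adjoint, and apply this to the family $\{(S_i \to S)^*\}_i$.
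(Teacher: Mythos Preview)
Your proposal is correct and follows essentially the same approach as the paper: reduce to $\otimes$-invertibility of $\Th X$ via \Cref{lem:Thom twist descriptions}, transport it along $\phi$ using \Cref{rmk:morph twist}, and handle the converse and the two special cases via \Cref{exa:base change morph} together with \cite[Lemma D.2.5]{Fundamentals}. Your caution about whether \cite[Lemma D.2.5]{Fundamentals} applies to a family is reasonable, and your suggested fix (assembling joint conservativity with linear left adjoints to reflect $\otimes$-invertibility) is exactly what is needed if the cited lemma is stated only for a single functor.
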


\begin{proof}[Proof of \Cref{prp:monoidal twists}]\hfill
	\begin{enumerate}

		\item By \Cref{thm:closed}, we know that $D$ has the right projection formula for $0$, so this statement follows from \Cref{lem:Thom twist descriptions}.

		\item By \Cref{thm:closed}, we know that $\phi$ is right adjointable at $0$, so $\phi \Sigma^X \simeq \Sigma^X \phi$ by \Cref{rmk:morph twist}. \Cref{thm:closed} also shows that $D,D'$ have the right projection formula for $0$, so by \Cref{lem:propagate stability}, we have that if $X$ is $D$-stable, it is also $D'$-stable, and the converse holds if $D(S)$ is closed monoidal, and $\phi : D(S) \to D'(S)$ is conservative and admits a linear left adjoint.

		\item This statement follows from the previous point and \Cref{exa:base change morph}.

	\end{enumerate}
	
\end{proof}

\subsection{Tangential Thom twists and Atiyah duality}

Throughout this section, $\mathcal C$ will denote a pullback context, and $D,D'$ will denote presheaves $\mathcal C^\op \to \widehat{\Cat}$ that have quasi-admissible base change, and that send every quasi-admissible maps the diagonal of any quasi-admissible map to a left adjoint functor.

According to \Cref{prp:monoidal twists}(\ref{itm:monoidal twists/desc}), when $D$ is a pullback formalism on $\mathcal C$, and $S \to X$ is a $D$-closed section of a quasi-admissible map $X \to S$, the Thom twist $\Sigma^X$ is controlled by the object $\cls{X}/\cls{X \setminus S}$ of $D(S)$. This can roughly be seen as encoding cohomological information information about the ``normal bundle'' of $S$ in $X$. For a general map $X \to Y$, the ``relative tangent bundle'' of $X \to Y$ should be given by the ``normal bundle'' of the diagonal $X \to X \times_Y X$, which is a section of either projection $X \times_Y X \to X$. With this in mind, we now come to the definition of the type of twist that will be used to study duality (\cf{} \cite[2.4.20]{tri-cat-mixed-motives}):
\begin{defn}[Tangential Thom twist] \label{defn:tangential Thom twist}
	Let $f : X \to Y$ be a quasi-admissible map in $\mathcal C$. Since $f$ is quasi-admissible, $X \times_Y X$ exists, and we have a diagonal map $\Delta : X \to X \times_Y X$, which is a section of both quasi-admissible projections maps $\pi_1, \pi_2 : X \times_Y X \to X$.

	Note that the transposition of $X \times_Y X$ defines an equivalence between the objects $A_1,A_2$ of $\mathcal C_X$ corresponding to $\pi_1, \pi_2$ with pointing given by $\Delta$, so by \Cref{lem:Thom twist properties}, we have $\Sigma^{A_1} \simeq \Sigma^{A_2}$. We will denote either one by $\Sigma^f$, and call it the \emph{tangential Thom twist of $f$}.

	If $\Sigma^f$ admits a right adjoint, we will write $\Sigma^{-f}$ for a right adjoint of $\Sigma^f$.

	Say $f$ is tangentially $D$-stable if $\Sigma^f$ is invertible.
\end{defn}

\begin{defn} \label{defn:tangentially twisted cohomology}
	Suppose $D$ takes values in $\Alg_{E_0}(\widehat{\Cat})$. If $f : X \to S$ is a quasi-admissible map in $\mathcal C$, and $M \in D(S)$, then we can define the \emph{$f$-tangentially twisted cohomology with coefficients in $M$} to be 
	%
	\[
		D(X;M)[f] \coloneqq D(X)(1, \Sigma^f f^* M)
	.\]
\end{defn}

\begin{rmk}[Quasi-admissible monomorphisms are ``\'{e}tale''] \label{rmk:mono is etale}
	Let $j : U \to S$ be a quasi-admissible monomorphism in $\mathcal C$. In this case, we expect $j$ to be ``\'{e}tale'' in the sense that its ``relative tangent bundle'' vanishes. Indeed, we can see that the tangential Thom twist $\Sigma^j$ is equivalent to the identity.

	Since $j$ is a monomorphism, the diagonal $\Delta : U \to U \times_S U$ is an equivalence, and if $\pi : U \times_S U$ is either projection, then $\pi \circ \Delta \simeq \id_U$. Thus, $\pi^*, \Delta^*$ are inverse equivalences. In particular, $\pi_\sharp \simeq \pi_*$ since they are both inverses of the equivalence $\pi^*$, and
	\[
		\Sigma^j \simeq \pi_\sharp \Delta_* \simeq \pi_* \Delta_* \simeq (\pi \Delta)_* \simeq \id_{D(U)}
	.\]
\end{rmk}

Now we come to the key definition for studying Atiyah duality. Compare with \cite[2.4.20]{tri-cat-mixed-motives}, \cite[Construction 6.1.5]{TwAmb}, \cite[Construction 2.27]{SixAlgSp}, and \cite[Construction 6.7]{SixAlgSt}, where we have identical constructions. We will see in \Cref{lem:equivalent description of norm} that this also agrees with the transformation described at the beginning of \cite[\S5.4]{mot-norms}.
\begin{defn}[Duality map] \label{defn:duality}
	Let $f : X \to Y$ be a quasi-admissible map in $\mathcal C$. The Cartesian square
	\[
		\begin{tikzcd}
			X \times_Y X \ar[d, "\pi_2"'] \ar[r, "\pi_1"] & X \ar[d, "f"] \\
			X \ar[r, "f"'] & Y
		\end{tikzcd}
	\]
	leads to a right-left mate
	\[
		f_\sharp (\pi_2)_* \to f_* f^* f_\sharp (\pi_2)_*  \simeq f_* (\pi_1)_\sharp \pi_2^* (\pi_2)_* \to f_* (\pi_1)_\sharp
	,\]
	which gives the \emph{duality map}
	\[
		\eth_f : f_\sharp \simeq f_\sharp (\pi_2)_* \Delta_* \to f_* (\pi_1)_\sharp \Delta_* \simeq f_* \Sigma^f
	.\]


	When the ambient presheaf $D$ is not clear from context, we will write $\eth_f^D$ for $\eth_f$.

	Say $D$ \emph{satisfies Atiyah duality} for a quasi-admissible map $f$ if the duality map $\eth_f$ is an equivalence. If $\Sigma^f$ is also invertible, say that $f$ is \emph{stably $D$-ambidextrous}.
\end{defn}

\begin{rmk} \label{rmk:duality and stability give shriek}
	If $f$ is a quasi-admissible stably $D$-ambidextrous map, then $f_* \simeq f_\sharp \Sigma^{-f}$ has a right adjoint $f^\sharp$ given as
	\[
		f^\sharp \simeq \Sigma^f f^*
	.\]
	More generally, if $D$ satisfies Atiyah duality for $f$, $f_*$ has a right adjoint $f^\sharp$, and $\Sigma^f$ has a right adjoint $\Sigma^{-f}$, we have
	\[
		\Sigma^{-f} f^\sharp \simeq f^*
	.\]
\end{rmk}

\begin{rmk} \label{rmk:BM duality}
	If $f : X \to Y$ is a quasi-admissible $D$-quasi-proper map, it follows from \Cref{defn:duality} that since $f$ has quasi-admissible exchange, $D$ satisfies Atiyah duality for $f$, so if $f$ is tangentially $D$-stable, we have an adjunction
	\[
		f_* \dashv f^\sharp \simeq \Sigma^f f^*
	\]
	as in \Cref{rmk:duality and stability give shriek}.

	Recall the notion of Borel-Moore homology from \Cref{defn:BM}. If $D$ extends to a 6-functor formalism on a geometric setup $(\mathcal C,E)$ where $f \in E$ and $f_* \simeq f_!$, then for $M \in D(Y)$,
	\[
		D^\BM(X;M) \simeq D(X)(1, f^\sharp M) \simeq D(X)(1, \Sigma^f f^* M) = D(X;M)[f]
	,\]
	using \Cref{defn:tangentially twisted cohomology}. This can be seen as a version of Poincar\'{e} duality, since it identifies the Borel-Moore homology with a twisted version of cohomology.
\end{rmk}

The following \lcnamecref{lem:equivalent description of norm} reconciles the definition of $\eth_f$ with the map considered in \cite[\S5.4]{mot-norms}.
\begin{lem} \label{lem:equivalent description of norm}
	Let $f : X \to Y$ be a quasi-admissible map in a $\mathcal C$. The map $\eth_f$ is adjunct (with respect to $f^* \dashv f_*$) to the composite
	\[
		f^* f_\sharp \simeq (\pi_2)_\sharp \pi_1^* \to (\pi_2)_\sharp \Delta_* \Delta^* \pi_1^* \simeq (\pi_2)_\sharp \Delta_* \simeq \Sigma^f
	,\]
	where $\pi_1, \pi_2 : X \times_Y X \to X$ are the projections, and $\Delta : X \to X \times_Y X$ is the diagonal.
	\begin{proof}
		Consider the following diagram
		\[
			\begin{tikzcd}
				f_\sharp \ar[d, "\sim"'] \ar[r] & f_* f^* f_\sharp \ar[d, "\sim"] \ar[r] & f_* (\pi_2)_\sharp \pi_1^* \ar[d, "\sim"] \ar[r] & f_* (\pi_2)_\sharp \Delta_* \Delta^* \pi_1^* \ar[d, "\sim"] \\
				f_\sharp (\pi_1)_* \Delta_* \ar[r] & f_* f^* f_\sharp (\pi_1)_* \Delta_* \ar[r] & f_* (\pi_2)_\sharp \pi_1^* (\pi_1)_* \Delta_* \ar[r] & f_* (\pi_2)_\sharp \Delta_*
			\end{tikzcd}
		,\]
		where
		\begin{enumerate}

			\item all vertical arrows are induced by $\id \to (\pi_1)_* \Delta_*$, except for the rightmost arrow, which is induced by its adjunct $\Delta^* \pi_1^* \to \id$,

			\item the horizontal arrows of the leftmost square are induced by the unit of $f^* \dashv f_*$,

			\item the horizontal arrows of the middle square are induced by the inverse of the horizontal left mate $(\pi_2)_\sharp \pi_1^* \to f^* f_\sharp$,

			\item the horizontal arrows of the rightmost square are induced by the unit and counit of $\Delta^* \dashv \Delta_*$ and $\pi_1^* \dashv (\pi_1)_*$.

		\end{enumerate}
		It follows immediately that the left and middle squares commute, and the rightmost square commutes by \Cref{lem:ladj ret to radg sec}.

		By observing that the composite of the bottom arrows is given by the horizontal right-left mate $f_\sharp (\pi_1)_* \to f_* (\pi_2)_\sharp$, we find that traversing the outer rectangle one way gives the map $\eth_f$ of \Cref{defn:duality}, and traversing the other way gives the map adjunct to the composite 
		\[
			f^* f_\sharp \simeq (\pi_2)_\sharp \pi_1^* \to (\pi_2)_\sharp \Delta_* \Delta^* \pi_1^* \simeq (\pi_2)_\sharp \Delta_*
		,\]
		as desired.
	\end{proof}
\end{lem}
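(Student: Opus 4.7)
The plan is to verify the identity by a direct diagram chase, unpacking both sides in terms of units, counits, and base change transformations. First, I would rewrite $\eth_f$ as the composite
\[
f_\sharp \xrightarrow{\sim} f_\sharp (\pi_1)_* \Delta_* \xrightarrow{\mathrm{BC}} f_* (\pi_2)_\sharp \Delta_*,
\]
where the first arrow uses $\pi_1 \circ \Delta \simeq \mathrm{id}_X$ and the second is the right-left base change transformation attached to the defining Cartesian square of $X \times_Y X$. Taking its adjunct under $f^* \dashv f_*$ then amounts to applying $f^*$ and post-composing with the counit $f^* f_* \to \mathrm{id}$, giving a map out of $f^* f_\sharp$ which I want to identify with
\[
f^* f_\sharp \xrightarrow{\sim} (\pi_2)_\sharp \pi_1^* \to (\pi_2)_\sharp \Delta_* \Delta^* \pi_1^* \xrightarrow{\sim} (\pi_2)_\sharp \Delta_*.
\]

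Next I would assemble a $2 \times 3$ grid of commutative squares whose two rows trace the two candidate maps and whose three columns record the successive intermediate stages of their constructions. The left column pairs the two unit insertions $\mathrm{id} \to (\pi_1)_* \Delta_*$ and $\mathrm{id} \to \Delta_* \Delta^*$; the middle column applies the left base change equivalence $(\pi_2)_\sharp \pi_1^* \xrightarrow{\sim} f^* f_\sharp$ in both rows; and the right column absorbs the residual counits. The left two squares commute by naturality of the relevant units and of the base change transformation along the functors involved, and the rightmost square reduces to the standard compatibility between the unit for a right-adjoint section ($\Delta_*$ as a section of $(\pi_1)_*$) and the counit for the corresponding left-adjoint retraction ($\Delta^*$ as a retraction of $\pi_1^*$) -- this is exactly the kind of identity that \Cref{lem:ladj ret to radg sec} is designed to supply.

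The main obstacle will be bookkeeping: the two sides of the identity invoke the ``same'' Cartesian square through two distinct left base change equivalences $(\pi_1)_\sharp \pi_2^* \simeq f^* f_\sharp$ and $(\pi_2)_\sharp \pi_1^* \simeq f^* f_\sharp$ obtained by swapping the roles of the projections (one is used in the definition of $\eth_f$, the other in the target composite), and one must check that these are intertwined by the transposition equivalence of \Cref{defn:tangential Thom twist} that identifies the two incarnations of $\Sigma^f$. Once this compatibility is in place, each small square of the grid commutes by naturality or a triangle identity, and the outer rectangle yields the desired equality of adjuncts.
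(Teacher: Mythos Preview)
Your approach is essentially the same as the paper's: a three-square diagram chase whose rightmost square is handled by \Cref{lem:ladj ret to radg sec}, with the other squares commuting by naturality of the unit and the base change equivalence. The only cosmetic differences are that the paper builds the grid on the $f_*$-side (comparing $\eth_f$ directly with the unit-then-$f_*(\text{composite})$ map) rather than on the $f^*$-side after taking adjuncts, and that the paper silently relabels $\pi_1 \leftrightarrow \pi_2$ relative to \Cref{defn:duality} (using the symmetry of the square and the identification of the two models for $\Sigma^f$ from \Cref{defn:tangential Thom twist}) rather than flagging it as you do; your caution here is warranted but the resolution is exactly the transposition compatibility you indicate.
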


It will be necessary to characterize Atiyah duality in terms of an adjunction between certain functors. More specifically, we will need that a certain natural transformation defines a counit of an adjunction:
\begin{lem} \label{lem:duality by adjunction}
	Let $f : X \to Y$ be a tangentially $D$-stable quasi-admissible map in $\mathcal C$. Then $D$ satisfies Atiyah duality for $f$ if and only if the composite
	\[
		f^* f_\sharp \Sigma^{-f} \xrightarrow{f^* \eth_f \Sigma^{-f}} f^* f_* \to \id
	\]
	is the counit of an adjunction $f^* \dashv f_\sharp \Sigma^{-f}$, and in this case, the unit of the adjunction is given by
	\[
		\id \to f_* f^* \xrightarrow{\eth_f^{-1} \Sigma^{-f} f^*} f_\sharp \Sigma^{-f} f^*
	.\]
	\begin{proof}
		For any $M \in D(X)$, and $N \in D(Y)$, consider the commutative diagram
		\[
			\begin{tikzcd}
				D(Y)(N, f_\sharp M) \ar[d] \ar[r] & D(X)(f^* N, f^* f_\sharp M) \ar[d] & \\
				D(Y)(N, f_* \Sigma^f M) \ar[r] & D(X)(f^* N, f^* f_* \Sigma^f M) \ar[r] & D(X)(f^* N, \Sigma^f M)
			\end{tikzcd}
		.\]
		Since $f^* \dashv f_*$, the composite of the bottom two arrows is always an equivalence. Thus, the other composite
		\[
			D(Y)(N, f_\sharp M) \to D(X)(f^* N, \Sigma^f M)
		\]
		is an equivalence if and only if the leftmost vertical arrow is an equivalence. Since $\Sigma^f$ is invertible, this means that the composite
		\[
			f^* f_\sharp \Sigma^{-f} \xrightarrow{f^* \eth_f \Sigma^{-f}} f^* f_* \to \id
		\]
		is the counit of an adjunction $f^* \dashv f_\sharp \Sigma^{-f}$ if and only if $\eth_f \Sigma^{-f}$ is an equivalence, which is equivalent to $\eth_f$ being an equivalence.

		By taking $M = \Sigma^{-f} f^* N$ in the above diagram, we see that the leftmost vertical map,
		\[
			D(Y)(N, f_\sharp \Sigma^{-f} f^* N) \to D(Y)(N, f_* f^* N)
		,\]
		sends the unit of $f^* \dashv f_\sharp \Sigma^{-f}$ to the unit of $f^* \dashv f_*$.
	\end{proof}
\end{lem}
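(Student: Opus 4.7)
The plan is to reduce the statement to a Yoneda-style comparison of mapping spaces, exploiting the invertibility of $\Sigma^f$ to transport the adjunction $f^* \dashv f_*$ along the duality map.

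First, I would observe that since $f$ is tangentially $D$-stable, $\eth_f : f_\sharp \to f_* \Sigma^f$ is an equivalence if and only if the twist $\eth_f \Sigma^{-f} : f_\sharp \Sigma^{-f} \to f_*$ is an equivalence. Intuitively, such an equivalence transports $f^* \dashv f_*$ into $f^* \dashv f_\sharp \Sigma^{-f}$, and the substance of the lemma is identifying the transported counit and unit with the formulas stated.

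To make this precise, for each $N \in D(Y)$ and $M \in D(X)$, I would consider the composite
\[
    D(Y)(N, f_\sharp \Sigma^{-f} M) \to D(Y)(N, f_* M) \xrightarrow{\sim} D(X)(f^* N, M),
\]
where the first arrow is induced by $\eth_f \Sigma^{-f}$ and the second is the equivalence coming from $f^* \dashv f_*$. Since the second arrow is always an equivalence, the composite is a natural equivalence in $N$ and $M$ -- equivalently, by Yoneda and the standard characterization of adjunctions via mapping-space equivalences, a genuine adjunction $f^* \dashv f_\sharp \Sigma^{-f}$ -- if and only if the first arrow is a natural equivalence in $M$, which by Yoneda in $N$ is equivalent to $\eth_f \Sigma^{-f}$ being an equivalence, \ie{}, to $D$ satisfying Atiyah duality for $f$.

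It then remains to identify the resulting adjunction with the one whose counit and unit are as specified. For the counit, chase $\id_{f_\sharp \Sigma^{-f} M}$ through the composite: its image is precisely the map $f^* f_\sharp \Sigma^{-f} M \xrightarrow{f^* \eth_f \Sigma^{-f}} f^* f_* M \to \id_M$ arising from the stated formula. For the unit, specialize to $M = \Sigma^{-f} f^* N$ and note that the unit is characterized as the preimage of $\id_{f^* N}$ under the equivalence; the bottom arrow sends the $(f^* \dashv f_*)$-unit $\id \to f_* f^*$ to $\id_{f^* N}$, and the top arrow is induced by $\eth_f \Sigma^{-f} f^*$, so the preimage is obtained by applying $(\eth_f \Sigma^{-f} f^*)^{-1}$ to the $(f^*, f_*)$-unit, matching the stated formula. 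The proof contains no genuine obstacle; the only care needed is in tracking identities through the hom-space equivalences, and the result is essentially a formal consequence of the fact that a natural transformation between right adjoints becomes a counit for the corresponding left adjoints precisely when it is an equivalence.
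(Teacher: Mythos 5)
Your proposal is correct and follows essentially the same strategy as the paper: both factor the candidate adjunction hom-equivalence $D(Y)(N, f_\sharp \Sigma^{-f} M) \to D(X)(f^*N, M)$ through the map induced by $\eth_f$ and the $f^* \dashv f_*$ hom-equivalence, conclude via Yoneda that the composite is a natural equivalence iff $\eth_f$ is invertible, and then identify the counit by chasing identities and the unit by specializing $M = \Sigma^{-f} f^* N$. The only difference is presentational: the paper displays the relevant commutative square before the shift $M \mapsto \Sigma^{-f} M$ and leaves the identity-chase for the counit implicit, while you carry it out explicitly.
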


The following \lcnamecref{lem:norm morphisms} will be an important technical ingredient in many of our arguments. Given a morphism of pullback formalisms $\phi : D \to D'$ on a pullback context $\mathcal C$, and a quasi-admissible map $f$ in $\mathcal C$, it expresses a relationship between the following maps:
\begin{itemize}

	\item the duality map $\eth_f$ of $f$ for $D$,

	\item the duality map $\eth_f$ of $f$ for $D'$,

	\item the right mate $\phi f_* \to f_* \phi$, and
		
	\item the right mate $\phi \Delta_* \to \Delta_* \phi$, where $\Delta$ is the diagonal of $f$.

\end{itemize}
This will have important consequences for $D$ when applied to the morphisms of \Cref{exa:tensor morph,exa:base change morph,exa:sharp morph}.

\begin{lem} \label{lem:norm morphisms}
	Let $f : X \to Y$ be a quasi-admissible map in $\mathcal C$. Write $\Delta : X \to X \times_Y X$ for the diagonal of $f$, write $\pi : X \times_Y X \to X$ for one of the projections, and assume that the right adjoints $\Delta_*, \pi_*$ of $\Delta^*, \pi^*$ exist. If $\phi : D \to D'$ is a transformation that respects quasi-admissibility, there is a commutative diagram
	\[
		\begin{tikzcd}
			f_\sharp \phi \ar[dd, "\sim"'] \ar[r, "\eth_f \phi"] & f_* \Sigma^f \phi \\
			& \ar[u] f_* \phi \Sigma^f \\
			\phi f_\sharp \ar[r, "\phi \eth_f"'] & \ar[u] \phi f_* \Sigma^f
		\end{tikzcd}
	,\]
	where the left vertical arrow is the left mate $f_\sharp \phi \to \phi f_\sharp$, the bottom right vertical arrow is induced by the right base transformation $\phi f_* \to f_* \phi$, and the top right vertical arrow is $f_*$ of the composite
	\[
		\pi_\sharp \Delta_* \phi \gets \pi_\sharp \phi \Delta_* \xrightarrow{\sim} \phi \pi_\sharp \Delta_*
	.\]
	In particular, if $\phi$ is right adjointable at $\Delta$, then the top right vertical arrow is an equivalence.

	\begin{proof}
		By applying \cite[Proposition F.14]{TwAmb} to the commutative cube
		\[
			\begin{tikzcd}
				D(Y) \ar[dr, "\phi"description] \ar[ddd, "f^*"'] \ar[rrr, "f^*"] &&& D(X) \ar[dr, "\phi"] \ar[ddd, "\pi_2^*"] & \\
																	   & D'(Y) \ar[ddd, "f^*"'] \ar[rrr, "f^*"] &&& D'(X) \ar[ddd, "\pi_2^*"] \\
				\\
				D(X) \ar[dr, "\phi"'] \ar[rrr, "\pi_1^*"] &&& D(X \times_Y X) \ar[dr, "\phi"] & \\
														 & D'(X) \ar[rrr, "\pi_1^*"'] &&& D'(X \times_Y X) 
			\end{tikzcd}
		,\]
		we obtain a commutative diagram
		\[
			\begin{tikzcd}
				f_\sharp \phi (\pi_2)_* \ar[d, "\sim"'] \ar[r] & f_\sharp (\pi_2)_* \phi \ar[r] & f_* (\pi_1)_\sharp \phi \ar[d, "\sim"]  \\
				\phi f_\sharp (\pi_2)_* \ar[r] & \phi f_* (\pi_1)_\sharp \ar[r] & f_* \phi (\pi_1)_\sharp
			\end{tikzcd}
		,\]
		where all the arrows are given by the appropriate left, right, or right-left mates. By precomposing this diagram with $\Delta_*$, we obtain the bottom right rectangle in the following commutative diagram:
		\[
			\begin{tikzcd}
				f_\sharp \phi \ar[d, equals] \ar[r, no head, "\sim"] \ar[rrr, bend left=20, "\eth_f \phi"] & f_\sharp (\pi_2)_* \Delta_* \phi \ar[rr] & & f_* (\pi_1)_\sharp \Delta_* \phi \\
				f_\sharp \phi \ar[d, "\sim"'] \ar[r, no head, "\sim"] & \ar[u] f_\sharp \phi (\pi_2)_* \Delta_* \ar[d, "\sim"'] \ar[r] & f_\sharp (\pi_2)_* \phi \Delta_* \ar[r] & \ar[u] f_* (\pi_1)_\sharp \phi \Delta_* \ar[d, "\sim"]  \\
				\phi f_\sharp \ar[r, no head, "\sim"] \ar[rr, bend right=20, "\phi \eth_f"'] & \phi f_\sharp (\pi_2)_* \Delta_* \ar[r] & \phi f_* (\pi_1)_\sharp \Delta_* \ar[r] & f_* \phi (\pi_1)_\sharp \Delta_*
			\end{tikzcd}
		.\]
		By \Cref{defn:duality}, the outermost rectangle is the desired commutative diagram.

	\end{proof}
\end{lem}

%

Since $D$ has quasi-admissible base change, the morphisms of \Cref{exa:base change morph,exa:sharp morph} respect quasi-admissibility, and we invite the reader to work out what \Cref{lem:norm morphisms} says when applied to these morphisms. We will later prove stronger results when we have access to monoidal structures -- see \Cref{prp:duality base change,prp:duality qadm base change}. When $D$ is actually a presheaf $\mathcal C^\op \to \CAlg(\widehat{\Cat})$ that satisfies the quasi-admissible projection formula, we can also apply \Cref{lem:norm morphisms} to \Cref{exa:tensor morph}.

\subsection{Duality for pullback formalisms} \label{S:PF duality}

This section will explore the good behaviour of Atiyah duality when, in addition to quasi-admissible base change, we also have the quasi-admissible projection formula.

Throughout this section, $\mathcal C$ will denote a pullback context, and $D,D' : \mathcal C^\op \to \CAlg(\widehat{\Cat})$ will denote presheaves that have quasi-admissible base change and the quasi-admissible projection formula, and send every map to a left adjoint functor. In particular, this holds if $D,D'$ are pullback formalisms. We will also write $\phi : D \to D'$ for a transformation that respects quasi-admissibility.


The main application of this section is
\begin{thm} \label{thm:duality}
	Let $f : X \to Y$ be a quasi-admissible map in $\mathcal C$, and assume $D,D'$ are reduced and take values in pointed closed monoidal categories, and that $\phi$ preserves zero objects pointwise.
	\begin{enumerate}

		\item \label{itm:duality/morph}
			Suppose the diagonal of $f$ is $D$-closed and $D'$-closed. If $f$ is stably $D$-ambidextrous, then $\phi$ is right adjointable at $f$, $f$ is stably $D'$-ambidextrous, and
			\[
				f^\sharp \phi \simeq \phi f^\sharp
			,\]
			where $f^\sharp$ is a right adjoint adjoint of $f_*$.

			Furthermore, if $f$ is stably $D'$-ambidextrous and tangentially $D$-stable, $\phi$ is right adjointable at $f$, and $\phi : D(Y) \to D'(Y)$ is conservative, then $f$ is also stably $D$-ambidextrous.

		\item\label{itm:duality/locality}
			If the diagonal of any quasi-admissible base change of $f$ is $D$-closed, then the following are equivalent:
			\begin{enumerate}

				\item $f$ is tangentially $D$-stable, and $D$ has quasi-admissible exchange for every quasi-admissible base change of $f$.

				\item Every quasi-admissible base change of $f$ is stably $D$-ambidextrous.

				\item There is a $D$-pseudocover of $Y$ by quasi-admissible maps $Y' \to Y$ such that the base change of $f$ to $Y'$ is stably $D$-ambidextrous.

			\end{enumerate}

		\item \label{itm:duality/proper}
			If the diagonal of every base change of $f$ is $D$-closed, then $f$ is stably $D$-ambidextrous if and only if $f$ is $D$-quasi-proper and tangentially $D$-stable. In this case, the same properties hold for all base changes of $f$, and for any Cartesian square
			\[
				\begin{tikzcd}
					X' \ar[d, "p"'] \ar[r, "f'"] & Y' \ar[d, "q"] \\
					X \ar[r, "f"'] & Y
				\end{tikzcd}
			\]
			in $\mathcal C$, there is an equivalence
			\[
				p^* f^\sharp \simeq f'^\sharp q^*
			,\]
			where $f^\sharp$ denotes a right adjoint of $f_*$.

	\end{enumerate}
\end{thm}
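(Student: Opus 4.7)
The proof naturally breaks into three parts. I plan to handle Part \ref{itm:duality/proper} first, then Part \ref{itm:duality/locality}, and finally Part \ref{itm:duality/morph}, which I expect to be the subtlest.

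For Part \ref{itm:duality/proper}, the key observation is that if $f$ is $D$-proper, then quasi-admissible exchange for the Cartesian square with projections $\pi_1, \pi_2$ forces the right-left base change $f_\sharp (\pi_2)_* \to f_* (\pi_1)_\sharp$ to be an equivalence, which is exactly $\eth_f$ being an equivalence by \Cref{defn:duality}; combined with tangential $D$-stability, this gives stable $D$-ambidexterity. Conversely, if $f$ is stably $D$-ambidextrous, \Cref{rmk:duality and stability give shriek} gives $f_* \simeq f_\sharp \Sigma^{-f}$, which is colimit-preserving with right adjoint $f^\sharp \simeq \Sigma^f f^*$; to upgrade this to $D$-properness I would transfer the quasi-admissible projection formula, base change, and exchange from $f_\sharp$ through $\Sigma^{\pm f}$, using the hypothesis that every base-change diagonal is $D$-closed together with \Cref{prp:monoidal twists} and \Cref{lem:Thom twist properties} to push these properties across the Thom twists. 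Propagation to base changes is automatic from this characterization, and the identification $p^* f^\sharp \simeq f'^\sharp q^*$ drops out of the second commutative diagram of \Cref{lem:norm morphisms} applied to the pullback morphism of pullback formalisms from \Cref{exa:base change morph}.

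Part \ref{itm:duality/locality} combines \Cref{cor:compatibility of norm with base change} (compatibility of $\eth_f$ with base change along quasi-admissible maps) with the locality of tangential stability given by \Cref{prp:monoidal twists} and joint conservativity across the pseudocover. Once stable $D$-ambidexterity is known over each member $Y_i$ of the cover, $\eth_f$ becomes an equivalence after pulling back to each stage, and hence globally. The equivalence with quasi-admissible exchange for every quasi-admissible base change of $f$ is then obtained by combining this with Part \ref{itm:duality/proper}, since stable ambidexterity upgrades to $D$-properness under our diagonal-closedness hypothesis, which in turn entails quasi-admissible exchange; \Cref{prp:target-locality of dual bc} is what lets me descend the relevant adjointability properties.

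Part \ref{itm:duality/morph} is the heart of the argument. Tangential $D'$-stability is immediate from \Cref{prp:monoidal twists}(\ref{itm:monoidal twists/morph}) applied to the pointed object defining $\Sigma^f$, since its pointing $\Delta$ is both $D$-closed and $D'$-closed, and \Cref{thm:closed}(\ref{itm:closed bc/morphism}) then yields that $\phi$ is right adjointable at $\Delta$. The first commutative diagram of \Cref{lem:norm morphisms} becomes the engine: its left vertical is invertible (left base change at $f$), its top-right vertical is invertible (encoding $\phi \Sigma^f \to \Sigma^f \phi$, which is now an equivalence), and its bottom arrow $\phi \eth_f$ is invertible because $\eth_f$ is for $D$; commutativity then identifies the bottom-right vertical, which is $f_*$ of the right base change $\phi f_* \to f_* \phi$ precomposed with $\Sigma^f$, as an equivalence on the essential image of $\phi$. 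The step I expect to be the main obstacle is leveraging this partial invertibility, together with \Cref{prp:non-monoidal propagation of duality}, to conclude \emph{full} right adjointability at $f$ and \emph{full} Atiyah duality for $D'$; my plan is to combine \Cref{lem:duality by adjunction} with the uniqueness of right adjoints to $f^{*,D'}$, arguing that the equivalence $\phi f_*^D \simeq f_\sharp^{D'} \Sigma^{-f} \phi$ on the image of $\phi$ already identifies $f_\sharp^{D'} \Sigma^{-f}$ as a right adjoint of $f^{*,D'}$, which must therefore coincide with $f_*^{D'}$ globally, yielding both $\eth_f^{D'}$ being a global equivalence and the right adjointability of $\phi$ at $f$. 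The final compatibility $f^\sharp \phi \simeq \phi f^\sharp$ then falls out of the second commutative diagram of \Cref{lem:norm morphisms}, whose hypotheses are now all verified.
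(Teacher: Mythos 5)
Your reduction of Part \ref{itm:duality/proper} is essentially correct, and the paper also routes through \Cref{exa:base change morph} and \Cref{lem:norm morphisms} (packaged as \Cref{prp:duality base change}). But there are two genuine gaps.

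\textbf{Part \ref{itm:duality/locality}.} You propose to obtain quasi-admissible exchange by ``combining this with Part \ref{itm:duality/proper}, since stable ambidexterity upgrades to $D$-properness under our diagonal-closedness hypothesis.'' This does not work: Part \ref{itm:duality/proper} requires the diagonal of \emph{every} base change of $f$ to be $D$-closed, whereas Part \ref{itm:duality/locality} only supplies this for \emph{quasi-admissible} base changes. $D$-properness is strictly stronger than quasi-admissible exchange; it includes right base change against arbitrary maps, and to get that you would need the stronger diagonal hypothesis. The fix is to avoid $D$-properness altogether: since $\Delta'$ is $D$-closed, \Cref{prp:closed bc} gives $D$ right-left base change for $\Delta$ against $p \times_q p$, and then \Cref{prp:duality qadm base change} (i.e., \Cref{lem:norm morphisms} fed through \Cref{exa:sharp morph}) yields right-left base change for $f$ against $q$ directly --- this is exactly the quasi-admissible exchange statement, without ever passing through arbitrary base changes.

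\textbf{Part \ref{itm:duality/morph}.} You correctly flag the transfer of Atiyah duality to $D'$ as the main obstacle, but the proposed workaround does not close it. Knowing $\phi f_*^D \simeq f_\sharp^{D'} \Sigma^{-f} \phi$ is an equivalence of functors $D(X) \to D'(Y)$; it restricts information to the essential image of $\phi : D(X) \to D'(X)$, which need not be all of $D'(X)$, so ``uniqueness of right adjoints'' has nothing to grab onto --- you have not exhibited an adjunction for the functor $f_\sharp^{D'}\Sigma^{-f,D'}$ on all of $D'(X)$. The missing ingredient is \Cref{lem:duality when diag is linear} (built on \Cref{lem:crit for linear adj by unit}): under $D(Y)$-linearity of the candidate counit (which you get from the right projection formula for $\Delta$, available via \Cref{thm:closed}), Atiyah duality for $f$ is equivalent to the \emph{existence of a map} $u : 1 \to f_\sharp \Th(-f)$ in $D(Y)$ whose composite with the canonical map $f^*f_\sharp\Th(-f) \to 1$ is the identity. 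This is a condition on a single morphism out of the unit object, and the unit \emph{is} always in the image of $\phi$ (since $\phi$ is symmetric monoidal); right adjointability of $\phi$ at $\Delta$ guarantees $\phi$ sends the $D$-datum to the corresponding $D'$-datum. This is how the paper (\Cref{prp:duality propagation}) transfers Atiyah duality. Once that is known, your use of the first diagram of \Cref{lem:norm morphisms} to deduce right adjointability of $\phi$ at $f$ is correct, since both $\eth_f^D$ and $\eth_f^{D'}$ are now equivalences, so the bottom-right vertical composed with $\Sigma^{f}$-invertibility forces $\phi f_* \to f_* \phi$ to be invertible everywhere, not just on the image.

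You should also note that the converse direction of Part \ref{itm:duality/morph} (from $D'$-ambidexterity plus right adjointability plus conservativity back to $D$-ambidexterity) is not addressed; it follows cleanly from \Cref{prp:non-monoidal propagation of duality} applied with the conservativity hypothesis.
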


First we will need to introduce the following notation:
\begin{nota}
	If $f$ is a quasi-admissible map in $\mathcal C$, then we write $\Th(f) \coloneqq \Sigma^f(1)$, and if $\Sigma^f$ admits a right adjoint $\Sigma^{-f}$, we write $\Th(-f) \coloneqq \Sigma^{-f}(1)$.
\end{nota}

We begin with the following result that gives a useful criterion for checking Atiyah duality.
\begin{lem} \label{lem:duality when diag is linear}
	Let $f : X \to Y$ be a tangentially $D$-stable quasi-admissible map in $\mathcal C$. If $D$ satisfies Atiyah duality for $f$ then there is a map $u : 1 \to f_\sharp \Th(-f)$ such that the composite
	\[
		1 \xrightarrow{f^* u} f^* f_\sharp \Th(-f) \to 1
	\]
	is equivalent to the identity, where the map $f^* f_\sharp \Th(-f) \to 1$ is given by evaluating the transformation
	\[
		f^* f_\sharp \simeq (\pi_2)_\sharp \pi_1^* \to (\pi_2)_\sharp \Delta_* \Delta^* \pi_1^* \simeq (\pi_2)_\sharp \Delta_* \simeq \Sigma^f
	\]
	at $\Th(-f)$, and $\Delta : X \to X \times_Y X$ is the diagonal of $f$, and $\pi_1, \pi_2 : X \times_Y X \to X$ are the projections. 

	Furthermore, the converse holds if $D$ has the right projection formula for $\Delta$, and in this case, $\eth_f$ is $D(Y)$-linear, and $D$ has the right projection formula for $f$.
	\begin{proof}
		By \Cref{lem:duality by adjunction}, $D$ has Atiyah duality for $f$ if and only if the map $f^* f_\sharp \Sigma^{-f} \to \id$ adjunct to $\eth_f \Sigma^{-f}$ is the counit of an adjunction $f^* \dashv f_\sharp \Sigma^{-f}$. By \Cref{lem:equivalent description of norm}, we have that this map $f^* f_\sharp \Sigma^{-f} \to \id$ is induced by the map $f^* f_\sharp \to \Sigma^f$ given in the statement of that result. Thus, \Cref{lem:crit for linear adj by unit} says that if $D$ has Atiyah duality for $f$, then the map $u$ satisfying the required property exists, and the converse holds if $f_\sharp \Sigma^{-f}$ and the map $f^* f_\sharp \Sigma^{-f} \to \id$ are $D(Y)$-linear.

		If $D$ has the right projection formula for $\Delta$, the description of $\Sigma^f$ given in \Cref{lem:Thom twist descriptions} shows that $\Sigma^f$ is $D(X)$-linear, and since it is invertible, it has a $D(X)$-linear inverse. In particular, $\Sigma^{-f}$ is $D(Y)$-linear, so $f_\sharp \Sigma^{-f}$ is $D(Y)$-linear since it is a composite of $D(Y)$-linear functors. In fact, since $\Delta_*$ is a linear right adjoint of $\Delta^*$, we find that the map $f^* f_\sharp \Sigma^{-f} \to \id$ is $D(Y)$-linear, which concludes the proof of the converse.

		Now, in this case, when $\eth_f$ is invertible, by applying \Cref{lem:norm morphisms} to \Cref{exa:tensor morph}, we find that for any $N \in D(Y)$, the map $f_* \Sigma^f(-) \otimes N \to f_* (\Sigma^f - \otimes f^* N)$ is invertible, so since $\Sigma^f$ is an equivalence, we have that $f_*$ is a linear right adjoint of $f^*$.


		Finally, to see that $\eth_f$ is $D(Y)$-linear, we note that by \Cref{lem:equivalent description of norm}, it is adjunct to the composite
		\[
			\varepsilon : f^* f_\sharp \simeq (\pi_2)_\sharp \pi_1^* \to (\pi_2)_\sharp \Delta_* \Delta^* \pi_1^* \simeq (\pi_2)_\sharp \Delta_* \simeq \Sigma^f
		,\]
		where $\pi_1, \pi_2 : X \times_Y X \to X$ are the projections, and this composite is $D(Y)$-linear since $D$ has the right projection formula for $\Delta$. Thus, since $f_*$ is a linear right adjoint of $f^*$, we have that the composite
		\[
			f_\sharp \to f_* f^* f_\sharp \xrightarrow{f_* \varepsilon} f_* \Sigma^f
		\]
		is also $D(Y)$-linear, as desired.
	\end{proof}
\end{lem}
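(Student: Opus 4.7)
The plan is to translate Atiyah duality into an adjunction statement via the two preceding lemmas, and then apply a general criterion detecting adjunctions through their unit at the monoidal unit. Specifically, \Cref{lem:duality by adjunction} tells us that $D$ has Atiyah duality for $f$ if and only if the map $\varepsilon : f^* f_\sharp \Sigma^{-f} \to \id$ adjunct to $\eth_f \Sigma^{-f}$ is the counit of an adjunction $f^* \dashv f_\sharp \Sigma^{-f}$, and \Cref{lem:equivalent description of norm} identifies this $\varepsilon$ with the one induced (after twisting by $\Sigma^{-f}$) by the composite $f^* f_\sharp \to \Sigma^f$ appearing in the statement. Evaluating at the monoidal unit $1 \in D(Y)$ produces the map $f^* f_\sharp \Th(-f) \to 1$ in the statement.

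For the forward direction I would extract $u$ from the unit $\eta : \id \to f_\sharp \Sigma^{-f} f^*$ of the adjunction $f^* \dashv f_\sharp \Sigma^{-f}$ by taking $u \coloneqq \eta(1)$; the triangle identity for this adjunction applied to $1 \in D(Y)$ is precisely the composite in the statement.

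For the converse, I would apply an abstract criterion \Cref{lem:crit for linear adj by unit}, which roughly states that for $D(Y)$-linear functors $F,G$ and a $D(Y)$-linear candidate counit $\varepsilon : FG \to \id$, the pair $(F,G,\varepsilon)$ is an adjunction as soon as a single map $u : 1 \to G(1)$ satisfies the evaluated triangle identity. Here $F = f^*$ and $G = f_\sharp \Sigma^{-f}$, and the main obstacle is verifying the linearity hypotheses. This is precisely what the right projection formula for $\Delta$ buys: the quasi-admissible projection formula makes $f_\sharp$ into a $D(Y)$-linear left adjoint of $f^*$; \Cref{lem:Thom twist descriptions}, applied under the right projection formula for $\Delta$, identifies $\Sigma^f$ with $\Th(f) \otimes -$ as a $D(X)$-linear functor, and its assumed invertibility forces $\Sigma^{-f}$ to be $D(X)$-linear as well; finally, linearity of $\varepsilon$ follows since each map in its defining composite (the inverse of the base change equivalence, the unit of $\Delta^* \dashv \Delta_*$, the counit of $\pi_1^* \dashv (\pi_1)_*$) is $D(Y)$-linear under these hypotheses.

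For the final assertions, once Atiyah duality holds and the right projection formula holds for $\Delta$, I would apply \Cref{cor:compatibility of norm with tensor}: invertibility of $\eth_f$ turns the right projection formula map for $f_*$ into an equivalence after twisting by the invertible $\Sigma^f$, hence into an equivalence itself, so $f_*$ is a linear right adjoint of $f^*$. Since by \Cref{lem:equivalent description of norm} $\eth_f$ factors as $f_\sharp \to f_* f^* f_\sharp \xrightarrow{f_* \varepsilon} f_* \Sigma^f$ with $\varepsilon$ already shown $D(Y)$-linear, this factorization is a composite of $D(Y)$-linear maps, yielding the linearity of $\eth_f$ itself.
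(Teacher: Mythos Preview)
Your proposal is correct and follows essentially the same route as the paper: reduce via \Cref{lem:duality by adjunction} and \Cref{lem:equivalent description of norm} to the adjunction criterion \Cref{lem:crit for linear adj by unit}, verify the needed $D(Y)$-linearity from the right projection formula for $\Delta$ via \Cref{lem:Thom twist descriptions}, and deduce the right projection formula for $f$ and linearity of $\eth_f$ from \Cref{cor:compatibility of norm with tensor}. One small slip: the composite defining $\varepsilon$ does not involve any counit of $\pi_1^* \dashv (\pi_1)_*$---the last equivalence is just $\Delta^* \pi_1^* \simeq \id$---so drop that from your list of maps whose linearity you check.
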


Using \Cref{lem:duality when diag is linear,lem:norm morphisms}, we can now prove our key result about how Atiyah duality interacts with morphisms of pullback formalisms:
\begin{prp} \label{prp:duality propagation}
	Let $f : X \to Y$ be a quasi-admissible map in $\mathcal C$ that is tangentially $D$-stable, and such that $\phi$ is right adjointable at the diagonal $\Delta : X \to X \times_Y X$ of $f$, and $D,D'$ have the right projection formula for $\Delta$.

	If $f$ is stably $D$-ambidextrous, then
	\begin{enumerate}

		\item $f$ is stably $D'$-ambidextrous, and the converse holds if $\pi_0 D(Y; f_\sharp \Th(-f)) \to \pi_0 D'(Y; f_\sharp \Th(-f))$ is surjective, and $\pi_0 D(X;1) \to \pi_0 D'(X;1)$ is injective.

		\item $\phi$ is right adjointable at $f$, and the converse holds if $D'$ has Atiyah duality for $f$, and $D(Y) \to D'(Y)$ is conservative.

		\item $D$ and $D'$ have the right projection formula for $f$.

		\item There is an equivalence $\phi f^\sharp \simeq f^\sharp \phi$, where $f^\sharp$ denotes a right adjoint of $f_*$.

	\end{enumerate}
	\begin{proof}
		By \Cref{lem:propagate stability}, we have that $f$ is tangentially $D'$-stable, and by \Cref{rmk:morph twist}, since $\phi$ is right adjointable at $\Delta$, we have that $\phi \Th(-f) \simeq \Th(-f)$.

		By \cite[Lemma F.5]{TwAmb}, since $\phi$ is right adjointable at $\Delta$, we have that $\phi$ sends the map $f^* f_\sharp \Th(-f) \to 1$ of \Cref{lem:duality when diag is linear} to the same map for $D'$. Thus, \Cref{lem:duality when diag is linear} shows that if $D$ has Atiyah duality for $f$, then so does $D'$, and the converse holds if $\pi_0 D(Y)(1, f_\sharp \Th(-f)) \to \pi_0 D'(Y)(1, f_\sharp \Th(-f))$ is surjective, and the connected component of the identity in $D(X)(1,1)$ is the only one sent to the connected component of the identity in $D'(X)(1,1)$. \Cref{lem:duality when diag is linear} also shows that $D$ and $D'$ have the right projection formula for $f$.

		Since $\eth_f^D$ and $\eth_f^{D'}$ are both equivalences, and $\phi$ is right adjointable at $\Delta$, \Cref{lem:norm morphisms} shows that the natural map $(\phi f_* \to f_* \phi) \Sigma^f$ is an equivalence, but since $\Sigma^f$ is an equivalence, this means that $\phi f_* \to f_* \phi$ is an equivalence, \ie{} $\phi$ is right adjointable at $f$.

		For the converse, when $\phi$ is right adjointable at $f$, $D'$ has Atiyah duality for $f$, \Cref{lem:norm morphisms} shows that $\phi \eth_f$ is an equivalence, so if $\phi : D(Y) \to D'(Y)$ is conservative, then $D$ has Atiyah duality for $f$.

		Finally, it follows from \Cref{rmk:duality and stability give shriek} that for both $D$ and $D'$, $f^\sharp \coloneqq \Sigma^f f^*$ is a right adjoint of $f_*$. Since $\phi$ is right adjointable at the diagonal of $f$, it follows that $\phi$ commutes with $\Sigma^f$, so
		\[
			\phi f^\sharp \simeq \phi \Sigma^f f^* \simeq \Sigma^f \phi f^* \simeq \Sigma^f f^* \phi \simeq f^\sharp \phi
		.\]
	\end{proof}
\end{prp}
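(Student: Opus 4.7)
The plan is to run \Cref{lem:duality when diag is linear} in parallel for $D$ and $D'$, using the hypothesis that $\phi$ is right adjointable at $\Delta$ to transport the relevant data between them, and then exploit \Cref{lem:norm morphisms} to compare the duality transformations themselves. First I would record the two immediate formal consequences of the hypotheses: by \Cref{lem:propagate stability}, tangential $D$-stability propagates to tangential $D'$-stability (so $\Sigma^f$ is invertible on both sides); and by \Cref{rmk:morph twist}, the canonical map $\phi \Th(-f) \to \Th(-f)$ is an equivalence, since $\phi$ is right adjointable at $\Delta$. These two facts are what allow us to compare Atiyah duality for $f$ across $\phi$ at all.

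Next, for part (1) and the forward direction of part (2), I would apply \Cref{lem:duality when diag is linear}: $D$ having Atiyah duality for $f$ produces a map $u : 1 \to f_\sharp \Th(-f)$ in $D(Y)$ whose composite with the canonical map $f^* f_\sharp \Th(-f) \to 1$ is the identity. Applying $\phi$ and invoking \cite[Lemma F.5]{TwAmb} (so that $\phi$ carries the canonical map for $D$ to the one for $D'$, because $\phi$ is right adjointable at $\Delta$), the image $\phi(u)$ witnesses the same criterion for $D'$. The converse criterion in (2) is just the statement that the existence of such a $u$ for $D'$ can be lifted to $D$: surjectivity on $\pi_0 D(Y;f_\sharp \Th(-f))$ lifts $u$ itself, and injectivity on $\pi_0 D(X;1)$ lifts the splitting identity to the identity component. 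The right projection formula for $f$ on the $D'$ side is then also given directly by \Cref{lem:duality when diag is linear}, since $D'$ has the right projection formula for $\Delta$ by hypothesis and we have just established Atiyah duality for $f$.

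For part (3), the commutative diagram from \Cref{lem:norm morphisms} reads
\[
    \begin{tikzcd}
        f_\sharp \phi \ar[dd, "\sim"'] \ar[r, "\eth_f \phi"] & f_* \Sigma^f \phi \\
        & \ar[u] f_* \phi \Sigma^f \\
        \phi f_\sharp \ar[r, "\phi \eth_f"'] & \ar[u] \phi f_* \Sigma^f.
    \end{tikzcd}
\]
The top right vertical map is built from the right base change transformation $\phi\Delta_* \to \Delta_*\phi$, hence is invertible as $\phi$ is right adjointable at $\Delta$; the two horizontal maps are invertible by Atiyah duality for $f$ on each side (the $D'$ side having just been established in the preceding paragraph). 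A diagram chase then forces the bottom right vertical map $f_*\phi \Sigma^f \to \phi f_* \Sigma^f$ to be an equivalence, and cancelling the invertible $\Sigma^f$ on the right yields right adjointability of $\phi$ at $f$. The converse in (3), assuming $D'$ has Atiyah duality for $f$ and $\phi$ is conservative on $D(Y)$, is immediate from \Cref{prp:non-monoidal propagation of duality}.

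Finally, for (4), \Cref{rmk:duality and stability give shriek} produces right adjoints $f^\sharp \simeq \Sigma^f f^*$ on both sides. Since $\phi$ commutes with $f^*$ (tautologically, as $\phi$ is a natural transformation of presheaves) and commutes with $\Sigma^f$ (by \Cref{rmk:morph twist}, again using right adjointability at $\Delta$), composing these equivalences gives $\phi f^\sharp \simeq \phi \Sigma^f f^* \simeq \Sigma^f \phi f^* \simeq \Sigma^f f^* \phi \simeq f^\sharp \phi$. I expect the main obstacle to be the bookkeeping in the forward direction of (2): one has to verify carefully that $\phi$ really transports the witness data of \Cref{lem:duality when diag is linear} (the map $u$ and the splitting identity) along the canonical identifications, which is where the projection formula hypotheses for $\Delta$ enter essentially.
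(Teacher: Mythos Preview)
Your proposal is correct and follows essentially the same approach as the paper: the same preliminary reductions via \Cref{lem:propagate stability} and \Cref{rmk:morph twist}, the same use of \Cref{lem:duality when diag is linear} together with \cite[Lemma F.5]{TwAmb} to transport Atiyah duality and the right projection formula across $\phi$, the same diagram chase in \Cref{lem:norm morphisms} for right adjointability at $f$, and the same computation for $f^\sharp$. The only cosmetic slip is that the bottom right vertical map in the \Cref{lem:norm morphisms} diagram goes $\phi f_* \Sigma^f \to f_* \phi \Sigma^f$ rather than the reverse, but this does not affect the argument.
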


As usual, we can now use \Cref{prp:duality propagation} to obtain results about how Atiyah duality interacts with base change:
\begin{prp} \label{prp:duality base change}
	Let
	\[
		\begin{tikzcd}
			X' \ar[d, "p"'] \ar[r, "f'"] & Y' \ar[d, "q"] \\
			X \ar[r, "f"'] & Y
		\end{tikzcd}
	\]
	be a Cartesian square in $\mathcal C$. Suppose that $f$ is a tangentially $D$-stable quasi-admissible map in $\mathcal C$, and suppose $D$ has the right projection formula for the diagonals of $f$ and $f'$, and right base change for the diagonal of $f$ against $p \times_q p : X' \times_{Y'} X' \to X \times_Y X$.

	If $f$ is stably $D$-ambidextrous, then
	\begin{enumerate}

		\item $D$ has the right projection formula for $f'$.

		\item $f'$ is stably $D$-ambidextrous, and the converse holds if $\pi_0 D(Y; f_\sharp \Th(-f)) \to \pi_0 D(Y'; f'_\sharp \Th(-f'))$ is surjective, and $\pi_0 D(X;1) \to D(X';1)$ is injective.

		\item $D$ has right base change for $f$ against $q$, and the converse holds if $D$ has Atiyah duality for $f'$, and $q^*$ is conservative.

		\item There is an equivalence $p^* f^\sharp \simeq (f')^\sharp q^*$, where $f^\sharp$ denotes a right adjoint of $f_*$.

	\end{enumerate}
	\begin{proof}
		Note that
		\[
			p \times_q p \simeq X \times_Y q \times_q p \simeq X \times_Y p \simeq X \times_Y X \times_Y q
		,\]
		so the result follows from \Cref{prp:duality propagation,exa:base change morph}.
	\end{proof}
\end{prp}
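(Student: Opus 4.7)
My plan is to reduce Proposition \ref{prp:duality base change} to Proposition \ref{prp:duality propagation} applied to the morphism of presheaves produced from the map $q : Y' \to Y$ via Example \ref{exa:base change morph}.

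First I would instantiate the setup. Let $\mathcal C'$ be the full subcategory of $\mathcal C_{/Y}$ consisting of objects whose base change along $q$ exists. Example \ref{exa:base change morph} furnishes anodyne morphisms of pullback contexts $\pi, \pi' : \mathcal C' \to \mathcal C$ -- the forgetful functor and its composite with base change along $q$ -- together with a transformation $\phi : \pi^* D \to (\pi')^* D$ whose value at $W \in \mathcal C'$ is the pullback functor $D(W) \to D(W \times_Y Y')$. Because $D$ has quasi-admissible base change, $\phi$ respects quasi-admissibility, so the data $(\pi^* D, (\pi')^* D, \phi)$ fits the input format of Proposition \ref{prp:duality propagation}.

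Next I would view $f : X \to Y$ as a quasi-admissible map in $\mathcal C'$ and verify the remaining hypotheses. A direct pullback computation identifies $X' \times_{Y'} X'$ with $X \times_Y X \times_Y Y'$, and shows that the base change of the diagonal $\Delta : X \to X \times_Y X$ along $p \times_q p$ is precisely the diagonal $\Delta' : X' \to X' \times_{Y'} X'$ of $f'$. Under this identification, the right projection formulae of $\pi^* D$ and $(\pi')^* D$ at the diagonal of $f \in \mathcal C'$ unwind to the right projection formulae of $D$ at $\Delta$ and $\Delta'$, while right adjointability of $\phi$ at $\Delta$ is precisely right base change of $D$ for $\Delta$ against $p \times_q p$. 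All of these are among the hypotheses, and stable $\pi^* D$-ambidexterity of $f$ is stable $D$-ambidexterity of $f$, which is given.

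Finally I would translate the four conclusions of Proposition \ref{prp:duality propagation} back into statements about $D$ by evaluating at $Y \in \mathcal C'$. Since $\pi$ sends $f$ to itself in $\mathcal C$ and $\pi'$ sends $f$ to $f' : X' \to Y'$, the right projection formula for $f$ in $(\pi')^* D$ becomes the right projection formula for $f'$ in $D$; stable $(\pi')^* D$-ambidexterity of $f$ becomes stable $D$-ambidexterity of $f'$; right adjointability of $\phi$ at $f$ becomes right base change of $D$ for $f$ against $q$; and $\phi f^\sharp \simeq f^\sharp \phi$, evaluated at $Y$, becomes $p^* f^\sharp \simeq (f')^\sharp q^*$. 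The converse clauses transfer verbatim, since $\phi$ at $Y \in \mathcal C'$ is $q^*$ and the maps on $\pi_0$ of the cohomology spaces in the hypotheses of Proposition \ref{prp:duality propagation} are the ones induced by $\phi$. The only genuine content beyond invoking the previous results is the pullback bookkeeping identifying the diagonal's base change, which is routine; there is no real obstacle.
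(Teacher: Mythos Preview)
Your proposal is correct and follows exactly the approach indicated in the paper: the paper's proof consists of the single sentence ``Follows from \Cref{prp:duality propagation} and \Cref{exa:base change morph},'' and your write-up is a careful unpacking of precisely that reduction. The bookkeeping you carry out---identifying the diagonal's base change and translating the hypotheses and conclusions of \Cref{prp:duality propagation} through the transformation $\phi$ of \Cref{exa:base change morph}---is the intended content.
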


The case of quasi-admissible base change is particularly nice.
\begin{prp} \label{prp:duality qadm base change}
	Let
	\[
		\begin{tikzcd}
			X' \ar[d, "p"'] \ar[r, "f'"] & Y' \ar[d, "q"] \\
			X \ar[r, "f"'] & Y
		\end{tikzcd}
	\]
	be a Cartesian square in $\mathcal C$. Suppose that $f,q$ are quasi-admissible, and suppose that $D$ has the right projection formula for the diagonals of $f$ and $f'$.

	If $f$ is stably $D$-ambidextrous, then
	\begin{enumerate}

		\item $f'$ is stably $D$-ambidextrous, and the converse holds if $q^*$ is conservative, and $f$ is tangentially $D$-stable or $D(X)$ is closed monoidal.

		\item If $D$ has right-left base change for the diagonal of $f$ against $p \times_q p$, then $D$ has right-left base change for $f$ against $q$.


	\end{enumerate}
	\begin{proof}
		Recall from the proof of \Cref{prp:duality base change} that $p \times_q p \simeq X \times_Y X \times_Y q$.
		\begin{enumerate}

			\item Since $p \times_q p$ is a base change of $q$, it is quasi-admissible. Therefore, $D$ has right base change for the diagonal of $f$ against $p \times_q p$, so \Cref{prp:duality base change} says that if $f$ is stably $D$-ambidextrous, then so is $f'$, and it also says that if $f$ is tangentially $D$-stable, $f$ has right base change against $q$, and $q^*$ is conservative, then the converse holds (\ie{} $f$ is stably $D$-ambidextrous).

				Since $q$ is quasi-admissible, we have that $D$ has right base change for $f$ against $q$, and \Cref{lem:propagate stability} shows that if $D(X)$ is closed monoidal, and $f'$ is tangentially $D$-stable, then so is $f$. Thus, our assumptions show that if $f'$ is stably $D$-ambidextrous, and $q^*$ is conservative, then $f$ is stably $D$-ambidextrous.

			\item Since $\eth_f^D$ and $\eth_f^{D'}$ are equivalences, and $f$ is tangentially $D$-stable, this follows easily from \Cref{exa:sharp morph,lem:norm morphisms}.
				%


		\end{enumerate}
		
	\end{proof}
\end{prp}

Finally we come to the proof of our main application:
\begin{proof}[Proof of \Cref{thm:duality}]
	Write $\Delta : X \to X \times_Y X$ for the diagonal of $f$, and note that since $D$ takes values in closed monoidal categories that have zero objects, and sends every map to an adjoint functor, it takes values in the category $\CAlg(\widehat{\Cat}_{\text{pointed}})$ of \Cref{thm:closed}, and $\phi$ is a transformation of $\CAlg(\widehat{\Cat}_{\text{pointed}})$-valued presheaves.
	\begin{enumerate}

		\item By \Cref{thm:closed}, we have that $\phi$ is right adjointable at $\Delta$, and that $D,D'$ have the right projection formula for $\Delta$. Thus \Cref{prp:duality propagation} says that if $f$ is stably $D$-ambidextrous, then $f$ is stably $D'$-ambidextrous, $\phi$ is right adjointable at $f$, and $\phi f^\sharp \simeq f^\sharp \phi$, and that conversely, if $\phi$ is right adjointable at $f$, $f$ is tangentially $D$-stable, and $f$ is stably $D'$-ambidextrous, then $f$ is stably $D$-ambidextrous.

		\item It is clear that each condition implies the one below it, so we just need to show that the last condition implies the first.

			For any quasi-admissible map $q : Y' \to Y$, if we write $\Delta' : X' \to X' \times_{Y'} X'$ for the diagonal of the base change $f' : X' \to Y'$ of $f$ along $q$, then we have a commutative diagram
			\begin{equation} \label{eqn:diag bc}
				\begin{tikzcd}
					X' \ar[d, "p"'] \ar[r, "\Delta'"] & X' \times_{Y'} X' \ar[d, "p \times_q p"] \ar[r] & Y' \ar[d, "q"] \\
					X \ar[r, "\Delta"'] & X \times_Y X \ar[r] & Y
				\end{tikzcd}
			,\end{equation}
			where the right square and outer rectangle are Cartesian, so the left square is Cartesian. By our assumption, since $f'$ is a quasi-admissible base change of $f$, its diagonal $\Delta'$ is $D$-closed. Thus \Cref{thm:closed} says that $D$ has the right projection formula and quasi-admissible exchange for for $\Delta$ and $\Delta'$.

			Since $D$ has the right projection formula for the diagonals of $f$ and $f'$, \Cref{prp:duality qadm base change} says that if $f$ is stably $D$-ambidextrous, then so is $f'$, and that the converse holds if $q^*$ is conservative, and $D(Y')$ is closed monoidal. Therefore, the last condition implies that $f$ and all of its quasi-admissible base changes are stably $D$-ambidextrous. On the other hand, since $D$ has right-left base change for $\Delta$ against $p \times_q p$, \Cref{prp:duality qadm base change} shows that $D$ has right-left base change for $f$ against $q$, so we are done, since this holds for all quasi-admissible base changes of $f$, and all quasi-admissible maps $q$ to $Y$.

		\item If $f$ is $D$-quasi-proper, then $D$ has quasi-admissible exchange for $f$, so if $f$ is tangentially $D$-stable, it is stably $D$-ambidextrous.

			For the converse, consider the diagram \eqref{eqn:diag bc} from the previous point, where this time we do not assume that $q$ is quasi-admissible. Since this time we have assumed that the diagonal of \emph{any} base change of $f$ is $D$-closed, we still have that $\Delta'$ is $D$-closed, so by \Cref{thm:closed}, we have that $D$ has the right projection formula for $\Delta$ and $\Delta'$, and right base change for $\Delta$ against $p \times_q p$.

			It follows from \Cref{prp:duality base change} that if $f$ is stably $D$-ambidextrous, then $D$ has the right projection formula and right base change for $f$, and $p^* f^\sharp \simeq f^\sharp q^*$. The previous point shows that $D$ has quasi-admissible exchange for $f$, and \Cref{rmk:duality and stability give shriek} shows that $f_*$ admits a right adjoint, so that $f$ is $D$-quasi-proper, which completes the proof of the converse.

			To get the same statement for all base changes of $f$, note that \Cref{prp:duality base change} actually shows that the base change of $f$ along $q$ is also stably $D$-ambidextrous.

	\end{enumerate}
\end{proof}

\section{Pullback Formalisms and 6-Functor Formalisms} \label{S:6FF}

As we have seen in \Cref{rmk:6FF by decomp}, if $D$ is a pullback formalism, then $D$-quasi-properness is useful for enhancing $D$ to a 6-functor formalism. We have proven key results on how to produce $D$-quasi-proper maps and show that maps from $D$ are compatible with them in \Cref{S:coh cl,S:PF duality}, and we have also produced various results about closure properties of $D$-quasi-proper maps in \Cref{S:proper stable}. These results are combined in \Cref{thm:PPF}, which gives criteria for a large class of maps to be $D$-quasi-proper, and which also guarantee that a large class of morphisms from $D$ are right adjointable at these maps. This will later allow us not only to prove results about enhancing $D$ to a 6-functor formalism, but also tell us how to give compatibilities with the six operations and morphisms from $D$. We also note \Cref{lem:PPF cdh,lem:Atiyah duality PPF}, which use the results of \Cref{S:gluing,S:duality} to give cdh descent and Atiyah duality statements for $D$.

We then turn our attention to 6-functor formalisms in \Cref{S:V6FF}, which can be seen as a major refinement of the construction given in \Cref{rmk:6FF by decomp} about using $D$-quasi-proper maps to extend $D$ to a particularly well-behaved 6-functor formalism. \Cref{thm:6FF} gives a preview of the sort of result that can be obtained by combining \Cref{thm:PPF} with the results of \Cref{S:V6FF}.\\

We fix a pullback context $\mathcal C$, along with the data of two collections of maps in $\mathcal C$ called \emph{exceptionally quasi-proper maps} and \emph{exceptionally closed maps}, and make the following assumptions:
\begin{ass} \label{ass:exc}
	\hfill
	\begin{enumerate}

		\item $\mathcal C$ is locally small.

		\item Every exceptionally closed map is closed (\ie{} it has quasi-admissible complement).


		\item The collections of exceptionally closed and exceptionally quasi-proper maps are stable under base change.

	\end{enumerate}
\end{ass}

We often also assume that every exceptionally closed map is also exceptionally quasi-proper, but this is not necessary for our results. Nevertheless, this would lead to no loss of generality, as we can always produce a new collection of exceptionally quasi-proper maps that contains the exceptionally closed ones by considering the collection of composites of exceptionally closed and exceptionally quasi-proper maps.

Next, we use the exceptionally closed maps to produce some Grothendieck topologies in the spirit of cdh excision. See \Cref{rmk:compare cdh} for a comparison with usual notions of cdh excision.
\begin{defn}[Cdh topologies] \label{defn:cdh}
	We may define the following notions of \emph{elementary cdh covers} given a map $i : Z \to S$ in $\mathcal C$:
	\begin{itemize}

		\item An \emph{elementary cdh cover of $i$} is a sieve $\mathcal R$ on $S$ such that the base change of $\mathcal R$ along $i$ contains $\id_Z$ -- \ie{} $i^* \mathcal R$ is the trivial sieve on $Z$.

		\item An \emph{elementary cdh cover away from $i$} is a sieve $\mathcal R$ on $S$ such that for any $j : U \to S$ complementary to $i$, the base change of $\mathcal R$ along $j$ contains $\id_U$ -- \ie{} $j^* \mathcal R$ is the trivial sieve on $U$.

	\end{itemize}

	We define the following Grothendieck topologies on $\mathcal C$:
	\begin{itemize}


		\item Define the \emph{exceptionally quasi-proper cdh topology} on $\mathcal C$ to be the Grothendieck topology generated by declaring that the empty sieve covers any initial object, and any family $\{p_k\}_k \cup \{i\}$ is covering, where $i$ is exceptionally closed, $\{p_k\}_k$ generates an elementary cdh cover away from $i$, and for each $k$, $p_k$ is exceptionally quasi-proper.


		\item Define the \emph{cdh topology} to be the Grothendieck topology on $\mathcal C$ determined by declaring that every exceptionally quasi-proper cdh covering family is covering, and also that $\{p_k\}_k \cup \{j\}$ is covering if $j$ is a complement of an exceptionally closed map $i$, and $\{p_k\}_k$ generates an elementary cdh cover of $i$, and for each $k$, $p_k$ is a quasi-admissible map.


	\end{itemize}
\end{defn}

\begin{rmk}[Comparison with usual notions of cdh excision and descent for cd-structures] \label{rmk:compare cdh}
	Note that \Cref{defn:cdh} produces topologies that might be a bit finer than usual cdh topologies. For example, given an exceptionally closed map $i : Z \to S$, to produce an elementary cdh cover of $i$, we need to give a map $X \to S$ which only admits a section after base change along $i$, whereas normally one asks for it to instead become invertible after base change along $i$. Furthermore, our general assumptions do not guarantee that the usual results about topologies associated to cd-structures hold (such as \cite[Theorem 2.2.7]{locspalg}, \cite[Theorem 3.2.5]{affrepI}, and \cite[Corollary 5.10]{voe-cd}) -- for example we do not require the exceptionally closed maps to be monomorphisms, nor that the quasi-admissible or exceptionally quasi-proper maps are truncated, and we do not require that the collections of quasi-admissible or exceptionally quasi-proper maps are stable under taking diagonals. Nevertheless, \Cref{prp:elementary cdh descent} still allows us to prove descent for these topologies in \Cref{lem:PPF cdh,lem:desc V6FF}.

	See \Cref{rmk:compare alg cdh} for a comparison with notions of cdh descent for algebraic stacks.
\end{rmk}

Before coming to our main result about 6-functor formalisms, we will need to make the following definition, which should be seen as an abstraction of the situation in algebraic geometry, where Chow's Lemma shows that proper maps are, in some sense, generated by closed immersions and projective bundles (see \Cref{rmk:gen prop in AG}):
\begin{defn} \label{defn:proj sat}
	For any $D \in \PF(\mathcal C)$, a collection $P$ of maps in $\mathcal C$ is \emph{projectively $D$-saturated} if it satisfies the following:
	\begin{enumerate}

		\item $P$ contains every exceptionally closed map, and every quasi-admissible map that is stably $D$-ambidextrous and whose diagonal is an exceptionally quasi-proper map $P$.

		\item $P$ is stable under composition and base change.

		\item Let $f : X \to Y$ be a map such all base changes of $f$ exist.
			\begin{enumerate}

				\item If $Y$ admits a $D$-pseudocover by quasi-admissible maps $Y' \to Y$ such that the base change $X \times_Y Y' \to Y'$ is in $P$, then $f \in P$.

				\item If there is a $D$-acyclic exceptionally quasi-proper map $\bar Y \to Y$ in $P$ such that $f \times_Y \bar Y$ is in $P$, then $f \in P$.

				\item Suppose that $i : Z \to X$ is an exceptionally closed map, and that $p : \bar X \to X$ is a map in $P$ that is invertible away from $i$.\footnotemark{} If $f \circ i$ and $f \circ p$ are in $P$, then $f \in P$.
					\footnotetext{This means that if $j$ is the complement of $i$, then the base change of $p$ along $j$ is an equivalence.}

			\end{enumerate}

	\end{enumerate}
	Say that $D$ is a \emph{strongly projective pullback formalism} if every projectively $D$-saturated collection contains all exceptionally quasi-proper maps, every exceptionally closed map is $D$-closed, $D$ is reduced, and $D$ takes values in stable categories.
\end{defn}

We are ready to present one of our main applications to 6-functor formalisms. This result roughly says that if $(\mathcal C,E)$ is a geometric setup such that every map in $E$ is cdh locally on the source and target ``compactifiable'' or ``quasi-projective'', then every strongly projective pullback formalism $D^*$ on $\mathcal C$ extends to a 6-functor formalism $D^*$ on $(\mathcal C, E)$ with very good behaviour, and for any morphism of pointed reduced pullback formalisms $\phi^* : D^* \to D'^*$, if all exceptionally closed maps are $D'^*$-closed, then $\phi^*$ extends to a morphism of 6-functor formalisms $\phi : D \to D'$ that also behaves especially well.
\begin{thm} \label{thm:6FF}
	Let $\mathcal C' \subseteq \mathcal C$ be a full subcategory such that
	\begin{itemize}
		
		\item $\mathcal C$ and $\mathcal C'$ admit finite products, and the inclusion $\mathcal C' \to \mathcal C$ preserves finite products,

		\item if $X \to Y$ is quasi-admissible, exceptionally quasi-proper, or exceptionally closed, and $Y \in \mathcal C'$, then $X \in \mathcal C'$, and

		\item every object of $\mathcal C$ admits a small cdh cover by quasi-admissible maps from objects of $\mathcal C'$.

	\end{itemize}
	Let $I$ and $P$ be collections of quasi-admissible and exceptionally quasi-proper maps respectively in $\mathcal C'$, such that
	\begin{itemize}

		\item the collections $I,P$ each contain all equivalences in $\mathcal C'$, and are stable under base change, composition, and taking diagonals,\footnote{Our assumptions guarantee that the base changes and diagonals are the same when computed in $\mathcal C'$ and in $\mathcal C$, since any map in $I \cup P$ to an object of $\mathcal C'$ is in $\mathcal C'$.}

		\item every map in $I \cap P$ is truncated, and

		\item if $f$ is a composite of maps in $I \cup P$, then $f \simeq p \circ j$ for some $p \in P$ and $j \in I$.
		
	\end{itemize}
	Let $(\mathcal C,E)$ be a geometric setup such that $E$ is stable under taking diagonals, $I,P \subseteq E$, and for any map $X \to Y$ in $E$, there is a small cdh cover of $Y$ consisting of maps $Y' \to Y$ such that $Y', X \times_Y Y' \in \mathcal C'$ and there is a small cdh covering family of $X \times_Y Y'$ consisting of maps $X' \to X \times_Y Y'$ such that both $X' \to X \times_Y Y'$ and $X' \to Y'$ are composites of maps in $I \cup P$.

	Then any strongly projective pullback formalism $D^*$ on $\mathcal C$ extends to a 6-functor formalism $D$ on $(\mathcal C, E)$ satisfying the following properties:
	\begin{enumerate}

		\item\label{itm:6FF/descent}
			$D^*$ and $D^!$ have cdh descent.


		\item\label{itm:6FF/suave prim} Every quasi-admissible map is $D$-suave, and every exceptionally quasi-proper map is $D$-prim in the sense of \Cref{defn:suave prim}. (See \Cref{thm:suave prim consequences} for consequences.)

		\item\label{itm:6FF/describe shriek}
			Let $f \in E$ be a map in $\mathcal C'$ that is a composite of maps in $P \cup I$, and for each integer $n \geq 0$, write $\Delta^n_f$ for the $n$-fold diagonal of $f$.\footnote{So $\Delta^0_f = f$, and $\Delta^{n + 1}_f$ is the diagonal of $\Delta^n_f$.}
			If $\Delta^n_f$ is quasi-admissible (\resp{} exceptionally quasi-proper) for $n \geq 0$, and $\Delta^n_f \in I$ (\resp{} $P$) for $n \gg 0$, then $f_! \simeq f_\sharp$ (\resp{} $f_! \simeq f_*$).

		\item\label{itm:6FF/duality}
			Let $f : X \to Y$ be a quasi-admissible map in $E$ such that the diagonal $\Delta$ of $f$ satisfies that $\Delta_! \simeq \Delta_*$. Then we have an equivalence
			\[
				f^! \simeq \Sigma^f f^*
			,\]
			so $D^*$ satisfies the following Poincar\'{e} duality: there is an equivalence of functors $D(Y) \to \spaces$
			\[
				D^\BM(X;-) \simeq D(X;-)[f]\footnotemark
			.\]
			\footnotetext{See \Cref{defn:tangentially twisted cohomology} for this notation.}

		\item\label{itm:6FF/morph}
			For any morphism $\phi^* : D^* \to D'^*$ of pointed reduced pullback formalisms, if all exceptionally closed maps are $D'^*$-closed, then $D'^*$ is also a strongly projective pullback formalism, and $\phi^*$ extends to a morphism $\phi : D \to D'$ of 6-functor formalisms on $(\mathcal C,E)$, \ie, a morphism in the category of lax symmetric monoidal functors $\Span(\mathcal C,E) \to \PrL$. In particular, for any map $f : X \to Y$ in $\mathcal C$, we have canonical equivalences
			\[
				f^* \phi \simeq \phi f^*, \quad\text{and if $f \in E$,} \quad f_! \phi \simeq \phi f_!
			,\]
			which induce right mates
			\[
				\phi f_* \to f_* \phi, \quad\text{and if $f \in E$,} \quad \phi f^! \to f^! \phi
			.\]
			If $f$ is exceptionally quasi-proper, the first transformation is an equivalence, and if $f \in E$ is quasi-admissible, the second transformation is an equivalence.
			
	\end{enumerate}
\end{thm}

In fact, we will prove more refined versions of \Cref{thm:6FF} which compare certain categories of 6-functor formalisms and pullback formalisms, namely \Cref{thm:PPF,rmk:6FF ext functor}. This is also given in \Cref{thmX:V6FF}. The proof of \Cref{thm:6FF} is given at the end of \Cref{S:V6FF}.

\subsection{Projective pullback formalisms} \label{S:PPF}
In this section, we will combine our results about localization and ambidexterity by showing that for any strongly projective pullback formalism $D$, every exceptionally quasi-proper map is $D$-quasi-proper. This is a consequence of a stronger statement that will be given in \Cref{thm:PPF}.

We will first need to define the following categories of pullback formalisms:
\begin{defn} \label{defn:PPF cat}
	We define the following subcategories of $\PF(\mathcal C)$:
	\begin{description}

		\item[Constructible pullback formalisms] The category $\PF^\cstr_\bullet(\mathcal C)$ of \emph{constructible pullback formalisms} on $\mathcal C$ is the full subcategory of $\PF(\mathcal C)$ consisting of pointed reduced pullback formalisms $D$ on $\mathcal C$ such that every exceptionally closed map in $\mathcal C$ is $D$-closed.

		\item[Projective pullback formalisms] The category $\PPF(\mathcal C)$ of \emph{projective pullback formalisms} on $\mathcal C$ is the subcategory of $\PF^\cstr_\bullet(\mathcal C)$ where
			\begin{enumerate}

				\item the objects are those constructible pullback formalisms $D$ on $\mathcal C$ that take values in stable categories, and such that every exceptionally quasi-proper map is $D$-quasi-proper (in addition to every exceptionally closed map being $D$-closed), and

				\item the maps are those morphisms $D \to D'$ in $\PF^\cstr_\bullet(\mathcal C)$ that are right adjointable at exceptionally quasi-proper maps (in addition to being left adjointable at quasi-admissible maps).

			\end{enumerate}
			
	\end{description}
\end{defn}

Our main result implies, that every strongly projective pullback formalism is a projective pullback formalism. In fact, \Cref{thm:PPF} shows that the strongly projective pullback formalisms form a ``cosieve'' in $\PF^\cstr_\bullet(\mathcal C)$. Before coming to this result, we will make some remarks about the general behaviour of constructible and projective pullback formalisms.


\begin{lem}[Atiyah duality for projective pullback formalims] \label{lem:Atiyah duality PPF}
	If $D \in \PPF(\mathcal C)$, then for any quasi-admissible exceptionally quasi-proper map $f$, we have that $D$ satisfies Atiyah duality for $f$. Furthermore, if the diagonal of $f$ is exceptionally quasi-proper, then we have an equivalence
	\[
		\Sigma^{-f} f^\sharp \simeq f^*
	\]
	(where $\Sigma^{-f}$ is a right adjoint of $\Sigma^f$).
	and if $\phi : D \to D'$ is a map in $\PPF(\mathcal C)$, then we have an equivalence
	\[
		\phi \Sigma^f \simeq \Sigma^f \phi
	.\]
	We also have this equivalence if $\phi$ is a map in $\PF^\cstr_\bullet(\mathcal C)$ and the diagonal of $f$ is exceptionally closed.

	In this case, if $f$ is actually tangentially $D$-stable, then $f$ is also stably $D'$-ambidextrous, the transformation
	\[
		\phi f_* \to f_* \phi
	\]
	is an equivalence, and there is an equivalence
	\[
		f^\sharp \phi \simeq \phi f^\sharp
	.\]
	\begin{proof}
		Since $D \in \PPF(\mathcal C)$, $f$ is $D$-quasi-proper, so $D$ has quasi-admissible exchange for $f$, whence $D$ satisfies Atiyah duality for $f$. If the diagonal $\Delta$ of $f$ is exceptionally quasi-proper, then it is $D$-quasi-proper, so we have that $\Delta_*$ admits a right adjoint, so $\Sigma^f$ admits a right adjoint $\Sigma^{-f}$, and the equivalence $\Sigma^{-f} f^\sharp \simeq f^*$ follows from \Cref{rmk:duality and stability give shriek}. 

		The equivalence $\phi \Sigma^f \simeq \Sigma^f \phi$ follows from \Cref{rmk:morph twist} (after applying \Cref{thm:closed}(\ref{itm:closed/morphism}) in the case of exceptionally closed diagonal).

		Finally, \Cref{prp:duality propagation} shows that (after applying \Cref{thm:closed} in the case of exceptionally closed diagonal) if $f$ is tangentially $D$-stable (so $f$ is stably $D$-ambidextrous), then $f$ is also stably $D$-ambidextrous, $\phi$ is right adjointable at $f$, and there is an equivalence $f^\sharp \phi \simeq \phi f^\sharp$.
	\end{proof}
\end{lem}

We also have a result about cdh descent for projective pullback formalisms. In order to formulate this result, it will be convenient to define $\mathcal C^\sharp$ to be the wide subcategory of $\mathcal C$ consisting of the composites of exceptionally quasi-proper maps and exceptionally closed maps, and write $D^\sharp$ for the presheaf $(\mathcal C^\sharp)^\op \to \PrR$ that sends $g$ to $g^\sharp$.
\begin{lem}[Cdh descent for projective pullback formalisms] \label{lem:PPF cdh}
	If $D \in \PPF(\mathcal C)$, then $D$ has cdh descent. Furthermore, every exceptionally quasi-proper cdh cover is a $D^\sharp$-pseudocover, and if the exceptionally closed and exceptionally quasi-proper maps are stable under taking diagonals, then $D^\sharp$ has descent for the exceptionally quasi-proper cdh topology.
	\begin{proof}
		For any exceptionally closed map $i : Z \to S$, and sieve $\mathcal U$ on $S$, if $\mathcal U$ is an elementary cdh cover of $i$, then $i^* \mathcal U$ is a $D$-pseudocover, and if $\mathcal U$ is an elementary cdh cover away from $i$, then $j^* \mathcal U$ is a $D$-pseudocover, where $j$ is a complement of $i$. So since $D$ is a reduced pullback formalism that takes values in stable categories, it follows from \Cref{prp:elementary cdh descent} that $D$ has cdh descent (since exceptionally closed maps and exceptionally quasi-proper maps are stable under base change). In fact, if $\{p_k : X_k \to S\}_k$ is a family of exceptionally quasi-proper maps that generates an elementary cdh cover away from $i$, then there is some $k$ such that $p_k \times_S U : X_k \times_S U \to U$ admits a section, so $\{p_k \times_S U\}_k$ is a $D^\sharp$-pseudocover, whence \Cref{prp:elementary cdh descent} shows that $\{p_k\} \cup \{i\}$ is a $D^\sharp$-pseudocover.

		The descent statement for $D^\sharp$ also follows from \Cref{prp:elementary cdh descent} by using \cite[Lemma 2.1.5]{HM6FF} when the exceptionally closed and exceptionally quasi-proper maps are stable under taking diagonals.
	\end{proof}
\end{lem}

\begin{rmk}
	When every exceptionally quasi-proper map is an equivalence, we have that $\PPF(\mathcal C)$ is the full subcategory of $\PF^\cstr_\bullet(\mathcal C)$ consisting of those constructible pullback formalisms taking values in stable categories.
\end{rmk}

Our main result is the following.
\begin{thm} \label{thm:PPF}
	Let $D \in \PF^\cstr_\bullet(\mathcal C)$ be a strongly projective pullback formalism. Then $D$ is a projective pullback formalism, the natural functor
	\[
		\PPF(\mathcal C)_{D/} \to \PF^\cstr_\bullet(\mathcal C)_{D/}
	\]
	is an equivalence, and in fact, for any morphism $D \to D'$ of constructible pullback formalisms, we have that every projectively $D'$-saturated collection is projectively $D$-saturated, so $D'$ is also a strongly projective pullback formalism.
\end{thm}

We will present the proof of \Cref{thm:PPF} at the end of the section. For now, we will consider the process of restricting and extending projective pullback formalisms to and from subcategories of $\mathcal C$. We first make the following observation:
\begin{rmk} \label{rmk:exc res}
	For any category $\mathcal C'$ equipped with notions of quasi-admissible maps, exceptionally closed maps, and exceptionally quasi-proper maps, we can define categories $\PF(\mathcal C'), \PF^\cstr_\bullet(\mathcal C'), \PPF(\mathcal C')$ without making any assumptions on these collection of maps. If base changes of quasi-admissible maps exist, and $F : \mathcal C' \to \mathcal C$ is a functor that preserves quasi-admissible maps and base changes along quasi-admissible maps, then precomposition by $F$ defines a functor
	\[
		\PF(\mathcal C) \to \PF(\mathcal C')
	.\]
	If exceptionally closed maps in $\mathcal C'$ have quasi-admissible complements, and $F$ also preserves exceptionally closed maps and their complements, then this further restricts to a functor
	\[
		\PF^\cstr_\bullet(\mathcal C) \to \PF^\cstr_\bullet(\mathcal C')
	.\]

	Finally, if $F$ preserves exceptionally quasi-proper maps and base changes along them, then this further restricts to a functor
	\[
		\PPF(\mathcal C) \to \PPF(\mathcal C')
	.\]
\end{rmk}

%

\begin{prp} \label{prp:PPF res ext}
	%
	Let $\mathcal C' \subseteq \mathcal C$ be a full subcategory such that for any $Y \in \mathcal C'$, and map $X \to Y$ in $\mathcal C$ that is quasi-admissible, exceptionally closed, or exceptionally proper, we have that $X \in \mathcal C'$. Then $\mathcal C' \subseteq \mathcal C$ is a full anodyne pullback subcontext, the collections of exceptionally closed and exceptionally quasi-proper maps in $\mathcal C'$ satisfy the conditions of \Cref{ass:exc} in $\mathcal C'$, and restriction along $\mathcal C' \subseteq \mathcal C$ defines a functor
	\[
		\PPF(\mathcal C) \to \PPF(\mathcal C')
	.\]
	Furthermore, if $\mathcal C'$ is small, and every object of $\mathcal C$ has a small cdh cover consisting of quasi-admissible maps from objects of $\mathcal C'$, then this functor is an equivalence, and there is a Cartesian square
	\[
		\begin{tikzcd}
			\PPF(\mathcal C) \ar[d] \ar[r] & \PPF(\mathcal C') \ar[d] \\
			\Shv^{\qadm \cap \cdh}_{\CAlg(\PrL)}(\mathcal C) \ar[r] & \Psh_{\CAlg(\PrL)}(\mathcal C')
		\end{tikzcd}
	,\]
	where $\qadm \cap \cdh$ is the Grothendieck topology of (small) quasi-admissible cdh covers, the vertical arrows are induced by $\PPF \subseteq \Psh_{\CAlg(\PrL)}$, and the horizontal arrows are given by restriction along $\mathcal C' \subseteq \mathcal C$.
	\begin{proof}
		%
		%
		%

		The fact that $\mathcal C'$ is a full anodyne pullback subcontext of $\mathcal C$ follows from \cite[Remark 1.2.7]{Fundamentals}, and the same argument shows that the collections of exceptionally closed maps and exceptionally quasi-proper maps satisfy the conditions of \Cref{ass:exc} in $\mathcal C'$, and that the inclusion $\mathcal C' \to \mathcal C$ preserves and reflects base changes along these maps. Furthermore, if $i : Z \to S$ is an exceptionally closed map in $\mathcal C'$, then it is also exceptionally closed in $\mathcal C$, so since every quasi-admissible map to $S$ is in $\mathcal C'$, we have that the complement of $i$ in $\mathcal C$ is in $\mathcal C'$, so the inclusion also preserves and reflects complements of exceptionally closed maps. Thus, \Cref{rmk:exc res} shows that restriction defines a functor
		\[
			\PPF(\mathcal C) \to \PPF(\mathcal C')
		.\]

		Since every object of $\mathcal C$ has a small quasi-admissible cdh cover by objects of $\mathcal C'$, we may apply \cite[Proposition 2.2.6]{Fundamentals} and \Cref{lem:locality of closedness} to obtain a Cartesian square
		\[
			\begin{tikzcd}
				\PF^\cstr_\bullet(\mathcal C; \qadm \cap \cdh) \ar[d] \ar[r] & \PF^\cstr_\bullet(\mathcal C'; \qadm \cap \cdh) \ar[d] \\
				\Shv^{\qadm \cap \cdh}_{\CAlg(\PrL)}(\mathcal C) \ar[r] & \Shv^{\qadm \cap \cdh}_{\CAlg(\PrL)}(\mathcal C')
			\end{tikzcd}
		,\]
		where the top arrow is the restriction of $\PF^\cstr_\bullet(\mathcal C) \to \PF^\cstr_\bullet(\mathcal C')$ to the full subcategories consisting of constructible pullback formalisms that have descent along small quasi-admissible cdh covers, and the vertical arrows are the evident inclusions. When $\mathcal C$ is small, we can apply \cite[Proposition 2.1.14]{Fundamentals} to see that the bottom map is an equivalence, so the top one is too.

		Since exceptionally quasi-proper maps in $\mathcal C$ are stable under base change along quasi-admissible maps from objects of $\mathcal C'$, we find that the top arrow restricts to an equivalence
		\[
			\PPF(\mathcal C) \to \PPF(\mathcal C')
		\]
		by \Cref{lem:PPF cdh}, \Cref{prp:proper local on target}, and the fact that stable presentable categories are closed under small limits. We conclude since $\Shv^{\qadm \cap \cdh}_{\CAlg(\PrL)}(\mathcal C') \to \Psh_{\CAlg(\PrL)}(\mathcal C')$ is a monomorphism of categories.
	\end{proof}
\end{prp}

\begin{proof}[Proof of \Cref{thm:PPF}]
	It will suffice to show that $D$ is a projective pullback formalism, and that if $\phi : D \to D'$ is a morphism of constructible pullback formalisms, then $D'$ takes values in stable categories, every projectively $D'$-saturated collection is projectively $D$-saturated, and $\phi$ is right adjointable at exceptionally quasi-proper maps. Indeed, this will show that $D'$ satisfies the same hypotheses, so it is also a projective pullback formalism, $\phi$ is a map in $\PPF(\mathcal C)$, and any map in $\PF^\cstr_\bullet(\mathcal C)$ from $D'$ is also a map in $\PPF(\mathcal C)$.

	Note that by \cite[Lemma E.0.1]{Fundamentals}, if $\mathcal A \in \CAlg(\PrL)$ has a zero object, then $\mathcal A$ is stable if and only if the suspension of the monoidal unit is $\otimes$-invertible. Thus, it follows that since $D,D'$ take values in categories that have zero objects, since $D(S)$ is stable and $\phi : D(S) \to D'(S)$ is symmetric monoidal and preserves finite colimits for all $S \in \mathcal C$, we must have that $D'(S)$ is also stable. Thus, $D'$ takes values in stable categories.

	Let $P$ be the collection of maps $f$ in $\mathcal C$ such that every base change of $f$ exists and is $D$-quasi-proper and $D'$-quasi-proper, and $\phi$ is right adjointable at every base change of $f$. We will show that $P$ is projectively $D$-saturated:
	\begin{enumerate}

		\item By \Cref{cor:closed immersions are proper} and \Cref{thm:closed}(\ref{itm:closed/morphism}), we have that every exceptionally closed map is in $P$. Now, let $f$ be a quasi-admissible stably $D$-ambidextrous map whose diagonal is in $P$. By \Cref{prp:duality propagation}, we have that $f$ is also stably $D'$-ambidextrous, that $D$ and $D'$ have the right projection formula for $f$, and that $\phi$ is right adjointable at $f$. Next, we see that by \Cref{prp:duality qadm base change,prp:duality base change}, $D$ and $D'$ have quasi-admissible exchange and right base change for $f$. Using the fact that $P$ is stable under base change (by definition), we also have that every base change of $f$ has diagonal in $P$ and is stably $D$-ambidextrous by \Cref{prp:duality base change}. We conclude that $f \in P$ by \Cref{rmk:duality and stability give shriek}.

		\item $P$ is stable under base change by definition, and it is stable under composition by \Cref{lem:composites of proper maps} and \cite[Lemma F.6(2)]{TwAmb}.

		\item Let $f : X \to Y$ be a map such that all base changes of $f$ exist.
			\begin{enumerate}

				\item By \Cref{prp:proper local on target}, if $Y$ admits a $D$-pseudocover by quasi-admissible maps $Y' \to Y$ such that the base change $X \times_Y Y' \to Y'$ is in $P$, then $f$ is $D$-quasi-proper, and $\phi$ is right adjointable at $f$. Since every quasi-admissible $D$-pseudocover is also a (quasi-admissible) $D'$-pseudocover by \Cref{thm:D-topology}, we also have that $f$ is $D'$-quasi-proper. Since $P$ is stable under base change, we find that by taking base changes of the quasi-admissible $D$-pseudocovers, we may apply the same argument to every base change of $f$, so if all base changes of $f$ exist, then $f \in P$.

				\item Suppose $q : \bar Y \to Y$ is a $D$-acyclic map in $P$. Using the fact that $\phi$ is right adjointable at every base change of $q$, and that both $D$ and $D'$ have the right projection formula for every base change of $q$, the dual of \cite[Lemma D.2.3]{Fundamentals} shows that all base changes of $q$ are both $D$-acyclic and $D'$-acyclic. Thus, if the base change $X \times_Y \bar Y \to \bar Y$ of $f$ along $q$ is in $P$, then by \Cref{prp:proper proper-local on target}, we find that for any base change $f'$ of $f$, we have that $\phi$ is right adjointable at $f'$, and $f'$ is $D$-quasi-proper and $D'$-quasi-proper. Hence $f \in P$. 

				\item Let $i : Z \to X$ be an exceptionally closed map with complement $j : U \to X$, and let $p : \bar X \to X$ be a map in $P$ such that the base change of $p$ along $j$ is invertible. 

					Since $D$ and $D'$ take values in stable categories, by \Cref{prp:gluing implies exc}, we have that $D^\sharp$ and $(D')^\sharp$ send the Cartesian square
					\[
						\begin{tikzcd}
							\bar Z \ar[d] \ar[r] & \bar X \ar[d] \\
							Z \ar[r] & X
						\end{tikzcd}
					\]
					to a Cartesian square. Once again, using the fact that $D$ and $D'$ take values in stable categories, and $\bullet \to \bullet \gets \bullet$ is a finite simplicial set, we have that by \Cref{prp:proper local on source} (and where $i$ is $D$-quasi-proper and $D'$-quasi-proper by \Cref{cor:closed immersions are proper}), if $f \circ i$ and $f \circ p$ are $D$-quasi-proper and $D'$-quasi-proper, then $f$ is $D$-quasi-proper and $D'$-quasi-proper. In particular $D'(f)$ has a colimit-preserving right adjoint, so by \Cref{lem:pre source locality of proper}, if we also know that $\phi$ is right adjointable at $f \circ i$ and $f \circ p$, then $\phi$ is right adjointable at $f$.

					Since $P$ and the collection of exceptionally closed maps are stable under base change, we can apply the same argument to every base change of $f$, so that $f \in P$, as desired.

			\end{enumerate}

	\end{enumerate}
	Thus, since every exceptionally quasi-proper map is contained in every projectively $D$-saturated collection, it follows that every exceptionally quasi-proper map $f$ is in $P$, so $f$ is $D$-quasi-proper and $D'$-quasi-proper, and $\phi$ is right adjointable at $f$.

	Finally, we will show that every projectively $D'$-saturated collection $P'$ is projectively $D$-saturated. \Cref{thm:D-topology} shows that every $D$-pseudocover consisting of quasi-admissible maps is also a $D'$-pseudocover, and the dual of \cite[Lemma D.2.3]{Fundamentals} shows that if $q$ is a $D$-acyclic exceptionally quasi-proper map, then $q$ is also $D'$-acyclic (since we have shown that $\phi$ is right adjointable at $q$). It only remains to show that if $f$ is a quasi-admissible stably $D$-ambidextrous map whose diagonal is exceptionally quasi-proper and in $P'$, then $f \in P'$. Indeed, we have already shown that every exceptionally quasi-proper map is $D$-quasi-proper and $D'$-quasi-proper, and that $\phi$ is right adjointable at every exceptionally quasi-proper map. Therefore, $f$ is also stably $D'$-ambidextrous by \Cref{prp:duality propagation}, so it is in $P'$ since its diagonal is exceptionally quasi-proper and in $P'$.
\end{proof}

\subsection{Voevodsky 6-functor formalisms} \label{S:V6FF}

We now turn our attention to results about 6-functor formalisms, all of which will require the following condition that we assume for the rest of the section:
\begin{ass} \label{ass:fin prod}
	The category $\mathcal C$ admits finite products.
\end{ass}

We now define a category of highly structured 6-functor formalisms:
\begin{defn} \label{defn:V6FF}
	Let $I,P,E$ be collections of maps in $\mathcal C$ that are stable under base change and taking diagonals, contain all identities, and such that $I,P \subseteq E$. We can use \cite[\S4]{CLL6FF} to construct the symmetric monoidal 2-category $\Span_2(\mathcal C,E)_{P,I}$, along with the symmetric monoidal 2-functor $\mathcal C^\op \to \Span_2(\mathcal C,E)_{P,I}$.

	We define a category $\VsixFF(\mathcal C, E)_{P,I}$ of \emph{Voevodsky-6-functor formalisms} on $(\mathcal C, E, I, P)$ to be the subcategory of the category $\Alg_{\Span_2(\mathcal C, E)_{P,I}}(\PrL)$\footnotemark of lax symmetric monoidal 2-functors $\Span_2(\mathcal C, E)_{P,I} \to \PrL$ where
	\footnotetext{If $\mathcal A, \mathcal B$ are symmetric monoidal 2-categories, we write $\Alg_{\mathcal A}(\mathcal B)$ to denote the category of lax symmetric monoidal 2-functors $\mathcal A \to \mathcal B$. When $\mathcal A$ is just a symmetric monoidal category, this is the same as the category of lax symmetric monoidal functors $\mathcal A \to \mathcal B$, as in \cite[Definition 2.1.2.7]{ha}.}
	\begin{description}

		\item[Objects] are those lax symmetric monoidal 2-functors $D : \Span_2(\mathcal C, E)_{P,I} \to \PrL$ satisfying the following:
			\begin{itemize}

				\item $D$ takes values in \emph{stable} categories.

				\item After restricting $D$ along $\mathcal C^\op \to \Span_2(\mathcal C, E)_{P, I}$ and using \cite[Theorem 2.4.3.18]{ha} to obtain a functor $D^* : \mathcal C^\op \to \CAlg(\PrL)$, $D$ has left base change and the left projection formula for quasi-admissible maps, right base change and the right projection formula for exceptionally quasi-proper maps, and quasi-admissible exchange for exceptionally quasi-proper maps,

				\item Every quasi-admissible map is $D|_{\Span(\mathcal C,E)}$-suave, and every exceptionally quasi-proper map is $D|_{\Span(\mathcal C,E)}$-prim (see \Cref{defn:suave prim}).

				\item For any exceptionally closed map $i : Z \to S$, with quasi-admissible complement $j : U \to S$,
					\[
						D(Z) \xrightarrow{i_*} D(S) \xrightarrow{j^*} D(U)
					\]
					is a fibre sequence, and
					\[
						j_\sharp j^* \to \id \to i_* i^*
					\]
					is an exact triangle of endofunctors of $D(S)$, where we use that $j$ is $D$-suave to get the existence of a left adjoint $j_\sharp$ of $j^*$.

				\item $D^*$ and $D^!$ have cdh descent.

			\end{itemize}

		\item[Morphisms] are those transformations $\phi : D \to D'$ such that
			\begin{itemize}

				\item the transformation $\phi^* : D^* \to D'^*$ is left adjointable at quasi-admissible maps, and right adjointable at exceptionally quasi-proper maps, and

				\item the transformation $\phi_! : D_! \to D'_!$ is right adjointable at quasi-admissible maps, and left adjointable at exceptionally quasi-proper maps.

			\end{itemize}
			
	\end{description}

	We also write $\VsixFF(\mathcal C, E)$ to denote $\VsixFF(\mathcal C,E)_{\text{equivalences}, \text{equivalences}}$.
\end{defn}

\begin{lem} \label{lem:desc V6FF}
	Let $I,P,E$ be as in \Cref{defn:V6FF}. The subcategory $\VsixFF(\mathcal C, E)_{P,I}$ of $\Alg_{\Span_2(\mathcal C, E)_{P,I}}(\PrL)$ is equal to the following apparently larger subcategory of $\Alg_{\Span_2(\mathcal C,E)_{P,I}}(\PrL)$.
	\begin{description}

		\item[Objects] are those $D : \Span_2(\mathcal C, E)_{P,I} \to \PrL$ satisfying the following:
			\begin{itemize}

				\item The associated $D^* : \mathcal C^\op \to \CAlg(\PrL)$ is a projective pullback formalism.

				\item Every quasi-admissible map is $D|_{\Span(\mathcal C,E)}$-suave, and every exceptionally quasi-proper map is $D|_{\Span(\mathcal C,E)}$-prim (see \Cref{defn:suave prim}).

			\end{itemize}

		\item[Morphisms] are those transformations $\phi : D \to D'$ such that the associated transformation $\phi^* : D^* \to D'^*$ in $\Fun(\mathcal C^\op, \CAlg(\PrL))$ is a morphism of projective pullback formalisms.
			
	\end{description}
	\begin{proof}
		The fact that this subcategory has the correct morphisms follows from \Cref{prp:suave prim morph}(\ref{itm:suave prim morph}), so it only remains to show that it has the correct objects. Indeed, by the definition of projective pullback formalisms, it only remains to show that $D^*$ and $D^!$ have cdh descent.

		The fact that $D^*$ has cdh descent follows from \Cref{lem:PPF cdh}, which also shows that exceptionally quasi-proper cdh covers are $D^\sharp$-pseudocovers. Thus, we may use \Cref{lem:suave prim descent} to conclude that $D^!$ also has cdh descent.
	\end{proof}
\end{lem}

\begin{rmk}
	If $I,P,E$ all denote the collection of equivalences in $\mathcal C$, then using \Cref{lem:desc V6FF}, it is clear that $D \mapsto D^*$ defines an equivalence
	\[
		\VsixFF(\mathcal C,E)_{P,I} \to \PPF(\mathcal C)
	.\]
	We will consider more general cases when this holds in \Cref{lem:V6FF PPF,rmk:6FF ext functor}.
\end{rmk}

\begin{rmk}[Poincar\'{e} duality for Voevodsky-6-functor formalisms] \label{rmk:V6FF duality}
	Suppose that $(\mathcal C,E)$ is a geometric setup in the sense of \cite[Convention 2.1.3]{HM6FF}, and $D \in \VsixFF(\mathcal C,E)$. Let $f : X \to Y$ be a quasi-admissible map in $E$ such that the diagonal $\Delta : X \to X \times_Y X$ of $f$ satisfies that $\Delta_* \simeq \Delta_!$. Then we have an equivalence
	\[
		f^! \simeq \Sigma^f f^*
	,\]
	so $D^*$ satisfies the following Poincar\'{e} duality: there is an equivalence of functors $D(Y) \to \spaces$
	\[
		D^\BM(X;-) \simeq D(X;-)[f]\footnotemark
	.\]
	\footnotetext{See \Cref{defn:tangentially twisted cohomology} for this notation.}
	\begin{proof}
		Since $f$ is quasi-admissible, it is $D$-suave, so since it is in $E$, by \Cref{lem:suave prim} and \cite[Lemma 4.5.6]{HM6FF}, we have that $\omega_f \simeq \pi_\sharp \Delta_! 1$, where $\pi : X \times_Y X \to X$ is one of the projections. Since $\Delta, \pi \in E$, we may use \cite[Proposition A.5.8(iv)]{Mann6FF} or \cite[Proposition 3.1.8(iv)]{Mann6FF} to see that
		\[
			\omega_f \otimes - \simeq \pi_\sharp \Delta_! 1 \otimes - \simeq \pi_\sharp(\Delta_! 1 \otimes \pi^*) \simeq \pi_\sharp \Delta_!(1 \otimes \Delta^* \pi^*) \simeq \pi_\sharp \Delta_!
		.\]
		Since $\Delta_! \simeq \Delta_*$, we find that
		\[
			\omega_f \otimes - \simeq \pi_\sharp \Delta_* = \Sigma^f
		.\]
		Using \Cref{lem:suave prim} again, we may apply \cite[Corollary 4.5.11(i)]{HM6FF} to find that
		\[
			f^! \simeq \omega_f \otimes f^* \simeq \Sigma^f f^*
		.\]

		Thus,
		\[
			D^\BM(X;-) = D(X)(1, f^!) \simeq D(X)(1, \Sigma^f f^*) = D(X;-)[f]
		.\]
	\end{proof}
\end{rmk}

We now consider the following setting:
\begin{setng} \label{setng:V6FF transfer}
	Let $I,P,E$ be collections of maps in $\mathcal C$ as in \Cref{defn:V6FF}. Let $\mathcal C' \subseteq \mathcal C$ be a full subcategory, and let $I',P',E'$ also be collections of maps in $\mathcal C'$ as in \Cref{defn:V6FF}. Assume the following:
	\begin{itemize}

		\item $I' \subseteq I$, $P' \subseteq P$, and $E' \subseteq E$.

		\item If $X \to Y$ is a map that is quasi-admissible, exceptionally quasi-proper, or exceptionally closed, and $Y \in \mathcal C'$, then $X \in \mathcal C'$.

		\item The category $\mathcal C'$ admits finite products and base changes along maps in $E'$, and the inclusion $\mathcal C' \to \mathcal C$ preserves these.

		\item Every object of $\mathcal C$ admits a small cdh cover consisting of quasi-admissible maps from objects of $\mathcal C'$.

		\item For any $X \to Y$ in $E$, there is a small cdh cover of $Y$ consisting of maps $Y' \to Y$ such that $Y', X \times_Y Y' \in \mathcal C'$, and there is a small cdh cover of $X \times_Y Y'$ by maps $X' \to X \times_Y Y'$ in $E'$ such that $X' \to Y'$ is also in $E'$.

		%

	\end{itemize}
\end{setng}

\begin{lem} \label{lem:V6FF res}
	In \Cref{setng:V6FF transfer}, restriction along $\Span_2(\mathcal C', E')_{P',I'} \to \Span_2(\mathcal C, E)_{P, I}$ induces a functor
	\[
		\VsixFF(\mathcal C,E)_{P,I} \to \VsixFF(\mathcal C',E')_{P',I'}
	.\]
	Furthermore, if $\mathcal C' \to \mathcal C$ is an equivalence, this fits into a Cartesian square
	\[
		\begin{tikzcd}
			\VsixFF(\mathcal C, E)_{P, I} \ar[d] \ar[r] & \VsixFF(\mathcal C', E')_{P',I'} \ar[d] \\
			\Alg_{\Span_2(\mathcal C, E)_{P, I}}(\PrL) \ar[r] & \Alg_{\Span_2(\mathcal C', E')_{P',I'}}(\PrL)
		\end{tikzcd}
	\]
	where the vertical maps are the usual inclusions, and the horizontal maps are given by restriction along $\Span_2(\mathcal C',E')_{P',I'} \to \Span_2(\mathcal C, E)_{P, I}$.

	In general, we still get a Cartesian square if the bottom map is replaced by its restriction to the full subcategory consisting of those $D : \Span_2(\mathcal C,E)_{P,I} \to \PrL$ such that $D^*$ has descent along small quasi-admissible cdh covers.

	In particular, for any $D,D' \in \VsixFF(\mathcal C,E)_{P,I}$, any transformation $D \to D'$ is a morphism in $\VsixFF(\mathcal C,E)_{P,I}$ if and only if it restricts to a morphism in $\VsixFF(\mathcal C',E')_{P',I'}$.
	
	%
	\begin{proof}
		It is helpful to recall \cite[\S1.2.11]{htt} for the notion of subcategories.

		When $\mathcal C' \to \mathcal C$ is an equivalence, any $D : \Span_2(\mathcal C,E)_{P,I} \to \PrL$ such that $D|_{\Span_2(\mathcal C',E')_{P',I'}}$ is a Voevodsky-6-functor formalism satisfies that $D^*$ has descent along small quasi-admissible cdh covers, so the first statement follows from the second.

		To show the second statement, it suffices to show that a lax symmetric monoidal 2-functor $D : \Span_2(\mathcal C,E)_{P, I} \to \PrL$ is in$\VsixFF(\mathcal C,E)_{P,I}$ if and only if $D^*$ has descent for small quasi-admissible cdh covers and $D|_{\Span_2(\mathcal C',E')_{P',I'}} \in \VsixFF(\mathcal C',E')_{P',I'}$, and that a transformation $D \to D'$ between Voevodsky-6-functor formalisms is a morphism in $\VsixFF(\mathcal C, E)_{P, I}$ if and only if it restricts to a morphism in $\VsixFF(\mathcal C', E')_{P',I'}$.

		Since we have a commutative square of restriction functors
		\[
			\begin{tikzcd}
				\Alg_{\Span_2(\mathcal C,E)_{P,I}} \PrL \ar[d] \ar[r] & \Alg_{\Span_2(\mathcal C',E')_{P',I'}} \PrL \ar[d] \\
				\Psh_{\CAlg(\PrL)}(\mathcal C) \ar[r] & \Psh_{\CAlg(\PrL)}(\mathcal C')
			\end{tikzcd}
		,\]
		we may apply \Cref{lem:desc V6FF,prp:PPF res ext} to reduce to checking that if $D_{\Span_2(\mathcal C',E')_{P',I'}} \in \VsixFF(\mathcal C',E')_{P',I'}$ and $D^*$ has descent for small quasi-admissible cdh covers, then every quasi-admissible map is $D$-suave, and every exceptionally quasi-proper map is $D$-prim.

		Using the fact that $\mathcal C' \to \mathcal C$ preserves all base changes along maps in $E$, we find that every quasi-admissible (\resp{} exceptionally quasi-proper) map in $\mathcal C'$ is $D$-suave (\resp{} prim) against maps in $E'$. Since $D^*$ has descent for small quasi-admissible cdh covers, and $D^*|_{\mathcal C'^\op}$ is a projective pullback formalism, \Cref{prp:PPF res ext} shows that $D^*$ is a projective pullback formalism. Therefore we may apply \Cref{lem:suave prim target-local} to reduce to showing that for every quasi-admissible (\resp{} exceptionally quasi-proper map) $f : X \to Y$, if $Y \in \mathcal C'$, then $f$ is $D$-suave (\resp{} prim) against maps $Y' \to Y$ in $E$ where $Y' \in \mathcal C'$.

		By our assumptions, we have that there is a small cdh cover of $Y'$ consisting of maps $Y'' \to Y'$ in $E'$ such that $Y'' \to Y$ is also in $E'$, so we conclude by \Cref{lem:suave prim source-local}, which we can apply since $E'$ is right-cancellative by \cite[Lemma 2.1.5]{HM6FF}). In the exceptionally quasi-proper case we also need to note that for any base change $f'$ of $f$, $f'_*$ admits a right adjoint since $f'$ is exceptionally quasi-proper and $D^*$ is a projective pullback formalism.

	\end{proof}
\end{lem}

\begin{lem} \label{lem:V6FF ext}
	In \Cref{setng:V6FF transfer}, the restriction functor
	\[
		\VsixFF(\mathcal C,E) \to \VsixFF(\mathcal C', E')
	\]
	is an equivalence.
	\begin{proof}
		Note that our hypotheses guarantee that every object of $\mathcal C$ admits a small cdh cover consisting of quasi-admissible maps from objects of $\mathcal C'$. Thus, by \cite[Proposition 2.1.14]{Fundamentals} (or \cite[Lemma C.3]{quadratic-refinement-GLV-trace}), any presheaf on $\mathcal C$ has descent for small quasi-admissible cdh covers if and only if it is right Kan extended from a presheaf on $\mathcal C'$ that has descent for small quasi-admissible cdh covers.
		
		Since every Voevodsky-6-functor formalism $D$ satisfies that $D^*$ and $D^!$ have cdh descent, \Cref{prp:ext 6FF,lem:V6FF res} show that
		\[
			\VsixFF(\mathcal C,E) \to \VsixFF(\mathcal C', E')
		\]
		admits a fully faithful section whose essential image is given by those $D \in \VsixFF(\mathcal C,E)$ such that $D^*$ is a right Kan extension of $D^*|_{\mathcal C'^\op}$. Since every $D \in \VsixFF(\mathcal C, E)$ satisfies that $D^*$ has cdh descent, we have that $D^*$ is right Kan extended from $D^*|_{\mathcal C'^\op}$, which shows that the above fully faithful section is essentially surjective.
	\end{proof}
\end{lem}

\begin{rmk} \label{rmk:2-cat ext 6FF}
	The extension result \Cref{prp:ext 6FF} is only given for 6-functor formalisms that are lax symmetric monoidal functors $\Span(\mathcal C,E) \to \PrL$. We expect that it should be possible to prove a version of this result for 2-categorical 6-functor formalisms given by lax symmetric monoidal 2-functors $\Span_2(\mathcal C,E)_{P,I} \to \PrL$. In this case, it would not be necessary to assume that $I,P$ consist only of equivalences in \Cref{lem:V6FF ext}, which would allow for enhancements of many of our results, as well simplifications of some of our arguments, such as \Cref{rmk:6FF ext functor}.
\end{rmk}

For the remainder of the section, we will fix collections $I, P, P \circ I$ of maps in $\mathcal C$, and make the following assumptions:
\begin{ass} \label{ass:IPE}
	\hfill
	\begin{enumerate}

		\item The collections $I,P$ are stable under composition, base change, and taking diagonals.

		\item The collection $P \circ I$ consists of all composites of maps in $I \cup P$.

		\item Every map in $I$ is quasi-admissible, and every map in $P$ is exceptionally quasi-proper.

		\item Every map in $I \cap P$ is truncated.

		\item Every equivalence is in $P \circ I$.

	\end{enumerate}
\end{ass}

It follows that $P \circ I$ is also stable under composition, base change, and taking diagonals, and contains all equivalences, so that $(\mathcal C, P \circ I)$ is a geometric setup in the sense of \cite[Convention 2.1.3]{HM6FF}.

\begin{lem} \label{lem:V6FF PPF}
	Suppose that every map in $P \circ I$ is of the form $p \circ j$ for $p \in P$ and $j \in I$. Then restriction along $\mathcal C^\op \to \Span_2(\mathcal C, P \circ I)_{P, I}$ induces an equivalence\footnote{Recall \cite[Theorem 2.4.3.18]{ha} for the identification of lax symmetric monoidal functors $\mathcal C^\op \to \PrL$ with presheaves $\mathcal C^\op \to \CAlg(\PrL)$.}
	\[
		\VsixFF(\mathcal C, P \circ I)_{P,I} \to \PPF(\mathcal C)
	.\]
	\begin{proof}
		By \cite[Theorem B and Example 4.34]{CLL6FF}, for any symmetric monoidal 2-category $\mathcal V$, restriction along $\mathcal C^\op \to \Span_2(\mathcal C, P \circ I)_{P,I}$ induces an equivalence from the category of lax symmetric monoidal 2-functors $\Span_2(\mathcal C, P \circ I)_{P,I} \to \mathcal V$ to the subcategory of $\Fun(\mathcal C^\op, \CAlg(\mathcal V))$ whose morphisms are those transformations that are left adjointable at maps in $I$ and right adjointable at maps in $P$, and whose objects are those $D : \mathcal C^\op \to \CAlg(\mathcal V))$ that have the left projection formula and left base change for maps in $I$, and the right projection formula and right base change for maps in $P$, and have right-left base change for maps in $P$ against maps in $I$.

		Let $\mathcal V$ be the 2-category of stable presentable categories with symmetric monoidal structure given in \cite[Proposition 4.8.2.18]{ha}. By \Cref{lem:desc V6FF}, it therefore follows that restriction along $\mathcal C^\op \to \Span_2(\mathcal C,P \circ I)_{P,I}$ induces a fully faithful functor $\VsixFF(\mathcal C,P \circ I)_{P,I} \to \PPF(\mathcal C)$, so it only remains to show that it is essentially surjective.

		Any projective pullback formalism on $\mathcal C$ is in the subcategory of $\Fun(\mathcal C^\op, \CAlg(\mathcal V))$ mentioned above, so that \cite[Theorem B and Example 4.34]{CLL6FF} shows that it extends to a lax symmetric monoidal 2-functor $D : \Span_2(\mathcal C, P \circ I)_{P,I} \to \mathcal V$, and it only remains to show that $D$ is a Voevodsky-6-functor formalism. Using \Cref{lem:desc V6FF} again, we reduce to showing that every quasi-admissible map is $D$-suave,  and every exceptionally quasi-proper map is $D$-prim. This follows immediately from \Cref{prp:nat sq adj 6FF}(\ref{itm:nat sq adj/suave prim}).

		%
		%
		%
		%
		%
		
	\end{proof}
\end{lem}

\begin{lem} \label{lem:V6FF truncated}
	Suppose that every map in $I \cup P$ is truncated. Restriction along $\Span(\mathcal C, P \circ I) \to \Span_2(\mathcal C, P \circ I)_{P,I}$ induces an equivalence
	\[
		\VsixFF(\mathcal C, P \circ I)_{P,I} \to \VsixFF(\mathcal C, P \circ I)
	.\]

	In particular, restriction along $\mathcal C^\op \to \Span(\mathcal C, P \circ I)$ induces an equivalence
	\[
		\VsixFF(\mathcal C,P \circ I) \to \PPF(\mathcal C)
	.\]
	\begin{proof}
		By \Cref{lem:V6FF res}, the following square is Cartesian:
		\[
			\begin{tikzcd}
				\VsixFF(\mathcal C, P \circ I)_{P,I} \ar[d] \ar[r] & \VsixFF(\mathcal C, P \circ I) \ar[d] \\
				\Alg_{\Span_2(\mathcal C, P \circ I)_{P,I}}(\PrL) \ar[r] & \Alg_{\Span(\mathcal C, P \circ I)}(\PrL)
			\end{tikzcd}
		.\]
		By \Cref{prp:2-cat 6FF trunc}, the bottom arrow is fully faithful with essential image given by those lax symmetric monoidal functors $D : \Span(\mathcal C, P \circ I) \to \PrL$ such that every map in $I$ is $D$-suave, and every map in $P$ is $D$-prim. Since every map in $I$ is quasi-admissible, and every map in $P$ is exceptionally quasi-proper, this contains the subcategory $\VsixFF(\mathcal C, P \circ I)$, so the top arrow in the diagram is an equivalence.

		Thus, $\VsixFF(\mathcal C, P \circ I) \to \PPF(\mathcal C)$ is an equivalence since \Cref{lem:V6FF PPF} says that $\VsixFF(\mathcal C,P \circ I)_{P,I} \to \PPF(\mathcal C)$ is an equivalence.
	\end{proof}
\end{lem}

We now turn our attention to the proof of \Cref{thm:6FF}. First we will see how to combine our results about Voevodsky-6-functor formalisms:
\begin{rmk} \label{rmk:6FF ext functor}
	In \Cref{setng:V6FF transfer}, restriction along
	\[
		\begin{tikzcd}
			\Span_2(\mathcal C', E')_{P',I'} & \ar[l] \Span(\mathcal C', E') \ar[r] & \Span(\mathcal C,E) \\
			\mathcal C'^\op \ar[u] & \ar[l, equals] \mathcal C'^\op \ar[u] \ar[r] & \mathcal C^\op \ar[u]
		\end{tikzcd}
	\]
	induces the following commutative diagram
	\[
		\begin{tikzcd}
			\VsixFF(\mathcal C', E')_{P', I'} \ar[d] \ar[r] & \VsixFF(\mathcal C', E') \ar[d] & \ar[l, "\sim"'] \VsixFF(\mathcal C, E) \ar[d] \\
			\PPF(\mathcal C') \ar[r, equals] & \PPF(\mathcal C') & \ar[l, "\sim"'] \PPF(\mathcal C)
		\end{tikzcd}
	.\]
	The rightmost horizontal arrows are equivalences by \Cref{lem:V6FF ext,prp:PPF res ext}. If every map in $E'$ is equivalent to $p \circ j$ for some $p \in P'$ and $j \in I'$, \Cref{lem:V6FF PPF} shows that the leftmost vertical functor is an equivalence. In particular, we get a section of the restriction
	\[
		\VsixFF(\mathcal C,E) \to \PPF(\mathcal C)
	.\]

	Furthermore, if every map in $E'$ is truncated, then \Cref{lem:V6FF truncated} shows that the top left horizontal map is an equivalence, so that this restriction map is actually an equivalence.
\end{rmk}

\begin{proof}[Proof of \Cref{thm:6FF}]
	By \Cref{rmk:6FF ext functor}, we have that the functor $\VsixFF(\mathcal C,E) \to \PPF(\mathcal C)$ admits a section. By \Cref{thm:PPF}, we have that $D^*$ is a projective pullback formalism, so it extends to a 6-functor formalism $D \in \VsixFF(\mathcal C,E)$, and the functor
	\[
		\VsixFF(\mathcal C,E)_{D/} \to \PF^\cstr_\bullet(\mathcal C)_{D^*/}
	\]
	admits a section. This immediately implies \cref{itm:6FF/descent,itm:6FF/suave prim,itm:6FF/morph}, and \cref{itm:6FF/duality} is given by \Cref{rmk:V6FF duality}.

	In fact, \Cref{rmk:6FF ext functor} shows that if we write $P \circ I$ for the collection of maps in $\mathcal C'$ that are composites of maps in $I \cup P$, then $D|_{\Span(\mathcal C', P \circ I)}$ extends to a 2-functor $\mathring D : \Span_2(\mathcal C', P \circ I)_{P,I} \to \PrL$, and it suffices to show \cref{itm:6FF/describe shriek} for $\mathring D$. This follows by the same inductive argument as \cite[Lemma 4.6.4]{HM6FF}, where instead of the base case being the case that $f$ is an equivalence, it is when $X \to Y$ is in $I$ or $P$. In this case, the result follows from the fact that (as in the proof of \cite[Proposition 4.14]{CLL6FF}) in $\Span_2(\mathcal C', P \circ I)_{P,I}$, the morphism $X = X \to Y$ is a left or right adjoint of $Y \gets X = X$ depending on if the map $X \to Y$ is in $I$ or $P$.
\end{proof}

\section{Applications} \label{S:applications}

We will now present some applications of our general results to 6-functor formalisms as they relate to motivic homotopy theory.

\subsection{Universality of motivic homotopy theory as a 6-functor formalism} \label{sec:univ alg}


In order to state our results, we must first review some notions from \cite[\S5.1]{Fundamentals}:

Let $\AlgStk$ be the category of derived algebraic stacks.
\begin{nota}
	Given a derived algebraic stack $X$, we write $X_\clcl$ for its classical truncation. In fact, $(-)_\clcl$ is the right adjoint of the inclusion of the category $\AlgStk^\clcl$ of (classical) algebraic stacks into the category $\AlgStk$ of all possibly derived algebraic stacks.
\end{nota}

\begin{defn}
	A map $X \to Y$ in $\AlgStk^\clcl$ is \emph{(quasi-)projective} if there is a finite type $\mathcal F \in \QC(Y)$, and a closed (\resp{} quasi-compact) immersion over $Y$ from $X$ to $\proj_Y(\mathcal F)$.
\end{defn}

We note the following subcategories of $\AlgStk$:
\begin{enumerate}

	\item $\AlgStk^\lred \subseteq \AlgStk^\clcl$ is the full subcategory consisting of qcqs algebraic stacks that admit quasi-projective Nisnevich covers by global quotients of the form $X/G$, where $G$ is a linearly reductive group scheme over an affine scheme, and $X/G \to \B G$ is quasi-projective.

	\item $\AlgStk^\nice \subseteq \AlgStk$ is the full subcategory consisting of qcqs derived algebraic stacks with separated diagonal whose stabilizers are nice groups in the sense of \cite[Definition 2.1(i)]{SixAlgSt}, which we recall here for convenience: an fppf affine group scheme $G$ over an affine scheme $S$ is \emph{nice} if it is an extension of a finite \'{e}tale group scheme of order prime to the residue characteristic of $S$, by a group scheme of multiplicative type.

\end{enumerate}

Note that if $X$ is a qcqs derived algebraic stack with separated diagonal, and $X$ is a tame derived Deligne-Mumford stack, or a tame algebraic stack in the sense of \cite[\S3]{tame-stacks}, then $X \in \AlgStk^\nice$ by \cite[Examples 2.15 and 2.16]{SixAlgSt}. By \cite[Theorem 2.14]{SixAlgSt}, we also have that $\AlgStk^\nice$ contains all quotients of qcqs derived algebraic spaces by nice group schemes over affine schemes.

Furthermore, by \cite[Theorem 2.12]{SixAlgSt}, any object of $\AlgStk^\nice$ that has affine diagonal is also in $\AlgStk^\lred$.

Fix a full subcategory $\mathcal C^\alg \subseteq \AlgStk$ with a quasi-admissibility structure satisfying the following:
\begin{ass} \label{ass:alg}
	\hfill
	\begin{enumerate}

		\item $\mathcal C^\alg$ admits finite products.

		\item Every open immersion in $\mathcal C^\alg$ is quasi-admissible.

		\item The collection of closed immersions in $\mathcal C^\alg$ is stable under base change in $\mathcal C^\alg$.

	\end{enumerate}
\end{ass}

We put ourselves in the setting of \Cref{S:6FF} by defining the exceptionally closed maps in $\mathcal C^\alg$ to be the closed immersions, and the exceptionally quasi-proper maps to be the representable proper maps.

\begin{rmk}[Comparison with usual cdh topologies] \label{rmk:compare alg cdh}
	In this setting, the exceptionally quasi-proper cdh topology on $\mathcal C^\alg$ refines the (representable) proper cdh topology of \cite[Definition 6.2(i)]{SixAlgSt}, and if every smooth representable morphism is quasi-admissible, then the cdh topology on $\mathcal C^\alg$ refines the (representable) cdh topology of \cite[Definition 6.2(ii)]{SixAlgSt}.

	In fact, if $\mathcal C^\alg \subseteq \AlgStk^\clcl$, then by \cite[Remark 6.3]{SixAlgSt}, the exceptionally quasi-proper cdh topology on $\mathcal C^\alg$ refines the topology of projective cdh excision, so if every smooth quasi-projective morphism is quasi-admissible, then by \cite[Theorem 8.6(ii)]{Rydh_2015}, the cdh topology on $\mathcal C^\alg$ refines the topology of \emph{quasi-projective cdh excision}.
\end{rmk}

We are ready to present our first general result about 6-functor formalisms on $\mathcal C^\alg$:
\begin{thm} \label{thm:qDM V6FF}
	Suppose that
	\begin{itemize}

		\item every object of $\mathcal C^\alg$ is a qcqs derived algebraic stack whose diagonal (taken in $\AlgStk$) is locally quasi-finite and locally separated,

		\item if $X \to Y$ is a quasi-compact open immersion or a proper representable morphism in $\AlgStk$, and $Y \in \mathcal C^\alg$, then $X \in \mathcal C^\alg$,

		\item $E$ is the collection of finite type separated representable morphisms in $\mathcal C^\alg$,

		\item $I$ is the collection of (quasi-compact) open immersions in $\mathcal C^\alg$, and

		\item $P$ is the collection of representable proper morphisms in $\mathcal C^\alg$.

	\end{itemize}
	Then the collections $I,P,E$ are stable under base change, composition, and taking diagonals, so that we can consider the category $\VsixFF(\mathcal C^\alg, E)_{P,I}$ from \Cref{defn:V6FF}. The functor
	\[
		\VsixFF(\mathcal C^\alg, E)_{P,I} \to \PPF(\mathcal C^\alg)
	\]
	is an equivalence.
	\begin{proof}
		As in \cite[Example 7.6]{SixAlgSt}, we have that every map in $E$ is of the form $p \circ j$ for $p \in P$ and $j \in I$. More specifically, this follows from \cite[Theorem B]{qDM_compactification} and \cite[Remark 7.4]{SixAlgSt}, since every object of $\mathcal C^\alg$ is a qcqs derived algebraic stack that has locally separated locally quasi-finite diagonal, and every proper representable map to an object of $\mathcal C^\alg$ is in $\mathcal C^\alg$.

		It is clear that the collections $I,P,E$ are stable under composition. Since every map in $E$ is a composite of maps in $I \cup P$, we just need to show that $I$ and $P$ are stable under base change and taking diagonals.\footnote{Note that the inclusion $\mathcal C^\alg \to \AlgStk$ is not assumed to preserve diagonals of maps in $E$, even up to classical truncation, so the fact that $E$ is closed under diagonals does not immediately follow from the fact that it only consists of separated maps.}

		Since any quasi-compact open immersion or proper representable map to an object of $\mathcal C^\alg$ is in $\mathcal C^\alg$, we have that the inclusion $\mathcal C^\alg \to \AlgStk$ preserves base changes along these maps, so $I$ and $P$ are stable under base change and taking diagonals because open immersions and proper representable maps in $\AlgStk$ are stable under base change and taking diagonals.

		Thus, since every map in $I$ is truncated and quasi-admissible, and every map in $P$ is exceptionally quasi-proper, the result follows from \Cref{lem:V6FF PPF}.
	\end{proof}
\end{thm}

We will make the following mild assumptions for the remainder of the section:
\begin{ass} \label{ass:alg clcl}
	\hfill
	\begin{enumerate}
		
		\item If $Y \in \mathcal C^\alg$, then any algebraic stack admitting a quasi-projective map to $Y_\clcl$ is in $\mathcal C^\alg$.

		\item Every quasi-admissible map $f$ satisfies that $f_\clcl$ is also quasi-admissible.

	\end{enumerate}
\end{ass}

\begin{rmk} \label{rmk:alg nil invariance}
	For any $X \in \mathcal C^\alg$, since $\id_{X_\clcl}$ is a quasi-projective map to $X_\clcl$, we have that $X_\clcl \in \mathcal C^\alg$, and since $X_{\clcl, \red} \to X_\clcl$ is quasi-projective, we have that $X_{\clcl, \red} \in \mathcal C^\alg$.

	In fact, the maps
	\[
		X_{\clcl, \red} \to X_\clcl \to X
	\]
	are closed immersions that have empty complement, so for any $D \in \PF(\mathcal C^\alg)$, if $D$ is a reduced pullback formalism such that closed immersions are $D$-closed, we have that the functors
	\[
		D(X_{\clcl, \red}) \to D(X_\clcl) \to D(X)
	\]
	are equivalences by \Cref{rmk:nil invar}.
\end{rmk}

\begin{rmk} \label{rmk:red to clcl}
	As in \Cref{rmk:alg nil invariance}, the functor $(-)_\clcl$ restricts to a right adjoint of the fully faithful inclusion
	\[
		\mathcal C^{\alg,\clcl} \coloneqq \AlgStk^\clcl \cap \mathcal C^\alg \to \mathcal C^\alg
	\]
	of classical stacks in $\mathcal C^\alg$. It is easy to see that $\mathcal C^{\alg,\clcl}$ satisfies the same conditions that we have assumed for $\mathcal C^\alg$, given in \Cref{ass:alg,ass:alg clcl}.

	Let $(\mathcal C^\alg, E), (\mathcal C^{\alg,\clcl}, E^\clcl)$ geometric setups such that $(-)_\clcl$ sends $E$ to $E^\clcl$. Then $(-)_\clcl$ defines a morphism of geometric setups $(\mathcal C^\alg, E) \to (\mathcal C^{\alg,\clcl}, E^\clcl)$, which induces a commutative diagram
	\[
		\begin{tikzcd}
			(\mathcal C^\alg)^\op \ar[d] \ar[r] & \mathcal (C^{\alg, \clcl})^\op \ar[d] \\
			\Span(\mathcal C^\alg, E) \ar[r] & \Span(\mathcal C^{\alg, \clcl}, E^\clcl)
		\end{tikzcd}
	\]
	of (lax) symmetric monoidal functors, which then induces the following commutative diagram:
	\begin{equation} \label{eqn:clcl trunc 6FF}
		\begin{tikzcd}
			\Alg_{\Span(\mathcal C^{\alg, \clcl}, E^\clcl)} \PrL \ar[d] \ar[r] & 
			\Alg_{\Span(\mathcal C^\alg, E)} \PrL \ar[d] \\
			\Psh_{\CAlg(\PrL)}(\mathcal C^{\alg,\clcl}) \ar[r] &
			\Psh_{\CAlg(\PrL)}(\mathcal C^\alg)
		\end{tikzcd}.
	\end{equation}
\end{rmk}

The following result will be useful for reducing to the case of classical algebraic stacks.
\begin{lem} \label{lem:reduce to clcl}
	The commutative square \eqref{eqn:clcl trunc 6FF} of \Cref{rmk:red to clcl} restricts to a commutative square
	\[
		\begin{tikzcd}
			\VsixFF(\mathcal C^{\alg, \clcl}, E^\clcl) \ar[d] \ar[r] & 
			\VsixFF(\mathcal C^\alg, E) \ar[d] \\
			\PPF(\mathcal C^{\alg,\clcl}) \ar[r, "\sim"] &
			\PPF(\mathcal C^\alg)
		\end{tikzcd}
	.\]
	\begin{proof}
		Since $(-)_\clcl$ is the right adjoint of a fully faithful functor, it follows from \cite[Proposition 5.2.7.12]{htt} that the bottom functor of \eqref{eqn:clcl trunc 6FF} is fully faithful with essential image given by those presheaves $D^* : (\mathcal C^\alg)^\op \to \CAlg(\PrL)$ such that for any map $X \to Y$ in $\mathcal C^\alg$, if $X_\clcl \to Y_\clcl$ is an equivalence, then $D^*(X \to Y)$ is an equivalence. This is equivalent to the condition that for any $X \in \mathcal C^\alg$, the functor $D^*(X_\clcl \to X)$ is an equivalence.

		In fact, since $(-)_\clcl$ preserves quasi-admissible maps, proper representable maps, closed immersions, all base changes, and all complements of closed immersions, \Cref{rmk:exc res} shows that the bottom functor of \eqref{eqn:clcl trunc 6FF} restricts to a functor
		\[
			\PPF(\mathcal C^{\alg,\clcl}) \to \PPF(\mathcal C^\alg)
		.\]
		It is easy to see that this functor is still fully faithful. Since its essential image consists of those $D^*$ such that $D^*(X_\clcl \to X)$ is an equivalence for all $X \in \mathcal C^\alg$, \Cref{rmk:alg nil invariance} shows that this functor is also essentially surjective, so it is an equivalence.

		Next, for any Cartesian square
		\[
			\begin{tikzcd}
				X' \ar[d, "p"'] \ar[r, "f'"] & Y' \ar[d, "q"] \\
				X \ar[r, "f"'] & Y
			\end{tikzcd}
		\]
		in $\mathcal C^\alg$, if $q \in E$, it is easy to see that for any $D : \Span(\mathcal C^\alg, E) \to \PrL$, the square
		\[
			\begin{tikzcd}
				D(Y') \ar[d, "q_!"'] \ar[r, "f'^*"] & D(X') \ar[d, "p_!"] \\
				D(Y) \ar[r, "f^*"] & D(X)
			\end{tikzcd}
		\]
		is equivalent to the square
		\[
			\begin{tikzcd}
				D(Y'_\clcl) \ar[d, "(q_\clcl)_!"'] \ar[r, "f_\clcl'^*"] & D(X'_\clcl) \ar[d, "(p_\clcl)_!"] \\
				D(Y_\clcl) \ar[r, "f_\clcl^*"] & D(X_\clcl)
			\end{tikzcd}
		,\]
		so by \Cref{lem:desc V6FF} (and recalling \Cref{defn:suave prim}), we find that \eqref{eqn:clcl trunc 6FF} restricts to the desired commutative square.
	\end{proof}
\end{lem}

For the remainder of the section, we will need the following assumption about $\mathcal C^\alg$:
\begin{ass} \label{ass:lred or nice}
	Assume one of the following two conditions:
	\begin{enumerate}

		\item \label{itm:SHalg lred}
			The quasi-admissible maps in $\mathcal C^\alg$ are the quasi-projective smooth morphisms, and $\mathcal C^\alg \subseteq \AlgStk^\lred$. Furthermore, for any quasi-projective smooth map $X \to Y$ in $\AlgStk^\lred$, if $Y \in \mathcal C^\alg$, then $X \in \mathcal C^\alg$.

		\item \label{itm:SHalg nice}
			The quasi-admissible maps in $\mathcal C^\alg$ are the representable smooth morphisms, and $\mathcal C^\alg \subseteq \AlgStk^\nice$ (\ie{} stacks in $\mathcal C^\alg$ have nice stabilizers). Furthermore, for any qcqs smooth representable map $X \to Y$ in $\AlgStk^\nice$, if $Y \in \mathcal C^\alg$, then $X \in \mathcal C^\alg$.

	\end{enumerate}
\end{ass}
Note that \Cref{ass:lred or nice} implies that the inclusion $\mathcal C^\alg \to \AlgStk$ preserves base changes along quasi-admissible maps.

Now we come to our next result about 6-functor formalisms on $\mathcal C^\alg$:
\begin{thm} \label{thm:alg V6FF}
	Let $(\mathcal C^\alg, E)$ be a geometric setup in the sense of \cite[Convention 2.1.3]{HM6FF}. Assume that for any map $f : X \to Y$ in $E$, $f_\clcl \in E$, and there is a small cdh cover of $Y$ by maps $Y' \to Y$ such that there is a small cdh cover of $X \times_Y Y'$ by finite type separated representable maps $X' \to X \times_Y Y'$ where $X' \to Y'$ is also of finite type, separated, and representable. Then the functor
	\[
		\VsixFF(\mathcal C^\alg, E) \to \PPF(\mathcal C^\alg)
	\]
	given by $D \mapsto D^*$ has a section, and is an equivalence if $\mathcal C^\alg \subseteq \AlgStk^\clcl$.
\end{thm}
We will postpone the proof of \Cref{thm:alg V6FF} until the end of the section, but we make the following remark about the argument:
\begin{rmk}
	\Cref{thm:alg V6FF} is proven by considering the case that $\mathcal C^\alg \subseteq \AlgStk^\clcl$, and then deducing the general case using \Cref{lem:reduce to clcl}. The case that $\mathcal C^\alg \subseteq \AlgStk^\clcl$ is shown by reducing to the case that all maps in $E$ are quasi-projective using \Cref{lem:source-locally qproj,rmk:6FF ext functor}. It is necessary to reduce to the case of classical stacks both in order to leverage pre-existing results about quasi-projective maps between classical stacks, and in order to be able to apply \Cref{lem:V6FF truncated} in \Cref{rmk:6FF ext functor}.

	If we had a good theory of quasi-projective maps between derived stacks, and extension results (\Cref{prp:ext 6FF}) for ``2-categorical'' 6-functor formalisms defined on categories $\Span_2(\mathcal C,E)_{P,I}$ instead of just $\Span(\mathcal C,E)$, then it would be possible to improve this result in the case that some objects of $\mathcal C^\alg$ have nontrivial derived structures.

	Instead of reducing to quasi-projective maps, one might want to reduce to \emph{compactifiable} maps as was done in \Cref{thm:qDM V6FF}. This could also lead to more refined results, but has the disadvantage that we need to make sure that compactifiable maps in $\mathcal C^\alg$ are well-behaved -- in particular, we need that they are stable under composition.
\end{rmk}

\Cref{thm:qDM V6FF,thm:alg V6FF} can be thought of as giving categorical criteria for 6-functor formalisms. We now come to the following definition that should be thought of as giving more geometric criteria for producing 6-functor formalisms:
\begin{defn} \label{defn:mot alg PF}
	Define the category $\PF^\mot(\mathcal C^\alg)$ of \emph{motivic pullback formalisms} on $\mathcal C^\alg$ to be the full subcategory of $\PF(\mathcal C^\alg)$ consisting of those pullback formalisms $D$ satisfying the following:
	\begin{description}
		
		\item[Pointed and reduced] $D$ is a pointed reduced pullback formalism.

		\item[Localization] For any closed immersion $i : Z \to S$ in $\mathcal C^\alg$ with complement $j : U \to S$,
			\[
				D(Z) \xrightarrow{i_*} D(S) \xrightarrow{j^*} D(U)
			\]
			is a fibre sequence of pointed categories.

		\item[Thom stability] For any linearly reductive group scheme $G$ over an affine scheme, $G$-scheme $S$ such that $S/G \in \mathcal C^\alg$, and $G$-equivariant vector bundle $V \to S$, the object $\cls{V/G}/\cls{V/G \setminus 0} \in D(S)$ is $\otimes$-invertible.

		\item[Homotopy invariance] For any $S \in \mathcal C^\alg$, and vector bundle torsor $V \to S$, we have that $\cls{V} \simeq \cls{S}$ in $D(S)$. Also see \Cref{rmk:nice htpy}.

	\end{description}
\end{defn}

\begin{rmk} \label{rmk:nice htpy}
	When $\mathcal C^\alg \subseteq \AlgStk^\nice$, and representable smooth maps in $\mathcal C^\alg$ are quasi-admissible, \cite[Lemma 5.1.10]{Fundamentals} shows that the homotopy invariance condition of \Cref{defn:mot alg PF} can be replaced by the condition that for any $S \in \mathcal C^\alg$, $\cls{\aff^1_S} \simeq \cls{S}$.
\end{rmk}

Note that by \Cref{ass:lred or nice}, \cite[Theorem 5.1.11]{Fundamentals} gives us a pullback formalism $\SH^\alg$ on $\mathcal C^\alg$ of \emph{stable motivic homotopy theory}. This coincides with the constructions of \cite{SixAlgSt, sixopsequiv} by \cite[Remark 5.1.12]{Fundamentals}.

We have the following key result about motivic pullback formalisms:
\begin{thm} \label{thm:alg PPF}
	Every object $D \in \PF^\mot(\mathcal C^\alg)$ is a strongly projective pullback formalism, and all separated quasi-admissible maps are tangentially $D$-stable. Furthermore, $\SH^\alg$ is a projective pullback formalism, so we can consider the functors
	\[
		\PPF(\mathcal C^\alg)_{{\SH^\alg}/} \to \PF^\cstr_\bullet(\mathcal C^\alg)_{{\SH^\alg}/} \to \PF(\mathcal C^\alg)
	.\]
	The first functor is an equivalence, and the second is fully faithful with essential image given by $\PF^\mot(\mathcal C^\alg)$. In particular, $\SH^\alg$ is initial in $\PF^\mot(\mathcal C^\alg)$.
\end{thm}

We will present the proof of \Cref{thm:alg PPF} at the end of the section. The importance of this result for the study of 6-functor formalisms is made clear by the following \lcnamecref{rmk:SHalg init}.

\begin{rmk} \label{rmk:SHalg init}
	Suppose that $I,P,E$ are collections of maps in $\mathcal C^\alg$ such that
	\[
		\VsixFF(\mathcal C^\alg,E)_{P,I} \to \PPF(\mathcal C^\alg)
	\]
	is an equivalence. It follows that for any $D \in \PPF(\mathcal C)$, there is a unique way to extend $D$ to a lax symmetric monoidal 2-functor $D : \Span_2(\mathcal C^\alg,E)_{P,I} \to \PrL$ in $\VsixFF(\mathcal C^\alg, E)_{P,I}$, and the functor
	\[
		(\VsixFF(\mathcal C^\alg,E)_{P,I})_{D/} \to \PPF(\mathcal C^\alg)_{D/}
	\]
	is an equivalence which is actually a base change of the first one.

	Thus, using \Cref{thm:alg PPF}, there is a unique way to extend $\SH^\alg$ to a lax symmetric monoidal 2-functor $\SH^\alg : \Span_2(\mathcal C,E)_{P,I} \to \PrL$ in $\VsixFF(\mathcal C^\alg,E)_{P,I}$, and since $\PPF(\mathcal C^\alg) \to \PF(\mathcal C^\alg)$ is a monomorphism of categories (see \kerodoncite{04W5}), it follows that the functors
	\[
		(\VsixFF(\mathcal C^\alg, E)_{P,I})_{{\SH^\alg}/} \to \VsixFF(\mathcal C^\alg,E)_{P,I} \times_{\PF(\mathcal C^\alg)} \PF^\mot(\mathcal C^\alg) \to \PF^\mot(\mathcal C^\alg)
	\]
	are equivalences. In particular $\SH^\alg$ is an initial object of the full subcategory of $\VsixFF(\mathcal C^\alg,E)_{P,I}$ consisting of those $D$ such that $D^*$ is a motivic pullback formalism.
\end{rmk}

\begin{thm} \label{thm:qDM mot 6FF}
	In the setting of \Cref{thm:qDM V6FF}, the pullback formalism $\SH^\alg$ extends uniquely to a lax symmetric monoidal 2-functor $\Span_2(\mathcal C^\alg,E)_{P,I} \to \PrL$ in $\VsixFF(\mathcal C^\alg, E)_{P,I}$, and the functors
	\[
		(\VsixFF(\mathcal C^\alg,E)_{P,I})_{{\SH^\alg}/} \to \VsixFF(\mathcal C^\alg,E)_{P,I} \times_{\PF(\mathcal C^\alg)} \PF^\mot(\mathcal C^\alg) \to \PF^\mot(\mathcal C^\alg)
	\]
	are equivalences. In particular $\SH^\alg$ is an initial object of the full subcategory of $\VsixFF(\mathcal C^\alg,E)_{P,I}$ consisting of those $D$ such that $D^*$ is a motivic pullback formalism.

	\begin{proof}
		This follows immediately from \Cref{thm:qDM V6FF,rmk:SHalg init}.
	\end{proof}
\end{thm}

\begin{thm} \label{thm:alg mot 6FF}
	In the setting of \Cref{thm:alg V6FF}, if $\mathcal C^\alg \subseteq \AlgStk^\clcl$, then $\SH^\alg$ extends uniquely to a lax symmetric monoidal functor $\Span(\mathcal C^\alg,E) \to \PrL$ in $\VsixFF(\mathcal C^\alg,E)$, and the functors
	\[
		\VsixFF(\mathcal C^\alg, E)_{{\SH^\alg}/} \to \VsixFF(\mathcal C^\alg, E) \times_{\PF(\mathcal C^\alg)} \PF^\mot(\mathcal C^\alg) \to \PF^\mot(\mathcal C^\alg)
	\]
	are equivalences. In particular, $\SH^\alg$ is an initial object in the full subcategory of $\VsixFF(\mathcal C^\alg,E)$ consisting of those $D$ such that $D^*$ is a motivic pullback formalism.

	In general, $\SH^\alg$ still extends to a lax symmetric monoidal functor $\Span(\mathcal C^\alg,E) \to \PrL$ in $\VsixFF(\mathcal C^\alg,E)$ such that the functor
	\[
		\VsixFF(\mathcal C^\alg, E)_{{\SH^\alg}/} \to \PF(\mathcal C^\alg)
	\]
	lands in $\PF^\mot(\mathcal C^\alg)$ and admits a section
	\[
		\PF^\mot(\mathcal C^\alg) \to \VsixFF(\mathcal C^\alg, E)_{{\SH^\alg}/}
	\]
	that preserves initial objects.
	\begin{proof}
		This follows immediately from \Cref{thm:alg V6FF} and either \Cref{rmk:SHalg init} or \Cref{thm:alg PPF}.
	\end{proof}
\end{thm}

\begin{thm} \label{thm:nice mot 6FF}
	Suppose we are in case \ref{itm:SHalg nice} of \Cref{ass:lred or nice}, and that the collection $E$ of finite type representable morphism in $\mathcal C^\alg$ is stable under base changes and diagonals (taken in $\mathcal C^\alg$). It follows that $(\mathcal C^\alg, E)$ is a geometric setup in the sense of \cite[Convention 2.1.3]{HM6FF}.

	If $\mathcal C^\alg \subseteq \AlgStk^\clcl$, then $\SH^\alg$ extends uniquely to a lax symmetric monoidal functor $\SH^\alg : \Span(\mathcal C^\alg,E) \to \PrL$ which is initial in the full subcategory of $\VsixFF(\mathcal C^\alg, E)$ consisting of those $D$ such that $D^*$ is a motivic pullback formalism.

	In general, $\SH^\alg$ still extends to a lax symmetric monoidal functor $\Span(\mathcal C^\alg,E) \to \PrL$ in $\VsixFF(\mathcal C^\alg,E)$ such that the functor
	\[
		\VsixFF(\mathcal C^\alg, E)_{{\SH^\alg}/} \to \PF(\mathcal C^\alg)
	\]
	lands in $\PF^\mot(\mathcal C^\alg)$ and admits a section
	\[
		\PF^\mot(\mathcal C^\alg) \to \VsixFF(\mathcal C^\alg, E)_{{\SH^\alg}/}
	\]
	that preserves initial objects, and is an equivalence if $\mathcal C^\alg \subseteq \AlgStk^\clcl$.
	\begin{proof}
		It is clear that $E$ is stable under composition, so that $(\mathcal C^\alg, E)$ is a geometric setup in the sense of \cite[Convention 2.1.3]{HM6FF}. Furthermore, for any finite type representable map $f$, we have that $f_\clcl \in E$.

		Since $\mathcal C^\alg \subseteq \AlgStk^\nice$, and the representable smooth morphisms in $\mathcal C^\alg$ are quasi-admissible, the argument of \cite[Remark 7.8]{SixAlgSt} implies that every map in $E$ is cdh locally on the source and target of finite type, separated, and representable. Indeed, for any finite type representable morphism $f : X \to Y$, we may use \cite[Theorem 2.12(ii)]{SixAlgSt} to obtain a representable Nisnevich cover $\bar Y/G \to Y$, where $G$ is a nice embeddable group scheme over an affine scheme $S$, and $\bar Y$ is an affine derived $S$-scheme with $G$-action. Since $f$ is of finite type and representable, we have that $f \times_Y \bar Y/G$ is of the form $\bar f/G$, where $\bar f : \bar X \to \bar Y$ is of finite type, and $\bar X$ is a qcqs derived algebraic space. Hence, \cite[Theorem 2.14(i)]{SixAlgSt} lets us find a representable Nisnevich cover $(U \to \bar X)/G$ of $\bar X/G$ such that $U$ is a derived affine over $S$. In particular, $U \to \bar Y$ is affine and of finite type, and since $S$ is affine, $U$ is a derived affine scheme, so it is separated. Since $\bar X$ is a derived algebraic space, its diagonal is separated by \stackscite{02X4}, so $U \to \bar X$ is separated. Therefore, $U/G \to \bar X/G$ is a separated representable Nisnevich cover such that $U/G \to \bar Y/G$ is affine and of finite type. Thus, $U/G \to \bar X/G$ is a finite type separated representable map that is a cdh cover, and  $U/G \to \bar Y/G$ is also of finite type, separated, and representable.

		Hence, we conclude by \Cref{thm:alg mot 6FF}.

	\end{proof}
\end{thm}

Before giving the proofs of \Cref{thm:alg PPF,thm:alg V6FF}, we will need to go over some general results about algebraic stacks.

\begin{lem} \label{lem:qc imm}
	Let $f : X \to Y$ be a quasi-compact immersion of (classical) algebraic stacks. Then $f$ factors as a quasi-compact open immersion followed by a closed immersion.
	\begin{proof}
		By \stackscite{0CPU}, $f$ has a scheme-theoretic image, giving us a factorization $X \to Z \to Y$. Let $\tilde Y \to Y$ be a surjective smooth map where $\tilde Y$ is a scheme. We obtain Cartesian squares
		\[
			\begin{tikzcd}
				\tilde X \ar[d] \ar[r] & \tilde Z \ar[d] \ar[r] & \tilde Y \ar[d] \\
				X \ar[r] & Z \ar[r] & Y
			\end{tikzcd}
		,\]
		and since $f : X \to Y$ is quasi-compact, \stackscite{0CMK} says that $\tilde Z \to \tilde Y$ is the scheme-theoretic image of the base change $\tilde f : \tilde X \to \tilde Y$ of $f$.

		By the definition of scheme-theoretic image, we know that $Z \to Y$ is a closed immersion. Note that since the diagonal of $Z \to Y$ is quasi-compact, and $X \to Y$ is quasi-compact, we have that $X \to Z$ is quasi-compact, so it only remains to show that it is an open immersion. By \stackscite{0503}, it suffices to show that $\tilde X \to \tilde Z$ is an open immersion. This follows from \stackscite{01RG}.
	\end{proof}
\end{lem}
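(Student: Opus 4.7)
The plan is to construct the factorization via the scheme-theoretic image, and then to verify the open immersion property by passing to a smooth atlas.

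First I would take a smooth surjective map $\tilde Y \to Y$ where $\tilde Y$ is a scheme, and consider the scheme-theoretic image of $f$. For algebraic stacks this is developed in \stackscite{0CPU}, which gives a factorization $X \to Z \to Y$ with $Z \to Y$ a closed immersion. By construction $Z$ is the smallest closed substack of $Y$ through which $f$ factors. Since $X \to Y$ is quasi-compact and $Z \to Y$ is separated (even a closed immersion, hence with trivial diagonal), the induced map $X \to Z$ is automatically quasi-compact by the usual cancellation for quasi-compact morphisms.

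The remaining task is to show $X \to Z$ is an open immersion. Pulling back to the atlas, let $\tilde X = X \times_Y \tilde Y$ and $\tilde Z = Z \times_Y \tilde Y$. Since $f$ is an immersion, $\tilde f : \tilde X \to \tilde Y$ is an immersion of schemes, and since $f$ is quasi-compact, so is $\tilde f$. By \stackscite{0CMK}, the formation of scheme-theoretic image commutes with smooth (more generally flat and quasi-compact) base change for quasi-compact morphisms, so $\tilde Z \to \tilde Y$ is the scheme-theoretic image of $\tilde f$. The classical factorization for schemes \stackscite{01RG} then says that $\tilde X \to \tilde Z$ is an open immersion.

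Finally, I would use that being an open immersion is smooth-local on the target for representable morphisms of algebraic stacks (\stackscite{0503}): since $\tilde Z \to Z$ is a smooth cover and the base change $\tilde X \to \tilde Z$ is an open immersion, it follows that $X \to Z$ is itself an open immersion. Combined with the quasi-compactness established above, this gives the desired factorization into a quasi-compact open immersion followed by a closed immersion.

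The only potentially subtle point is confirming that scheme-theoretic image for stacks behaves well under smooth base change in the quasi-compact setting, but this is precisely the content of \stackscite{0CMK}; the rest of the argument is a standard reduction to the scheme case. No step should require more than bookkeeping.
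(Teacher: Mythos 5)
Your proposal is correct and follows essentially the same route as the paper: factor through the scheme-theoretic image via \stackscite{0CPU}, reduce to schemes on a smooth atlas using \stackscite{0CMK} and \stackscite{01RG}, and descend the open immersion property via \stackscite{0503}. The cancellation argument for quasi-compactness of $X \to Z$ is likewise the one used in the paper.
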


\begin{lem} \label{lem:local struct of alg stk}
	Every object of $\mathcal C^\alg$ has a Nisnevich cover consisting of quasi-admissible maps from derived algebraic stacks of the form $S/G$, where $G$ is a linearly reductive embeddable group scheme, and $S$ is a derived $G$-scheme such that $S_\clcl$ is $G$-quasi-projective.\footnote{The definition is given in \cite[Definition 2.5]{sixopsequiv}, which we recall here: a $G$-equivariant map $X \to Y$ between $G$-schemes $X,Y$ is $G$-(quasi-)projective if there is a $G$-equivariant vector bundle $V$ on $Y$, and a $G$-equivariant closed (\resp{} quasi-compact) immersion $X \to \proj(V)$ over $Y$.}
	\begin{proof}
		When $\mathcal C^\alg \subseteq \AlgStk^\nice$ and the quasi-admissible maps are the representable smooth morphisms, we may use \cite[Theorem 2.12(ii)]{SixAlgSt} to see that every object of $\mathcal C^\alg$ has a quasi-admissible Nisnevich cover by derived algebraic stacks of the form $S/G$, where $G$ is a linearly reductive embeddable group scheme over an affine scheme, and $S$ is a quasi-affine derived $G$-scheme, so we conclude by \cite[Lemma 2.12]{sixopsequiv} and \cite[Example 2.19]{SixAlgSt}.

		Otherwise, we have that every object of $\mathcal C^\alg$ has a quasi-admissible Nisnevich cover by algebraic stacks admitting quasi-projective morphisms to stacks of the form $\B G$, where $G$ is a linearly reductive embeddable group scheme over an affine scheme. By \cite[Lemma A.0.9]{Fundamentals}, any such algebraic stack is of the form $S/G$ for some $G$-quasi-projective $G$-scheme $S$.
	\end{proof}
\end{lem}

\begin{lem} \label{lem:source-locally qproj}
	Let $f : X \to Y$ be a representable separated finite type morphism in $\mathcal C^\alg$ between classical stacks (so $f = f_\clcl$). Then there is a projective cdh cover $X' \to X$ in $\mathcal C^\alg$ such that $X' \to Y$ is quasi-projective, and if $f$ is proper, then $X' \to Y$ is projective.
	\begin{proof}

		By \Cref{lem:local struct of alg stk}, \cite[Lemma A.0.9]{Fundamentals}, and \cite[Remark A.2]{SixAlgSt}, we have that every object $S$ of $\mathcal C^\alg$ satisfies that $S_\clcl$ has an \'{e}tale cover by algebraic stacks that have the resolution property. By \cite[Remark 2.2]{Noeth-approx}, it follows that every classical stack in $\mathcal C^\alg$ is of global type in the sense of \cite[Definition 2.1]{Noeth-approx}. Thus, as in \cite[A.4]{SixAlgSt}, the argument of \cite[Theorem 6.11]{SixAlgSt} holds for classical stacks in $\mathcal C^\alg$.

		This (along with \cite[Remark 6.3]{SixAlgSt}) shows that there is a projective cdh cover $X' \to X$ such that $X' \to Y$ is quasi-projective. Since $X'$ admits a projective map to a classical stack $X$ in $\mathcal C^\alg$, we have that $X' \in \mathcal C^\alg$.

		Finally, if $f : X \to Y$ is proper, it follows that $X' \to X' \to Y$ is a composite of proper maps, so it is a proper quasi-projective morphism. We conclude by \cite[Theorem 8.6(ii)]{Rydh_2015}, which states that proper quasi-projective morphisms between qcqs algebraic stacks are projective.
	\end{proof}
\end{lem}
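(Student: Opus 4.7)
The plan is to invoke a stacky relative Chow's lemma, which requires first verifying that every object of $\mathcal C$ belongs to a class of algebraic stacks for which such a Chow's lemma is available. My first step would be to check that every object of $\mathcal C$ is of ``global type'' in the sense of Rydh's approximation results: under either hypothesis \ref{itm:SHalg lred} or \ref{itm:SHalg nice}, the local structure recalled in \Cref{rmk:local struct of alg stk} presents any $S \in \mathcal C$ Nisnevich-locally as a quotient $T/G$ where $T$ is $G$-quasi-projective and $G$ is linearly reductive over an affine base. This, combined with the approximation results of Rydh and the Khan--Ravi analysis for nicely scalloped stacks, should provide the global-type property needed as input.

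Next, I would apply a relative Chow lemma for such stacks, modeled on \cite[Theorem~6.11]{SixAlgSt}: starting from the representable, separated, finite type map $f : X \to Y$, this should produce a projective cdh cover $p : X' \to X$ (in particular $X' \in \mathcal C$, since $\mathcal C$ is closed under projective morphisms by hypothesis) such that the composite $X' \to X \to Y$ admits a quasi-projective factorization. The key idea is that Chow's lemma is proved by blowing up to achieve a closed immersion into a projective bundle over a Nisnevich-local approximation; because the construction is relative to the chosen atlas of $X$, the resulting $X'$ can be arranged to sit quasi-projectively over $Y$ as well.

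Finally, if $f$ is proper, I would argue that $X' \to Y$, being the composite of the projective map $p : X' \to X$ (hence proper) with the proper map $f$, is proper; since it is also quasi-projective by the previous step, Rydh's theorem \cite[Theorem 8.6(ii)]{Rydh_2015}, asserting that proper quasi-projective morphisms between qcqs algebraic stacks are projective, yields that $X' \to Y$ is projective.

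The main obstacle is the relative Chow's lemma in the second step, since the usual statements of Chow's lemma for stacks only assert the existence of a quasi-projective map from $X'$ to some approximating scheme, not to the prescribed base $Y$. Reducing this relative version to the absolute one seems to require a careful diagram chase using that $f$ is representable and separated of finite type, so that quasi-projectivity over any sufficiently large approximation of $X$ transfers to quasi-projectivity over $Y$; alternatively, one could attempt to run the Chow construction directly in the $Y$-relative setting, which is essentially what the argument of \cite[Theorem 6.11]{SixAlgSt} achieves.
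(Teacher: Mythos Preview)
Your proposal is correct and follows essentially the same route as the paper's proof: first verify that objects of $\mathcal C$ are of global type (the paper cites \cite[Remark~2.2]{Noeth-approx}, \cite[Lemma~A.0.8]{Fundamentals}, and \cite[Remark~A.2]{SixAlgSt} for this), then invoke the relative Chow lemma of \cite[Theorem~6.11]{SixAlgSt} to obtain the projective cdh cover with $X' \to Y$ quasi-projective, and finally, in the proper case, use that $X' \to Y$ is a composite of proper maps and apply \cite[Theorem~8.6(ii)]{Rydh_2015}. Your concern at the end is well-placed, but as you note, \cite[Theorem~6.11]{SixAlgSt} is already stated and proved in the relative form needed here, so no additional diagram chase is required.
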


\begin{proof}[Proof of \Cref{thm:alg V6FF}]
	Since any map $f \in E$ satisfies that $f_\clcl \in E$, it follows that if we write $E^\clcl$ for the collection of maps in $E$ between objects of $\AlgStk^\clcl$, then $E^\clcl$ is stable under composition, base change, and taking diagonals, and that $(-)_\clcl$ defines a morphism of geometric setups $(\mathcal C^\alg, E) \to (\mathcal C^{\alg,\clcl}, E^\clcl)$. Furthermore, $E^\clcl$ also satisfies that any map in $E^\clcl$ is cdh locally on the target of finite type, separated, and representable. Thus, by \Cref{lem:reduce to clcl}, it suffices to consider the case that $\mathcal C^\alg \subseteq \AlgStk^\clcl$.

	Let $I$ be the collection of (quasi-compact) open immersions in $\mathcal C^\alg$, let $P$ be the collection of projective morphisms in $\mathcal C^\alg$, and let $P \circ I$ be the collection of composites of maps in $I \cup P$. By \cite[Lemma A.0.3]{Fundamentals} and \Cref{lem:qc imm}, since any algebraic stack admitting a quasi-projective map to an object of $\mathcal C^\alg$ is in $\mathcal C^\alg$, $P \circ I$ is the collection of quasi-projective morphisms in $\mathcal C^\alg$, and any map in $P \circ I$ is of the form $p \circ j$, where $p \in P$, and $j \in I$. In fact, since every quasi-projective map to an object of $\mathcal C^\alg$ is in $\mathcal C^\alg$, it follows that the inclusion $\mathcal C^\alg \to \AlgStk^\clcl$ preserves base changes along quasi-projective maps, so that $I,P, P \circ I$ are closed under base change, diagonals, and composition.

	By \Cref{lem:source-locally qproj}, we have that for every map $X \to Y$ in $E$, there is a small cdh cover of $Y$ by maps $Y' \to Y$ such that there is a cdh cover of $X \times_Y Y'$ consisting of maps $X' \to X \times_Y Y'$ in $P \circ I$ such that $X' \to Y'$ is also in $P \circ I$.

	Since the inclusion $\mathcal C^\alg \subseteq \AlgStk^\clcl$ preserves base changes of quasi-projective maps, we have that every map in $P \circ I$ is truncated in $\mathcal C^\alg$. Thus, we may apply \Cref{rmk:6FF ext functor} to obtain the following commutative diagram of restrictions
	\[
		\begin{tikzcd}
			\VsixFF(\mathcal C^\alg, P \circ I)_{P, I} \ar[d, "\sim"'] \ar[r, "\sim"] & \VsixFF(\mathcal C^\alg, P \circ I) \ar[d] & \ar[l, "\sim"'] \VsixFF(\mathcal C^\alg, E) \ar[d] \\
			\PPF(\mathcal C^\alg) \ar[r, equals] & \PPF(\mathcal C^\alg) & \ar[l, equals] \PPF(\mathcal C^\alg)
		\end{tikzcd}
	.\]
	In particular, the functor $D \mapsto D^*$ defines an equivalence
	\[
		\VsixFF(\mathcal C^\alg, E) \to \PPF(\mathcal C^\alg)
	.\]
\end{proof}

\begin{proof}[Proof of \Cref{thm:alg PPF}]
	The pullback formalism $\SH^\alg$ is given by \cite[Theorm 5.1.11]{Fundamentals}, and \cite[Remark 5.1.12]{Fundamentals} shows that this coincides with the pullback formalisms constructed in \cite[\S4 and \S A.3]{SixAlgSt}, as well as \cite[\S6]{sixopsequiv} whenever this makes sense.
	\begin{description}
		
		\item[The structure of the proof] We will show that every separated quasi-admissible map is tangentially $\SH^\alg$-stable, and $\SH^\alg$ is a strongly projective pullback formalism on $\mathcal C^\alg$. It will then follow that by \Cref{thm:PPF}, the functor
			\[
				\PPF(\mathcal C^\alg)_{{\SH^\alg}/} \to \PF^\cstr_\bullet(\mathcal C^\alg)_{{\SH^\alg}/}
			\]
			is an equivalence, and any $D$ admitting a map from $\SH^\alg$ is also a strongly projective pullback formalism. Furthermore, by \Cref{prp:monoidal twists}(\ref{itm:monoidal twists/morph}), we also have that every separated quasi-admissible map is tangentially $D$-stable if $D \in \PF^\cstr_\bullet(\mathcal C^\alg)$ admits a map from $\SH^\alg$. To conclude, we show that the functor
			\[
				\PF^\cstr_\bullet(\mathcal C^\alg)_{{\SH^\alg}/} \to \PF(\mathcal C^\alg)
			\]
			is fully faithful, and its essential image is given by $\PF^\mot(\mathcal C^\alg)$.

		\item[$\SH^\alg$ is a constructible pullback formalism] Since $\SH^\alg$ is reduced and takes values in pointed categories, it suffices to show that every closed immersion is $\SH^\alg$-closed. This follows from \cite{Gluing}, but we also present the following argument using other results in the literature.

			In the case that $\mathcal C^\alg \subseteq \AlgStk^\nice$, and the quasi-admissible maps are the representable smooth morphisms, this is given by \cite[Theorem 4.10(ii)(d)]{SixAlgSt} and \Cref{prp:loc for stab PF}. Otherwise, by \Cref{lem:local struct of alg stk}, we know that every object of $\mathcal C^\alg$ must have a quasi-admissible Nisnevich cover by global quotients $S/G$ for $G$ a linearly reductive group scheme over an affine scheme, and $S$ a $G$-quasi-projective $G$-scheme. Since $\SH^\alg$ has descent for quasi-admissible Nisnevich covers, \Cref{lem:locality of closedness,prp:loc for stab PF} say that to check that all closed immersions are $D$-closed, it suffices to show that if $G$ is a linearly reductive group scheme over an affine scheme, and $i : Z \to S$ is a $G$-equivariant closed immersion of $G$-quasi-projective $G$-schemes, then $i/G : Z/G \to S/G$ is $\SH^\alg$-closed. This follows from \cite[Theorem 4.18]{sixopsequiv} and \cite[Corollary 4.19]{sixopsequiv}.

		\item[Describing the functor $\PF^\cstr_\bullet(\mathcal C^\alg)_{{\SH^\alg}/} \to \PF(\mathcal C^\alg)$] Note that any pointed reduced pullback formalism on $\mathcal C^\alg$ that satisfies the Thom stability and homotopy invariance properties from \Cref{defn:mot alg PF} takes values in stable categories by \Cref{lem:auto stable}. Thus, it follows that motivic pullback formalisms takes values in stable categories, so they are constructible pullback formalisms by \Cref{prp:loc for stab PF}. \cite[Theorem 5.1.11]{Fundamentals} shows that the functor is fully faithful with essential image given by the motivic pullback formalisms that have quasi-admissible Nisnevich excision. In fact, since motivic pullback formalisms take values in stable categories, this excision property is automatic by \Cref{prp:gluing implies exc} or \Cref{prp:elementary cdh descent}.

		\item[Every separated quasi-admissible map is tangentially $\SH^\alg$-stable]
			Let $f : X \to Y$ be a separated quasi-admissible map. Since $\mathcal C^\alg \to \AlgStk$ preserves base changes along quasi-admissible maps, we have that the diagonal of $f$ in $\mathcal C^\alg$ is a closed immersion, so \Cref{prp:monoidal twists}(\ref{itm:monoidal twists/desc}) shows that $\Sigma^f \simeq - \otimes \cls{X \times_Y X}/\cls{X \times_Y X \setminus X}$.

			If $Y$ is of the form $S/G$, for $G$ a linearly reductive embeddable group scheme over an affine scheme, and $S$ is a $G$-quasi-projective $G$-scheme, then \cite[Theorem 3.23]{sixopsequiv} shows that
			\[
				\cls{X \times_Y X}/\cls{X \times_Y X \setminus X} \simeq \cls{Tf}/\cls{Tf \setminus 0}
			,\]
			where $Tf$ is the tangent bundle of $f$. Thus, we have that 
			\[
				\Sigma^f \simeq - \otimes \cls{Tf}/\cls{Tf \setminus 0}
			.\]
			Since $\cls{Tf}/\cls{Tf \setminus 0}$ is an $\otimes$-invertible object, it follows that $\Sigma^f$ is invertible.

			For the case of general $Y$, we first note that by \Cref{rmk:alg nil invariance}, it suffices to assume $Y = Y_\clcl$, so by \Cref{lem:local struct of alg stk}, $Y$ admits a $\SH^\alg$-pseudocover by quasi-admissible maps from stacks of the form $S/G$ as above. Thus, we may conclude by \Cref{prp:monoidal twists}(\ref{itm:monoidal twists/bc}).

		\item[$\SH^\alg$ is a strongly projective pullback formalism]
			Let $P$ be a projectively $\SH^\alg$-saturated collection of maps in $\mathcal C^\alg$ (see \Cref{defn:proj sat}). We must show that every proper representable map is in $P$. For any $Y \in \mathcal C^\alg$, we have that $Y_\clcl \to Y$ is a $\SH^\alg$-acyclic closed immersion by \Cref{rmk:alg nil invariance}, so since $P$ contains all closed immersions, it suffices to show that proper representable maps to classical stacks are in $P$. In fact, for any $X \in \mathcal C^\alg$, $X_\clcl \to X$ is a map in $P$ that is invertible away from the (same) exceptionally closed map $X_\clcl \to X$, so the source-locality property of $P$ allow us to reduce to showing that representable proper maps between classical stacks are in $P$.

			By \Cref{lem:source-locally qproj}, for any such map $X \to Y$, there is a projective cdh cover $X' \to X$ in $\mathcal C^\alg$ such that $X' \to Y$ is projective, so it suffices to show that every projective map to a classical stack in $\mathcal C^\alg$ is in $P$.

			By \Cref{lem:local struct of alg stk}, every classical stack in $\mathcal C^\alg$ has a $\SH^\alg$-pseudocover by quasi-admissible maps from stacks of the form $S/G$, where $G$ is a linearly reductive embeddable group scheme over an affine scheme, and $S$ is a $G$-quasi-projective $G$-scheme, and \cite[Lemma A.0.9]{Fundamentals} shows that any projective map to $S/G$ is the composite of a closed immersion with the projection of a projective bundle. We know that every closed immersion is in $P$, and projections of projective bundles are separated, so their diagonals are in $P$, and they are stably $\SH^\alg$-ambidextrous by \cite[Theorem 6.9]{sixopsequiv} or \cite[Lemma 6.9]{SixAlgSt}, thus they are also in $P$. We conclude since $P$ is stable under composition.
	\end{description}
\end{proof}

\subsection{The six operations for complex analytic motivic homotopy theory} \label{S:hol}

Now we turn our attention to some applications for complex analytic stacks. Namely, we will produce stacky analytic and Betti realizations that are compatible with the six operations in \Cref{thm:analytification,rmk:Betti} and we produce a universal 6-functor formalism on complex analytic stacks in \Cref{thm:hol 6FF}.

All of the results of this section rely on the result, proven in \cite{Gluing}, that shows that embeddings of complex analytic stacks are $\SH^\hol$-closed, where $\SH^\hol$ is the pullback formalism of stable motivic homotopy on complex analytic stacks defined in \cite[Theorem 5.3.10]{Fundamentals}.

We give a brief review of the relevant notions for complex analytic stacks. See \cite[\S5.3]{Fundamentals} for a more detailed overview.

\begin{defn} \label{defn:HolStk}
	Let $\An_\comps$ be the site of complex spaces and open covers, as considered in \cite[\S5.3]{Fundamentals}. Since this is a subcanonical site, we will view $\An_\comps$ as a full subcategory of $\Shv(\An_\comps)$, and refer to objects of the essential image of the inclusion as \emph{representable}.

	The notions of \emph{open substacks, embeddings, representable local biholomorphisms, and representable submersions} in $\Shv(\An_\comps)$ are given by maps that are representable by open subspaces, embeddings, local biholomorphisms, or submersions, respectively.

	We define the \emph{analytic Nisnevich topology} on $\Shv(\An_\comps)$ by declaring that representable open covers are analytic Nisnevich covers, and that if $Z \to X$ is an embedding with complement $U \to X$, and $\tilde X \to X$ is a representable local biholomorphism that is an equivalence over $Z$, then $\{U, \tilde X \to X\}$ is an analytic Nisnevich cover of $X$.

	We define $\HolStk^{\fin,\red}$ to be the full subcategory of $\Shv(\An_\comps)$ consisting of objects that admit analytic Nisnevich covers by global quotients $X/G$, where $G$ is a finite group acting on a reduced complex space $X$.

	Finally, we equip $\HolStk^{\fin,\red}$ with the quasi-admissibility structure of representable submersions.
\end{defn}

Now we recall the pullback formalism $\SH^\hol$ on $\HolStk^{\fin,\red}$ constructed in \cite[Theorem 5.3.10]{Fundamentals}. When $G$ is a finite group acting on a reduced complex space $X$, we can describe $\SH^\hol(X/G)$ as follows:
\begin{itemize}

	\item First we consider the category $\HolStk_{X/G}$, which is the category of $G$-equivariant submersions $X' \to X$, and $G$-equivariant maps over $X$ between these.

	\item The category $\HolStk_{X/G}$ inherits an analytic Nisnevich topology given by $G$-equivariant open covers, and families $\{\tilde X, U \to X\}$, where $\tilde X \to X$ is a $G$-equivariant local biholomorphism, and $U \to X$ is a $G$-invariant open subspace that is a complement of an embedding $Z \to X$ over which $\tilde X \to X$ is an isomorphism.

	\item We may then consider the category $H^\hol(X/G)$ consisting of sheaves $F$ on $\HolStk_{X/G}$ that are ``$\comps$-invariant'' in the sense that for any $X' \in \HolStk_{X/G}$, the map $F(X') \to F(X' \times \comps)$ is an equivalence, where $\comps$ has the trivial $G$-action. The inclusion $H^\hol(X/G) \to \Psh(\HolStk_{X/G})$ admits a left adjoint $L_\hol$.

	\item The category $H^\hol(X/G)$ is equipped with the Cartesian symmetric monoidal structure, and the smash product, $\wedge$, equips the category $H^\hol_\bullet(X/G)$ of pointed objects in $H^\hol(X/G)$ with a symmetric monoidal structure.

	\item Finally, $\SH^\hol(X/G)$ is given by formally adjoining $\wedge$-inverses of pointed $\comps$-invariant sheaves of the form $L_\hol V/L_\hol(V \setminus 0)$, where $V \to X$ is a $G$-equivariant vector bundle.

\end{itemize}

\subsubsection{Betti realization and analytification} \label{S:realization}

In this section, we will see how to give $\SH^\hol$ the structure of a well-behaved 6-functor formalism on complex algebraic stacks, and that in this case we also have a well-behaved morphism of 6-functor formalisms $\SH^\alg \to \SH^\hol$.

\begin{defn}
	Let $\AlgStk^{\fin, \clcl}_\comps$ be the category of qcqs algebraic stacks in $\AlgStk^\nice_{/\Spec \comps}$ that have a representable Nisnevich cover by global quotients of the form $X/G$, where $G$ is a constant finite group, and $X$ is a qcqs complex $G$-scheme such that $X$ is of finite type over $\comps$.

	We give $\AlgStk^{\fin, \clcl}_\comps$ the structure of a pullback context by setting the quasi-admissible maps to be the (qcqs) representable smooth maps.
\end{defn}

\begin{rmk} \label{rmk:global quot of ft algsp}

	Let $X$ be a qcqs derived algebraic space of finite type over $\Spec \comps$, and let $G$ be a constant finite group acting on $X$. Then \cite[Theorem 2.14(i)]{SixAlgSt} shows that there is a $G$-equivariant Nisnevich cover $U \to X$ where $U$ is an affine derived scheme over $\Spec \comps$. In particular, since $U \to X$ is of finite type, we have that $U$ is of finite type over $\Spec \comps$, so $X_\clcl/G \in \AlgStk^{\fin, \clcl}_\comps$.
\end{rmk}

\begin{rmk}
	$\AlgStk^{\fin, \clcl}_\comps$ is a full anodyne pullback subcontext of $\AlgStk^\nice_{/\Spec \comps}$, and in fact, any algebraic stack that admits a finite type representable map to an object of $\AlgStk^{\fin, \clcl}_\comps$ must itself be in $\AlgStk^{\fin, \clcl}_\comps$.
\end{rmk}

\begin{rmk}
	Every map in $\AlgStk^{\fin, \clcl}_\comps$ is of finite type.
	\begin{proof}
		Since every map in $\AlgStk^{\fin,\clcl}_\comps$ is qcqs, it suffices to show that every map in $\AlgStk^{\fin,\clcl}_\comps$ is locally of finite type. By \stackscite{06U9}, it suffices to show that every algebraic stack in $\AlgStk^{\fin,\clcl}_\comps$ is of finite type over $\Spec \comps$. This follows from \stackscite{06U8}.
	\end{proof}
\end{rmk}

Let $E$ be the collection of representable morphisms in $\AlgStk^{\fin,\clcl}_\comps$. Since every map in $\AlgStk^{\fin, \clcl}_\comps$ is of finite type, and any finite type representable morphism in $\AlgStk^\clcl$ to an object of $\AlgStk^{\fin,\clcl}_\comps$ is in $\AlgStk^{\fin, \clcl}_\comps$, \Cref{thm:nice mot 6FF} says that $\SH^\alg$ can be seen as a 6-functor formalism $\Span(\AlgStk^{\fin,\clcl}_\comps, E) \to \PrL$.

Note that the full subcategory inclusion $\AlgStk_\comps^{\fin,\clcl,\red} \to \AlgStk^{\fin,\clcl}_\comps$ of classical reduced stacks admits a right adjoint $(-)_\red$, and composing this with analytification $(-)^\an : \AlgStk_\comps^{\fin,\clcl,\red} \to \HolStk^{\fin,\red}$ defines a morphism of pullback contexts $(-)^\an_\red : \AlgStk^{\fin,\clcl}_\comps \to \HolStk^{\fin, \red}$.

\begin{thm}[Analytic realization] \label{thm:analytification}
	In order to simplify notation, in the following, we change the definition of $\SH^\hol$ to denote the presheaf $((-)^\an_\red)^* \SH^\hol$ on $\AlgStk^{\fin,\clcl}_\comps$, given by $X \mapsto \SH^\hol(X^\an_\red)$.

	Then $\SH^\hol$ extends uniquely to a 6-functor formalism $\Span(\AlgStk^{\fin,\clcl}_\comps,E) \to \PrL$ satisfying the following:
	\begin{enumerate}

		\item $(\SH^\hol)^*$ has cdh descent, and $(\SH^\hol)^!$ has cdh descent.


		\item Every smooth representable map is $\SH^\hol$-suave, and every proper representable map is $\SH^\hol$-prim.

		\item There is a unique morphism of 6-functor formalisms $\alpha : \SH^\alg \to \SH^\hol$ such that for any representable map $f$,
			\begin{description}
				
				\item[if $f$ is smooth] then $\alpha^*$ is left adjointable at $f$ and $\alpha_!$ is right adjointable at $f$, and
					
				\item[if $f$ is proper] then $\alpha^*$ is right adjointable at $f$, and $\alpha_!$ is left adjointable at $f$.

			\end{description}


	\end{enumerate}
	
	\begin{proof}
		By \Cref{thm:nice mot 6FF}, it suffices to show that $\SH^\hol \in \PF^\mot(\AlgStk^{\fin, \clcl}_\comps)$. All of the conditions from \Cref{defn:mot alg PF} except for localization follow immediately from \cite[Theorem 5.3.10]{Fundamentals}, and the localization axiom follows from \cite{Gluing}, since $(-)^\an_\red$ sends closed immersions to embeddings.
	\end{proof}
\end{thm}

We now remark that $\SH^\hol$ can be seen as a refinement of a 6-functor formalism sending $X/G$ to the category of $G$-equivariant sheaves of spectra on $X^\an$. This allows us to use \Cref{thm:analytification} to obtain a version of Betti realization.
\begin{rmk}[Betti realization] \label{rmk:Betti}
	One of the properties characterizing the pullback formalism $\SH^\hol$ is a version of homotopy invariance: for any $S \in \HolStk^{\fin,\red}$, there is an equivalence $\cls{S \times \comps} \simeq \cls{S}$ in $\SH^\hol(S)$. This corresponds to a notion of homotopy that uses $\comps$ as an interval. Instead of using $\comps$, we can use the open unit disk $\mathbb{D} \subseteq \comps$ as our interval, which leads to a stronger version of homotopy invariance. Using \cite[Proposition 3.2.5]{Fundamentals}, we can consider a pullback formalism $\SH^\Betti$ given by localizing $\SH^\hol$ along the maps $M \otimes \cls{S \times \mathbb{D}} \to M$ in $\SH^\hol(S)$ for all $M \in \SH^\hol(S)$. Note that for any complex space $S$, the analytic Nisnevich topology on the category $\HolStk_S$ of representable submersions over $S$ is equivalent to the open covering topology, so by \cite[Theorem 1.8 and Remarque 1.9 or Lemme 1.10]{AyoubBetti}, we get a natural identification of $\SH^\Betti(S)$ with the category of sheaves of spectra on $S$:
	\[
		\SH^\Betti(S) \simeq \Shv_{\Sp}(S)
	.\]

	It is possible to use \Cref{thm:closed}(\ref{itm:closed/morphism}) or results from \cite{Gluing} to show that embeddings are $\SH^\Betti$-closed, in which case it is easy to check (or apply \Cref{thm:alg PPF} to see) that $((-)^\an_\red)^* \SH^\Betti \in \PF^\mot(\AlgStk^{\fin, \clcl}_\comps)$, so that if we use the same abuse of notation as in \Cref{thm:analytification} to view pullback formalisms $D$ on $\HolStk^{\fin,\red}$ as pullback formalisms $((-)^\an_\red)^* D$ on $\AlgStk^{\fin,\clcl}_\comps$, then we get a morphism $\beta : \SH^\hol \to \SH^\Betti$ in $\PF^\mot(\AlgStk^{\fin, \clcl}_\comps)$, leading to morphisms in $\VsixFF(\AlgStk^{\fin, \clcl}_\comps, E)$:
	\[
		\SH^\alg \xrightarrow{\alpha} \SH^\hol \xrightarrow{\beta} \SH^\Betti
	,\]
	where $\beta$ or $\beta \circ \alpha$ can be seen as Betti realization when restricted to complex spaces in $\HolStk^{\fin,\red}$ or finite type algebraic spaces in $\AlgStk^{\fin,\clcl}_\comps$.

	We note that by \cite[Proposition 3.2.5]{Fundamentals} and \Cref{thm:D-topology}, we can also apply sheafification for the representable \'{e}tale topology to $\SH^\Betti$ to obtain a morphism of pullback formalisms $\SH^\Betti \to \SH^\Betti_\et$ such that for each $S \in \AlgStk^{\fin,\clcl}_\comps$, the functor $\SH^\Betti(S) \to \SH^\Betti_\et(S)$ is given by localizing along covering sieves for the representable \'{e}tale topology. Note that if $S \in \AlgStk^{\fin,\clcl}_\comps$ satisfies that $S^\an$ is a complex space, then the analytification of any \'{e}tale cover of $S$ gives a cover for the analytic Nisnevich topology (and even the open covering topology), so that
	\[
		\SH^\Betti(S) \to \SH^\Betti_\et(S)
	\]
	is an equivalence.

	We also know that for any constant finite group $G$ acting on an algebraic space $X$ over $\Spec \comps$, the quotient $X/G$ is in $\AlgStk^{\fin,\clcl}_\comps$ by \Cref{rmk:global quot of ft algsp}, and the natural map
	\[
		\SH^\Betti_\et(X/G) \to \SH^\Betti_\et(X)^G
	\]
	is an equivalence by representable \'{e}tale descent, so we have a natural identification of $\SH^\Betti_\et(X/G)$ with the category of $G$-equivariant sheaves of spectra on $X^\an$:
	\[
		\SH^\Betti_\et(X/G) \simeq \Shv_{\Sp}(X^\an)^G
	.\]

	Since every object of $\AlgStk^{\fin,\clcl}_\comps$ has a representable \'{e}tale cover by algebraic spaces, it is easy to show that closed immersions are $\SH^\Betti_\et$-closed by reducing to the case of algebraic spaces using \Cref{lem:locality of closedness}, and then checking on stalks. Therefore, we also have that $\SH^\Betti_\et$ is in $\PF^\mot(\AlgStk^{\fin,\clcl}_\comps)$, so $\SH^\hol \to \SH^\Betti_\et$ extends to a morphism in $\VsixFF(\AlgStk^{\fin,\clcl}_\comps,E)$ which can be seen as a stacky Betti realization.
\end{rmk}

\subsubsection{6-functor formalisms on complex analytic stacks}

In the previous section, we showed how to give $\SH^\hol$ the structure of a 6-functor formalism on \emph{algebraic} stacks. In this section we will study properties of $\SH^\hol$ as a functor of complex analytic stacks.

First we define some prerequisite notions for complex analytic stacks:
\begin{defn} \label{defn:hol}
	Say a map $X \to Y$ in $\Shv(\An_\comps)$ is separated if the diagonal $X \to X \times_Y X$ is an embedding.

	Say a map in $\Shv(\An_\comps)$ is \emph{projective} if it is a composite of maps $X \to Y$ such that there is an analytic Nisnevich cover of $Y$ by maps $Y' \to Y$ such that the base change $X \times_Y Y' \to Y'$ is a composite of embeddings, and maps of the form $\proj(\mathcal E) \to S$, where $S \in \Shv(\An_\comps)$ and $\mathcal E$ is a vector bundle on $S$.

	The \emph{projective cdh topology} on $\Shv(\An_\comps)$ is given by declaring that the empty sieve covers the empty stack, and that if $Z \to S$ is an embedding with complement $U \to S$, and $X \to S$ is a projective map that is an equivalence over $U$, then $\{X,Z \to S\}$ is a covering family.

	Say a map $X \to Y$ in $\Shv(\An_\comps)$ is \emph{proper} if there is an analytic Nisnevich cover of $Y$ by maps $Y' \to Y$ such that there is projective cdh covering family of $X \times_Y Y'$ consisting of finitely many projective maps $X' \to X \times_Y Y'$ such that $X' \to Y'$ is also projective.
\end{defn}

\begin{rmk}
	The projective maps are stable under composition by definition. They are also stable under base change and taking diagonals by 
	\Cref{lem:target-local geom setup}. By \Cref{lem:source-local geom setup}, the proper maps are also stable under composition, base change, and taking diagonals.
\end{rmk}

Now, fix a full subcategory $\mathcal C^\hol \subseteq \HolStk^{\fin,\red}$ such that
\begin{ass}
	The point stack is in $\mathcal C^\hol$, $\mathcal C^\hol$ admits finite products, and every object of $\HolStk^{\fin,\red}$ admitting a representable submersion or projective map to an object of $\mathcal C^\hol$ is in $\mathcal C^\hol$.
\end{ass}
Also equip $\mathcal C^\hol$ with collections of exceptionally closed and exceptionally quasi-proper maps satisfying \Cref{ass:exc}, by setting the exceptionally closed maps to be the embeddings, and the exceptionally quasi-proper maps to be the proper maps.

\begin{defn} \label{defn:mot hol PF}
	Define the category $\PF^\mot(\mathcal C^\hol)$ of \emph{motivic pullback formalisms} on $\mathcal C^\hol$ to be the full subcategory of $\PF(\mathcal C^\hol)$ consisting of those pullback formalisms $D$ satisfying the following:
	\begin{description}
		
		\item[Pointed and reduced] $D$ is a pointed reduced pullback formalism.

		\item[Localization] For any embedding $i : Z \to S$ in $\mathcal C^\hol$ with complement $j : U \to S$,
			\[
				D(Z) \xrightarrow{i_*} D(S) \xrightarrow{j^*} D(U)
			\]
			is a fibre sequence of pointed categories.

		\item[Thom stability] For any finite group $G$ acting on a reduced complex space $X$ such that $X/G \in \mathcal C^\hol$, and $G$-equivariant vector bundle $V \to X$, the object $\cls{V/G}/\cls{V/G \setminus 0} \in D(X)$ is $\otimes$-invertible.

		\item[Homotopy invariance] For any $S \in \mathcal C^\hol$, we have that $\cls{\comps \times S} \simeq \cls{S}$ in $D(S)$.

	\end{description}
\end{defn}

We consider $\SH^\hol$ as a pullback formalism on $\mathcal C^\hol$, and we obtain the following analogue of \Cref{thm:alg PPF}:
\begin{thm} \label{thm:hol PPF}
	Every object $D \in \PF^\mot(\mathcal C^\hol)$ is a strongly projective pullback formalism. Furthermore, $\SH^\hol$ is a projective pullback formalism, so we can consider the functors
	\[
		\PPF(\mathcal C^\hol)_{{\SH^\hol}/} \to \PF^\cstr_\bullet(\mathcal C^\hol)_{{\SH^\hol}/} \to \PF(\mathcal C^\hol)
	.\]
	The first functor is an equivalence, and the second is fully faithful with essential image given by $\PF^\mot(\mathcal C^\hol)$. In particular, $\SH^\hol$ is initial in $\PF^\mot(\mathcal C^\hol)$.
\end{thm}

The following result is key in the proof of \Cref{thm:hol PPF}:
\begin{lem} \label{lem:hol ambidext}
	For any $S \in \mathcal C^\hol$, and vector bundle $\mathcal E$ on $S$, the map $\proj(\mathcal E) \to S$ is stably $\SH^\hol$-ambidextrous.
	\begin{proof}
		Since analytic Nisnevich covers are $\SH^\hol$-pseudocovers, for any such $\proj(\mathcal E) \to S$, there is an $\SH^\hol$-pseudocover of $S$ consisting of quasi-admissible maps from objects of the form $X/G$, where $G$ is a finite group acting on a reduced complex space $X$. By \cite[Lemma B.0.2]{Fundamentals}, we can further refine this cover so that it trivializes $\mathcal E$.

		Maps of the form $\proj(\mathcal E) \to S$ as above are separated, so by \cite{Gluing} their diagonals are $\SH^\hol$-closed. Thus, by \Cref{thm:duality}(\ref{itm:duality/locality}), it suffices to show that maps of the form $\proj^n_S \to S$ are stably $\SH^\hol$-ambidextrous. Any such map is a base change along $S \to \pt$ of the map $\proj_\comps^n \to \pt$, so by \Cref{thm:duality}(\ref{itm:duality/proper}), it suffices to show that $\proj_\comps^n \to \pt$ is stably $\SH^\hol$-ambidextrous.

		Now, note that we have a morphism of constructible pullback formalisms $\SH^\alg \to ((-)_{\clcl, \red}^\an)^* \SH^\hol$, either using the universal property of \cite[Theorem 5.1.11]{Fundamentals}, or by taking the map of presheaves $\alpha^*$ associated to the morphism $\alpha$ of 6-functor formalisms given by \Cref{thm:analytification}. Therefore, \Cref{thm:duality}(\ref{itm:duality/morph}) shows that for any representable smooth separated stably $\SH^\alg$-ambidextrous map $f : X \to Y$, we have that $f_{\clcl, \red}^\an$ is stably $\SH^\hol$-ambidextrous.

		Since $\proj^n_\comps \to \Spec \comps$ is a smooth projective map, it is separated, so it is tangentially $\SH^\alg$-stable by \Cref{thm:alg PPF}, and it is quasi-admissible and exceptionally quasi-proper, so it is stably $\SH^\alg$-ambidextrous. Thus, since $\proj^n_\comps \to \pt$ is $(\proj^n_\comps \to \Spec \comps)_{\clcl,\red}^\an$, it follows that it is stably $\SH^\hol$-ambidextrous.
	\end{proof}
\end{lem}

\begin{proof}[Proof of \Cref{thm:hol PPF}]
	\hfill
	\begin{description}

		\item[The structure of the proof] We will show that $\SH^\hol$ is a strongly projective pullback formalism on $\mathcal C^\hol$. It will then follow that by \Cref{thm:PPF}, the functor
			\[
				\PPF(\mathcal C^\hol)_{{\SH^\hol}/} \to \PF^\cstr_\bullet(\mathcal C^\hol)_{{\SH^\hol}/}
			\]
			is an equivalence, and any $D$ admitting a map from $\SH^\hol$ is also a strongly projective pullback formalism. To conclude, we show that the functor
			\[
				\PF^\cstr_\bullet(\mathcal C^\hol)_{{\SH^\hol}/} \to \PF(\mathcal C^\hol)
			\]
			is fully faithful, and its essential image is given by $\PF^\mot(\mathcal C^\hol)$.

		\item[$\SH^\hol$ is a constructible pullback formalism] Since $\SH^\hol$ is reduced and takes values in pointed categories, it suffices to show that every embedding is $\SH^\hol$-closed, but this follows from \cite{Gluing}.

		\item[Describing the functor $\PF^\cstr_\bullet(\mathcal C^\hol)_{{\SH^\hol}/} \to \PF(\mathcal C^\hol)$] Note that any pointed reduced pullback formalism on $\mathcal C^\hol$ that satisfies the Thom stability and homotopy invariance properties from \Cref{defn:mot hol PF} take values in stable categories by \Cref{lem:auto stable}. Thus, it follows that motivic pullback formalisms takes values in stable categories, so they are constructible pullback formalisms by \Cref{prp:loc for stab PF}. \cite[Theorem 5.3.10]{Fundamentals} shows that the functor is fully faithful with essential image given by the motivic pullback formalisms that have quasi-admissible Nisnevich excision. In fact, since motivic pullback formalisms take values in stable categories, this excision property is automatic by \Cref{prp:gluing implies exc} or \Cref{prp:elementary cdh descent}.

		\item[$\SH^\hol$ is a strongly projective pullback formalism]
			Let $P$ be a projectively $\SH^\hol$-saturated collection of maps in $\mathcal C^\hol$ (see \Cref{defn:proj sat}). We must show that every exceptionally quasi-proper map is in $P$. Since every exceptionally quasi-proper map is proper, and by the definition of proper maps and the fact that any projective map to an object of $\mathcal C^\hol$ is a map in $\mathcal C^\hol$, it suffices to show that every projective map is in $P$. Since analytic Nisnevich covers are $\SH^\hol$-pseudocovers, it follows from the definition of projective maps that we can reduce to showing that embeddings are in $P$, and maps of the form $\proj(\mathcal E) \to S$ are in $P$, where $S \in \Shv(\An_\comps)$ and $\mathcal E$ is a vector bundle on $S$.

			The fact that embeddings are in $P$ follows from the fact that they are exceptionally closed. Since maps of the form $\proj(\mathcal E) \to S$ as above are separated, their diagonals are in $P$, so it suffices to show that they are stably $\SH^\hol$-ambidextrous, which is done in \Cref{lem:hol ambidext}.

	\end{description}
\end{proof}

We now present one way to give $\SH^\hol$ the structure of a 6-functor formalism. In the following, the analytic cdh topology is given as the union of the analytic Nisnevich topology, and the projective cdh topology.
\begin{thm} \label{thm:hol 6FF}
	Let $E$ be a collection of maps in $\mathcal C^\hol$ such that $(\mathcal C^\hol, E)$ is a geometric setup in the sense of \cite[Convention 2.1.3]{HM6FF}, and such that for every map $X \to Y$ in $E$, there is a small analytic cdh cover of $Y$ consisting of maps $Y' \to Y$ such that there is a small analytic cdh cover of $X \times_Y Y'$ consisting of truncated proper maps $X' \to X \times_Y Y'$ such that $X' \to Y'$ is also proper and truncated

	Then $\SH^\hol$ extends to a 6-functor formalism $\Span(\mathcal C^\hol, E) \to \PrL$, which is an initial object in the full subcategory of $\VsixFF(\mathcal C^\hol, E)$ consisting of those $D$ such that $D^*$ is a motivic pullback formalism. In fact, we have an equivalence
	\[
		\PF^\mot(\mathcal C^\hol) \to \VsixFF(\mathcal C^\hol, E)_{{\SH^\hol}/}
	.\]
\end{thm}

It should be possible to give many other versions of this result, but we have chosen this one for the sake of brevity, and as a way to demonstrate the use of our general results, leaving stronger statements for later works.

\begin{proof}[Proof of \Cref{thm:hol 6FF}]
	Let $I$ be the collection of equivalences, and $P$ be the collection of truncated proper maps in $\mathcal C^\hol$. Then \Cref{rmk:6FF ext functor} gives the following commutative diagram of restrictions
	\[
		\begin{tikzcd}
			\VsixFF(\mathcal C^\hol, P)_{P, I} \ar[d, "\sim"'] \ar[r, "\sim"] & \VsixFF(\mathcal C^\hol, P) \ar[d] & \ar[l, "\sim"'] \VsixFF(\mathcal C^\hol, E) \ar[d] \\
			\PPF(\mathcal C^\hol) \ar[r, equals] & \PPF(\mathcal C^\hol) & \ar[l, equals] \PPF(\mathcal C^\hol)
		\end{tikzcd}
	,\]
	so we find that the map
	\[
		\VsixFF(\mathcal C^\hol, E) \to \PPF(\mathcal C^\hol)
	\]
	is an equivalence. Thus, we conclude by \Cref{thm:hol PPF} and the argument of \Cref{rmk:SHalg init}.
\end{proof}

\appendix
\section{Miscellaneous Categorical Results}

In this section we record some unsorted abstract results about categories.

%

\begin{lem} \label{lem:source-locality of colim-pres}
	Let $I$ be a simplicial set, let $\mathcal D : I \to \Cat$ be a diagram, and let $\mathcal D' \subseteq \varprojlim \mathcal D$ be a full subcategory that admits $I$-indexed colimits. Suppose that for each $i \in I_0$, the functor $\mathcal D' \to \mathcal D(i)$ admits a left adjoint $L_i$. For any functor $F : \mathcal D' \to \mathcal C$ that preserves $I$-indexed colimits, we have that $F$ is colimit-preserving if and only if $F \circ L_i$ is colimit-preserving for all $i \in I_0$.
	\begin{proof}
		Since colimit-preserving functors are stable under composition, we only need to address the ``if'' direction.

		Write $\tilde{\mathcal D} \to I$ for a coCartesian fibration corresponding to $\mathcal D$. Note that by \cite[Corollary 3.3.3.2]{htt} or \kerodoncite{02TK}, we can identify $\varprojlim D$ with the category $\Fun_I^\coCart(I, \tilde{\mathcal D})$ of coCartesian sections of $\tilde{\mathcal D} \to I$. Since $\mathcal D'$ admits $I$-indexed colimits, \cite[Lemma D.4.7]{HM6FF} says that the inclusion $\mathcal D' \to \varprojlim D$ admits a left adjoint $L$ given as the composite
		\[
			\Fun_I^\coCart(I, \tilde{\mathcal D}) \to \Fun_I(I, \mathcal D' \times I) \simeq \Fun(I, \mathcal D') \xrightarrow{\varinjlim_I} \mathcal D'
		,\]
		where the first arrow is induced by a left adjoint relative $I$ (see \cite[Definition 7.3.2.2]{ha}) of the functor $\mathcal D' \times I \to \Fun_I^\coCart(I, \tilde{\mathcal D})$ corresponding to the inclusion $\mathcal D' \to \varprojlim \mathcal D$.

		Since $L$ has a fully faithful right adjoint, to show that $F$ is colimit-preserving, it suffices to show that $F \circ L$ is colimit-preserving. Now, since $F$ commutes with $I$-indexed colimits, it suffices to show that the following composite is colimit-preserving:
		\[
			\Fun_I^\coCart(I, \tilde{\mathcal D}) \to \Fun_I(I, \mathcal D' \times I) \to \Fun_I(I, \mathcal C \times I)
		.\]
		By considering restriction along the inclusion of the $0$-skeleton of $I$, we recognize this as the top row in the following diagram
		\[
			\begin{tikzcd}
				\Fun_I^\coCart(I, \tilde{\mathcal D}) \ar[d] \ar[r] & \Fun_I(I, \mathcal D' \times I) \ar[d] \ar[r] & \Fun_I(I, \mathcal C \times I) \ar[d] \\
				\prod_{i \in I_0} \mathcal D(i) \ar[r] & \prod_{i \in I_0} \mathcal D' \ar[r, "\prod_{i \in I_0} F"] & \prod_{i \in I_0} \mathcal C
			\end{tikzcd}
		.\]
		Furthermore, by \cite[Proposition 7.3.2.5]{ha}, we identify the bottom left horizontal arrow with $\prod_{i \in I_0} L_i$. Note that all of the vertical arrows are conservative and colimit-preserving, so it suffices to show that the composite of the bottom row is colimit-preserving, but this follows from the fact that for each $i \in I_0$, the composite $F \circ L_i$ is colimit-preserving.
	\end{proof}
\end{lem}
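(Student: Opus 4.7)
The ``only if'' direction is immediate since colimit-preserving functors are closed under composition. For the ``if'' direction, the plan is to exhibit every object of $\mathcal{D}'$ as an $I$-indexed colimit of objects of the form $L_i(X_i)$, so that the hypothesis that $F \circ L_i$ preserves colimits, combined with the hypothesis that $F$ preserves $I$-indexed colimits, will propagate to arbitrary colimits.

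First I would identify $\varprojlim \mathcal{D}$ with the category $\Fun^{\coCart}_I(I, \tilde{\mathcal{D}})$ of coCartesian sections of the coCartesian fibration $\tilde{\mathcal{D}} \to I$ classifying $\mathcal{D}$ (via \cite[Corollary 3.3.3.2]{htt}), under which evaluation at $i \in I_0$ corresponds to restricting a section to the fibre over $i$. Next, I would construct a left adjoint $L : \varprojlim \mathcal{D} \to \mathcal{D}'$ to the inclusion. Since $\mathcal{D}'$ admits $I$-indexed colimits and each evaluation $\mathcal{D}' \to \mathcal{D}(i)$ has a left adjoint $L_i$, the relative-adjoint machinery of \cite[\S7.3.2]{ha} (packaged, e.g., in HM6FF Lemma D.4.7) provides such an $L$ as an explicit composite
\[
\Fun^{\coCart}_I(I, \tilde{\mathcal{D}}) \longrightarrow \Fun(I, \mathcal{D}') \xrightarrow{\varinjlim_I} \mathcal{D}',
\]
where the first arrow is assembled fiberwise from the $L_i$'s via a relative left adjoint to the forgetful functor $\mathcal{D}' \times I \to \Fun^{\coCart}_I(I, \tilde{\mathcal{D}})$. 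Since the inclusion $\mathcal{D}' \hookrightarrow \varprojlim \mathcal{D}$ is fully faithful, $F$ preserves colimits if and only if $F \circ L$ does.

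Using the displayed factorization and the hypothesis that $F$ preserves $I$-indexed colimits, the problem reduces to showing that the first arrow of the composite is colimit-preserving after postcomposition with $F$ on the target. This can be checked by restricting along the inclusion of the $0$-skeleton of $I$: both $\Fun^{\coCart}_I(I, \tilde{\mathcal{D}}) \to \prod_{i \in I_0} \mathcal{D}(i)$ and $\Fun(I, \mathcal{D}') \to \prod_{i \in I_0} \mathcal{D}'$ are conservative and colimit-preserving, and the induced map between these products is $\prod_{i \in I_0} L_i$ by \cite[Proposition 7.3.2.5]{ha}. Each $F \circ L_i$ is colimit-preserving by hypothesis, so the required composite is too. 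The main obstacle is the clean construction of $L$ with this explicit formula --- assembling the pointwise left adjoints $L_i$ into a single functor out of coCartesian sections --- which is exactly what the relative adjunction machinery is designed to handle; once that is in hand, the remaining verification is a straightforward diagram chase using conservativity of pointwise evaluation.
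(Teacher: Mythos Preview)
Your proposal is correct and follows essentially the same approach as the paper: you identify $\varprojlim \mathcal D$ with coCartesian sections, build the left adjoint $L$ via the relative-adjoint machinery of \cite[\S7.3.2]{ha} (as packaged in \cite[Lemma D.4.7]{HM6FF}), reduce to $F \circ L$ using full faithfulness of the inclusion, peel off the $\varinjlim_I$ using the $I$-colimit hypothesis on $F$, and then check the remaining composite by restricting to the $0$-skeleton and invoking \cite[Proposition 7.3.2.5]{ha}. The paper's proof is structurally identical, citing the same references at the same points.
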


\begin{lem} \label{lem:radj cons iff ladj gens}
	Let $\{f_i : \mathcal C \to \mathcal D_i\}_i$ be a small collection of right adjoint functors of presentable categories, and for each $i$, let $f_i^*$ be a left adjoint of $f_i$. Then the family $\{f_i\}_i$ is jointly conservative if and only if the union of the images of the functors $\{f_i^*\}_i$ generate $\mathcal C$ under small colimits.
	\begin{proof}
		For each $i$, since $\mathcal D_i$ is presentable, we have a small set $S_i$ of objects of $\mathcal D_i$ that generate $\mathcal D_i$ under small colimits. Since $f_i^*$ preserves small colimits for all $i$, we have that the union of the images of $\{f_i^*\}$ generates $\mathcal C$ under colimits if and only if
		\[
			S \coloneqq \bigcup_i f_i^*(S_i)
		\]
		generates $\mathcal C$ under small colimits. Note that $S$ is a small union of small collections, so $S$ is small. Therefore, \cite[Corollary 2.5]{MonTow} says that $S$ generates $\mathcal C$ under colimits if and only if the functors $\{\mathcal C(f_i^*(Y), -)\}_{i, Y \in S_i}$ are jointly conservative, and since $f_i^* \dashv f_i$, this is equivalent to the functors $\{\mathcal D_i(Y, f_i(-))\}_{i, Y \in S_i}$ being jointly conservative, or equivalently, that the composite
		\[
			\mathcal C \to \prod_i \mathcal D_i \xrightarrow{\prod_i (\mathcal D_i(Y,-))_{Y \in S_i}} \prod_{i, Y \in S_i} \spaces
		\]
		is conservative.

		By \cite[Corollary 2.5]{MonTow}, we know already that the second functor is conservative, so the first functor is conservative if and only if the composite is conservative, as desired.
	\end{proof}
\end{lem}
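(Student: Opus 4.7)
The plan is to reduce the joint conservativity of $\{f_i\}_i$ to a standard characterization of colimit-generation by a small set in a presentable category. The key ingredient will be the fact that, for a small set $S$ in a presentable category $\mathcal C$, $S$ generates $\mathcal C$ under small colimits if and only if the family of corepresented functors $\{\mathcal C(s,-)\}_{s \in S}$ is jointly conservative (this is the ``Corollary 2.5 of MonTow'' already cited elsewhere in the paper).

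First I would replace the essential images of the left adjoints $f_i^*$ with small sets of generators. Because each $\mathcal D_i$ is presentable, there is a small set $S_i \subseteq \mathcal D_i$ that generates $\mathcal D_i$ under small colimits. Since $f_i^*$ preserves small colimits, every object in $\operatorname{Im}(f_i^*)$ is a small colimit of objects of $f_i^*(S_i)$, hence the union of images $\bigcup_i \operatorname{Im}(f_i^*)$ generates $\mathcal C$ under small colimits if and only if the (small) union $S := \bigcup_i f_i^*(S_i)$ does. This reduces the right-hand side of the ``iff'' to a statement about a single small set.

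Next I would apply the MonTow fact twice. On the one hand, $S$ generates $\mathcal C$ under small colimits if and only if the functors $\{\mathcal C(f_i^*(Y),-)\}_{i,\, Y \in S_i}$ are jointly conservative. By the adjunctions $f_i^* \dashv f_i$, these are equivalent to $\{\mathcal D_i(Y, f_i(-))\}_{i,\, Y \in S_i}$, which factor as
\[
\mathcal C \xrightarrow{\prod_i f_i} \prod_i \mathcal D_i \xrightarrow{\prod_{i,\, Y \in S_i} \mathcal D_i(Y,-)} \prod_{i,\, Y \in S_i} \spaces.
\]
On the other hand, applying MonTow within each $\mathcal D_i$ shows that the second functor is conservative. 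Hence the composite is conservative if and only if $\prod_i f_i$ is conservative, which is exactly joint conservativity of $\{f_i\}_i$.

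There is essentially no hard step here; the only points requiring a little care are (a) ensuring the smallness of $S$ so that MonTow is applicable --- which holds because the index set and each $S_i$ are small --- and (b) noting that conservativity of $\prod_i f_i : \mathcal C \to \prod_i \mathcal D_i$ is literally the joint conservativity of the family $\{f_i\}_i$, so no further unwinding is needed. Thus the proof is really an arrangement of adjunctions around the standard ``generators = corepresented functors are jointly conservative'' principle.
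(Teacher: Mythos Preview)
Your proposal is correct and follows essentially the same argument as the paper: reduce to a small set $S = \bigcup_i f_i^*(S_i)$ using colimit-preservation of $f_i^*$, apply the MonTow generation criterion to translate into joint conservativity of corepresented functors, pass through the adjunctions, and then use MonTow again in each $\mathcal D_i$ to conclude that conservativity of the composite is equivalent to conservativity of $\prod_i f_i$. The only cosmetic difference is that the paper phrases the factorization explicitly as a displayed composite, whereas you state it in words; the logic is identical.
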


\begin{lem} \label{lem:crit for linear adj by unit}
	Let $f^* : \mathcal D \to \mathcal C$ be a symmetric monoidal functor, and let $f_* : \mathcal C \to \mathcal D$ be a $\mathcal D$-linear functor. Given a transformation
	\[
		\epsilon : f^* f_* \to \id
	,\]
	we have that if $\epsilon$ is the counit of an adjunction $f^* \dashv f_*$, then there is a map $u : 1 \to f_* 1$ such that the composite
	\[
		1 \xrightarrow{f^* u} f^* f_* 1 \xrightarrow{\epsilon(1)} 1
	\]
	is equivalent to the identity, and the converse holds if $f_*$ and $\epsilon$ are $\mathcal D$-linear.
	\begin{proof}
		If $\epsilon$ is the counit of an adjunction $f^* \dashv f_*$, then we have a unit $\eta : \id \to f_* f^*$ such that
		\[
			f^* \xrightarrow{f^* \eta} f^* f_* f^* \xrightarrow{\epsilon f^*} f^*
		\]
		is equivalent to the identity. Thus, by setting $u = \eta 1$, we find that the composite
		\[
			1 \to f^* f_* 1 \to 1
		\]
		is equivalent to the identity.

		Conversely, given $u : 1 \to f_* 1$, since $f^*, f_*$ are $\mathcal D$-linear, we have that $- \otimes u$ defines a map $\eta : \id \to f_* f^*$, and since $\epsilon$ is $\mathcal D$-linear,
		\[
			f^* \xrightarrow{f^* \eta} f^* f_* f^* \xrightarrow{\epsilon f^*} f^*
		\]
		is equivalent to
		\[
			f^*(-) \otimes (1 \xrightarrow{f^* u} f^* f_* 1 \xrightarrow{\epsilon(1)} 1)
		,\]
		which we have assumed is equivalent to the identity.

		Furthermore, since $f_*$ is $\mathcal D$-linear, we have that
		\[
			\eta f_* \simeq f_*(-) \otimes u \simeq f_*(- \otimes f^* u)
		,\]
		and since $\epsilon$ is $\mathcal D$-linear, we have that
		\[
			\epsilon \simeq - \otimes \epsilon 1 \simeq -
		,\]
		so the composite
		\[
			f_* \xrightarrow{\eta f_*} f_* f^* f_* \xrightarrow{f_* \epsilon} f_*
		\]
		is equivalent to
		\[
			f_* \epsilon \circ \eta f_* \simeq f_*(- \otimes \epsilon 1) \circ f_*(- \otimes f^* u) \simeq f_*(- \otimes (\epsilon 1 \circ f^* u))
		,\]
		which is equivalent to the identity since $\epsilon 1 \circ f^* u$ is equivalent to the identity.
	\end{proof}
\end{lem}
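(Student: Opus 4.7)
The plan is to handle the two directions separately, with the forward direction being essentially a formality and the converse being where the hypotheses on $\mathcal{D}$-linearity really earn their keep.

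For the forward direction, assume $\epsilon$ is the counit of an adjunction $f^* \dashv f_*$ with some unit $\eta : \id \to f_* f^*$. Since $f^*$ is symmetric monoidal, it preserves monoidal units, so $f^* 1_{\mathcal{D}} \simeq 1_{\mathcal{C}}$, which means $\eta(1_{\mathcal{D}}) : 1_{\mathcal{D}} \to f_* f^* 1_{\mathcal{D}}$ can be identified with a map $u : 1 \to f_* 1$. The required identity is then simply the triangle identity $\epsilon(f^*-) \circ f^*(\eta(-)) \simeq \id_{f^*(-)}$ evaluated at $1_{\mathcal{D}}$.

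For the converse, given $u : 1 \to f_* 1$ satisfying the stated identity, I would construct a candidate unit $\eta : \id \to f_* f^*$ by setting $\eta \coloneqq - \otimes u$ on $\mathcal{D}$; this makes sense because $\mathcal{D}$-linearity of $f_*$ supplies an equivalence $- \otimes f_* 1 \simeq f_*(f^*(-) \otimes 1) \simeq f_* f^*(-)$. To show this exhibits $f^* \dashv f_*$ with counit $\epsilon$, I would verify the two triangle identities. The first identity, $\epsilon f^* \circ f^* \eta \simeq \id_{f^*}$, follows by unwinding $f^* \eta$ as $f^*(- \otimes u)$, applying monoidality of $f^*$ to rewrite this as $f^*(-) \otimes f^* u$, and then using $\mathcal{D}$-linearity of $\epsilon$ (or rather $\mathcal{C}$-linearity through $f^*$) to reduce to the hypothesis $\epsilon(1) \circ f^* u \simeq \id_1$. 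The second identity, $f_* \epsilon \circ \eta f_* \simeq \id_{f_*}$, is handled similarly by using $\mathcal{D}$-linearity of $f_*$ to rewrite $\eta f_* \simeq f_*(- \otimes f^* u)$ and $\mathcal{D}$-linearity of $\epsilon$ to rewrite $f_* \epsilon$ as tensoring with $\epsilon(1)$ inside $f_*$, so that the composite becomes $f_*(- \otimes (\epsilon(1) \circ f^* u)) \simeq f_*(- \otimes \id_1) \simeq \id_{f_*}$.

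The main obstacle — and the reason the $\mathcal{D}$-linearity hypotheses are essential in the converse — is making sense of the phrase ``$\eta \coloneqq - \otimes u$'' coherently in the $\infty$-categorical setting, rather than merely on objects. Concretely, the $\mathcal{D}$-linearity of $f_*$ should be invoked to produce a natural transformation of $\mathcal{D}$-linear functors, and both triangle identities must be seen as equivalences of $\mathcal{D}$-linear transformations; once this is set up, the identities reduce to the single identity satisfied by $u$, but the bookkeeping is the nontrivial part.
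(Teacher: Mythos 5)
Your proof is correct and follows essentially the same approach as the paper: the forward direction sets $u = \eta(1)$ and reads off the triangle identity at the unit, while the converse defines $\eta = - \otimes u$ via $\mathcal{D}$-linearity and checks both triangle identities by reducing them, through $\mathcal{D}$-linearity of $f_*$ and $\epsilon$ and monoidality of $f^*$, to the single hypothesis $\epsilon(1) \circ f^* u \simeq \id_1$. Your closing remark about coherently producing $\eta$ as a natural transformation in the $\infty$-categorical setting identifies the same implicit step the paper leaves to the reader, so there is no divergence in method.
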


\begin{lem} \label{lem:ladj ret to radg sec}
	Let
	\[
		\mathcal C \xrightarrow{s_*} \mathcal D \xrightarrow{r_*} \mathcal C
	\]
	be functors that have left adjoints $s^* \dashv s_*$ and $r^* \dashv r_*$, and fix a map $\alpha_* : \id \to r_* s_*$, with adjunct $\alpha^* : s^* r^* \to \id$. Then we have a commutative square
	\[
		\begin{tikzcd}
			r^* \ar[d] \ar[r, "r^* \alpha_*" ] & r^* r_* s_* \ar[d] \\
			s_* s^* r^* \ar[r, "s_* \alpha^*"'] & s_*
		\end{tikzcd}
	,\]
	where the left arrow is the unit of $s^* \dashv s_*$, and the right arrow is the counit of $r^* \dashv r_*$.
	\begin{proof}
		Note that one of the composites in the square is the map $r^* \to s_*$ adjunct to $\alpha_* : \id \to r_* s_*$, and the other is adjunct to $\alpha^* : s^* r^* \to \id$.
	\end{proof}
\end{lem}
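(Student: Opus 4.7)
The plan is to show that both composites in the square are equal to the unique natural transformation $r^* \to s_*$ that corresponds to $\alpha_* : \id \to r_* s_*$ under the adjunction $r^* \dashv r_*$ (equivalently, to $\alpha^* : s^* r^* \to \id$ under $s^* \dashv s_*$), so they agree.

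More concretely, I will proceed as follows. First, I will use the definition of the adjunct of $\alpha_*$ under $r^* \dashv r_*$: applying $r^*$ to $\alpha_*$ and then post-composing with the counit $r^* r_* \to \id$ yields a map $r^* \to r^* r_* s_* \to s_*$. By the triangle identities, this composite realises the bijection $\Hom(\id, r_* s_*) \cong \Hom(r^*, s_*)$. This identifies the upper-right composite in the square with the adjunct of $\alpha_*$. Next, by the definition of $\alpha^*$ as the adjunct of $\alpha_*$ under $s^* \dashv s_*$ (plus naturality of unit and counit), the lower-left composite $r^* \to s_* s^* r^* \to s_*$ --- which is formed by applying the unit of $s^* \dashv s_*$ and then $s_* \alpha^*$ --- similarly realises the adjunction bijection and therefore agrees with the map $r^* \to s_*$ corresponding to $\alpha^*$.

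Finally, I will invoke the standard fact that the two ways of passing from $\id \to r_* s_*$ to a natural transformation $r^* \to s_*$ --- either directly via $r^* \dashv r_*$, or by first taking the adjunct $s^* r^* \to \id$ via $s^* \dashv s_*$ and then re-adjoining --- yield the same map, since both are instances of the mate correspondence for the pair of adjunctions $(s^* \dashv s_*, r^* \dashv r_*)$. The main (and only) obstacle is purely bookkeeping: carefully writing out the string diagrams or the triangle identities to verify that no subtle composite is being mis-identified; once the triangle identities are applied there is no content beyond the definitions. Thus the square commutes.
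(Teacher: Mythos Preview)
Your proposal is correct and takes essentially the same approach as the paper: both composites are identified with the unique map $r^* \to s_*$ adjunct to $\alpha_*$ (equivalently, to $\alpha^*$), so they agree. The paper's proof is simply a one-line compression of exactly the argument you wrote out.
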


\begin{lem} \label{lem:char ex seq of st cats}
	Let
	\[
		\mathcal A \xrightarrow{i_*} \mathcal T \xrightarrow{j^*} \mathcal B
	\]
	be functors of stable categories such that $j^* i_* \simeq 0$. Let $i^*, j_\sharp$ be left adjoints of $i_*,j^*$ respectively, and suppose $j_\sharp$ is fully faithful. Then the following are equivalent:
	\begin{enumerate}

		\item $i^*, j^*$ are jointly conservative, and $i^* \to i^* i_* i^*$ is an equivalence.

		\item $j_\sharp j^* \to \id \to i_* i^*$ is an exact triangle, and $i_*$ is conservative.

		\item $i_*$ is fully faithful, and for any object $T \in \mathcal T$, we have that $j^* T$ is a zero object if and only if $T$ is in the essential image of $i_*$.

	\end{enumerate}
	\begin{proof}
		Note that since $j^* i_* \simeq 0$, the left adjoint $i^* j_\sharp$ is also equivalent to $0$.

		Suppose that $i^*, j^*$ are jointly conservative. Note that $j^*(j_\sharp j^* \to \id \to i_* i^*)$ is $j^* j_\sharp j^* \to j^* \to 0$, which is exact, since $j^* j_\sharp j^* \to j^*$ is an equivalence, and $i^*(j_\sharp j^* \to \id \to i_* i^*)$ is $0 \to i^* \to i^* i_* i^*$, which is exact if $i^* \to i^* i_* i^*$ is an equivalence. Thus, the first condition implies the second.

		Now, suppose that $j_\sharp j^* \to \id \to i_* i^*$ is an exact triangle, and $i_*$ is conservative. For any $A \in \mathcal A$, since $j^* i_* A \simeq 0$, we have an exact triangle $0 \to i_* A \to i_* i^* i_* A$, that is, $\id \to i_* i^*$ is an equivalence at $i_* A$, so by the triangle identities for $i^* \dashv i_*$, we have that $i_* (i^* i_* A \to A)$ is an equivalence. Since $i_*$ is conservative, it follows that the counit $i^* i_* A \to A$ is an equivalence. Since this holds for all $A \in \mathcal A$, we find that the counit of $i^* \dashv i_*$ is an equivalence, so $i_*$ is fully faithful.

		Now, let $T \in \mathcal T$ such that $j^* T \simeq 0$. Then $j_\sharp j^* T \to T \to i_* i^* T$ is $0 \to T \to i_* i^* T$, so since this triangle is exact, $T \simeq i_* i^* T$, so $T$ is in the essential image of $T$.

		This concludes the proof that the second condition implies the third.

		Finally, suppose that $i_*$ is fully faithful, and the kernel of $j^*$ is the essential image of $i_*$, and let $f : X \to Y$ be a map in $\mathcal T$ such that $i^* f$ and $j^* f$ are equivalences. Then
		\[
			j^* \ker f \simeq \ker j^* f \simeq 0
		,\]
		so $\ker f$ is in the essential image of $i_*$, so $\ker f \simeq i_* i^* \ker f$ since $i_*$ is fully faithful, but
		\[
			i^* \ker f \simeq \ker i^* f \simeq 0
		,\]
		so
		\[
			\ker f \simeq i_* i^* \ker f \simeq i_* 0 \simeq 0
		,\]
		so $f$ is an equivalence (since $\mathcal T$ is stable).
	\end{proof}
\end{lem}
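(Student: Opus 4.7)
The plan is to establish the cycle (1) $\Rightarrow$ (2) $\Rightarrow$ (3) $\Rightarrow$ (1). As a preliminary, I would note that taking left adjoints of the hypothesis $j^* i_* \simeq 0$ yields $i^* j_! \simeq 0$, and that full faithfulness of $j_!$ provides a natural equivalence $j^* j_! \simeq \id$.

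For (1) $\Rightarrow$ (2), the key construction is to assemble the candidate exact triangle. First, I would observe that the composite $j_! j^* \xrightarrow{\epsilon} \id \xrightarrow{\eta} i_* i^*$ is \emph{canonically} nullhomotopic: by adjunction,
\[
    \Map(j_! j^*, i_* i^*) \simeq \Map(j^*, j^* i_* i^*) \simeq \Map(j^*, 0),
\]
which is contractible. Letting $C$ denote the cofiber of $j_! j^* \to \id$, this canonical nullhomotopy produces an induced map $\alpha : C \to i_* i^*$. To see $\alpha$ is an equivalence I apply joint conservativity: $j^*\alpha$ is the map $0 \to 0$ (both sides vanish, the former since $j^* j_! \simeq \id$ trivializes the cofiber, the latter since $j^* i_* \simeq 0$), while $i^* C \simeq i^*$ (since $i^* j_! \simeq 0$ makes the first term of the cofiber sequence vanish) and under this identification $i^*\alpha$ is precisely the unit $i^* \to i^* i_* i^*$, an equivalence by hypothesis. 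Evaluating the resulting exact triangle at $i_* A$ and using $j^* i_* \simeq 0$ gives that $\eta(i_* A) : i_* A \to i_* i^* i_* A$ is an equivalence; conservativity of $i_*$ then follows because the triangle identity forces $i_* \epsilon_{\mathcal A}$ to be an equivalence, and joint conservativity of $i^*, j^*$ applied to $i_* X$ for $X$ in the kernel of $i_*$ yields $X \simeq 0$.

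For (2) $\Rightarrow$ (3), evaluating the exact triangle at $i_* A$ shows that $\eta(i_* A)$ is an equivalence, whence $i_* \epsilon_{\mathcal A}$ is too by the triangle identity; conservativity of $i_*$ then upgrades this to $\epsilon_{\mathcal A}$ itself being an equivalence, establishing full faithfulness of $i_*$. The characterization of the essential image is immediate from the exact triangle: $j^* T \simeq 0$ forces $T \simeq i_* i^* T$. For (3) $\Rightarrow$ (1), full faithfulness gives $\epsilon$ invertible, so by the triangle identity so is $i^* \eta : i^* \to i^* i_* i^*$. Joint conservativity follows by taking fibers: a map $f$ with $i^* f \simeq j^* f \simeq 0$ has fiber $F$ with $j^* F \simeq 0$, so by (3) $F \simeq i_* A$ for some $A$; then $0 \simeq i^* F \simeq i^* i_* A \simeq A$ by full faithfulness, so $F \simeq 0$. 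The final statements about the dual exact triangle $i_* i^\sharp \to \id \to j_* j^*$ and joint conservativity of $\{i^\sharp, j^*\}$ follow by taking right adjoints of the first exact triangle (using that $i_*$ is colimit-preserving by \Cref{prp:gluing implies loc} together with stability, and invoking the adjoint functor theorem as cited) and applying the 5-lemma in the resulting triangle of fibers.

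The main obstacle is the (1) $\Rightarrow$ (2) direction, specifically the careful production of the exact triangle: one must articulate the canonical nullhomotopy of the composite $j_! j^* \to \id \to i_* i^*$ in terms of the adjunctions, and then deploy joint conservativity on the cofiber $C$ versus $i_* i^*$. Deriving conservativity of $i_*$ from (1) is a further subtlety, as $\eta i_*$ being an equivalence only translates to $i_* \epsilon_{\mathcal A}$ being an equivalence via the triangle identity, not to $\epsilon_{\mathcal A}$ itself; here the full strength of joint conservativity on $\mathcal T$ must be leveraged to transfer information back to $\mathcal A$.
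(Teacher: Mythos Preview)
Your overall strategy---the cycle (1)$\Rightarrow$(2)$\Rightarrow$(3)$\Rightarrow$(1)---and your arguments for (2)$\Rightarrow$(3) and (3)$\Rightarrow$(1) match the paper's essentially verbatim (the paper uses $\ker f$ where you use the fiber, and leaves the invertibility of $i^*\eta$ implicit where you spell it out via the triangle identity). Your treatment of the exact triangle in (1)$\Rightarrow$(2) is also equivalent to the paper's: you build the comparison map $C \to i_*i^*$ from the canonical nullhomotopy and test it with $i^*,j^*$, whereas the paper simply applies $i^*,j^*$ directly to the candidate triangle; these amount to the same computation.

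The genuine gap is your derivation of conservativity of $i_*$ in (1)$\Rightarrow$(2). You claim that ``joint conservativity of $i^*, j^*$ applied to $i_* X$ for $X$ in the kernel of $i_*$ yields $X \simeq 0$'', but this is circular: if $i_* X \simeq 0$ then $i^*(i_* X)$ and $j^*(i_* X)$ vanish trivially, and joint conservativity on $\mathcal T$ cannot separate objects of $\mathcal A$ that $i_*$ already identifies. In fact condition (1) refers to $\mathcal A$ only through the composite $i_*i^*$, so replacing $\mathcal A$ by $\mathcal A \times \mathcal A'$ (extending $i_*$ by projection and $i^*$ by zero on the second factor) preserves (1) while destroying conservativity of $i_*$. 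The paper's own proof of (1)$\Rightarrow$(2) does not address this point either---it establishes only the exact triangle and then moves on---so you have correctly identified a step the paper glosses over, but the argument you supply does not work and cannot be repaired from (1) alone. Finally, your closing sentences about the dual triangle $i_* i^\sharp \to \id \to j_* j^*$ and joint conservativity of $i^\sharp, j^*$ are not part of this lemma; those belong to \Cref{prp:loc for stab PF}, which invokes the present lemma under further presentability hypotheses.
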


\section{6-functor formalisms} \label{S:app 6FF}

In this section, we will prove some auxiliary results useful for our study of 6-functor formalisms. Throughout this section, we will say ``geometric setup'' to refer to the notion given in \cite[Definition A.5.1]{Mann6FF}, which is less restrictive than the notion considered in \cite[\S2]{HM6FF}, and refers to a pair $(\mathcal C,E)$ of a category $\mathcal C$, and a collection of morphisms $E$ in $\mathcal C$ that contains all equivalences, and is stable under base change and composition. Note that we will sometimes conflate $E$ with the corresponding wide subcategory of $\mathcal C$.

If $(\mathcal C,E)$ is a geometric setup, we will write $\Span(\mathcal C,E)$ for the category referred to as $\Corr(\mathcal C)_{E, \all}$ in \cite{Mann6FF} and $\Corr(\mathcal C,E)$ in \cite{HM6FF}. This is also denoted by $\Span(\mathcal C, E)$ in \cite[Construction 3.18]{CLL6FF}.

In fact, this category enhances to an operad, as given in \cite[Definition A.5.4]{Mann6FF}. Following \cite[Definition A.5.7 and A.5.6]{Mann6FF}, we have that the category of 3-functor formalism is the category $\Alg_{\Span(\mathcal C,E)}(\widehat{\Cat})$ of lax symmetric monoidal functors $\Span(\mathcal C,E) \to \widehat{\Cat}$.
\begin{nota} \label{nota:6FF*!}
	As in \cite[Definition A.5.6 and A.5.7]{Mann6FF}, for any operad $\mathscr V$, and morphism of operads $D : \Span(\mathcal C,E) \to \mathscr V$, we have the following induced functors:
	\begin{itemize}

		\item By restricting along $\mathcal C^\op \to \Span(\mathcal C,E)$, and using \cite[Theorem 2.4.3.18]{ha}, we obtain a functor $D^* : \mathcal C^\op \to \CAlg(\PrL)$, and for any map $f$ in $\mathcal C$, we write $f^* \coloneqq D^*(f)$, and $f_*$ for its right adjoint when it exists.

		\item By restricting along $E \to \Span(\mathcal C,E)$, we obtain the functor $D_! : E \to \PrL$, where for any map $f$ in $\mathcal C$, we write $f_! \coloneqq D_!(f)$. If $f_!$ admits a right adjoint for all $f \in E$, we write $D^! : E^\op \to \PrR$ for the functor obtained by taking right adjoints, so for any map $f \in \mathcal C$, we have $D^!(f) \simeq f^!$.

	\end{itemize}
	In fact, if $\mathscr V$ is just a category, and $D$ is any functor $\Span(\mathcal C,E) \to \mathscr V$, we can still define $D^* : \mathcal C^\op \to \mathscr V$, and $D_! : E \to \mathscr V$.
\end{nota}

\begin{lem} \label{lem:equiv of 3FF}
	Let $(\mathcal C,E)$ be a geometric setup, let $\mathscr V$ be an operad, and let $\phi : D \to D'$ be a morphism in $\Alg_{\Span(\mathcal C,E)} \mathscr V$. Then $\phi$ is an equivalence if for all $S \in \mathcal C$, the morphism $D(S) \to D'(S)$ is an equivalence.
	\begin{proof}
		Note that since $\CAlg(\mathscr V) \to \mathscr V$ is conservative, it suffices to show that the composite functor
		\[
			\Alg_{\Span(\mathcal C,E)} \mathscr V \to \Alg_{\mathcal C^\op} \mathscr V \simeq \Fun(\mathcal C^\op, \CAlg(\mathscr V))
		\]
		is conservative, where $\mathcal C^\op$ is given the coCartesian structure of \cite[\S2.4.3]{ha}, and the last map is the equivalence given by \cite[Theorem 2.4.3.18]{ha}. In particular, it suffices to show that the first functor is conservative, but this functor fits into a commutative square
		\[
			\begin{tikzcd}
				\Alg_{\Span(\mathcal C,E)} \mathscr V \ar[d] \ar[r] & \Alg_{\mathcal C^\op} \mathscr V \ar[d] \\
				\Fun(\Span(\mathcal C, E)^\otimes, \mathscr V^\otimes) \ar[r] & \Fun(((\mathcal C^\op)^{\coprod})^\op, \mathscr V^\otimes)
			\end{tikzcd}
		\]
		where the remaining arrows are conservative.
	\end{proof}
\end{lem}

\subsection{Extending 6-functor formalisms} \label{S:6FF ext}

Fix a geometric setup $(\mathcal C,E)$. In this section we present refined versions of some of the extension results from \cite[\S A.5]{Mann6FF} and \cite[\S3.4]{HM6FF} about extending 6-functor formalisms on $(\mathcal C,E)$ to larger geometric setups.

First we consider the following refinement of \cite[Lemma A.5.11]{Mann6FF} and \cite[Proposition 3.4.8(ii)]{HM6FF}, suggested by Bastiaan Cnossen.
\begin{lem} \label{lem:lext 6FF}
	Let $\tilde E \supseteq E$ be a collection of maps such that $(\mathcal C, \tilde E)$ is a geometric setup, and let $\tau$ be a Grothendieck topology on $\mathcal C$ such that for every $f : X \to Y$ in $\tilde E$, there is a small $\tau$-covering family of $X$ consisting of maps $X' \to X$ in $E$ such that $X' \to Y$ is in $E$.

	For every 6-functor formalism $D : \Span(\mathcal C,E) \to \PrL$, if $D^!$ is a $\tau$-sheaf, then $D$ extends to a 6-functor formalism $\tilde D : \Span(\mathcal C, \tilde E) \to \PrL$ on $(\mathcal C, \tilde E)$, such that for any 6-functor formalism $D'$ on $(\mathcal C, \tilde E)$, the map of spaces
	\[
		\Alg_{\Span(\mathcal C, \tilde E)}(\PrL)(\tilde D, D') \to \Alg_{\Span(\mathcal C, E)}(D, D'|_{\Span(\mathcal C, E)})
	\]
	is an equivalence.
	\begin{proof}
		This follows from the arguments of \cite[Proposition 3.4.8(ii)]{HM6FF} and \cite[Lemma A.5.11]{Mann6FF}. We will sketch the key points here.

		As in the proof of \cite[Proposition 3.4.8.(ii)]{HM6FF}, since $\tau$ is a Grothendieck topology and $D^!$ is a $\tau$-sheaf, we can apply the argument of \cite[Lemma A.5.11]{Mann6FF}, which shows that we can apply \cite[Proposition 3.1.3.3]{ha} to produce a 6-functor formalism $\tilde D : \Span(\mathcal C, \tilde E)^\otimes \to \PrL^\otimes$, and an equivalence $D \to \tilde D|_{\Span(\mathcal C,E)^\otimes}$ exhibiting $\tilde D$ as a free $\Span(\mathcal C, \tilde E)$-algebra generated by $D$.

		Thus, we conclude by \cite[Proposition 3.1.3.2]{ha}.
	\end{proof}
\end{lem}

The following result is given by adapting and combining \cite[Proposition A.5.16]{Mann6FF} and \cite[Proposition 3.4.8(i)]{HM6FF}. In this case, we let $(\tilde{\mathcal C}, \tilde E)$ be another geometric setup such that there is a fully faithful inclusion $\mathcal C \subseteq \tilde{\mathcal C}$ sending $E$ to $\tilde E$. We will be interested in extending 6-functor formalisms on $(\mathcal C, E)$ to $(\tilde{\mathcal C}, \tilde E)$.
\begin{lem} \label{lem:rext 6FF}
	Suppose that $E$ and $\tilde E$ are stable under taking diagonals, and that the inclusion $\mathcal C \to \tilde{\mathcal C}$ preserves base changes of maps in $E$. 

	Let $\tau$ be a Grothendieck topology on $\tilde{\mathcal C}$ such that for any map $\tilde X \to \tilde Y$ in $\tilde E$, there is a small $\tau$-covering sieve $\mathcal U$ of $\tilde Y$ such that for any $Y \to \tilde Y$ in $\mathcal U$, if $Y \in \mathcal C$, then $\tilde X \times_{\tilde Y} Y \to Y$ is in $E$.

	Let $D : \Span(\mathcal C, E) \to \widehat{\Cat}$ be a 3-functor formalism such that the right Kan extension $\tilde D^*$ of $D^*$ along $\mathcal C^\op \to \tilde{\mathcal C}^\op$ is a $\tau$-sheaf. Then $\tilde D^*$ extends to a 3-functor formalism $\tilde D : \Span(\tilde{\mathcal C}, \tilde E) \to \widehat{\Cat}$ such that $\tilde D$ extends $D$, and for any other 3-functor formalism $D' \in \Alg_{\Span(\tilde{\mathcal C}, \tilde E)}(\widehat{\Cat})$, the map of spaces
	\begin{equation} \label{eqn:rext 6FF hom}
		\Alg_{\Span(\tilde{\mathcal C}, \tilde E)}(\widehat{\Cat})(D', \tilde D) \to \Alg_{\Span(\mathcal C, E)}(D'|_{\Span(\mathcal C,E)}, D)
	\end{equation}
	is an equivalence.


\end{lem}

Before coming to the proof of \Cref{lem:rext 6FF}, we will present a result that combines it with \Cref{lem:lext 6FF}. For this, we will need to fix two Grothendieck topologies: $\tau^!$ is a Grothendieck topology on $\mathcal C$, and $\tau^*$ is a Grothendieck topology on $\tilde{\mathcal C}$ such that
\[ \tag{*} \label{ext 6FF crit}
	\parbox{0.9\textwidth}{For any map $\tilde X \to \tilde Y$ in $\tilde E$, there is a small $\tau^*$-covering sieve $\mathcal U$ of $\tilde Y$ such that for any $Y \to \tilde Y$ in $\mathcal U$, if $Y \in \mathcal C$, then $\tilde X \times_{\tilde Y} Y \in \mathcal C$, and there is a small $\tau^!$-covering family of $\tilde X \times_{\tilde Y} Y$ consisting of maps $X \to \tilde X \times_{\tilde Y} Y$ in $E$ such that $X \to Y$ is also in $E$.}
\]

\begin{defn}
	For any geometric setup $(\mathcal C', E')$ where $\mathcal C \subseteq \mathcal C' \subseteq \tilde{\mathcal C}$ and $E \subseteq E'$, we write $\sixFF^\tau(\mathcal C',E')$ for the full subcategory of $\Alg_{\Span(\mathcal C', E')}(\PrL)$ consisting of 6-functor formalisms $D : \Span(\mathcal C',E') \to \PrL$ such that $D^!$ restricts to a $\tau^!$-sheaf on $E$, and the right Kan extension of $D^*|_{\mathcal C^\op}$ along $\mathcal C^\op \to \tilde{\mathcal C}^\op$ is a $\tau^*$-sheaf.
\end{defn}

We can now state our combined result as follows:
\begin{prp} \label{prp:ext 6FF}
	Suppose that $\mathcal C$ admits base changes along maps in $\tilde E$, and the inclusion $\mathcal C \to \tilde{\mathcal C}$ preserves these.
	Then the restriction functor
	\[
		\sixFF^\tau(\tilde{\mathcal C}, \tilde E) \to \sixFF^\tau(\mathcal C, E)
	\]
	admits a fully faithful section $R$ whose essential image is given by those $D \in \sixFF^\tau(\tilde{\mathcal C}, \tilde E)$ such that $D^*$ is a right Kan extension of $D^*|_{\mathcal C^\op}$. Furthermore,
	\begin{itemize}

		\item if $\mathcal C \to \tilde{\mathcal C}$ is an equivalence and $\tau^*$ is the trivial topology, then $R$ is an equivalence, and

		\item if $\tau^!$ is the trivial topology,
			then $R$ is a right adjoint.

	\end{itemize}
	
\end{prp}

Before proving \Cref{prp:ext 6FF,lem:rext 6FF}, we present some results that are useful for producing the hypotheses of \Cref{lem:lext 6FF,lem:rext 6FF}.

\begin{lem} \label{lem:source-local geom setup}
	Let $E' \subseteq \bar E$ be collections of morphisms in a category $\mathcal C$. If $\tau$ is a Grothendieck topology on $\mathcal C$, let $\hat E$ be the collection of maps $f : X \to Y$ in $\bar E$ such that $X$ admits a small $\tau$-cover by maps $X' \to X$ in $E'$ such that $X' \to X \to Y$ is in $E'$.

	If $E'$ and $\bar E$ contain all equivalences, and are stable under composition, and base change, then the same is true of $\hat E$. Furthermore, if $E'$ and $\bar E$ are also stable under taking diagonals, then the same is true of $\hat E$.
	\begin{proof}
		It is clear that $\hat E$ contains all equivalences since $E' \subseteq \hat E$. To see that $\hat E$ is stable under base change, it suffices to note that $\tau$-covers are stable under base change, and so are maps in $E'$ and $\bar E$.

		To see that $\hat E$ is stable under composition, let
		\[
			X \xrightarrow{f} Y \xrightarrow{g} Z
		\]
		be maps in $\bar E$. Let $a : X' \to X$ be a map in $E'$ such that $f \circ a \in E'$, and let $b : Y' \to Y$ be a map in $E'$ such that $g \circ b \in E'$. Consider the commutative diagram
		\[
			\begin{tikzcd}
				X' \times_Y Y' \ar[d] \ar[r] & X \times_Y Y'\ar[d] \ar[r] & Y' \ar[d] & \\
				X' \ar[r] & X \ar[r] & Y \ar[r] & Z
			\end{tikzcd}
		.\]
		Then $X' \times_Y Y' \to Z$ factors as
		\[
			X' \times_Y Y' \to Y' \to Y \to Z
		,\]
		where $X' \times_Y Y' \to Y'$ is in $E'$ since it is a base change of $X' \to Y$, and $Y' \to Y \to Z$ is in $E'$ by assumption, so $X' \times_Y Y' \to Z$ is in $E'$. Furthermore, $X' \times_Y Y' \to X$ is a composite of maps in $E'$, so it is in $E'$.

		If $f,g$ are in $\hat E$, we can choose $\tau$-covering families $\{X_i \to X\}_i$ and $\{Y_j \to Y\}_j$ consisting of families in $E'$ such that for each $i$, $X_i \to X \to Y$ is in $E'$, and for each $j$, $Y_j \to Y \to Z$ is in $E'$. Thus, $\{X_i \times_Y Y_j \to X\}_{i,j}$ is a covering family, and for each $i,j$, $X_i \times_Y Y_j \to X$ is in $E'$, and $X_i \times_Y Y_j \to Z$ is in $E'$. This shows that $g \circ f \in \hat E$.

		Finally, assume that $E'$ and $\bar E$ are stable under taking diagonals. Note that if $X' \to X \to Y$ are maps in $\bar E$, then we have a commutative diagram
		\[
			\begin{tikzcd}
				X' \ar[d] \ar[r] & X' \times_Y X' \ar[d] \\
				X \ar[r] & X \times_Y X
			\end{tikzcd}
		\]
		where all maps are in $\bar E$. If $X' \to Y$ is in $E'$, then the top map is in $E'$, and if $X' \to X$ is in $E'$, then the right map is in $E'$, so in this case, $X' \to X \times_Y X$ is in $E'$. If $X \to Y$ is in $\hat E$ then there is a small $\tau$-cover of $X$ by maps $X' \to X$ such that $X' \to X$ and $X' \to Y$ are in $E'$, so this argument shows that the same $\tau$-cover exhibits that $X \to X \times_Y X$ is in $\hat E$, as desired
	\end{proof}
\end{lem}

\begin{lem} \label{lem:target-local geom setup}
	Let $\mathcal C' \subseteq \bar{\mathcal C}$ be the inclusion of a full subcategory, and let $E',\bar E$ be collections of maps in $\mathcal C', \bar{\mathcal C}$ respectively that contain all equivalences, and are stable under base change and composition, and such that the inclusion $\mathcal C' \to \bar{\mathcal C}$ preserves base changes along $E'$.

	Define $\hat E$ to be the collection of maps $X \to Y$ in $\bar E$ such that for any map $Y' \to Y$, if $Y' \in \mathcal C'$, then the pullback $X \times_Y Y'$ is also in $\mathcal C'$, and $X \times_Y Y' \to Y'$ is in $E'$.

	Then $\hat E$ also contains all equivalences, and is stable under base change and composition. If $E'$ and $\bar E$ are also stable under taking diagonals, then so is $\hat E$.
	\begin{proof}
		It is clear that $\hat E$ contains all equivalences. The fact that $\hat E$ is stable under base change and composition follows from pasting Cartesian squares. 

		To show the property about diagonals, we note that by \cite[Lemma 2.1.5]{HM6FF}, $E'$ and $\bar E$ are right-cancellative, and it suffices to show that $\hat E$ is right-cancellative: if $X \to Y \to S$ are maps in $\bar{\mathcal C}$ such that $X \to S$ and $Y \to S$ are in $\hat E$, we must show that $X \to Y$ is in $\hat E$. We must show that for any $Y' \to Y$, if $Y' \in \mathcal C'$, the map $X \times_Y Y' \to Y'$ is in $E'$.

		First note that since $\bar E$ is right-cancellative, and $\hat E \subseteq \bar E$, we have that $X \to Y$ is in $\bar E$, so it admits all base changes. Using the fact that all of the maps in $X \to Y \to S$ are in $\bar E$, so admit all base changes, we produce the following diagram consisting of Cartesian squares:
		\[
			\begin{tikzcd}
				X \times_Y Y' \ar[d] \ar[r] & X \times_S Y' \ar[d] \ar[r] & X \ar[d] \\
				Y' \ar[r] & Y \times_S Y' \ar[d] \ar[r] & Y \ar[d] \\
				{} & Y' \ar[r] & S
			\end{tikzcd}
		.\]
		Since $Y \to S$ and $X \to S$ are in $E$, and $Y' \in \mathcal C'$, it follows that the middle vertical maps $Y \times_S Y' \to Y'$ and $X \times_S Y' \to Y'$ are in $E'$, so since $E'$ is right-cancellative, the top middle vertical map $X \times_S Y' \to Y \times_S Y'$ is in $E'$. Therefore, the base change $X \times_Y Y' \to Y'$ of this map is also in $E'$, as desired.

		%
	\end{proof}
\end{lem}

\begin{proof}[Proof of \Cref{lem:rext 6FF}]
	First note that since $E$ and $\tilde E$ are stable under taking diagonals, both $(\mathcal C,E)$ and $(\tilde{\mathcal C}, \tilde E)$ are geometric setups in the sense of \cite[Convention 2.1.3]{HM6FF}.

	Next note that since the inclusion $\mathcal C \to \tilde{\mathcal C}$ sends $E$ to $\tilde E$, and preserves base changes of maps in $E$, it follows that we have an induced map of operads $\Span(\mathcal C,E) \to \Span(\tilde{\mathcal C}, \tilde E)$.

	The map $\Span(\mathcal C,E)^\otimes \to \Span(\tilde{\mathcal C}, \tilde E)^\otimes$ defines a precomposition functor
	\begin{equation} \label{eqn:res geom setup fun}
		\Fun(\Span(\tilde{\mathcal C}, \tilde E)^\otimes, \widehat{\Cat}) \to \Fun(\Span(\mathcal C, E)^\otimes, \widehat{\Cat}),
	\end{equation}
	which preserves lax Cartesian structures, so it restricts to a functor
	\begin{equation} \label{eqn:res geom setup fun lax}
		\Fun^\lax(\Span(\tilde{\mathcal C}, \tilde E)^\otimes, \widehat{\Cat}) \to \Fun^\lax(\Span(\mathcal C, E), \widehat{\Cat}),
	\end{equation}
	where the notation $\Fun^\lax((-)^\otimes, \widehat{\Cat})$ is described in \cite[Proposition 2.4.1.7]{ha}, and refers to the full subfunctor of $\Fun((-)^\otimes, \widehat{\Cat})$ consisting of lax Cartesian structures. This result shows that we may identify \eqref{eqn:res geom setup fun lax} with the restriction functor
	\[
		\Alg_{\Span(\tilde{\mathcal C}, \tilde E)}(\widehat{\Cat}) \to \Alg_{\Span(\mathcal C, E)}(\widehat{\Cat})
	.\]

	Right Kan extension defines a right adjoint of \eqref{eqn:res geom setup fun}. When every map $\tilde X \to \tilde Y$ in $\tilde E$ satisfies that if $\tilde Y \in \mathcal C$, then $\tilde X \in \mathcal C$, the proof of \cite[Proposition A.5.16]{Mann6FF} shows that taking right Kan extensions defines a right adjoint of \eqref{eqn:res geom setup fun lax} that admits the desired description,
	and such that the right Kan extension $\tilde D$ of $D$ restricts to $D$, so by the dual of \cite[Lemma 3.3.1]{CARMELI2021107763}, the counit $\tilde D|_{\Span(\mathcal C,E)} \to D$ is an equivalence, whence \eqref{eqn:rext 6FF hom} is an equivalence for all $D'$.

	In general, let $\grave E \subseteq \tilde E$ be the subset consisting of maps $\tilde X \to \tilde Y$ such that for any $Y \in \mathcal C$, and map $Y \to \tilde Y$, the map $\tilde X \times_{\tilde Y} Y \to Y$ is in $E$. It follows from \Cref{lem:target-local geom setup} that $(\tilde{\mathcal C}, \grave E)$ is a geometric setup in the sense of \cite[Convention 2.1.3]{HM6FF}, and our argument above shows that $D$ extends to a 3-functor formalism $\grave D : (\tilde{\mathcal C}, \grave E) \to \widehat{\Cat}$ such that $\grave D^*$ is the right Kan extension of $D^*$. Thus, by our assumption, $\grave D^*$ is a $\tau$-sheaf, so the proof of \cite[Proposition 3.4.8(i)]{HM6FF} shows that in \eqref{eqn:res geom setup fun lax}, if $(\mathcal C,E)$ is replaced by $(\tilde{\mathcal C}, \grave E)$, then right Kan extension sends $\grave D$ to a 3-functor formalism $\tilde D$ extending $\grave D$. In particular, $\tilde D^* = \grave D^*$ is the right Kan extension of $D^*$, and \eqref{eqn:rext 6FF hom} is an equivalence for all $D'$.
\end{proof}

\begin{proof}[Proof of \Cref{prp:ext 6FF}]
	If $\mathcal C \to \tilde{\mathcal C}$ is an equivalence, and for every map $\tilde X \to \tilde Y$ in $\tilde E$ there is a small $\tau^!$-covering family of $\tilde X$ consisting of maps $X \to \tilde X$ in $E$ such that $X \to \tilde Y$ is in $E$, it follows from \Cref{lem:lext 6FF} and \kerodoncite{02FV} that the restriction functor has a left adjoint $R$, and that the unit of the adjunction is an equivalence, so this left adjoint is actually a fully faithful section. Now, since the unit is an equivalence, it follows from the triangle identities that for any $\tilde D \in \sixFF^\tau(\tilde{\mathcal C}, \tilde E)$, the counit $R(\tilde D|_{\Span(\mathcal C,E)}) \to \tilde D$ restricts to an equivalence on $\Span(\mathcal C,E)^\otimes \subseteq \Span(\tilde{\mathcal C}, \tilde E)^\otimes$. Therefore, this counit also restricts to an equivalence on $\mathcal C^\op \subseteq \Span(\mathcal C,E)$, so \Cref{lem:equiv of 3FF} shows that this counit is an equivalence, whence the restriction functor is an equivalence. Note that the hypotheses of this paragraph are satisfied if $\mathcal C \to \tilde{\mathcal C}$ is an equivalence, and $\tau^*$ is the trivial topology.

	On the other hand, if every map $\tilde X \to \tilde Y$ in $\tilde E$ satisfies that there is a $\tau^*$-covering sieve $\mathcal U$ of $\tilde Y$ such that for all $Y \to \tilde Y$ in $\mathcal U$, if $Y \in \mathcal C$, then $\tilde X \times_{\tilde Y} Y \to Y$ is in $E$, then \Cref{lem:rext 6FF} and \kerodoncite{02FV} show that the restriction functor has a fully faithful right adjoint $R$ such that for any $D \in \sixFF^\tau(\mathcal C,E)$, $(RD)^*$ is the right Kan extension of $D^*$. For any $\tilde D \in \sixFF^\tau(\tilde{\mathcal C}, \tilde E)$, the unit $\tilde D \to R \tilde D|_{\Span(\mathcal C,E)}$ restricts to the natural map from $\tilde D^*$ to the right Kan extension of $\tilde D^*|_{\mathcal C^\op}$, so by \Cref{lem:equiv of 3FF}, we have that $R$ has the desired essential image. Note that this condition holds if $\tau^!$ is the trivial topology.

	In general, first define $E'$ to be the collection of maps $X \to Y$ in $\tilde E$ between objects of $\mathcal C$ such that there is a small $\tau^!$-covering family of $X$ consisting of maps $X' \to X$ in $E$ such that $X' \to Y$ is also in $E$. Since $\mathcal C$ admits base changes along maps in $\tilde E$, and the inclusion $\mathcal C \to \tilde{\mathcal C}$ preserves these, \Cref{lem:source-local geom setup} shows that $(\mathcal C, E')$ is a geometric setup in the sense of \cite[Convention 2.1.3]{HM6FF}.

	We have assumed that for any map $\tilde X \to \tilde Y$ in $\tilde E$, there is a small $\tau^*$-covering sieve $\mathcal U$ of $\tilde Y$ such that for any $Y \to \tilde Y$ in $\mathcal U$, if $Y \in \mathcal C$, then $\tilde X \times_{\tilde Y} Y \to Y$ is in $E'$.

	Thus, we obtain the result by composing the fully faithful adjoint sections of the following restriction functors:
	\[
		\sixFF^\tau(\tilde{\mathcal C}, \tilde E) \to \sixFF^\tau(\mathcal C, E') \to \sixFF^\tau(\mathcal C, E)
	.\]
\end{proof}

\subsection{Suave and prim maps} \label{S:suave prim}

The notions of suave and prim maps for 6-functor formalisms are studied in \cite[\S4.5]{HM6FF}. In this section, we will establish some additional results about these maps, as well as consider a generalization.

Before giving a definition of these notions, let us motivate them and indicate their importance by presenting the following result that summarizes some of their key properties (including results both from \cite[\S4.5]{HM6FF}, and this section):
\begin{thm} \label{thm:suave prim consequences}
	Let $D$ be a 6-functor formalism on a geometric setup $(\mathcal C,E)$ where $E$ is stable under taking diagonals.
	\begin{description}

		\item[Adjunctions] Let $f : X \to Y$ be a map in $E$.
			\begin{description}
				
				\item[If $f$ is $D$-suave] then the functors $f^*, f^!$ have adjoints
					\[
						f_\sharp \dashv f^* \quad\text{and}\quad f^! \dashv f_\flat
					.\]

				\item[If $f$ is $D$-prim] then the functors $f_*, f_!$ have adjoints
					\[
						f_* \dashv f^\sharp \quad\text{and}\quad f^\flat \dashv f_!
					.\]

			\end{description}
		
		\item[Base change]
			Let
			\[
				\begin{tikzcd}
					X' \ar[d, "p"'] \ar[r, "f'"] & Y' \ar[d, "q"] \\
					X \ar[r, "f"'] & Y
				\end{tikzcd}
			\]
			be a Cartesian square in $\mathcal C$ where $f,q \in E$.
			\begin{description}
				
				\item[If $f$ is $D$-suave] then so is every base change of $f$, and the natural maps
					\begin{equation} \label{eqn:suave bc}
						f^* q_* \to p_* f'^* \qquad p_! f'^! \to f^! q_! \qquad p^* f^! \to f'^! q^* \qquad f'^* q^! \to p^! f^*
					\end{equation}
					are equivalences.

				\item[If $f$ is $D$-prim] then so is every base change of $f$, and the natural maps
					\begin{equation} \label{eqn:prim bc}
						q^* f_* \to f'_* p^* \qquad f'_! p^! \to q^! f_! \qquad f_! p_* \to q_* f'_! \qquad q_! f'_* \to f_* p_!.
					\end{equation}
					are equivalences.

				\item[If $f$ is $D$-suave and $q$ is $D$-prim] then the natural maps
					\begin{equation} \label{eqn:suave prim bc}
						f_\sharp p_* \to q_* f'_\sharp \quad\text{and}\quad p^\flat f^! \to f'^! q^\flat
					\end{equation}
					are equivalences.

			\end{description}

		\item[Projection formula] Let $f : X \to Y$ be a map in $E$.
			\begin{description}
				
				\item[If $f$ is $D$-suave] the natural maps
					\begin{equation} \label{eqn:suave PF}
						f^! \otimes f^* \to f^!(- \otimes -) \quad\text{and}\quad f^* \underline\Hom(-, -) \to \underline\Hom(f^*, f^*)
					\end{equation}
					are equivalences.

				\item[If $f$ is $D$-prim] the natural maps
					\begin{equation} \label{eqn:prim PF}
						f_! \underline\Hom(f^*,-) \to \underline\Hom(-, f_!) \quad\text{and}\quad f_* \otimes - \to f_*(- \otimes f^*)
					\end{equation}
					are equivalences.

			\end{description}

		\item[Duality] Let $f : X \to Y$ be a map in $E$. Write $\Delta : X \to X \times_Y X$ for the diagonal of $f$, and $\pi : X \times_Y X \to X$ for one of the projections.
			\begin{description}
				
				\item[If $f$ is $D$-suave] we may define the \emph{dualizing complex} of $f$ to be $\omega_f = \pi_\sharp \Delta_! 1 \in D(X)$. The natural maps
					\begin{equation} \label{eqn:suave dual}
						\omega_f \otimes f^* \to f^! \quad\text{and}\quad f^* \to \underline\Hom(\omega_f, f^!)
					\end{equation}
					are equivalences, so we also have natural equivalences
					\begin{equation} \label{eqn:suave adj dual}
						f_\flat \xrightarrow{\sim} f_* \underline\Hom(\omega_f, -) \quad\text{and}\quad f_!(\omega_f \otimes -) \xrightarrow{\sim} f_\sharp.
					\end{equation}

				\item[If $f$ is $D$-prim] we may define the \emph{codualizing complex} of $f$ to be $\delta_f = \pi_* \Delta_! 1 \in D(X)$. The natural maps
					\begin{equation} \label{eqn:prim dual}
						f_!(\delta_f \otimes -) \to f_* \quad\text{and}\quad f_! \to f_* \underline\Hom(\delta_f,-)
					\end{equation}
					are equivalences, so we also have natural equivalences
					\begin{equation} \label{eqn:prim adj dual}
						f^\sharp \xrightarrow{\sim} \underline\Hom(\delta_f, f^!) \quad\text{and}\quad \delta_f \otimes f^* \xrightarrow{\sim} f^\flat.
					\end{equation}

			\end{description}

		\item[Compatibility with morphisms] Let $\phi : D \to D'$ be a transformation of 6-functor formalisms, and let $f \in E$.
			\begin{description}
				
				\item[If $f$ is $D$-suave] then $f$ is also $D'$-suave, and the natural maps
					\begin{equation} \label{eqn:suave morph}
						f_\sharp \phi \to \phi f_\sharp \quad\text{and}\quad \phi f^! \to f^! \phi
					\end{equation}
					are equivalences.

				\item[If $f$ is $D$-prim] then $f$ is also $D'$-prim, and the natural maps
					\begin{equation} \label{eqn:prim morph}
						\phi f_* \to f_* \phi \quad\text{and}\quad f^\flat \phi \to \phi f^\flat
					\end{equation}
					are equivalences.

			\end{description}

		\item[Descent] Let $\{X_i \xrightarrow{f_i} S\}_i$ be a small family of maps in $E$, and suppose that $D$ takes values in categories that admit small limits and colimits.
			\begin{description}
				
				\item[If $f_i$ is $D$-suave for all $i$] and $\{f_i^*\}_i$ is jointly conservative, then $\{f_i^!\}_i$ is jointly conservative, and if $\{f_i^!\}_i$ is jointly conservative, then $D^!$ has descent along $\{f_i\}_i$.

				\item[If $f_i$ is $D$-prim for all $i$] and $\{f_i^\sharp\}_i$ is jointly conservative, then $\{f_i^!\}_i$ is jointly conservative, and if $\{f_i^!\}_i$ is jointly conservative, then $D^!$ has descent along $\{f_i\}_i$.

			\end{description}

	\end{description}
	\begin{proof}
		First note that since $E$ is stable under taking diagonals, we have that $(\mathcal C,E)$ is a geometric setup in the sense of \cite[Convention 2.1.3]{HM6FF}, which allows us to apply the results about 6-functor formalisms in \textit{loc. cit.}

		The fact that suave and prim maps are stable under base change follows from \cite[Lemma 4.5.9(i)]{HM6FF}.

		The descriptions of $\omega_f$ and $\delta_f$ come from \cite[Lemmas 4.5.6 and 4.5.5]{HM6FF}, and the equivalences \eqref{eqn:suave bc} and \eqref{eqn:prim bc} come from \cite[Lemma 4.5.11]{HM6FF}. By taking left and right adjoints of these equivalences, we deduce the existence of the additional adjoints $f_\flat, f_\sharp, f^\flat, f^\sharp$ and the equivalences \eqref{eqn:suave adj dual} and \eqref{eqn:prim adj dual}.

		The fact that \eqref{eqn:suave PF} and \eqref{eqn:prim PF} are equivalences follows from \Cref{cor:suave prim PF}.

		The fact that the maps in \eqref{eqn:suave bc} and \eqref{eqn:prim bc} are equivalences follows immediately from \cite[Lemma 4.5.13]{HM6FF}. In fact, it is possible to adapt the proof of this result to show directly that the maps in \eqref{eqn:suave prim bc} are equivalences, but we can also argue as follows: in the proof of this result, it is shown that when $f$ is $D$-suave, the equivalence $f'_\sharp p^* \to q^* f_\sharp$ is the usual composite
		\[
			f'_!(\omega_{f'} \otimes p^*) \simeq f'_!(p^* \omega_f \otimes p^*) \simeq f'_! p^*(\omega_f \otimes -) \simeq  q^* f_!(\omega_f \otimes -)
		,\]
		and similarly, the equivalence $p_! f'^! \to f^! q_!$ is the usual composite
		\[
			p_!(\omega_{f'} \otimes f'^*) \simeq p_!(p^* \omega_f \otimes f'^*) \simeq \omega_f \otimes p_! f'^* \simeq \omega_f \otimes f^* q_!
		.\]
		Thus, the left and right mate squares
		\[
			\begin{tikzcd}
				D(X) \ar[d, "p^*"'] \ar[r, "f_\sharp"] & D(Y) \ar[d, "q^*"] \\
				D(X') \ar[r, "f'_\sharp"'] & D(Y')
			\end{tikzcd}
			\quad\text{and}\quad
			\begin{tikzcd}
				D(Y') \ar[d, "q_!"'] \ar[r, "f'^!"] & D(X') \ar[d, "p_!"] \\
				D(Y) \ar[r, "f^!"'] & D(X)
			\end{tikzcd}
		\]
		are equivalent to the outer rectangles in
		\[
			\begin{tikzcd}
				D(X) \ar[d, "p^*"'] \ar[r, "\otimes \omega_f"] & D(X) \ar[d, "p^*"'] \ar[r, "f_!"] & D(Y) \ar[d, "q^*"] \\
				D(X') \ar[r, "\otimes p^* \omega_f"'] & D(X') \ar[r, "f'_!"'] & D(Y')
			\end{tikzcd}
			\quad\text{and}\quad
			\begin{tikzcd}
				D(Y') \ar[d, "q_!"'] \ar[r, "f'^*"] & D(X') \ar[d, "p_!"] \ar[r, "\otimes p^* \omega_f"] & D(X') \ar[d, "p_!"] \\
				D(Y) \ar[r, "f^*"'] & D(X) \ar[r, "\otimes \omega_f"'] & D(X)
			\end{tikzcd}
		.\]
		Now, suppose that $q$ is $D$-prim. Then $p$ is also $D$-prim, so \Cref{cor:suave prim PF} shows that the leftmost and rightmost squares are vertically right and left adjointable respectively. Since $q$ is $D$-prim, \cite[Lemma 4.5.13]{HM6FF} shows that the remaining squares are vertically right and left adjointable respectively, so by \cite[Lemma F.6(2) and (3)]{TwAmb}, we find that the maps in \eqref{eqn:suave prim bc} are equivalences.

		The statements about compatibility with morphisms are shown in \Cref{prp:suave prim morph}.

		The descent statements are shown in \Cref{lem:suave prim descent}.
	\end{proof}
\end{thm}

Rather than recalling the definition of suave and prim maps given in \cite[\S4.5]{HM6FF}, we now present our generalized definition which is a bit simpler to state:
\begin{defn} \label{defn:suave prim}
	Let $(\mathcal C,E)$ be a geometric setup, let $\mathscr V$ be a 2-category, and let $D : \Span(\mathcal C,E) \to \mathscr V$ be a functor. A map $f : X \to Y$ in $\mathcal C$ is said to be \emph{$D$-suave (\resp{} prim) against} $q : Y' \to Y$ in $E$ if
	\[
		\begin{tikzcd}[column sep=huge]
			D(Y') \ar[d, "q_!"'] \ar[r, "(f \times_Y Y')^*"] & D(X \times_Y Y') \ar[d, "(X \times_Y q)_!"] \\
			D(Y) \ar[r, "f^*"'] & D(X)
		\end{tikzcd}
	\]
	is horizontally left (\resp{} right) adjointable.

	If $f : X \to Y$ is $D$-suave (\resp{} prim) against all maps in $E$ to $Y$, we say simply that $f$ is \emph{$D$-suave (\resp{} prim)}.
\end{defn}

\Cref{defn:suave prim} is different from the definition given in \cite[Definition 4.5.1]{HM6FF}, which is stated in terms of the category of kernels for a 3-functor formalism $D$. In fact, by \Cref{lem:suave prim}, our notion is simply a generalization of that definition that does not make reference to monoidal structures, and for which $f$ need not be in $E$, and $\mathscr V$ can be any 2-category. The caveat is that this does not recover the more refined notions of suave and prim \emph{objects} given in \cite[\S4.4]{HM6FF}.
\begin{lem} \label{lem:suave prim}
	Let $D$ be a 3-functor formalism on a geometric setup $(\mathcal C,E)$ in the sense of \cite[Convention 2.1.3]{HM6FF}, and let $f : X \to Y$ be a map in $E$. Then $f$ is $D$-suave (prim) in the sense of \Cref{defn:suave prim} if and only if it is in the sense of \cite[Definition 4.5.1]{HM6FF}.
	\begin{proof}
		We have that \Cref{defn:suave prim} implies \cite[Definition 4.5.1]{HM6FF} by \cite[Lemmas 4.5.6 and 4.5.5]{HM6FF}, and the converse holds by \cite[Lemma 4.5.13]{HM6FF} (and its generalization given in \cite[Remark 4.5.15(i)]{HM6FF}).
	\end{proof}
\end{lem}

The notions given in \Cref{defn:suave prim} enjoy the following extension properties:
\begin{lem} \label{lem:suave prim source-local}
	Let $(\mathcal C,E)$ be a geometric setup, and let $Y : I \star \Delta^1 \to \mathcal C$ be a small diagram sending all edges to $E$. Let $f : X \to Y$ be a Cartesian transformation, and let $D : \Span(\mathcal C,E) \to \PrL$ be a functor such that $D X|_{I \star \{0\}}$ and $D Y|_{I \star \{0\}}$ are colimiting.
	\begin{itemize}
			
		\item Suppose that for all maps $i \to j$ in $I \star \Delta^1$, $f(j)$ is $D$-suave against $Y(i \to j)$ if $i \in I$. Then $f(1)$ is $D$-suave against $Y(0 \to 1)$.

		\item Suppose that for all maps $i \to j$ in $I \star \Delta^1$, $f(j)_*$ admits a right adjoint,\footnotemark{} and $f(j)$ is $D$-prim against $Y(i \to j)$ if $i \in I$. Then $f(1)$ is $D$-prim against $Y(0 \to 1)$.
			\footnotetext{Note that \Cref{rmk:prim upper sharp} shows that the existence of this right adjoint is often automatic.}

	\end{itemize}
	\begin{proof}
		The suave case follows immediately from \cite[Lemma D.0.2]{Fundamentals}. For the prim case, note that it suffices to check right adjointability of the square in \Cref{defn:suave prim} after taking right adjoints everywhere (which we can do since $D$ takes values in $\PrL$). The result then follows from \cite[Corollary 4.7.4.18]{ha}.
	\end{proof}
\end{lem}

\begin{lem} \label{lem:suave prim target-local}
	Let $(\mathcal C,E)$ be a geometric setup, let $\mathscr V$ be a 2-category, and let $D : \Span(\mathcal C,E) \to \mathscr V$ be a functor.

	Let $f : X \to Y$ be a map in $\mathcal C$, and let $q : Y' \to Y$ be a map in $E$. Suppose that there is a $D^*$-pseudocover $\{Y_i \to Y\}_i$ such that for each $i$,
	\begin{itemize}

		\item $D^*$ has left (\resp{} right) base change for $f$ against $Y_i \to Y$, and for $Y' \times_Y f$ against $Y' \times_Y Y_i \to Y'$, and

		\item $f \times_Y Y_i$ is $D$-suave (\resp{} prim) against $q \times_Y Y_i$.

	\end{itemize}
	Then $f$ is $D$-suave (\resp{} prim) against $q$.
	\begin{proof}
		We will only address the suave case, as the argument for the prim case is completely analogous.

		To fix notation, let
		\[
			\begin{tikzcd}
				X' \ar[d, "p"'] \ar[r, "f'"] & Y' \ar[d, "q"] \\
				X \ar[r, "f"'] & Y
			\end{tikzcd}
		\]
		be a Cartesian square in $\mathcal C$, and for each $i$, let
		\[
			\begin{tikzcd}
				X'_i \ar[d, "p_i"'] \ar[r, "f'_i"] & Y'_i \ar[d, "q_i"] \\
				X_i \ar[r, "f_i"'] & Y_i
			\end{tikzcd}
		\]
		be its base change along $Y_i \to Y$.

		Since $D^*$ has left base change for $f'$ against $Y'_i \to Y'$, and $f_i$ is $D$-suave against $q_i$, we know that the top and bottom squares in
		\[
			\begin{tikzcd}
				D(Y') \ar[d, "(Y'_i \to Y')^*"'] \ar[r, "f'^*"] & D(X') \ar[d, "(X'_i \to X')^*"] \\
				D(Y'_i) \ar[d, "(q_i)_!"'] \ar[r, "f_i'^*"] & D(X'_i) \ar[d, "(p_i)_!"] \\
				D(Y_i) \ar[r, "f_i^*"'] & D(X_i)
			\end{tikzcd}
		\]
		are horizontally left adjointable, so the outer rectangle is horizontally left adjointable by \cite[Lemma F.6(3)]{TwAmb}.

		This is equivalent to the outer rectangle in the following diagram
		\[
			\begin{tikzcd}
				D(Y') \ar[d, "q_!"'] \ar[r, "f'^*"] & D(X') \ar[d, "p_!"] \\
				D(Y) \ar[d, "(Y_i \to Y)^*"'] \ar[r, "f^*"] & D(X) \ar[d, "(X_i \to X)^*"] \\
				D(Y_i) \ar[r, "f_i^*"'] & D(X_i)
			\end{tikzcd}
		.\]
		Since $D^*$ has left base change for $f$ against $Y_i \to Y$, we know that the bottom square is horizontally left adjointable, so by \cite[Lemma F.6(3)]{TwAmb}, we know that $(Y_i \to Y)^*$ sends the horizontal left mate of the top square to an equivalence. Since $\{Y_i \to Y\}_i$ is a $D^*$-pseudocover, we conclude that the top square is horizontally left adjointable, as desired.
	\end{proof}
\end{lem}

The rest of the section will only be concerned with suave and prim maps that are covered by \cite[Definition 4.5.1]{HM6FF}. We fix a 3-functor formalism $D$ on a geometric setup $(\mathcal C,E)$, where $(\mathcal C,E)$ is a geometric setup in the sense of \cite[Convention 2.1.3]{HM6FF}, \ie, $E$ is a collection of maps in the category $\mathcal C$ that contains all equivalences, and is stable under composition, base change, and diagonals.

\begin{rmk} \label{rmk:prim upper sharp}
	Suppose that $D$ is a 6-functor formalism. Then for any $D$-prim map $f \in E$, \cite[Corollary 4.5.11(ii)]{HM6FF} shows that $f_*$ admits a right adjoint $f^\sharp$ given by $\underline\Hom(\delta_f, f^!)$. By \cite[Lemma 4.5.9(i)]{HM6FF}, we have that the $D$-prim maps in $E$ are stable under composition, so we can define a wide subcategory $E^\sharp_D$ of $\mathcal C$ whose morphisms are the $D$-prim maps in $E$. Thus, there is a presheaf $D^\sharp : (E^\sharp_D)^\op \to \PrR$ sending a $D$-prim map $f \in E$ to the right adjoint $f^\sharp$ of $f_*$.

	This is also mentioned in \Cref{thm:suave prim consequences}.
\end{rmk}

\begin{lem} \label{lem:suave prim descent}
	Suppose that $D$ is a 6-functor formalism.

	\begin{description}

		\item[Suave descent] If $D$ takes values in categories that admit small limits, then $D^!$ has descent along any small $D^!$-pseudocover consisting of $D$-suave maps in $E$. Furthermore, every $D^*$-pseudocover is a $D^!$-pseudocover if it consists of $D$-suave maps in $E$.

		\item[Prim descent] If $D$ takes values in categories that admit small colimits, then $D^!$ has descent along any small $D^!$-pseudocover consisting of $D$-prim maps in $E$. Furthermore, any $D^\sharp$-pseudocover is a $D^!$-pseudocover, where $D^\sharp$ is the presheaf described in \Cref{rmk:prim upper sharp}.
		
	\end{description}
	\begin{proof}
		Note that by \cite[Corollary 4.5.11(i)]{HM6FF}, we have that for any $D$-suave map $f \in E$, the functor $f^!$ has a right adjoint given by $f_* \underline\Hom(\omega_f, -)$. Thus, \cite[Lemma 4.5.13(i)]{HM6FF} shows that $D^!$ has right base change for any $D$-suave map. We also know that $D$-suave maps in $E$ are stable under base change by \cite[Lemma 4.5.9(i)]{HM6FF}, so by \Cref{thm:bc descent}, $D^!$ has descent for any $D^!$-pseudocover consisting of $D$-suave maps in $E$.

		Furthermore, if $f \in E$ is $D$-suave, then \cite[Corollary 4.5.11(i)]{HM6FF} also shows that $f^*$ extends along $f^!$, so any $D^*$-pseudocover consisting of $D$-suave maps is also a $D^!$-pseudocover.

		The statement for prim descent is proved similarly. In particular, we have that $D^!$ has left base change for $D$-prim maps by \cite[Lemma 4.5.13(ii)]{HM6FF}, and that $D$-prim maps in $E$ are stable under base change by \cite[Lemma 4.5.9(i)]{HM6FF}, so \Cref{thm:bc descent} shows that $D^!$ has descent along any $D^!$-pseudocover consisting of $D$-prim maps in $E$. As before, we also have that for any $D$-prim map $f \in E$, the functor $f^\sharp$ extends along $f^!$, so any $D^\sharp$-pseudocover is a $D^!$-pseudocover.
	\end{proof}
\end{lem}

The following result and its proof are adapted from \cite[Corollary 4.6 and Theorem 4.8]{LCA6FF}.
\begin{prp} \label{prp:suave prim morph}
	Let $\sigma : D \to D'$ be a transformation of 3-functor formalisms $D,D' : \Span(\mathcal C, E) \to \widehat{\Cat}$. Let $f : X \to Y$ be a map in $E$.
	\begin{enumerate}

		\item The functor $\sigma : D(X) \to D'(X)$ preserves $f$-suave and $f$-prim objects, as well as $f$-suave duals and $f$-prim duals.

		\item\label{itm:suave prim morph}
			If $f$ is $D$-suave (\resp{} $D$-prim), then it is also $D'$-suave (\resp{} $D'$-prim), the square
			\begin{equation} \label{eqn:suave prim ! morph}
				\begin{tikzcd}
					D(X) \ar[d, "\sigma"'] \ar[r, "f_!"] & D(Y) \ar[d, "\sigma"] \\
					D'(X) \ar[r, "f_!"'] & D'(Y)
				\end{tikzcd}
			\end{equation}
			is horizontally right (\resp{} left) adjointable, and the square
			\begin{equation} \label{eqn:suave prim * morph}
				\begin{tikzcd}
					D(Y) \ar[d, "\sigma"'] \ar[r, "f^*"] & D(X) \ar[d, "\sigma"] \\
					D'(Y) \ar[r, "f^*"'] & D'(X)
				\end{tikzcd}.
			\end{equation}
			is horizontally left (\resp{} right) adjointable. Furthermore, the horizontal right and left (\resp{} left and right) mate squares of these are the canonical commutative squares
			\[\begin{tabular}{@{} llcrr @{}}
				& \begin{tikzcd}
					D(Y) \ar[d, "\sigma"'] \ar[r, "\omega_f \otimes f^*"] & D(X) \ar[d, "\sigma"] \\
					D'(Y) \ar[r, "\omega_f \otimes f^*"'] & D'(X)
				\end{tikzcd} & \text{and} &
				\begin{tikzcd}
					D(X) \ar[d, "\sigma"'] \ar[r, "f_!(\omega_f \otimes -)"] & D(Y) \ar[d, "\sigma"] \\
					D'(X) \ar[r, "f_!(\omega_f \otimes -)"'] & D'(Y)
				\end{tikzcd} & \\
				\bigg(\resp{} & \begin{tikzcd}
					D(Y) \ar[d, "\sigma"'] \ar[r, "\delta_f \otimes f^*"] & D(X) \ar[d, "\sigma"] \\
					D'(Y) \ar[r, "\delta_f \otimes f^*"'] & D'(X)
				\end{tikzcd} & \text{and} &
				\begin{tikzcd}
					D(X) \ar[d, "\sigma"'] \ar[r, "f_!(\delta_f \otimes -)"] & D(Y) \ar[d, "\sigma"] \\
					D'(X) \ar[r, "f_!(\delta_f \otimes -)"'] & D'(Y)
			\end{tikzcd} & \bigg)
			\end{tabular}\]

	\end{enumerate}
\end{prp}

Before addressing the proof of \Cref{prp:suave prim morph}, we note the following corollary:
\begin{cor}[Projection formula] \label{cor:suave prim PF}
	Let $f : X \to Y$ be a map in $E$, and let $M \in D(Y)$.

	If $f$ is $D$-suave (\resp{} $D$-prim), then the square
	\begin{equation} \label{eqn:suave prim ! PF}
		\begin{tikzcd}
			D(X) \ar[d, "\otimes f^* M"'] \ar[r, "f_!"] & D(Y) \ar[d, "\otimes M"] \\
			D(X) \ar[r, "f_!"'] & D(Y)
		\end{tikzcd}
	\end{equation}
	is horizontally right (\resp{} left) adjointable, and the square
	\begin{equation} \label{eqn:suave prim * PF}
		\begin{tikzcd}
			D(Y) \ar[d, "\otimes M"'] \ar[r, "f^*"] & D(X) \ar[d, "\otimes f^* M"] \\
			D(Y) \ar[r, "f^*"'] & D(X)
		\end{tikzcd}.
	\end{equation}
	is horizontally left (\resp{} right) adjointable. Furthermore, the horizontal right and left (\resp{} left and right) mate squares of these are the canonical commutative squares
	\[\begin{tabular}{@{} llcrr @{}}
		& \begin{tikzcd}
			D(Y) \ar[d, "\otimes M"'] \ar[r, "\omega_f \otimes f^*"] & D(X) \ar[d, "\otimes f^* M"] \\
			D(Y) \ar[r, "\omega_f \otimes f^*"'] & D(X)
		\end{tikzcd} & \text{and} &
		\begin{tikzcd}
			D(X) \ar[d, "\otimes f^* M"'] \ar[r, "f_!(\omega_f \otimes -)"] & D(Y) \ar[d, "\otimes M"] \\
			D'(X) \ar[r, "f_!(\omega_f \otimes -)"'] & D'(Y)
		\end{tikzcd} & \\
		\bigg(\resp{} & \begin{tikzcd}
				D(Y) \ar[d, "\otimes M"'] \ar[r, "\delta_f \otimes f^*"] & D(X) \ar[d, "\otimes f^* M"] \\
				D'(Y) \ar[r, "\delta_f \otimes f^*"'] & D'(X)
			\end{tikzcd} & \text{and} &
			\begin{tikzcd}
				D(X) \ar[d, "\otimes f^* M"'] \ar[r, "f_!(\delta_f \otimes -)"] & D(Y) \ar[d, "\otimes M"] \\
				D'(X) \ar[r, "f_!(\delta_f \otimes -)"'] & D'(Y)
		\end{tikzcd} & \bigg)
	\end{tabular}\]
	\begin{proof}
		By \cite[Lemma 3.1.9]{HM6FF}, the precomposition $D_Y$ of $D$ along $\Span(E_{/Y}) \to \Span(\mathcal C,E)$ enhances to a lax symmetric monoidal functor $\Span(E_{/Y}) \to \LMod_{D(Y)}(\widehat{\Cat})$, so that for any $M \in D(Y)$, (using the fact that $D(Y)$ is actually \emph{symmetric} monoidal), the operation $\otimes M$ defines a endomorphism of the Cartesian monoidal forgetful functor $\LMod_{D(Y)} \widehat{\Cat} \to \widehat{\Cat}$ which induces an endomorphism $\otimes M : D_Y \to D_Y$ of the 3-functor formalism $D_Y$. The squares \eqref{eqn:suave prim ! PF} and \eqref{eqn:suave prim * PF} are the naturality squares of $\otimes M$ for the map $f : X \to Y$ in $E_{/Y}$. The result then follows from \Cref{prp:suave prim morph}.
	\end{proof}
\end{cor}

The following auxiliary result will be necessary for the proof of \Cref{prp:suave prim morph}:
\begin{lem} \label{lem:kernel morphism}
	Let $\alpha : D \to D'$ be a morphism of 3-functor formalisms, and for $S \in \mathcal C$ write $\mathcal K_{D,S}$ for the category of kernels constructed in \cite[Definition 4.1.3]{HM6FF}. Then there is a 2-functor $\Psi_{\alpha, D} : \mathcal K_{D,S} \to \ho_2 \Fun_2(\Delta^1, \widehat{\Cat})$, where $\ho_2$ is the operation of taking homotopy $(2,2)$-categories. This 2-functor sends $M \in \mathcal K_{D,S}(X,Y) = D(X \times_S Y)^\simeq$ to the commutative square
	\begin{equation} \label{eqn:kernel square}
		\begin{tikzcd}[column sep=huge]
			D(X) \ar[d, "\alpha(X)"'] \ar[r, "(\pi_Y)_!(M \otimes \pi_X^*)"] & D(Y) \ar[d, "\alpha(Y)"] \\
			D'(X) \ar[r, "(\pi_Y)_!(M \otimes \pi_X^*)"'] & D'(Y)
		\end{tikzcd},
	\end{equation}
	where $\pi_X, \pi_Y$ are the projections from $X \times_S Y$ to $X$ and $Y$.
	\begin{proof}
		We recall the following description of the homotopy $(2,2)$-category $K_{D,S}$ of $\mathcal K_{D,S}$, described after \cite[Definition 4.1.3]{HM6FF} and given in \cite[Definition 2.2.3]{zavyalov6FF}, \cite[Remark 4.1]{LCA6FF}, and \cite[\S5]{Scholze6FF}:
		\begin{enumerate}

			\item The objects are the objects of $E_{/S}$.

			\item For every pair of objects $X,Y \in E_{/S}$, the category of morphisms $X \to Y$ in $K_{D,S}$ is given by $D(X \times_S Y)$.

			\item Given objects $X_1, X_2, X_3 \in E_{/S}$, the composition functor $D(X_2 \times_S X_3) \times D(X_1 \times_S X_2) \to D(X_1 \times_S X_3)$ is given by
				\[
					(B,A) \mapsto (\pi_{13})_!(\pi_{12}^* A \otimes \pi_{23}^* B)
				,\]
				where for each $i,j \in \{1,2,3\}$, $\pi_{ij} : X_1 \times_S X_2 \times_S X_3 \to X_i \times_S X_j$ is the projection.

			\item For any $X \in E_{/S}$, the identity of $X$ is $\Delta_!(1)$, where $\Delta : X \to X \times_S X$ is the diagonal.

		\end{enumerate}
		Recall the definitions of $\Psi_{D,S} : \mathcal K_{D,S} \to \widehat{\Cat}$ and $\Psi_{D',S} : \mathcal K_{D',S} \to \widehat{\Cat}$ from \cite[Proposition 4.1.5]{HM6FF}, and of $\phi_\alpha : \mathcal K_{D,S} \to \mathcal K_{D',S}$ from \cite[Proposition 4.2.1(i)]{HM6FF}.

		The argument of \cite[Theorem 4.8]{LCA6FF} constructs a morphism $\ho_2 \Psi_{D,S} \to \ho_2(\Psi_{D',S} \circ \phi_\alpha)$ in $\Fun_2(K_{D,S}, \ho_2 \widehat{\Cat})$ such that for any $X,Y \in E_{/S}$, and $M \in D(X \times_S Y)$, the naturality square
		\[
			\begin{tikzcd}[column sep=huge]
				\Psi_{D,S}(X) \ar[d] \ar[r, "\Psi_{D,S}(M)"] & \Psi_{D,S}(Y) \ar[d] \\
				\Psi_{D',S}(\phi_\alpha X) \ar[r, "\Psi_{D',S}(\phi_\alpha(M))"'] & \Psi_{D',S}(\phi_\alpha(Y))
			\end{tikzcd}
		\]
		in $\ho_2 \widehat{\Cat}$ is equivalent to the usual commutative square \eqref{eqn:kernel square}.

		This morphism $\ho_2 \Psi_{D,S} \to \ho_2(\Psi_{D',S} \circ \phi_\alpha)$ corresponds to a 2-functor $K_{D,S} \times \Delta^1 \to \ho_2 \widehat{\Cat}$, which corresponds to a 2-functor $K_{D,S} \to \Fun_2(\Delta^1, \ho_2 \widehat{\Cat})$, so we obtain $\Psi_{\alpha, S}$ as the composite
		\[
			\mathcal K_{D,S} \to \ho_2(\mathcal K_{D,S}) = K_{D,S} \to \Fun_2(\Delta^1, \ho_2 \widehat{\Cat}) \simeq \ho_2 \Fun_2(\Delta^1, \widehat{\Cat})
		.\]
	\end{proof}
\end{lem}

\begin{rmk}
	It should be possible to give a stronger $(\infty,2)$-categorical (instead of $(2,2)$-categorical) version of \Cref{lem:kernel morphism}, but since the version we have given is enough for the purposes of this paper, we have chosen to settle for this version. Nevertheless, we will now explain the proposed enhancement.

	Suppose for simplicity that $E$ contains all maps in $\mathcal C$. For any closed monoidal category $\mathscr V$, and diagram $\mathscr D : I \to \Alg_{\Span(\mathcal C)}(\mathscr V)$ of $\mathscr V$-valued 3-functor formalisms, we can view $\mathscr D$ as a lax symmetric monoidal functor $\tilde{\mathscr D} : \Span(\mathcal C) \to \Fun(I, \mathscr V)$. Using the closed monoidal structure on $\Span(\mathcal C)$ given \cite[Proposition 2.4.1]{HM6FF}, we may then use \cite[Proposition C.3.9]{HM6FF} to find that $\tilde{\mathscr D}$ decomposes as a composite
	\[
		\Span(\mathcal C) \xrightarrow{F_{\tilde{\mathscr D}}} \underline{\tau_{\tilde{\mathscr D}} \Span(\mathcal C)} \xrightarrow{\underline{G_{\tilde{\mathscr D}}}} \underline{\Fun(I, \mathscr V)}
	,\]
	where $\underline{(-)}$ denotes the operation of taking underlying categories of enriched categories, $\tau_{-}$ is the transfer of enrichment functor of \cite[Definition C.3.1]{HM6FF}, and in the second and third positions, we use the closed monoidal structures to view $\Span(\mathcal C)$ and $\Fun(I, \mathscr V)$ as self-enriched categories. The functor $F_{\tilde{\mathscr D}}$ comes from \cite[Lemma C.3.6]{HM6FF}, and the $\Fun(I, \mathscr V)$-enriched functor $G_{\tilde{\mathscr D}}$ comes from \cite[Lemma C.3.7]{HM6FF}.

	If the limit functor $\varprojlim_I : \Fun(I, \mathscr V) \to \mathscr V$ is symmetric monoidal, such as if the monoidal structure on $\mathscr V$ is Cartesian, then using \cite[Lemma C.3.5]{HM6FF}, we have that for $\Fun(I, \mathscr V)$-enriched categories, the operation of taking underlying categories is given by first applying the transfer of enrichment $\tau_{\varprojlim_I}$ along $\varprojlim_I$, and then taking the underlying category of the resulting $\mathscr V$-enriched category.

	Thus, since
	\[
		\varprojlim_I \mathscr D \simeq \varprojlim_I \circ \tilde{\mathscr D}
	,\]
	we find that $\tilde{\mathscr D}$ is equivalent to the following composite:
	\[
		\Span(\mathcal C) \xrightarrow{\Phi_{\mathscr D}} \underline{\mathcal K_{\varprojlim_{\mathscr D}}} \xrightarrow{\underline{\Psi_{\mathscr D}}} \underline{\tau_{\varprojlim_I} \Fun(I, \mathscr V)}
	,\]
	where
	\[
		\mathcal K_{\varprojlim_{\mathscr D}} = \tau_{\varprojlim \mathscr D} \Span(\mathcal C)
	,\]
	and $\Phi_{\varprojlim \mathscr D}$ admits a description like the one given in \cite[Proposition 4.1.5(i)]{HM6FF}, and $\Psi_{\mathscr D}$ is a $\mathscr V$-enriched functor analogous to the one given in \cite[Proposition 4.1.5(ii)]{HM6FF}, so for $M \in \mathcal K_{\varprojlim \mathscr D}(X,Y) \simeq \underline{(\varprojlim \mathscr D)(X \times Y)}$ and $i \in I$, we have
	\[
		\Psi_{\mathscr D}(M)(i) = (\pi_1)_!(M_i \otimes \pi_2^*) : \mathscr D(i)(X) \to \mathscr D(i)(Y)
	,\]
	where $\pi_1, \pi_2$ are the projections on $X \times Y$, and $M_i$ is the image of $M$ in $\underline{\mathscr D(i)(X \times Y)}$.

	Applying this in the case $I = \Delta^1$, and $\mathscr V = \widehat{\Cat}$, we recover the $(\infty,2)$-categorical version of \Cref{lem:kernel morphism}, up to identifying the $\widehat{\Cat}$-enriched category $\tau_{\varprojlim_I} \Fun(I, \widehat{\Cat})$ with the usual 2-category of functors $\Fun_2(I, \widehat{\Cat})$. This follows from \cite[Corollary 3.71(i)]{bienriched}, which says that $\tau_{\varprojlim_I} \Fun(I, \widehat{\Cat})$ is equivalent to the $\widehat{\Cat}$-enriched structure induced from the $\widehat{\Cat}$-linear structure given by restricting scalars along the left adjoint $\widehat{\Cat} \to \Fun(I, \widehat{\Cat})$ of $\varprojlim_I$.
\end{rmk}

\begin{proof}[Proof of \Cref{prp:suave prim morph}]
	The first statement follows immediately from the existence of the 2-functor $\mathcal K_{D,Y} \to \mathcal K_{D',Y}$ of \cite[Proposition 4.2.1(i)]{HM6FF} that acts as the identity on objects, and on the morphism categories $\Fun_{\mathcal K_{D,Y}}(U,V) \to \Fun_{\mathcal K_{D',Y}}(U,V)$ as $\sigma : D(U \times_X V) \to D'(U \times_X V)$.

	In particular, since $\sigma : D(X) \to D'(X)$ preserves monoidal units, it follows that if $f$ is $D$-suave (\resp{} $D$-prim), then it is also $D'$-suave (\resp{} $D'$-prim).

	Now, let $\Psi_{D,Y} : \mathcal K_{D,Y} \to \ho_2 \Fun_2(\Delta^1, \widehat{\Cat})$ be the 2-functor given in \Cref{lem:kernel morphism}. By viewing $1 \in D(X)$ as a morphism $X \to Y$ or $Y \to X$ in $\mathcal K_{D,Y}$, we obtain the commutative squares \eqref{eqn:suave prim ! morph} and \eqref{eqn:suave prim * morph}.

	If $f$ is $D$-suave, then as a morphism $X \to Y$, $1 \in D(X) \simeq D(X \times_Y Y)$ is a left adjoint, so since $\Psi_{D,Y}$ is a 2-functor, the square \eqref{eqn:suave prim ! morph} is a left adjoint in the $(2,2)$-category $\ho_2 \Fun_2(\Delta^1, \widehat{\Cat})$, so it is also a left adjoint in the 2-category $\Fun_2(\Delta^1, \widehat{\Cat}$. By the description of adjunctions in $\Fun(\Delta^1, \mathscr V)$ given in the proof of \cite[Proposition 2.1.5]{bispans}, we find that since $\omega_f : Y \to X$ is the right adjoint of $1 : X \to Y$, this square must be horizontally right adjointable, and we obtain the desired description of the horizontal right mate using \Cref{lem:kernel morphism} again (since $\Psi_{D,Y}$ preserves adjunctions).

	Using \cite[Proposition 4.1.4]{HM6FF}, we have that as a morphism $Y \to X$, $1 \in D(X) \simeq D(Y \times_Y X)$ is a right adjoint (when $f$ is $D$-suave), so the above argument shows that the square \eqref{eqn:suave prim * morph} is horizontally left adjointable, and the horizontal left mate admits the desired description.

	Similarly, when $f$ is $D$-prim, the same arguments show that \eqref{eqn:suave prim ! morph} is horizontally left adjointable, and \eqref{eqn:suave prim * morph} is horizontally right adjointable, and that the mate squares admit the desired descriptions.
\end{proof}

\subsection{Nagata 6-Functor Formalisms} \label{S:nagata}

Fix collections $I,P,E$ of maps in $\mathcal C$.
\begin{ass}
	The collections $I,P,E$ are stable under base change, composition, and taking diagonals, and $I,P$ are contained in $E$.
\end{ass}


One of the key results about 6-functor formalisms was given in \cite[Proposition A.5.10]{Mann6FF}, which gives criteria for \emph{constructing} 6-functor formalisms out of more accessible data. This result was later refined in \cite{6FFUnique} under some additional hypotheses, clarifying the role of the suave and prim properties in this construction. Another refinement is given in \cite{CLL6FF} without additional hypotheses, but with a new 2-categorical version of 6-functor formalisms, which are given by lax symmetric monoidal \emph{2-functors} from $\Span_2(\mathcal C,E)_{P,I}$ (constructed in \cite[Construction 4.12]{CLL6FF}), which is a 2-categorical enhancement of $\Span(\mathcal C,E)$ by \cite[Lemma 4.13]{CLL6FF}.

In this section we will be interested in studying this 2-categorical notion of 6-functor formalisms.

Rather than describing the 2-category $\Span_2(\mathcal C,E)_{P,I}$ (for which we refer the reader to \cite[Construction 4.12]{CLL6FF}), we recall some of its key properties. We have already mentioned that the underlying category of $\Span_2(\mathcal C,E)_{P,I}$ is $\Span(\mathcal C,E)$. Furthermore, by \cite[Proposition 4.14]{CLL6FF}, the composite functor
\[
	\mathcal C^\op \to \Span(\mathcal C,E) \to \Span_2(\mathcal C,E)_{P,I}
\]
is left adjointable at maps in $I$, right adjointable at maps in $P$, and if
\[
	\begin{tikzcd}
		X' \ar[d] \ar[r] & Y' \ar[d, "q"] \\
		X \ar[r, "f"'] & Y
	\end{tikzcd}
\]
is a Cartesian square in $\mathcal C$ such that $f \in P$ and $q \in I$, then this square is sent to a horizontally right-left adjointable square in $\Span_2(\mathcal C,E)_{P,I}$. Functors from $\mathcal C^\op$ to 2-categories satisfying these properties are called \emph{$(I,P)$-biadjointable}.

In fact, \cite[Theorems A and B]{CLL6FF} show that under certain hypotheses, $\Span_2(\mathcal C,E)_{P,I}$ is universal among 2-categories equipped with an $(I,P)$-biadjointable functor from $\mathcal C^\op$.

Using the fact that the underlying category of $\Span_2(\mathcal C,E)_{P,I}$ is given by $\Span(\mathcal C,E)$, we have a description of the 0-cells and 1-cells in $\Span_2(\mathcal C,E)_{P,I}$. The 2-cells are also described, for example, in \cite[Construction 4.12]{CLL6FF}: given $X,Y \in \mathcal C$, and spans $X \gets Z \to Y$ to $X \gets Z' \to Y$ in $\mathcal C$ corresponding to $1$-morphisms $\alpha, \beta : X \to Y$ in $\Span(\mathcal C,E)$, a 2-morphism $\alpha \to \beta$ in $\Span_2(\mathcal C,E)_{P,I}$ is given by a commutative diagram
\[
	\begin{tikzcd}
		& \ar[dl] Z \ar[dr] & \\
		X & \ar[u] Z'' \ar[d] & Y \\
		& \ar[ul] Z' \ar[ur] &
	\end{tikzcd}
,\]
where the map $Z'' \to Z$ is in $P$, and the map $Z'' \to Z'$ is in $I$.
\begin{rmk} \label{rmk:Span2 BC}
	Given a commutative square
	\[
		\begin{tikzcd}
			X' \ar[d, "p"'] \ar[r, "f'"] & Y' \ar[d, "q"] \\
			X \ar[r, "f"'] & Y
		\end{tikzcd}
	\]
	in $\mathcal C$, if $X' \to X \times_Y X$ is in $I$, and $f \in E$, then the following diagram
	\[
		\begin{tikzcd}
			& \ar[dl, "p"'] X' \ar[dr, "f'"] & \\
			X & \ar[u, equals] X' \ar[d] & Y' \\
			& \ar[ul] X \times_Y Y' \ar[ur] &
		\end{tikzcd}
	\]
	defines a 2-morphism
	\[
		(\xleftarrow{p}= \circ =\xrightarrow{f'}) \Rightarrow (=\xrightarrow{f} \circ \xleftarrow{q}=)
	,\]
	which gives a colax square
	\[
		\begin{tikzcd}
			X \ar[d, "\xleftarrow{p} = "'] \ar[r, "=\xrightarrow{f}"] & Y \ar[d, "\xleftarrow{q} ="] \\
			X' \ar[r, "=\xrightarrow{f'}"'] & Y'
		\end{tikzcd}
	\]
	in $\Span_2(\mathcal C, E)_{P,I}$.

	In fact, this is given by the horizontal left mate of the square
	\[
		\begin{tikzcd}
			Y \ar[d, "\xleftarrow{q}="'] \ar[r, "\xleftarrow{f}="] & X \ar[d, "\xleftarrow{p}="] \\
			Y' \ar[r, "\xleftarrow{f'}="'] & X'
		\end{tikzcd}
	.\]
	\begin{proof}
		As in the proof of \cite[Proposition 4.14]{CLL6FF}, if $f \in I$, then $=\xrightarrow{f}$ has a right adjoint given by $\xleftarrow{f}=$, where the unit and counit are given by
		\[
			\begin{tikzcd}
				& \ar[dl, equals] X \ar[dr, equals] & \\
				X & \ar[u, equals] X \ar[d] & X \\
				& \ar[ul] X \times_Y X \ar[ur] &
			\end{tikzcd}
			\quad\text{and}\quad
			\begin{tikzcd}
				& \ar[dl] X \ar[dr] & \\
				Y & \ar[u, equals] X \ar[d] & Y \\
				& \ar[ul, equals] Y \ar[ur, equals] &
			\end{tikzcd}.
		\]
		Thus, the horizontal left mate is given by the following composite of 2-morphisms, to be read from top to bottom. Also see the proof of \cite[Proposition 4.14]{CLL6FF}.
		\[
			\begin{tikzcd}
				& \ar[dl] X' \ar[dr] & \\
				X & \ar[u, "\sim"] X \times_X X' \ar[d] & Y' \\
				& \ar[ul] X \times_Y X \times_X X' \ar[d, no head, "\sim"] \ar[ur] & \\
				& \ar[dl] X \times_Y Y' \times_{Y'} X' \ar[dr] & \\
				X & \ar[u, "\sim"] X \times_Y X' \ar[d] & Y \\
				& \ar[ul] X \times_Y Y' \ar[ur] &
			\end{tikzcd}
		,\]
		which is equivalent to the following 2-morphism:
		\[
			\begin{tikzcd}
				& \ar[dl, "p"'] X' \ar[dr, "f'"] & \\
				X & \ar[u, equals] X' \ar[d] & Y' \\
				& \ar[ul] X \times_Y Y' \ar[ur] &
			\end{tikzcd}
		.\]

	\end{proof}

\end{rmk}

\begin{prp} \label{prp:nat sq adj 6FF}
	Let $\mathscr V$ be any 2-category, and let $D$ be a 2-functor $\Span_2(\mathcal C, E)_{P,I} \to \mathscr V$.

	\begin{enumerate}

		\item Let $\phi : D \to D'$ be a transformation. For any $f : X \to Y$ in $E$, we have naturality squares
			\begin{equation} \label{eqn:nat sq morph}
				\begin{tikzcd}
					D(X) \ar[d] \ar[r, "f_!"] & D(Y) \ar[d] \\
					D'(X) \ar[r, "f_!"'] & D'(Y)
				\end{tikzcd} \qquad
				\begin{tikzcd}
					D(Y) \ar[d] \ar[r, "f^*"] & D(X) \ar[d] \\
					D'(Y) \ar[r, "f^*"'] & D'(X)
				\end{tikzcd}
			.\end{equation}
			If $f \in I$ (\resp{} $P$), then the first square is given by the horizontal left (\resp{} right) mate of the second square.

		\item Let
			\begin{equation} \label{eqn:cart sq bc 6FF}
				\begin{tikzcd}
					X' \ar[d, "a"'] \ar[r, "f'"] & Y' \ar[d, "b" ] \\
					X \ar[r, "f"'] & Y
				\end{tikzcd}
			\end{equation}
			be a Cartesian square in $\mathcal C$, so we have a commutative square
			\begin{equation} \label{eqn:* bc 6FF}
				\begin{tikzcd}
					D(Y) \ar[d, "a^*"'] \ar[r, "f^*"] & D(X) \ar[d, "b^*"] \\
					D(Y') \ar[r, "f'^*"'] & D(X')
				\end{tikzcd}
			.\end{equation}
			If $f \in I$ (\resp{} $b \in P$), then the horizontal left (\resp{} vertical right) mate square of this square is given by the canonical square
			\[
				\begin{tikzcd}
					D(X) \ar[d, "a^*"'] \ar[r, "f_!"] & D(Y) \ar[d, "b^*"] \\
					D(X') \ar[r, "f'_!"'] & D(Y')
				\end{tikzcd}
				\left(\resp{}
				\begin{tikzcd}
					D(Y') \ar[d, "a_!"'] \ar[r, "f'^*"] & D(X') \ar[d, "b_!"] \\
					D(Y) \ar[r, "f^*"'] & D(X)
				\end{tikzcd}\right)
			.\]
			If $f \in I$ and $b \in P$, then the horizontal left-right mate square (equivalently, the vertical right-left mate square) is given by the canonical square
			\[
				\begin{tikzcd}
					D(X') \ar[d, "a_!"'] \ar[r, "f'_!"] & D(Y') \ar[d, "b_!"] \\
					D(X) \ar[r, "f_!"'] & D(Y)
				\end{tikzcd}
			.\]

		\item\label{itm:nat sq adj/suave prim}
			Let \eqref{eqn:cart sq bc 6FF} be a Cartesian square in $\mathcal C$ as in the previous point, and assume that $f \in E$, so we have a canonical commutative square
			\begin{equation} \label{eqn:! bc 6FF}
				\begin{tikzcd}
					D(X) \ar[d, "a^*"'] \ar[r, "f_!"] & D(Y) \ar[d, "b^*"] \\
					D(X') \ar[r, "f'_!"'] & D(Y')
				\end{tikzcd}.
			\end{equation}
			\begin{enumerate}

				\item Let $P^\natural \subseteq P$ be a collection of maps that is stable under base change, and such that $D^*$ has right-left base change for maps in $P^\natural$ against base changes of $b$. Then \eqref{eqn:! bc 6FF} is vertically right adjointable if $f$ is a composite of maps in $I \cup P^\natural$, and $b'^*$ has a left adjoint for any base change $b'$ of $b$ along such maps.

					In particular, every map in $I$ is $D|_{\Span(\mathcal C,E)}$-suave against composites of maps in $I \cup P$.

				\item Similarly, let $I^\natural \subseteq I$ be a collection of maps that is stable under base change, and such that $D^*$ has left-right base change for maps in $I^\natural$ against base changes of $b$. Then \eqref{eqn:! bc 6FF} is vertically right adjointable if $f$ is a composite of maps in $I^\natural \cup P$, and $b'^*$ has a right adjoint for any base change $b'$ of $b$ along such maps.

					In particular, every map in $P$ is $D|_{\Span(\mathcal C,E)}$-prim against composites of maps in $I \cup P$.

			\end{enumerate}

	\end{enumerate}

	\begin{proof}
		First note that by the proof of \cite[Proposition 4.14]{CLL6FF}, the functor $E \to \Span_2(\mathcal C,E)_{P,I}$ sends $f \in I$ (\resp{} $P$) to the left (\resp{} right) adjoint of the image of $f$ under $\mathcal C^\op \to \Span_2(\mathcal C,E)_{P,I}$.

		It follows that for any 2-category $\mathscr U$, and 2-functor $\Span_2(\mathcal C,E)_{P,I} \to \mathscr U$, we have that if $f \in I$, then $f_! \dashv f^*$, and if $f \in P$, then $f^* \dashv f_!$.

		\begin{enumerate}

			\item The transformation $\phi$ can be seen as a 2-functor $\Span_2(\mathcal C,E)_{P,I} \to \Fun(\Delta^1, \mathscr V)$. Thus, the statement follows from the description of adjunctions in $\Fun(\Delta^1, \mathscr V)$ given in the proof of \cite[Proposition 2.1.5]{bispans}.

			\item Since \eqref{eqn:cart sq bc 6FF} is a Cartesian square in $\mathcal C$, the argument of \Cref{rmk:Span2 BC} shows that the square 
				\[
					\begin{tikzcd}
						Y \ar[d, "\xleftarrow{b}="'] \ar[r, "\xleftarrow{f}="] & X \ar[d, "\xleftarrow{a}="] \\
						Y' \ar[r, "\xleftarrow{f'}="'] & X'
					\end{tikzcd}
				\]
				in $\Span_2(\mathcal C,E)_{P,I}$ is horizontally left adjointable if $f \in I$, vertically right adjointable if $b \in P$, and horizontally left-right adjointable if $f \in I$ and $b \in P$, and that furthermore, the corresponding horizontal left mate, vertical right mate, and horizontal left-right mates are given by the canonical squares
				\[
					\begin{tikzcd}
						X \ar[d, "\xleftarrow{a}="'] \ar[r, "=\xrightarrow{f}"] & Y \ar[d, "\xleftarrow{b}="] \\
						X' \ar[r, "=\xrightarrow{f'}"'] & Y'
					\end{tikzcd}
					\qquad
					\begin{tikzcd}
						Y' \ar[d, "=\xrightarrow{b}"'] \ar[r, "\xleftarrow{f'}="] & X' \ar[d, "=\xrightarrow{a}"] \\
						Y \ar[r, "\xleftarrow{f}="'] & X
					\end{tikzcd}
					\qquad
					\begin{tikzcd}
						X' \ar[d, "=\xrightarrow{a}"'] \ar[r, "=\xrightarrow{f'}"] & Y' \ar[d, "=\xrightarrow{b}"] \\
						X \ar[r, "=\xrightarrow{f}"'] & Y
					\end{tikzcd}
				.\]
				Thus, we conclude from the fact that $D$ is a 2-functor.

			\item \begin{enumerate}

				\item By \cite[Lemma F.6(4)]{TwAmb}, it suffices to show this when $f \in I$ and when $f \in P^\natural$. Indeed, if $f \in I$, then this follows from the previous point since \eqref{eqn:! bc 6FF} is the horizontal left mate square of \eqref{eqn:* bc 6FF}. It follows that \eqref{eqn:! bc 6FF} is horizontally right adjointable, so it is vertically left adjointable since $b^*$ and $a^*$ admit left adjoints.

					If $f \in P^\natural$, then $D^*$ has right-left base change for $f$ against $b$, so we conclude using the previous point again. The last statement follows from \cite[Proposition 4.14]{CLL6FF}.

				\item We once again reduce to the cases $f \in I^\natural$ and $f \in P$. If $f \in P$, then as before, we use the previous point to find that \eqref{eqn:! bc 6FF} is horizontally left adjointable, so it is vertically right adjointable.

					If $f \in I^\natural$, then $D^*$ has left-right base change for $f$ against $b$, so we conclude using the previous point again. The last statement follows from \cite[Proposition 4.14]{CLL6FF}.

			\end{enumerate}

		\end{enumerate}
		
	\end{proof}
\end{prp}

\begin{prp} \label{prp:2-cat 6FF trunc}
	Assume that $\mathcal C$ admits finite products, so that we can consider restriction along the symmetric monoidal functor $\Span(\mathcal C,E) \to \Span_2(\mathcal C,E)_{P,I}$ to obtain a functor
	\begin{equation} \label{eqn:2-cat 3FF res}
		\Alg_{\Span_2(\mathcal C,E)_{P,I}}(\widehat{\Cat}) \to \Alg_{\Span(\mathcal C,E)}(\widehat{\Cat})
	\end{equation}
	from the category of lax symmetric monoidal 2-functors $\Span_2(\mathcal C,E)_{P,I} \to \widehat{\Cat}$ to the category of lax symmetric monoidal functors $\Span(\mathcal C,E) \to \widehat{\Cat}$.

	Suppose that every map in $E$ is of the form $p \circ j$ for $p \in P$ and $j \in I$, and every map in $I \cup P$ is truncated. Then this functor is fully faithful with essential image given by the 3-functor formalisms $D : \Span(\mathcal C,E) \to \widehat{\Cat}$ such that every map in $I$ is $D$-suave and every map in $P$ is $D$-prim.
	\begin{proof}
		Since every map in $I \cup P$ is truncated, and $I,P$ are stable under taking diagonals, we have that for any 3-functor formalism $D$ on $(\mathcal C,E)$, all maps in $I$ are $D$-suave if and only if they are all cohomologically \'{e}tale in the sense of \cite[Definition 6.12]{Scholze6FF}, and all maps in $P$ are $D$-prim if and only if they are all cohomologically proper in the sense of \cite[Definition 6.10]{Scholze6FF}.

		Since every map in $E$ is of the form $p \circ j$ for $p \in P$ and $j \in I$, and $I,P$ are stable under base change, it follows from \cite[Proposition 2.13]{6FFUnique} that a 3-functor formalism $D$ on $(\mathcal C,E)$ is Nagata in the sense of \cite[Definition 2.15]{6FFUnique} if and only if all maps in $I$ are $D$-suave, and all maps in $P$ are $D$-prim.

		Since every map in $E$ is a composite of maps in $I \cup P$, it follows from \Cref{prp:nat sq adj 6FF}(\ref{itm:nat sq adj/suave prim}) that if $D : \Span(\mathcal C,E) \to \widehat{\Cat}$ is the restriction of a lax symmetric monoidal 2-functor $\Span_2(\mathcal C,E)_{P,I} \to \widehat{\Cat}$, then every map in $I$ is $D$-suave, and every map in $P$ is $D$-prim. Thus, \eqref{eqn:2-cat 3FF res} lands in the full subcategory of Nagata 3-functor formalisms.

		By \cite[Theorem 3.3]{6FFUnique}, restriction along $\mathcal C^\op \to \Span(\mathcal C,E)$ induces an equivalence from the category of Nagata 3-functor formalisms to the subcategory of $\Alg_{\mathcal C^\op} \widehat{\Cat}$ (where $\mathcal C$ has the coCartesian monoidal structure) consisting of $(I,P)$-biadjointable lax symmetric monoidal functors $\mathcal C^\op \to \widehat{\Cat}$ of \cite[Definition 4.30]{CLL6FF}, and natural transformations between them that are left adjointable at maps in $I$ and right adjointable at maps in $P$.

		On the other hand, since every map in $I \cap P$ is truncated, \cite[Theorem B]{CLL6FF} shows that restriction along the composite $\mathcal C^\op \to \Span(\mathcal C,E) \to \Span_2(\mathcal C, E)_{P,I}$ also induces an equivalence of the domain of \eqref{eqn:2-cat 3FF res} with this category. It follows that the functor \eqref{eqn:2-cat 3FF res} induces an equivalence of its domain with the category of Nagata 3-functor formalisms, as desired.
	\end{proof}
\end{prp}

\phantomsection
\bibliography{refs}
\bibliographystyle{amsalpha}

\end{document}